\numberwithin{figure}{section}
\theoremstyle{plain}
\newtheorem{theorem}{Theorem}[section]
\newtheorem{corollary}[theorem]{Corollary}
\newtheorem{lemma}[theorem]{Lemma}
\newtheorem{proposition}[theorem]{Proposition}
\newtheorem*{theorem*}{Theorem}
\newtheorem*{milnortheorem}{Milnor's Theorem}
\newtheorem*{sdtheorem}{Stallings-Dwyer Theorem}
\newtheorem*{acconj}{Almost-Concordance Conjecture}
\newenvironment{customthm}[1]
  {\innercustomthm}
  {\endinnercustomthm}
\newenvironment{customprop}[1]
  {\innercustomprop}
  {\endinnercustomprop}
\newenvironment{customcor}[1]
  {\innercustomcor}
  {\endinnercustomcor}
\theoremstyle{definition}
\newtheorem{definition}[theorem]{Definition}
\newtheorem*{definition*}{Definition}
\newtheorem{question}[theorem]{Question}
\newtheorem{goal}[theorem]{Goal}
\theoremstyle{remark}
\title{\textbf{Milnor's invariants for knots and links in closed orientable 3-manifolds}}
\author{Ryan Stees}
\date{}
\begin{document}

\maketitle

\vspace{-2.5em}

\noindent {\small \textbf{Abstract.}
In his 1957 paper, John Milnor introduced a collection of invariants for links in $S^3$ detecting higher-order linking phenomena by studying lower central quotients of link groups and comparing them to those of the unlink. These invariants, now known as Milnor's $\overline{\mu}$-invariants, were later shown to be topological link concordance invariants and have since inspired decades of consequential research. Milnor's invariants have many interpretations, and there have been numerous attempts to extend them to other settings. In this paper, we extend Milnor's invariants to topological concordance invariants of knots and links in general closed orientable 3-manifolds. These invariants unify and generalize all previous versions of Milnor's invariants in dimension 3, including Milnor's original invariants for links in $S^3$.}

{
\renewcommand{\cfttoctitlefont}{\large\bfseries}
\renewcommand{\cftsecfont}{\normalsize}
\renewcommand{\cftsecpagefont}{\normalsize}
\renewcommand{\cftsecaftersnum}{.}
\setlength{\cftbeforesecskip}{0em}

\tableofcontents
}


\section{Introduction}\label{intro}

In his 1954 Ph.D. thesis \cite{Milnor}, John Milnor introduced a collection of integer-valued invariants for links in $S^3$, now known as \emph{Milnor's $\overline{\mu}$-invariants}, which inductively compare lower central quotients of link groups to those of the unlink. The simplest $\overline{\mu}$-invariants are the classical linking numbers first defined by C.F. Gauss \cite{Gauss}, and the higher invariants are higher-order linking numbers. Much is known about Milnor's $\overline{\mu}$-invariants; in particular, Casson used work of Stallings \cite{Stallings} to show that Milnor's invariants are topological link concordance invariants \cite{Casson}. Milnor's invariants are ubiquitous, having interpretations in terms of Massey products \cites{Turaev76, Porter}, homotopy-theoretic data \cite{Orr89}, surface systems in $S^3$ \cite{Cochran90}, the Kontsevich integral \cite{HabeggerMasbaum}, and twisted Whitney towers in the 4-ball \cite{CST14}. We summarize the characterizing property of these $\overline{\mu}$-invariants in the following theorem:

\begin{milnortheorem}[\cite{Milnor} 1954]\label{THMmilnor}
Let $L\subset S^3$ be an $m$-component link, and let $U$ be the $m$-component unlink, with link groups $\pi$ and $F$, respectively. Suppose the $n^{\text{th}}$ lower central quotients of $\pi$ and $F$ are isomorphic. Then the following three statements are equivalent:
\begin{enumerate}[label=(\arabic*)]
\item The $\overline{\mu}$-invariants of $L$ of length $n$ agree with those of $U$ (that is, they vanish).
\item The $(n+1)^{\text{st}}$ lower central quotients of $\pi$ and $F$ are isomorphic.
\item The $\overline{\mu}$-invariants of $L$ of length $n+1$ are well-defined.
\end{enumerate}
\end{milnortheorem}

There have been numerous attempts to extend Milnor's $\overline{\mu}$-invariants to other settings; see \cites{MillerD,Heck,Kuzbary,ChaOrr}. This paper extends Milnor's invariants to concordance invariants of knots and links in any closed orientable 3-manifold. Our work unifies and generalizes all previous extensions of Milnor's invariants in dimension 3.


\subsection{New contributions}\label{intro-new}

To any link $L$ in a closed orientable 3-manifold $M$, we associate a tower of spaces $\{X_n(L)\}_{n\geq 1}$, well-defined up to homotopy equivalence, which come equipped with canonical homotopy classes of maps $\iota_n(L):M\to X_n(L)$. We have the following diagram:
\begin{center}
\begin{tikzcd}
& & M \arrow[dl,"\iota_{n+1}(L)"'] \arrow[d,"\iota_n(L)"] \arrow[drr,"\iota_1(L)"] & & &\\
\cdots \arrow[r] & X_{n+1}(L) \arrow[r] & X_n(L) \arrow[r] & \cdots \arrow[r] & X_1(L).
\end{tikzcd}
\end{center}
We will show that a concordance between two links $L, L'\subset M$ induces homotopy equivalences $X_n(L')\xrightarrow{\simeq}X_n(L)$ for all $n$ and that the maps $\iota_n(L'):M\to X_n(L')$ and $\iota_n(L):M\to X_n(L)$ correspond via these homotopy equivalences. 

To define invariants, we will fix a link $L\subset M$ and compare other links to $L$ inductively. Given some link $L'\subset M$, and assuming we have homotopy equivalences $X_k(L')\xrightarrow{\simeq}X_k(L)$ for all $k\leq n$, we obstruct the existence of a homotopy equivalence $X_{n+1}(L')\xrightarrow{\simeq}X_{n+1}(L)$ by comparing the maps $\iota_n(L'):M\to X_n(L')$ and $\iota_n(L):M\to X_n(L)$ via the homotopy equivalence $X_n(L')\xrightarrow{\simeq}X_n(L)$. Through this inductive comparison, we extract a sequence of topological concordance invariants $h_n$ which we call the \emph{lower central homotopy invariants} of $L'$ relative to $L$. These invariants are homotopy classes of maps in $[M,X_n(L)]_0/\text{Aut}(\pi/\Gamma_n,\partial)$, where $\text{Aut}(\pi/\Gamma_n,\partial)$ is a group acting on $[M,X_n(L)]_0$ we discuss in Section~\ref{nbasing-selfbasing}.

An approach K. Orr pioneered in \cite{Orr89} inspired our approach to defining the lower central homotopy invariants; we define them in a similar manner to invariants of P. Heck \cite{Heck}. We paraphrase the main results involving these invariants:
\begin{itemize}
\item \emph{Invariance:} Theorem~\ref{THMinvariance}, stated in Section~\ref{results-homotopy} and proved in Section~\ref{proofs-inv-A}, asserts that the $h_n$ are topological concordance invariants.
\item \emph{Characterization:} Theorem~\ref{THMcharacterization}, stated in Section~\ref{results-ncob} and proved in Section~\ref{htpychar-thmB}, states that the $h_n$ characterize $n$-cobordism, a geometric relationship between links. Roughly speaking, $L'$ and $L''$ are \emph{$n$-cobordant} if they cobound a surface in $M\times[0,1]$ which looks like a concordance to the $n^{\text{th}}$ lower central quotients. Theorem~\ref{THMcharacterization} states that $h_n(L')=h_n(L'')$ if and only if $L'$ is $n$-cobordant to $L''$.
\item \emph{Realization:} Theorem~\ref{THMrealization}, stated in Section~\ref{results-htpyreal} and proved in Section~\ref{htpychar-thmC}, gives exact conditions under which a homotopy class in $[M,X_n(L)]_0$ is realized as the invariant of some link $L'\subset M$.
\end{itemize}

The invariants $h_n$ also determine two additional new invariants we will discuss and explore:
\begin{itemize}
\item The invariants $\theta_n$, which we call the \emph{lower central homology invariants}, are homology classes in $H_3(X_n(L))/\text{Aut}(\pi/\Gamma_n,\partial)$. They are images of the fundamental class $[M]$ under the maps the $h_n$ induce on homology.
\item The invariants $\overline{\mu}_n$, which we refer to as \emph{Milnor's invariants}, are a further reduction of the $h_n$. Like Milnor's $\overline{\mu}$-invariants, they determine the lower central quotients of link groups one step at a time.
\end{itemize}

The invariants $\theta_n$ and $\overline{\mu}_n$ are useful in the greatly extended setting of $\mathbb{Z}[G]$-homology concordance, for suitable groups $G$, where we allow the links we compare to live in different 3-manifolds. A \emph{$\mathbb{Z}[G]$-homology concordance} between $L\subset M$ and $L'\subset M'$ is a concordance in a $\mathbb{Z}[G]$-homology cobordism between $M$ and $M'$. We now summarize the main results involving these reductions of $h_n$:
\begin{itemize}
\item \emph{Invariance:} Theorem~\ref{THMhinvariance}, stated in Section~\ref{results-homology} and proved in Section~\ref{proofs-inv-A'}, asserts that the $\theta_n$ and $\overline{\mu}_n$ are topological homology concordance invariants.
\item \emph{Characterization:} Theorem~\ref{THMhcharacterization}, stated in Section~\ref{results-determination} and proved in Section~\ref{homchar-thmD}, states that the $\overline{\mu}_n$ characterize algebraic information we call an \emph{$n$-basing}. Roughly speaking, an $n$-basing for $L'$ relative to $L$ is an isomorphism on $n^{\text{th}}$ lower central quotients of link groups which preserves the peripheral structures of meridians and longitudes. We state a concise version of Theorem~\ref{THMhcharacterization} below for the sake of direct comparison with \hyperref[THMmilnor]{Milnor's Theorem}. See Section~\ref{results-determination} for the definition of the $\overline{\mu}_n$ and the full statement of Theorem~\ref{THMhcharacterization}.
\item \emph{Lifting property:} Corollary~\ref{CORlift}, stated and proved in Section~\ref{results-determination}, asserts that when comparing links in the same 3-manifold the invariant $\overline{\mu}_n$ vanishes if and only if the lower central homotopy invariant $h_n$ lifts to a realizable homotopy class in $[M,X_{n+1}(L)]_0$.
\item \emph{Realization:} Theorem~\ref{THMhrealization}, stated in Section~\ref{results-homreal} and proved in Section~\ref{homchar-thmE}, gives exact conditions under which a homology class $\theta\in H_3(X_n(L))$ is realized as the integral homology concordance invariant $\theta=\theta_n(L')$ of some link $L'$ in some 3-manifold $M'$.
\end{itemize}

\begin{customthm}{\ref{THMhcharacterization}}
Fix an $m$-component link $L\subset M$. Let $L'\subset M$ be another $m$-component link, and let $\pi$ and $\pi'$ be the link groups of $L$ and $L'$, respectively. Suppose $L'$ admits an $n$-basing relative to $L$, that is, suppose the $n^{\text{th}}$ $\pi_1(M)$-lower central quotients of $\pi$ and $\pi'$ are admissibly isomorphic. Then the following three statements are equivalent:
\begin{enumerate}[label=(\arabic*)]
\item We have $\overline{\mu}_n(L')=\overline{\mu}_n(L)$.
\item The link $L'$ admits an $(n+1)$-basing relative to $L$, that is, the $(n+1)^{\text{st}}$ $\pi_1(M)$-lower central quotients of $\pi$ and $\pi'$ are admissibly isomorphic.
\item The invariant $\overline{\mu}_{n+1}(L')$ is well-defined.
\end{enumerate}
\end{customthm}

We emphasize a few features of the invariants $h_n$ and their reductions $\theta_n$ and $\overline{\mu}_n$ which relate them to previous extensions of Milnor's $\overline{\mu}$-invariants:

\begin{itemize}
\item The vanishing of the invariants $h_n$, taken in appropriate contexts, implies the vanishing of all previous versions of Milnor's invariants in dimension 3, including Milnor's original $\overline{\mu}$-invariants.
\item For links in $S^3$ and every $n\geq 2$, the invariant $\overline{\mu}_n$ relative to the unlink is equivalent to Milnor's (total) $\overline{\mu}$-invariant of length $n+1$.
\item For links in $S^3$ and $n\geq 2$, the invariants $h_n$ relative to the unlink are equivalent to invariants of Orr from \cite{Orr89}.
\item The invariants $h_n$ are defined for knots and can be nontrivial for knots in 3-manifolds $M\neq S^3$; see \cites{MillerD, Heck, Kuzbary}.
\item For empty links, the invariants $\theta_n$ and $\overline{\mu}_n$ are precisely the Cha-Orr invariants of homology cobordism of 3-manifolds, ``Milnor's invariants of 3-manifolds", from \cite{ChaOrr}. Cha-Orr show these invariants exhibit highly nontrivial behavior.
\end{itemize}


\subsection{Notation}\label{intro-notation}

Throughout this paper, $L\subset M$, $L'\subset M'$, and $L''\subset M''$ will denote $m$-component links in the closed orientable 3-manifolds $M$, $M'$, and $M''$. All links are assumed to be oriented and ordered. We will usually assume the 3-manifolds are oriented, so that meridians of links and fundamental classes of the 3-manifolds are well-defined. The 3-manifolds $M$, $M'$, and $M''$ will be equipped with surjective homomorphisms $\varphi$, $\varphi'$, and $\varphi''$ from their respective fundamental groups onto a fixed group $G$. Often, we will have $M=M'=M''$, $G=\pi_1(M)$, and $\varphi=\varphi'=\varphi''=\text{id}_{\pi_1(M)}$, and we will indicate when this is not the case. We will denote the regular neighborhood of a submanifold $X$ in a manifold $Y$ by $\nu X$ and the exterior of $X$ in $Y$ by $E_X$. The link group of $L\subset M$, the fundamental group $\pi_1(E_L)$, will be denoted by $\pi$. We will let $\Gamma\trianglelefteq\pi$ be the normal subgroup $\Gamma=\ker(\pi\twoheadrightarrow\pi_1(M)\twoheadrightarrow G)$. We will use analogous notation for the links $L'$ and $L''$.


\subsection{Acknowledgements}\label{intro-acknowledgements}

The author is grateful to Kent Orr for his unwavering support of this project, his many insightful comments, and his guidance. The author also thanks Jae Choon Cha, whose work with Kent Orr in \cite{ChaOrr} inspired several ideas in this paper. Additionally, the author thanks Anthony Conway, Matthew Hedden, Alexandra Kjuchukova, and Mark Powell for valuable conversations. Finally, the author is grateful to the referee for their careful reading of this work and for many helpful suggestions.


\section{Preliminaries}\label{prelims}


\subsection{Concordance in 3-manifolds}\label{prelims-conc}

A non-simply-connected 3-manifold $M$ allows for embedding phenomena which cannot occur in $S^3$. In such 3-manifolds, we do not have an analogue of the unknot or unlink in the sense that there is no natural choice of knot or link to which all others can be reasonably compared. For instance, knots which are concordant must be freely homotopic; in other words, concordant knots represent the same element of $[S^1,M]$. A free homotopy between concordant knots is given by the composition of the concordance with projection to $M$: \[S^1\times[0,1]\hookrightarrow M\times[0,1]\xrightarrow{\text{proj}} M.\] To develop concordance invariants, we will fix some link $L\subset M$ and compare other links relative to $L$. The fixed link $L$ will play the role of the unlink. 

The nontriviality of $\pi_1(M)$ also implies that knots $K\subset M$ may not all have preferred longitudes. A \emph{longitude} for a knot $K$ is a section of the sphere bundle of $K$, that is, a parallel of $K$ in $\partial\nu K$. We also refer to the homology class of such a curve as a longitude. Removing a regular neighborhood of a concordance between two knots $K,K'\subset M$ yields a cobordism rel boundary between knot exteriors $E_K$ and $E_{K'}$ which identifies $\partial \nu K$ and $\partial \nu K'$ and induces a correspondence between the meridians and longitudes of $K$ and $K'$. If $K$ is nullhomologous in $M$, then one can pick a preferred longitude by taking the pushoff of $K$ which is nullhomologous in the exterior $E_K$. A concordance induces a correspondence between preferred longitudes if they exist; however, in the absence of preferred longitudes, there is \emph{a priori} some ambiguity in how $\partial \nu K'$ and $\partial \nu K$ are to be identified when attempting to obstruct the existence of a concordance between $K'$ and $K$. In other words, the absence of a preferred longitude prevents a canonical identification of $\partial\nu K$ with $S^1\times S^1$. For knots representing torsion classes in $H_1(M)$, a rational-valued linking number remedies this problem (see \cite{FNOP} or \cite{Raoux}), but it is not clear what to do in the case where $[K]\in H_1(M)$ has infinite order. The techniques of Section~\ref{results} resolve the afformentioned issue. To define the lower central homotopy invariants, we fix an identification of longitudes and then understand how to account for the dependence on this chosen identification.

In the last few decades, there has been renewed interest in extending the study of concordance to general 3-manifolds \cites{MillerD, Schneiderman, Heck, DNPR, Kuzbary}, as well as to generalized versions of concordance. These include homology concordance \cites{Goldsmith, LevineA, Zhou, HLL} and almost-concordance \cites{Celoria, Yildiz, FNOP, NOPP}, concordance up to local knotting. Relatively little is known in these settings. For instance, the following conjecture of Celoria \cite{Celoria} and Friedl-Nagel-Orson-Powell \cite{FNOP} remains open in many cases:

\begin{acconj}[Celoria, Friedl-Nagel-Orson-Powell]\label{CONJ}
Every free homotopy class $x\in[S^1,M]$ in a closed orientable 3-manifold $M\neq S^3$ which does not admit a dual 2-sphere contains infinitely many concordance classes of knots modulo local knotting.
\end{acconj}

\noindent M. Powell has suggested the work in this paper could be useful for attacking many of the remaining cases; indeed, in forthcoming work we consider the \hyperref[CONJ]{Almost-Concordance Conjecture} in a large class of open cases, namely, we consider nontrivial $x\in[S^1,M]$ in aspherical $M$.


\subsection{Lower central series}\label{prelims-lcs}

Recall that the \emph{(classical) lower central series} of a group $\pi$ is the series of normal subgroups \[\cdots\trianglelefteq\pi_{n+1}\trianglelefteq\pi_n\trianglelefteq\cdots\trianglelefteq\pi_2\trianglelefteq\pi_1=\pi,\] where $\pi_2=[\pi,\pi]$, the commutator subgroup of $\pi$, and $\pi_{n+1}=[\pi,\pi_n]$ in general. We call the quotients $\pi/\pi_n$ the \emph{lower central quotients} of $\pi$. 

We recall a fundamental result of Stallings and Dwyer concerning lower central quotients and group homology which will play an important role in proving some of the main results of this work. For an abbreviated proof, see \cite{ChaOrr}.

\begin{sdtheorem}[\cite{Stallings} 1965, \cite{Dwyer} 1975] \label{THMsd}
Suppose $f:A\to B$ is a group homomorphism inducing an isomorphism $H_1(A)\xrightarrow{\cong}H_1(B)$. For any positive integer $n$, $f$ induces an isomorphism $f_n:A/A_n\xrightarrow{\cong} B/B_n$ if and only if $f$ induces a surjection $H_2(A)\twoheadrightarrow H_2(B)/K_j(B)$ for all $j<n$, where $K_j(B)=\ker\big(H_2(B)\to H_2(B/B_j)\big)$.
\end{sdtheorem}

Now fix a group $G$, and suppose a group $\pi$ admits a surjective homomorphism $\varphi:\pi\twoheadrightarrow G$. We consider a finer lower central series, the \emph{$G$-lower central series}, defined as \[\cdots\trianglelefteq\Gamma_{n+1}\trianglelefteq\Gamma_n\trianglelefteq\cdots\trianglelefteq\Gamma_2\trianglelefteq\Gamma\trianglelefteq\pi,\] where $\Gamma=\ker\varphi$. This lower central series was also used in work of P. Heck \cite{Heck}. We recover the classical lower central series when $G=\{1\}$. The quotients $\pi/\Gamma_n$ are the \emph{$G$-lower central quotients} of $\pi$. The $G$-lower central quotients of the link group $\pi$ of a link $L\subset M$, for suitable groups $G$, will play a central role in our study of concordance. Most often, we will take $G=\pi_1(M)$ and $\varphi$ the surjective homomorphism $\pi\twoheadrightarrow\pi_1(M)$ induced by the inclusion $j:E_L\hookrightarrow M$.


\subsection{Homology properties of concordance}\label{prelims-homology}

Cappell and Shaneson observed that a concordance exterior is a $\mathbb{Z}[\pi_1(M)]$-homology cobordism rel boundary \cite{CappellShaneson}. In particular, if $L$ and $L'$ are concordant, we have an isomorphism \[H_*(E_{L'};\mathbb{Z}[\pi_1(M)])\xrightarrow{\cong}H_*(E_L;\mathbb{Z}[\pi_1(M)]),\] along with an identification of $\partial E_{L'}$ and $\partial E_L$ preserving meridians, orientations, and orderings of components. We will call such an identification \emph{admissible}. 

Apart from the case of knots in $S^3$, the $\mathbb{Z}[\pi_1(M)]$-homology of $E_L$, along with boundary data, can provide nontrivial information about the concordance class of $L$. (On the other hand, $E_K$ is an integral homology $S^1$ for any knot $K\subset S^3$, where the meridian and longitude of $K$ represent 1 and 0, respectively, in $H_1(S^1)\cong\mathbb{Z}$.) Using the \hyperref[THMsd]{Stallings-Dwyer Theorem}, we see that the $\pi_1(M)$-lower central quotients of link groups are concordance invariants.

\begin{corollary}\label{prelims-corlcs}
A concordance $C$ between $L$ and $L'$ induces isomorphisms over $\pi_1(M)$ on $\pi_1(M)$-lower central quotients $\pi'/\Gamma'_n\xrightarrow{\cong}\pi/\Gamma_n$ for all $n$.
\end{corollary}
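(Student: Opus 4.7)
The plan is to apply the Stallings-Dwyer Theorem in a $\pi_1(M)$-equivariant form, using the Cappell-Shaneson fact (recalled in Section~\ref{prelims-homology}) that the concordance exterior is a $\mathbb{Z}[\pi_1(M)]$-homology cobordism rel boundary. Let $E_C\subset M\times I$ denote the exterior of the concordance $C$, set $\pi_C := \pi_1(E_C)$, and $\Gamma_C := \ker(\pi_C\twoheadrightarrow\pi_1(M))$. It suffices to show that each of the inclusion-induced maps $\pi\to\pi_C$ and $\pi'\to\pi_C$ yields an isomorphism over $\pi_1(M)$ on $\pi_1(M)$-lower central quotients, i.e.\ $\pi/\Gamma_n\xrightarrow{\cong}\pi_C/(\Gamma_C)_n$ and $\pi'/\Gamma'_n\xrightarrow{\cong}\pi_C/(\Gamma_C)_n$ for every $n$; composing the second with the inverse of the first then gives the desired isomorphism.

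By Cappell-Shaneson the inclusions $E_L\hookrightarrow E_C$ and $E_{L'}\hookrightarrow E_C$ induce isomorphisms on $H_*(-;\mathbb{Z}[\pi_1(M)])$; equivalently, after passing to the $\pi_1(M)$-covers $\widetilde{E_L}, \widetilde{E_{L'}}, \widetilde{E_C}$, one obtains isomorphisms on integral $H_1$ and $H_2$. I would then proceed by induction on $n$. The base case is immediate, since $\pi/\Gamma \cong \pi_1(M)\cong \pi_C/\Gamma_C$ tautologically. For the inductive step I would use the central extensions
\[
1\to\Gamma_n/\Gamma_{n+1}\to\pi/\Gamma_{n+1}\to\pi/\Gamma_n\to 1,
\]
and analogously for $\pi_C$; centrality is forced by the defining relation $\Gamma_{n+1}=[\pi,\Gamma_n]$. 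The induction hypothesis supplies the right-hand isomorphism, and one identifies the kernels $\Gamma_n/\Gamma_{n+1}$ and $(\Gamma_C)_n/(\Gamma_C)_{n+1}$ by running the Stallings-Dwyer argument $\pi_1(M)$-equivariantly, using the twisted $H_1$ and $H_2$ data from the covers. The five lemma then yields the middle isomorphism, and naturality ensures it is a map over $\pi_1(M)$.

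The main obstacle is that the stated \hyperref[THMsd]{Stallings-Dwyer Theorem} applies to the \emph{classical} lower central series, whereas the $G$-lower central series $\Gamma_n$ is strictly finer (indeed $\Gamma_n\subseteq \pi_n$, typically properly, when $G\neq\{1\}$), so the classical statement applied directly to $\pi\to\pi_C$ is not sufficient on its own. Handling this mismatch via the $\pi_1(M)$-equivariant inductive argument above — a $G$-enhancement of Stallings-Dwyer, in the spirit of the version employed by Cha-Orr in their study of Milnor's invariants of $3$-manifolds — is the technical heart of the proof.
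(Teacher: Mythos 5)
Your architecture---Cappell-Shaneson, pass to $\pi_1(M)$-covers, Stallings-Dwyer, Five Lemma---matches the paper's, and you correctly reduce to the two inclusions $\pi \to \pi^C \leftarrow \pi'$. But you stop short of the observation that makes the paper's proof immediate and dissolves the technical obstacle you flag: the $\pi_1(M)$-covers $\overline{E_{L'}},\overline{E_C},\overline{E_L}$ have fundamental groups precisely $\Gamma',\Gamma^C,\Gamma$, and the subgroup $\Gamma_n$ in the paper's convention is the \emph{classical} $n$-th lower central subgroup of $\Gamma$ (this reading is forced by the paper's later use of the integral Stallings five-term sequence for $\Gamma$, e.g.\ in the proof of Proposition~\ref{results-ncob-propncob}). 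With that reading, Cappell-Shaneson gives isomorphisms on $H_1$ and $H_2$ of the covers with $\mathbb{Z}$-coefficients, so the \emph{classical} Stallings-Dwyer Theorem applies verbatim to $\Gamma' \to \Gamma^C \leftarrow \Gamma$, yielding $\Gamma'/\Gamma'_n\cong\Gamma^C/\Gamma^C_n\cong\Gamma/\Gamma_n$. Since $\Gamma_n$ is characteristic in $\Gamma$ and $\Gamma\trianglelefteq\pi$, we have $\Gamma_n\trianglelefteq\pi$, and the Five Lemma applied to $1\to\Gamma/\Gamma_n\to\pi/\Gamma_n\to\pi_1(M)\to 1$ and its $\pi^C$, $\pi'$ analogues finishes. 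The ``$G$-enhancement of Stallings-Dwyer'' you designate as the technical heart is thus unnecessary, and since you do not actually supply it, as written your proposal has a genuine gap exactly at the step you identify as hard.

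A secondary problem: you invoke the defining relation $\Gamma_{n+1}=[\pi,\Gamma_n]$, which is not the paper's convention. Under the paper's convention $\Gamma_{n+1}=[\Gamma,\Gamma_n]$, the extension $1\to\Gamma_n/\Gamma_{n+1}\to\pi/\Gamma_{n+1}\to\pi/\Gamma_n\to 1$ need not be central: centrality is only guaranteed for the extension by $\Gamma/\Gamma_{n+1}$, since $[\pi,\Gamma_n]$ may strictly contain $[\Gamma,\Gamma_n]$. So the inductive scaffold you set up would need repair even if the twisted Stallings-Dwyer step were filled in. The cleanest fix is to drop the by-hand induction entirely and argue at the level of $\Gamma$, $\Gamma^C$, $\Gamma'$ as above, where the classical theorem already does all the work.
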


\begin{proof}
Let $\overline{E_{L'}}$, $\overline{E_C}$, and $\overline{E_L}$ denote the $\pi_1(M)$-covers of $E_{L'}$, $E_C$, and $E_L$, respectively. Consider the inclusion-induced isomorphisms \[H_*(E_{L'};\mathbb{Z}[\pi_1(M)])\xrightarrow{\cong}H_*(E_C;\mathbb{Z}[\pi_1(M)])\xleftarrow{\cong}H_*(E_L;\mathbb{Z}[\pi_1(M)])\] which yield isomorphisms $H_*(\overline{E_{L'}};\mathbb{Z})\xrightarrow{\cong}H_*(\overline{E_C};\mathbb{Z})\xleftarrow{\cong}H_*(\overline{E_L};\mathbb{Z})$. Let $\pi^C=\pi_1(E_C)$ and $\Gamma^C=\ker\big(\pi^C\twoheadrightarrow\pi_1(M\times[0,1])\big)$. Then $\overline{E_{L'}}$, $\overline{E_C}$, and $\overline{E_L}$ have fundamental groups $\Gamma'$, $\Gamma^C$, and $\Gamma$, respectively. By the \hyperref[THMsd]{Stallings-Dwyer Theorem}, we have isomorphisms on lower central quotients $\Gamma'/\Gamma'_n\xrightarrow{\cong}\Gamma^C/\Gamma^C_n\xleftarrow{\cong}\Gamma/\Gamma_n$ for all $n$. The result then follows from the Five Lemma.
\end{proof}

Corollary~\ref{prelims-corlcs} motivates the definitions of the lower central homotopy invariants $h_n$ given in Section~\ref{results-homotopy}. The $h_n$ will obstruct the existence of certain isomorphisms between $\pi_1(M)$-lower central quotients of link groups. As in Milnor's original work, they will compare the $\pi_1(M)$-lower central quotients of link groups to those of a fixed link $L\subset M$.


\section{Main results}\label{results}

In this section, we present our extension of Milnor's invariants to links in closed orientable 3-manifolds. For the duration of Sections~\ref{results-homotopy}-\ref{results-htpyreal}, all links will live in a fixed 3-manifold $M$, and for a link group $\pi$ we will take $\Gamma=\ker\big(\pi\twoheadrightarrow\pi_1(M)\big)$, so that the groups $\pi/\Gamma_n$ are the $\pi_1(M)$-lower central quotients of $\pi$. We will indicate when these assumptions change.


\subsection{Lower central homotopy invariants}\label{results-homotopy}

Recall from Section~\ref{intro-new} that we will associate to a link $L\subset M$ a tower of spaces $\{X_n(L)\}_{n\geq 1}$, well-defined up to homotopy equivalence, with canonical homotopy classes of maps $\iota_n(L):M\to X_n(L)$ as in the diagram below. 
\begin{center}
\begin{tikzcd}
& & M \arrow[dl,"\iota_{n+1}(L)"'] \arrow[d,"\iota_n(L)"] \arrow[drr,"\iota_1(L)"] & & &\\
\cdots \arrow[r] & X_{n+1}(L) \arrow[r] & X_n(L) \arrow[r] & \cdots \arrow[r] & X_1(L)
\end{tikzcd}
\end{center}
To define the concordance invariants $h_n$, we fix a link $L\subset M$ and compare other links to $L$. For another link $L'\subset M$, we inductively define homotopy classes of maps, depending on $L'$, from $M$ into the tower $\{X_n(L)\}$. The $n^{\text{th}}$ invariant $h_n$ of $L'$ relative to $L$ is a homotopy class of maps $h_n(L',\phi)\in[M,X_n(L)]_0$ into the $n^{\text{th}}$ level of this tower. If $L'$ is concordant to $L$, we can define such a homotopy class for every $n$.

Consider the diagram $\mathcal{D}_n(L)$ below, where $j:E_L\hookrightarrow M$ is the inclusion and $p_n$ is induced by the canonical projection $\pi\twoheadrightarrow\pi/\Gamma_n$.

\begin{center}
\begin{tikzcd}
 & & E_L \arrow[rr, "p_n"] \arrow[dd, hookrightarrow, "j"'] & & K(\pi/\Gamma_n,1) \\
\mathcal{D}_n(L) & = & & & \\
 & & M & &
\end{tikzcd}
\end{center}
Define $X_n(L)$ to be the standard homotopy pushout of $\mathcal{D}_n(L)$, that is, the based space \[ X_n(L)=\frac{K(\pi/\Gamma_n,1)\sqcup (E_L\times[0,1])^\times\sqcup M}{p_n(x)\sim (x,0), j(x)\sim (x,1)}, \] where $(E_L\times[0,1])^\times$ is the reduced cylinder on $E_L$. The standard homotopy pushout square equips the space $X_n(L)$ with a canonical based homotopy class of maps $\iota_n(L)\in[M,X_n(L)]_0$ represented by the inclusion $M\hookrightarrow X_n(L)$. This inclusion is a canonical extension of $E_L\hookrightarrow M_{p_n}^\times$, where $M_{p_n}^\times$ is the reduced mapping cylinder of $p_n$. We may interpret the homotopy pushout $X_n(L)$ as:
\begin{itemize}
\item The space $M_{p_n}^\times\cup_{E_L} M$. In particular, $X_n(L)$ is obtained from $M$ by attaching cells of dimensions 2 and higher.
\item The space $M_{p_n}^\times\cup_{\partial E_L}\nu L$ obtained by gluing $\nu L$ to the reduced mapping cylinder $M_{p_n}^\times$.
\end{itemize}

The canonical projections $p_{m,n}:\pi/\Gamma_m\twoheadrightarrow\pi/\Gamma_n$ for $m\geq n$ induce morphisms of diagrams $\mathcal{D}_m(L)\to\mathcal{D}_n(L)$ over $\pi_1(M)$ which induce based maps $\psi_{m,n}(L):X_m(L)\to X_n(L)$. We will often suppress notation and write $\psi_{m,n}$. 
Via these maps $\psi_{m,n}$, the collection of spaces $\{X_n(L)\}$ forms a tower over $\pi_1(M)$:
\begin{center}
\begin{tikzcd}
& & M \arrow[dl,"\iota_{n+1}(L)"'] \arrow[d,"\iota_n(L)"] \arrow[drr,"\iota_1(L)"] & & &\\
\cdots \arrow[r, "\psi_{n+2{,}n+1}"'] & X_{n+1}(L) \arrow[r,"\psi_{n+1{,}n}"'] & X_n(L) \arrow[r, "\psi_{n{,}n-1}"'] & \cdots \arrow[r, "\psi_{2{,}1}"'] & X_1(L).
\end{tikzcd}
\end{center}
For a list of well-definedness, naturality, and compatibility properties the $X_n(L)$, $\iota_n(L)$, and $\psi_{m,n}$ satisfy, see Appendix~\ref{appendix}.

Now fix a link $L\subset M$. The \emph{lower central homotopy invariants} relative to $L$ will be maps $h_n(L',\phi)\in[M,X_n(L)]_0$. The following notion of \emph{$n$-basing} gives a method of producing a homotopy class of maps depending on some other link $L'\subset M$ into the $n^{\text{th}}$ level $X_n(L)$ of the tower $\{X_n(L)\}$ corresponding to the fixed link $L$.

\begin{definition} \label{results-homotopy-defnbasing}
Fix an $m$-component link $L\subset M$, and let $L'\subset M$ be another $m$-component link. An \emph{$n$-basing} for $L'$ relative to $L$ is an ordered pair $(\phi,\phi_\partial)$ such that
\begin{enumerate}[label=(\arabic*)]
\item $\phi_\partial:H_1(\partial E_{L'})\xrightarrow{\cong}H_1(\partial E_L)$ is an admissible isomorphism, and
\item $\phi:\pi'/\Gamma'_n\xrightarrow{\cong}\pi/\Gamma_n$ is an isomorphism over $\pi_1(M)$ such that $\phi_\partial$ is \emph{compatible} with $\phi$, that is, given any basing $\tau'$ for $L'$, there exists a basing $\tau$ for $L$ such that the following diagram commutes:
\begin{center}
\begin{tikzcd}
\pi_1(\partial E_{L'}\cup\tau') \arrow[r, "\phi_{\partial*}"', "\cong"] \arrow[d] & \pi_1(\partial E_L\cup\tau) \arrow[d] \\
\pi' \arrow[d, twoheadrightarrow] & \pi \arrow[d, twoheadrightarrow] \\
\pi'/\Gamma'_n \arrow[r, "\cong", "\phi"'] & \pi/\Gamma_n.
\end{tikzcd}
\end{center}
\end{enumerate}
\end{definition}
Condition (2) in the above definition is satisfied for any basing $\tau'$ if and only if it is satisfied for a single basing $\tau'$. Recall that an isomorphism $H_1(\partial E_{L'})\xrightarrow{\cong}H_1(\partial E_L)$ is \emph{admissible} if it preserves meridians, orientations, and orderings of components. We call a homeomorphism $\partial E_{L'}\to\partial E_L$ \emph{admissible} if it induces an admissible isomorphism. We often suppress notation and write $\phi$ instead of $(\phi,\phi_\partial)$. We discuss $n$-basings in further detail in Section~\ref{nbasing}. For now, we list some of their relevant properties:
\begin{itemize}
\item If $L'$ is concordant to $L$, an $n$-basing exists for every $n$ (see Corollary~\ref{nbasing-cornbasing}).
\item An $n$-basing induces a $k$-basing for all $k\leq n$. 
\item The existence of a 1-basing is equivalent to the statement that, when based appropriately, corresponding components of $L'$ and $L$ agree in $\pi_1(M)$ (see the discussion at the beginning of Section~\ref{prelims-conc}).
\end{itemize}

Our terminology is inspired by the case of links in $S^3$, where a choice of meridians for a link (a basing of the link), modulo the $n^{\text{th}}$ lower central subgroup of its link group, is equivalent to a choice of isomorphism between the $n^{\text{th}}$ lower central quotient of the link group and that of the unlink, assuming such an isomorphism exists. In Section~\ref{previous-milnor}, we prove that for links in $S^3$ the existence of an $n$-basing for a link relative to the unlink is equivalent to the statement that Milnor's $\overline{\mu}$-invariants of length $n$ vanish. 

An $n$-basing for a link $L'$ relative to $L$ contains sufficient information to define the lower central homotopy invariant $h_n$. We postpone the proof of the following lemma until the end of this section.

\begin{lemma} \label{results-homotopy-lemmanbasing}
An $n$-basing $\phi$ for $L'$ relative to $L$ determines a well-defined based homotopy class of maps of pairs $(E_{L'},\partial E_{L'})\to(M_{p_n}^\times,\partial E_L)$, restricting to an admissible homeomorphism $\partial E_{L'}\xrightarrow{\cong}\partial E_L$, which extends canonically to a based homotopy class of maps of pairs \[h_n(L',\phi):(M,\nu L')\to(X_n(L),\nu L)\] which may be taken to map $L'$ homeomorphically to $L$, all over $\pi_1(M)$.
\end{lemma}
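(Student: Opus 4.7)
The plan is to build the map in three stages, at each stage using that the reduced mapping cylinder $M_{p_n}^\times$ deformation retracts onto $K(\pi/\Gamma_n,1)$ and is therefore aspherical. First, I would realize $\phi_\partial$ geometrically. Since $\partial E_{L'}$ is a disjoint union of tori and $\phi_\partial$ is admissible, each component restriction preserves a meridian class and an orientation, and so is realized by a torus homeomorphism that is unique up to isotopy. Doing this componentwise in the prescribed order yields a homeomorphism $f_\partial\colon \partial E_{L'} \to \partial E_L$, well-defined up to isotopy, realizing $\phi_\partial$.

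Next, I would extend $f_\partial$ over $E_{L'}$ into $M_{p_n}^\times$. Composing with $\partial E_L \hookrightarrow E_L \subset M_{p_n}^\times$ gives a map on $\partial E_{L'}$, and the goal is to extend to $E_{L'}$. Since $M_{p_n}^\times$ is a $K(\pi/\Gamma_n,1)$, such an extension exists if and only if the boundary-induced homomorphism $\pi_1(\partial E_{L'}) \to \pi/\Gamma_n$ factors through $\pi' = \pi_1(E_{L'})$ via inclusion. Fixing any basing $\tau'$ for $L'$ and taking the basing $\tau$ for $L$ promised by Definition~\ref{results-homotopy-defnbasing}(2), the compatibility square says precisely that this map equals the composite $\pi_1(\partial E_{L'}\cup\tau') \to \pi' \twoheadrightarrow \pi'/\Gamma'_n \xrightarrow{\phi} \pi/\Gamma_n$, so the factorization exists and yields a map $f\colon E_{L'} \to M_{p_n}^\times$ extending $f_\partial$. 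Any two such extensions induce the same homomorphism $\pi' \to \pi/\Gamma_n$ by construction, and obstruction theory for aspherical targets --- specifically, the vanishing of the relevant cohomology with coefficients in $\pi_k(M_{p_n}^\times) = 0$ for $k\geq 2$ --- shows that any two are homotopic as maps of pairs rel basepoint. This yields a well-defined based homotopy class of maps of pairs $(E_{L'},\partial E_{L'}) \to (M_{p_n}^\times, \partial E_L)$ restricting to $f_\partial$.

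For the extension to $M$, I would use the decompositions $M = E_{L'} \cup_{\partial E_{L'}} \nu L'$ and $X_n(L) = M_{p_n}^\times \cup_{\partial E_L} \nu L$, so it suffices to extend $f_\partial$ to a homeomorphism $\nu L' \to \nu L$ sending $L'$ to $L$. Since $\nu L'$ is a disjoint union of solid tori and $f_\partial$ sends meridians to meridians, each solid-torus extension is obtained by coning off the meridional disks, producing a homeomorphism unique up to ambient isotopy rel $\partial \nu L'$. Pasting with the map from the previous step gives the based homotopy class $h_n(L',\phi)\colon (M,\nu L') \to (X_n(L),\nu L)$, manifestly mapping $L'$ homeomorphically to $L$. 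The statement that everything lies over $\pi_1(M)$ follows because $\phi$ is over $\pi_1(M)$ by hypothesis, and both $f_\partial$ and the solid-torus extensions preserve the meridian data controlling the $\pi_1(M)$-factor.

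The main obstacle is the middle step: translating the compatibility condition, stated in terms of a particular basing $\tau'$, into the factorization needed to extend, and verifying that the resulting homotopy class does not depend on the choice of $\tau'$. The key observation is that compatibility for one basing implies compatibility for any other --- changing basing paths conjugates the inclusion-induced maps on boundary fundamental groups, and this conjugation is absorbed when passing to based maps into the $K(\pi/\Gamma_n,1)$. Once this independence is pinned down, the existence and homotopy uniqueness of the extension both reduce to the standard universal property of Eilenberg--MacLane spaces.
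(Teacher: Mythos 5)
Your proof follows essentially the same approach as the paper: realize $\phi_\partial$ by a boundary homeomorphism, extend over $E_{L'}$ into the aspherical $M_{p_n}^\times$ via the compatibility condition to get a map of pairs inducing $\phi$, and then cone over the disk bundles to extend to $(M,\nu L')\to(X_n(L),\nu L)$. The only notable difference is in the well-definedness step: where you invoke obstruction theory directly, the paper more carefully constructs the homotopy between two candidate extensions by first homotoping the boundary restrictions, then extending over the trace of a homotopy between the two choices of basing $\tau'_0,\tau'_1$ (tracking the wedge of circles by which they differ), before concluding from asphericality --- but the underlying mechanism is the same.
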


\begin{definition}\label{resluts-homotopy-defhpair}
Fix an $m$-component link $L\subset M$, and let $L'\subset M$ be another $m$-component link. Suppose $L'$ admits an $n$-basing $(\phi,\phi_\partial)$ relative to $L$. The \emph{$n^{\text{th}}$ lower central homotopy invariant} of the pair $(L',\phi)$ relative to $L$ is the homotopy class $h_n(L',\phi)\in[M,X_n(L)]_0$ induced by the $n$-basing $\phi$ (see Lemma~\ref{results-homotopy-lemmanbasing}).
\end{definition}

With our notation, $h_n(L,\text{id})=\iota_n(L)$, where $\text{id}=(\text{id}_{\pi/\Gamma_n},\text{id}_{H_1(\partial E_L)})$. By Lemma~\ref{results-homotopy-lemmanbasing}, $h_n(L',\phi)$ is a well-defined homotopy class, but it may depend on the choice of $n$-basing $\phi$. This dependence is measured by the group of \emph{self-$n$-basings} of $L$, written $\text{Aut}(\pi/\Gamma_n,\partial)$, which consists of all $n$-basings of $L$ relative to itself. The group $\text{Aut}(\pi/\Gamma_n,\partial)$ is a subgroup of $\text{Aut}(\pi/\Gamma_n)\times\text{Aut}(H_1(\partial E_L))$ and acts on the set $[M,X_n(L)]_0$ by post-composition with homotopy self-equivalences of $X_n(L)$. See Section~\ref{nbasing-selfbasing} for more details. To remove the indeterminacy introduced by the choice of $n$-basing, we take the value of $h_n(L',\phi)$ in the orbit space $[M,X_n(L)]_0/\text{Aut}(\pi/\Gamma_n,\partial)$.

\begin{definition}\label{results-homotopy-defh}
Let $L$ and $L'$ be as in the previous definition. The \emph{$n^{\text{th}}$ lower central homotopy invariant} of $L'$ relative to $L$ is the image $h_n(L')$ of the homotopy class $h_n(L',\phi)$ in the orbit space $[M,X_n(L)]_0/\text{Aut}(\pi/\Gamma_n,\partial)$.
\end{definition}

We call $n$ the \emph{length} of the invariant $h_n$. For conciseness, we will often refer to $h_n(L')$ as the \emph{$n^{\text{th}}$ $h$-invariant} of $L'$. To compare the link $L'$ to the fixed link $L$, we compare $h_n(L',\phi)$ and $h_n(L')$ to the distinguished classes $h_n(L,\text{id})$ and $h_n(L)$ associated to the fixed link $L$. 

\begin{definition}\label{results-homotopy-defvanish}
We say the $n^{\text{th}}$ $h$-invariant for $L'$ relative to $L$ \emph{vanishes} if $h_n(L',\phi)=h_n(L,\text{id})\in [M,X_n(L)]_0$ or $h_n(L')=h_n(L)\in [M,X_n(L)]_0/\text{Aut}(\pi/\Gamma_n,\partial)$.
\end{definition}

Theorem~\ref{THMinvariance}, which we prove in Section~\ref{inv}, asserts the concordance invariance of the lower central homotopy invariants $h_n$. Note that we may use $h_n$ not only to compare other links $L'$ to the fixed link $L$, but to compare links $L'$ and $L''$ \emph{relative to the fixed link $L$}.

\begin{customthm}{A} \label{THMinvariance}
The lower central homotopy invariants $h_n$ are invariants of concordance. More precisely, fix an $m$-component link $L\subset M$. Suppose the $m$-component links $L',L''\subset M$ are concordant. Then the following statements hold for all $n$:
\begin{enumerate}[label=(\arabic*)]
\item There is a canonical $n$-basing $(\phi',\phi'_\partial)$ for $L''$ relative to $L'$ induced by the concordance.
\item An $n$-basing for $L'$ relative to $L$ exists if and only if an $n$-basing for $L''$ relative to $L$ exists, and $h_n(L')$ is defined if and only if $h_n(L'')$ is defined.
\item If $(\phi,\phi_\partial)$ is an $n$-basing for $L'$ relative to $L$, then $h_n(L',\phi)=h_n(L'',\phi\circ\phi')$, where $\phi'$ is the $n$-basing in (1).
\item If $h_n(L')$ and $h_n(L'')$ are defined using any $n$-basings relative to $L$, then \[h_n(L')=h_n(L'')\in [M,X_n(L)]_0/\textnormal{Aut}(\pi/\Gamma_n,\partial).\]
\end{enumerate}
In particular, if $L'$ is concordant to the fixed link $L$, then $h_n(L')$ is defined and vanishes for all $n$.
\end{customthm}

By construction, if the invariant $h_n(L',\phi)$ is defined, then $\psi_{n,k}\circ h_n(L',\phi)=h_k(L',\phi_k)$ for all $k\leq n$, where $\phi_k$ is the $k$-basing induced by the $n$-basing $\phi$. Thus, the $n^{\text{th}}$ lower central homotopy invariant determines the $k^{\text{th}}$ invariant for every $k\leq n$. We will eventually show in Corollary~\ref{CORinductive} that the invariants $h_n$ are defined inductively for $n\geq 2$. If the $n^{\text{th}}$ invariant vanishes, then the $(n+1)^{\text{st}}$ invariant is defined. In fact, we will characterize precisely when the $(n+1)^{\text{st}}$ invariant is defined; see Section~\ref{results-determination}.

\begin{proof}[Proof of Lemma~\ref{results-homotopy-lemmanbasing}]
Suppose $L'$ admits an $n$-basing $(\phi,\phi_\partial)$ relative to $L$. Since $\phi_\partial$ is an admissible isomorphism, it corresponds to a unique (up to isotopy) admissible homeomorphism $h_\partial:\partial E_{L'}\to\partial E_L$. Choose any basing $\tau'$ for $L'$. Since $\phi_\partial$ is compatible with $\phi$, there exists a basing $\tau$ for $L$ such that $h_\partial$ may be extended to a map $\partial E_{L'}\cup\tau'\to\partial E_L\cup\tau$ where the following diagram commutes:
\begin{center}
\begin{tikzcd}[row sep=scriptsize, column sep=scriptsize]
\pi_1(\partial E_{L'}\cup\tau') \arrow[r, "\phi_{\partial*}"', "\cong"] \arrow[dd] & \pi_1(\partial E_L\cup\tau) \arrow[dd] \\
& \\
\pi'/\Gamma'_n \arrow[r, "\phi"', "\cong"] & \pi/\Gamma_n.
\end{tikzcd}
\end{center}

Postcomposing with the inclusion $\partial E_L\cup\tau\hookrightarrow M_{p_n}^\times$, we now have the extension problem over $\pi_1(M)$ below.
\begin{center}
\begin{tikzcd}
\partial E_{L'}\cup\tau' \arrow[r] \arrow[d, hookrightarrow] & M_{p_n}^\times  \arrow[r] & B\pi_1(M) \\
 E_{L'} \arrow[ur, dashed, "?\exists"] \arrow[urr] & &
\end{tikzcd}
\end{center}
\noindent The space $M_{p_n}^\times$ is a $K(\pi/\Gamma_n,1)$ and $\phi$ is an isomorphism over $\pi_1(M)$, so there is no obstruction to extending to a map $h:E_{L'}\to M_{p_n}^\times$ inducing $\phi$. The map $h$ is the desired map of pairs $h:(E_{L'},\partial E_{L'})\to(M_{p_n}^\times,\partial E_L)$ over $\pi_1(M)$.

We will next show that the homotopy class of this homotopy equivalence $h$ given above depends only on the $n$-basing $(\phi,\phi_\partial)$. Suppose $h':(E_{L'},\partial E_{L'})\to(M_{p_n}^\times,\partial E_L)$ is another map of pairs over $\pi_1(M)$ defined using the same $n$-basing. We first define the homotopy between $h$ and $h'$ on $\partial E_{L'}$. Since both $h_\partial$ and $h'|_{\partial E_{L'}}$ are homeomorphisms $\partial E_{L'}\xrightarrow{\cong}\partial E_L$ which induce $\phi_\partial$, they are homotopic via some homotopy $H_\partial$. To show $h$ and $h'$ are homotopic, we solve the extension problem over $\pi_1(M)$ below.
\begin{center}
\begin{tikzcd}
(E_{L'}\times\{0,1\})\cup (\partial E_{L'}\times[0,1]) \arrow[rr, "(h\sqcup h')\cup H_\partial"] \arrow[dd, hookrightarrow] & & M_{p_n}^\times \arrow[r] & B\pi_1(M) \\
& & & \\
E_{L'}\times[0,1] \arrow[uurr, dashed, "?\exists"] \arrow[uurrr] & &
\end{tikzcd}
\end{center}

Suppose $\tau'_0$ and $\tau_0$ are the basings for $L'$ and $L$ used to define the map $h$, as above, and $\tau'_1$ and $\tau_1$ are the basings used to define the map $h'$. The basings $\tau_0$ and $\tau_1$ for $L$ differ by a wedge of circles $\alpha$, so that $\alpha\tau_0\simeq\tau_1$ rel endpoints. Similarly, $\tau'_0$ and $\tau'_1$ differ by a wedge of circles $\alpha'$, so that $\alpha'\tau'_0\simeq\tau'_1$. We have $\phi_*(\alpha')\simeq\alpha$ by construction. Let $\alpha_0=h(\alpha')$. In $M_{p_n}^\times$, $\alpha_0\simeq\alpha$.

We extend our map to the trace of a homotopy $H_\tau\subset E_{L'}\times[0,1]$ between $\alpha'\tau'_0$ and $\tau'_1$ over a homotopy from $h(\alpha'\tau'_0)=\alpha_0\tau_0$ to $h'(\tau'_1)=\tau_1$. Note that $\alpha_0\tau_0\simeq\alpha\tau_0\simeq\tau_1$ rel endpoints. We now have a based map $(E_{L'}\times\{0,1\})\cup(\partial E_{L'}\times[0,1])\cup H_\tau\to M_{p_n}^\times$. We extend to all of $E_{L'}\times[0,1]$ as in the previous extension problem. There is no obstruction to extending since $M_{p_n}^\times$ is a $K(\pi/\Gamma_n,1)$ and $h$ and $h'$ both induce the isomorphism $\phi$ which is a homomorphism over $\pi_1(M)$. Thus, $h$ and $h'$ are based homotopic as maps of pairs.

To conclude the proof, we show $h$ extends over $\pi_1(M)$ to a canonical homotopy class of maps of pairs $\overline{h}:(M,\nu L')\to (X_n(L),\nu L)$. We show that $h$ and $h'$ above both extend canonically and the canonical extensions are homotopic. The homotopy seen above between $h$ and $h'$ is a homotopy of pairs sending $\partial E_{L'}\times[0,1]\to\partial E_L$. We may view this as a homotopy of maps from the sphere bundle of $L'\subset M'$ to the sphere bundle of $L\subset M$. There is a canonical extension of this homotopy to a homotopy of disk bundles $\nu L'\times[0,1]\to\nu L$ given by coning on the fibers of the sphere bundles. Since $X_n(L)=M_{p_n}^\times\cup_{\partial E_L}\nu L$ and $M=E_{L'}\cup_{\partial E_{L'}}\nu L'$, we may extend the homotopy $E_{L'}\times[0,1]\to M_{p_n}^\times$ constructed above to a homotopy $M\times[0,1]\to X_n(L)$. This yields a canonical homotopy class of maps of pairs $\overline{h}:(M,\nu L')\to (X_n(L),\nu L)$ which is still a map over $\pi_1(M)$ because the 2-cells we have attached are glued along elements in $\Gamma'$ and $\Gamma$. Just as $h$ may be taken to restrict to a homeomorphism of sphere bundles $h_\partial:\partial E_{L'}\xrightarrow{\cong}\partial E_L$, $\overline{h}$ may be taken to restrict to a homeomorphism of disk bundles $h_\nu:\nu L'\xrightarrow{\cong}\nu L$ sending $L'$ to $L$.
\end{proof}


\subsection{\texorpdfstring{$n$}{n}-cobordism of links}\label{results-ncob}

The lower central homotopy invariants provide obstructions to the existence of surfaces which successively approximate a concordance between links in $M\times[0,1]$. We call such surfaces \emph{$n$-cobordisms}. K. Orr introduced and studied such surfaces for links in $S^3$ in \cite{Orr89}.

\begin{definition}\label{results-ncob-defncob}
Two $m$-component links $L, L'\subset M$ are \emph{$n$-cobordant} if there exists a properly-embedded orientable $m$-component surface $\Sigma=\sqcup_i\Sigma_i\subset M\times [0,1]$ such that for each $i$:
\begin{enumerate}[label=(\arabic*)]
\item $\Sigma_i\cap (M\times\{0\})=L_i$ and $\Sigma_i\cap (M\times\{1\})=L'_i$.
\item For any basing of $\Sigma_i$ determined by a basing of $L_i$, the images of $\pi_1(L_i)$ and $\pi_1(\Sigma_i)$ in $\pi_1(E_\Sigma)$ agree modulo the $n^\text{th}$ $\pi_1(M)$-lower central subgroup, that is,  \[\text{im}\Big(\pi_1(L_i)\to\pi\to\pi^\Sigma\twoheadrightarrow\pi^\Sigma/\Gamma^\Sigma_n\Big)=\text{im}\Big(\pi_1(\Sigma_i)\to\pi^\Sigma\twoheadrightarrow\pi^\Sigma/\Gamma^\Sigma_n\Big),\] where $\pi^\Sigma=\pi_1(E_\Sigma)$ and $\Gamma^\Sigma=\ker\big(\pi^\Sigma\twoheadrightarrow\pi_1(M)\big)$, and where the inclusions $\Sigma_i\hookrightarrow E_\Sigma$ and $L_i\hookrightarrow E_L$ are induced by a trivialization of $\nu\Sigma$.
\end{enumerate}
\end{definition}
In other words, an $n$-cobordism is a surface that looks like a concordance to the $n^{\text{th}}$ $\pi_1(M)$-lower central quotients. A concordance is an $n$-cobordism for all $n$. Condition (2) implies that for all $i$ the images of the longitudes of $L_i$ and $L_i'$, when based appropriately, agree in $\pi^\Sigma/\Gamma^\Sigma_n$: They cobound a copy of $\Sigma_i$ on $\partial\nu\Sigma$ and are therefore homologous in $E_\Sigma$. Since the images of $\pi_1(L_i)$ and $\pi_1(\Sigma_i)$ in $\pi^\Sigma/\Gamma^\Sigma_n$ agree, and because the image of $\pi_1(L_i)$ is cyclic and therefore abelian, the longitudes agree in $\pi^\Sigma/\Gamma^\Sigma_n$. We prove the following property of $n$-cobordism in Section~\ref{htpychar}:

\begin{proposition} \label{results-ncob-propncob}
An $n$-cobordism between the links $L$ and $L'$ induces a canonical $n$-basing of $L'$ relative to $L$ and a canonical isomorphism $\pi'/\Gamma'_{n+1}\xrightarrow{\cong}\pi/\Gamma_{n+1}$.
\end{proposition}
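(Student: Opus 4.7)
The plan is to use the exterior $E_\Sigma$ of the $n$-cobordism $\Sigma$ as a bridge between the two link exteriors, adapting the proof of Corollary~\ref{prelims-corlcs} with the $n$-cobordism condition (2) playing the role of the $\mathbb{Z}[\pi_1(M)]$-homology cobordism property available for concordances.

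First, I would construct $h_\partial$ and $\phi_\partial$. Since $\Sigma$ is orientable and embedded in the orientable $4$-manifold $M \times [0,1]$, the normal bundle $\nu\Sigma$ is trivial. Fixing a trivialization $\nu\Sigma_i \cong \Sigma_i \times D^2$ and restricting to the two boundary circles of each $\Sigma_i$ produces identifications $\partial\nu L'_i \cong \partial\nu L_i$ sending meridians to meridians and the $\Sigma_i$-pushoff longitude of $L'_i$ to that of $L_i$. Taking disjoint unions over $i$ gives $h_\partial : \partial E_{L'} \xrightarrow{\cong} \partial E_L$ and the admissible isomorphism $\phi_\partial$ on $H_1(\partial E_{L'})$.

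Next, I would build $\phi$ on $\pi_1(M)$-lower central quotients. Write $\pi^\Sigma = \pi_1(E_\Sigma)$ and $\Gamma^\Sigma = \ker(\pi^\Sigma \twoheadrightarrow \pi_1(M))$; the inclusions $E_L \hookrightarrow E_\Sigma \hookleftarrow E_{L'}$ induce homomorphisms over $\pi_1(M)$. The central technical claim is that these descend to isomorphisms $\pi/\Gamma_{n+1} \xrightarrow{\cong} \pi^\Sigma/\Gamma^\Sigma_{n+1} \xleftarrow{\cong} \pi'/\Gamma'_{n+1}$; composing yields both the canonical $(n+1)$-step isomorphism required by the proposition and, by restriction, the $n$-basing isomorphism $\phi$. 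To prove the claim I would adapt Corollary~\ref{prelims-corlcs}: pass to the $\pi_1(M)$-covers $\overline{E_L}$, $\overline{E_\Sigma}$, $\overline{E_{L'}}$ (with fundamental groups $\Gamma$, $\Gamma^\Sigma$, $\Gamma'$), apply the \hyperref[THMsd]{Stallings-Dwyer Theorem} to $\Gamma \to \Gamma^\Sigma$ and $\Gamma' \to \Gamma^\Sigma$, and combine with the Five Lemma applied to the extensions $1 \to \Gamma/\Gamma_{n+1} \to \pi/\Gamma_{n+1} \to \pi_1(M) \to 1$ and its analogues. The Stallings-Dwyer hypotheses---iso on $H_1$ and surjection on $H_2$ modulo $K_j(\Gamma^\Sigma)$ for $j \le n$---are exactly where condition (2) enters: the ``extra'' cycles contributed by positive-genus components of $\Sigma$ are represented by loops in $\pi_1(\Sigma_i)$, and condition (2) forces those loops to agree with the meridian of $L_i$ modulo $\Gamma^\Sigma_n$, which is precisely what is needed to place the corresponding $H_2$-classes in $K_j$ for $j \le n$.

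Finally, compatibility of $\phi$ with $\phi_\partial$ in the sense of Definition~\ref{results-homotopy-defnbasing} reduces to checking meridians and longitudes: meridians match by construction, while the $\Sigma_i$-pushoffs of $L_i$ and $L'_i$ cobound a parallel copy of $\Sigma_i$ on $\partial\nu\Sigma$ inside $E_\Sigma$ and so are homologous in $\overline{E_\Sigma}$, and condition (2) upgrades this homology to equality in $\pi^\Sigma/\Gamma^\Sigma_{n+1}$. The hard part of the argument will be verifying the $H_2$ surjectivity hypothesis in the Stallings-Dwyer step, as this is where the geometric content of ``$n$-cobordism'' (as opposed to an arbitrary surface cobordism) is essential and which explains why the conclusion is an $n$-basing with an $(n+1)$-step isomorphism, rather than isomorphisms on all lower central quotients.
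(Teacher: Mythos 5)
Your overall strategy matches the paper's proof: pass through the inclusions $E_L \hookrightarrow E_\Sigma \hookleftarrow E_{L'}$, build $\phi_\partial$ from a trivialization of $\nu\Sigma$ restricted to the boundary circles, verify $\pi/\Gamma_{n+1} \xrightarrow{\cong} \pi^\Sigma/\Gamma^\Sigma_{n+1} \xleftarrow{\cong} \pi'/\Gamma'_{n+1}$ via Stallings-type machinery and the Five Lemma (the paper iterates the Stallings five-term exact sequence level by level using auxiliary spaces $X_j(\Sigma)$, you propose invoking Stallings-Dwyer in one step --- these are equivalent in substance), and check compatibility of $\phi_\partial$ with $\phi$ using the symplectic basis of $\Sigma_i$. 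You also correctly identify the $H_2$-surjectivity verification as the crux.

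The gap is in the mechanism you describe for that step, which is wrong in two places. First, $\pi_1(L_i)$ includes into $\pi$ via a pushoff of the circle $L_i$ onto $\partial\nu L_i$ (this is what a trivialization of $\nu\Sigma$ gives), so its image is generated by a \emph{longitude} of $L_i$, not its meridian; condition (2) therefore forces loops on $\Sigma_i$ to agree with powers of $\lambda_i$ modulo $\Gamma^\Sigma_n$, not with powers of $m_i$. Second, the ``extra'' $H_2$-classes are not pushed into $K_j(\Gamma^\Sigma)$: a torus $T^2 \subset \overline{\partial\nu\Sigma}$ whose longitude circle lies on $\overline{\Sigma}_i$ typically maps to a nonzero class in $H_2(\Gamma^\Sigma/\Gamma^\Sigma_j)$, so it does not lie in the kernel $K_j(\Gamma^\Sigma)$. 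The correct statement, which is the content of Lemma~\ref{htpychar-ncob-lemmancob} in the paper, is that such a torus can be replaced, up to homotopy into $K(\Gamma^\Sigma/\Gamma^\Sigma_n,1)$, by a torus mapping through $\overline{\partial E_L}$: the meridian circle already is a meridian of $\overline{L_i}$, and condition (2) lets you homotope the longitude circle to a curve coming from $\pi_1(\overline{L_i}\cup\overline{\tau_i})$. This absorbs the extra classes into $\text{im}\big(H_2(\partial E_L;\mathbb{Z}[\pi_1(M)])\big)\subseteq\text{im}\big(H_2(\Gamma)\big)$, which is what Stallings-Dwyer surjectivity requires; putting them in $K_j$ is the wrong target, and replacing ``longitude'' by ``meridian'' makes the torus replacement impossible, since then both symplectic circles would map to the same curve.
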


We will also prove the following theorem which states that the lower central homotopy invariants are complete invariants of $n$-cobordism.  Note again that the invariants $h_n$ can compare two links $L'$ and $L''$ \emph{over the fixed link $L$}. A comparison between the $h$-invariants of $L'$ and $L''$ is still meaningful even if they do not agree with the $h$-invariant of $L$. P. Heck proved a version of this theorem for knots in prime 3-manifolds in the case where we compare some knot to the fixed knot \cite{Heck}.

\begin{customthm}{B} \label{THMcharacterization}
Fix an $m$-component link $L\subset M$. Let $L', L''\subset M$ be $m$-component links, and suppose there exist $n$-basings $(\phi',\phi'_\partial)$ and $(\phi'',\phi''_\partial)$ for $L'$ and $L''$, respectively, relative to $L$. Then the following statements are equivalent:
\begin{enumerate}[label=(\arabic*)]
\item We have $h_n(L',\phi')=h_n(L'',\phi'')$.
\item The links $L'$ and $L''$ are $n$-cobordant via an $n$-cobordism inducing the $n$-basing \[(\psi,\psi_\partial)=((\phi')^{-1}\circ\phi'',(\phi'_\partial)^{-1}\circ\phi''_\partial)\] of $L''$ relative to $L'$.
\end{enumerate}
As a consequence, the following basing-independent statements are equivalent:
\begin{enumerate}[label=(\arabic*\,$'$)]
\item We have $h_n(L')=h_n(L'')$.
\item The links $L'$ and $L''$ are $n$-cobordant. 
\end{enumerate}
In particular, $h_n(L',\phi')$ vanishes if and only if $L'$ is $n$-cobordant to the fixed link $L$ via a surface whose exterior induces the isomorphisms $\phi'$ and $\phi'_\partial$, and $h_n(L')$ vanishes if and only if $L'$ is $n$-cobordant to the fixed link $L$.
\end{customthm}


\subsection{Realization of lower central homotopy invariants}\label{results-htpyreal}

Questions concerning the realizability of Milnor-type invariants date back to Milnor's original work \cite{Milnor}. Leveraging a realization theorem, Orr computed the number of linearly independent $\overline{\mu}$-invariants of length $n$ for every $n$ \cite{Orr89}. Similar computations for knots and links in other 3-manifolds can provide answers to open questions. Recall the \hyperref[CONJ]{Almost-Concordance Conjecture} from Section~\ref{prelims-conc}. Celoria \cite{Celoria} and Friedl-Nagel-Orson-Powell \cite{FNOP} conjecture that, aside from certain exceptional cases, there are infinitely many almost-concordance classes representing any free homotopy class in $[S^1,M]$, where $M$ is any closed orientable 3-manifold. Cappell and Shaneson first studied concordance up to local knotting (see, for instance, \cite{CappellShaneson}). More recently, Celoria coined the term \emph{almost-concordance} to mean concordance up to local knotting.

As the lower central homotopy invariants are almost-concordance invariants, proving either of the following statements is sufficient for proving the \hyperref[CONJ]{Almost-Concordance Conjecture} for some fixed $M$ and some fixed free homotopy class $x\in[S^1,M]$:

\vspace{1em}
\begin{minipage}{.95\textwidth}
\begin{enumerate}[label=\emph{(\arabic*)}]
\item \emph{For some knot $K\subset M$ representing $x$ and some $n\in\mathbb{N}$, the set of equivalence classes of elements in $[M,X_n(K)]_0$ realized as nonvanishing invariants of knots in $M$ is infinite.}
\item \emph{For some knot $K\subset M$ representing $x$, there exists a family $\{K_n\}_{n\geq 1}$ of knots in $M$ such that $h_i(K_n)=h_i(K)$ for $i<n$ but $h_n(K_n)\neq h_n(K)$.}
\end{enumerate}
\end{minipage}
\vspace{1em}

\noindent The following theorem, which characterizes for a fixed $L\subset M$ which classes in $[M,X_n(L)]_0$ are realized by links in $M$, can be leveraged to provide far more information about concordance classes of knots in a fixed free homotopy class. We prove this realization theorem in Section~\ref{htpychar}.

\begin{customthm}{C} \label{THMrealization}
Fix an $m$-component link $L\subset M$, and let $f\in[M,X_n(L)]_0$. Then there exists an $m$-component link $L'\subset M$ which admits an $n$-basing $\phi$ relative to $L$ such that $h_n(L',\phi)=f$ if and only if the following three conditions hold.
\begin{enumerate}[label=(\arabic*)]
\item The composition $H_3(M)\xrightarrow{f_*}H_3(X_n(L))\xrightarrow{\Delta}H_2(\partial E_L)$, where $\Delta$ is the connecting homomorphism in the Mayer-Vietoris sequence corresponding to the decomposition $X_n(L)=M_{p_n}^\times\cup_{\partial E_L}\nu L$, sends the fundamental class $[M]$ to $\sum_i[T_i]$, the sum of the fundamental classes of the tori in $\partial E_L=\sqcup_i T_i$.
\item On fundamental groups, we have $f_*=\iota_n(L)_*$.
\item The image of the composition \[H^1(X_n(L))\xrightarrow{\cap f_*[M]}H_2(X_n(L))\xrightarrow{\textnormal{proj}}H_2(X_n(L))/\textnormal{im}(K_j(\pi/\Gamma_n))\] contains the image of $H_2(\pi/\Gamma_n)/K_j(\pi/\Gamma_n)$ for all $j<n$, where \[K_j(\pi/\Gamma_n)=\ker\Big(H_2(\pi/\Gamma_n)\to H_2(\pi/\Gamma_j)\Big),\] and where all coefficients are in $\mathbb{Z}[\pi_1(M)]$.
\end{enumerate}
\end{customthm}
Condition (3) above may be viewed as a Poincar\'{e} duality property imposed on the class $f_*[M]$. It holds for all $j<n$ if and only if it holds for $j=n-1$. We will see in the proof of Theorem~\ref{THMrealization} that condition (3) follows from conditions (1) and (2) in the case where $\pi_2(M)=0$ or, more generally, if the composition \[H_2(E_L;\mathbb{Z}[\pi_1(M)])\xrightarrow{j_*} H_2(M;\mathbb{Z}[\pi_1(M)])\xrightarrow{\iota_n(L)_*} H_2(X_n(L);\mathbb{Z}[\pi_1(M)])\] is zero; see Corollary~\ref{htpychar-thmC-corpi2}.


\subsection{\texorpdfstring{$G$}{G}-lower central homotopy invariants} \label{results-overG}

We may reduce the lower central homotopy invariants from Section~\ref{results-homotopy} by changing from the group $\pi_1(M)$ to some quotient group $G$ (which could be the trivial group). We will use the resulting $G$-lower central homotopy invariants to study $\mathbb{Z}[G]$-homology concordance in Sections~\ref{results-homology}-\ref{results-homreal}. For this section, we assume there is a fixed surjective homomorphism $\varphi:\pi_1(M)\twoheadrightarrow G$ onto some group $G$. For a link $L\subset M$, we now denote the kernel of the composition $\pi\twoheadrightarrow\pi_1(M)\twoheadrightarrow G$ by $\Gamma$, so that the groups $\pi/\Gamma_n$ are now the $G$-lower central quotients of $\pi$. The homological properties of concordance which hold over $\pi_1(M)$ also hold over $G$. A concordance exterior is a $\mathbb{Z}[G]$-homology cobordism rel boundary and induces isomorphisms over $G$ on $G$-lower central quotients of link groups.

Just as in Section~\ref{results-homotopy}, we define a tower of spaces $\{X_n^G(L)\}$ with maps $\iota_n^G(L):M\to X_n^G(L)$. The space $X_n^G(L)$ is the homotopy pushout of a diagram $\mathcal{D}^G_n(L)$ analogous to the diagram $\mathcal{D}_n(L)$ from Section~\ref{results-homotopy} but with the group $\pi/\Gamma_n$ the $n^{\text{th}}$ $G$-lower central quotient of $\pi$. The map $\iota_n^G(L)$ is still the homotopy class $M\to X_n^G(L)$ induced by the homotopy pushout diagram. We have the diagram over $G$ below.
\begin{center}
\begin{tikzcd}
& & M \arrow[dl,"\iota^G_{n+1}(L)"'] \arrow[d,"\iota^G_n(L)"] \arrow[drr,"\iota^G_1(L)"] & & &\\
\cdots \arrow[r, "\psi^G_{n+2{,}n+1}"'] & X^G_{n+1}(L) \arrow[r,"\psi^G_{n+1{,}n}"'] & X^G_n(L) \arrow[r, "\psi^G_{n{,}n-1}"'] & \cdots \arrow[r, "\psi^G_{2{,}1}"'] & X^G_1(L)
\end{tikzcd}
\end{center}
\vspace{0.5em}
\noindent We will often omit the group $G$ from our notation when it is understood from context.

To define invariants in this setting, we fix a link $L\subset M$. For other links $L'\subset M$, we require a $\mathbb{Z}[G]$-homology version of an $n$-basing which we will call an \emph{$n$-basing over $G$} relative to $L$. 
When $G=\pi_1(M)$ and $\varphi=\text{id}_{\pi_1(M)}$, an $n$-basing over $G$ is just an $n$-basing in the sense of Section~\ref{results-homotopy}. 
We will often refer to an $n$-basing over $G$ simply as an \emph{$n$-basing}. 
An analogue of Lemma~\ref{results-homotopy-lemmanbasing}, Proposition~\ref{nbasing-propnbasing}, implies that an $n$-basing $\phi$ over $G$ for $L'$ relative to $L$ induces a well-defined based homotopy class of maps of pairs $(E_{L'},\partial E_{L'})\to(M_{p_n}^\times,\partial E_L)$, restricting to an admissible homeomorphism $\partial E_{L'}\xrightarrow{\cong}\partial E_L$, which extends canonically to a based homotopy class of maps of pairs $h_n^G(L',\phi):(M,\nu L')\to(X_n^G(L),\nu L)$, all over $G$. 
We define the \emph{$G$-lower central homotopy invariant} to be this homotopy class $h^G_n(L',\phi)$. 
This invariant agrees with the lower central homotopy invariant $h_n(L',\phi)$ when $G=\pi_1(M)$ and $\varphi=\text{id}_{\pi_1(M)}$. 
Note that $h^G_n(L,\text{id}_{\pi/\Gamma_n})=\iota^G_n(L)$. We may again remove the dependence on the $n$-basing by defining $h^G_n(L')$, the image of $h^G_n(L',\phi)$ in the orbit space $[M,X^G_n(L)]_0/\text{Aut}(\pi/\Gamma_n,\partial)$, where the group acting is now the group of self-$n$-basings over $G$. Analogues of Theorems~\ref{THMinvariance} and~\ref{THMcharacterization} hold for the invariants $h^G_n$.

Any $n$-basing $\phi$ over $\pi_1(M)$ induces an $n$-basing $\phi^G$ over $G$. Assuming an $n$-basing for $L'$ relative to $L$ exists, we may consider the relationship between $h_n(L',\phi)$ and $h^G_n(L',\phi^G)$. The surjections of $\pi_1(M)$-lower central quotients onto $G$-lower central quotients yield morphisms of homotopy pushout squares $\mathcal{D}_n(L)\to\mathcal{D}^G_n(L)$ inducing well-defined homotopy classes of maps $\rho^G_n:X_n(L)\to X^G_n(L)$. It is straightforward to show that \[\rho^G_n\circ h_n(L',\phi)=h^G_n(L',\phi^G)\in[M,X^G_n(L)]_0.\]


\subsection{Homology concordance invariants}\label{results-homology}

We now consider the ideas from Sections~\ref{results-homotopy} and~\ref{results-overG} in the broader setting of $\mathbb{Z}[G]$-homology concordance, concordance in $\mathbb{Z}[G]$-homology cobordisms, for suitable groups $G$. 
We introduce two $\mathbb{Z}[G]$-homology concordance invariants $\theta^G_n$ and $\overline{\mu}^G_n$ determined by the $G$-lower central homotopy invariant $h^G_n$ from Section~\ref{results-overG}. As we will see in Theorem~\ref{THMhcharacterization}, the invariants $\overline{\mu}^G_n$ inductively determine the $G$-lower central quotients of link groups.

For the remainder of Section~\ref{results}, we allow the links $L$, $L'$, and $L''$ to live in different closed orientable 3-manifolds $M$, $M'$, and $M''$. 
We assume there are fixed surjective homomorphisms $\varphi$, $\varphi'$, and $\varphi''$ from the fundamental groups of these 3-manifolds onto a common group $G$ (which could be the trivial group). 
We consider these homomorphisms as fixed data accompanying the 3-manifolds and use them to define $\mathbb{Z}[G]$-coefficient systems on the 3-manifolds. 
Denote the kernels of the surjective homomorphisms from the link groups onto $G$ by $\Gamma$, $\Gamma'$, and $\Gamma''$, respectively. 

\begin{definition} \label{results-homology-defhcob}
A \emph{$\mathbb{Z}[G]$-homology cobordism} from $(M,\varphi)$ to $(M',\varphi')$ is an orientable 4-manifold $W$ with boundary $M\sqcup M'$, along with a diagram
\begin{center}
\begin{tikzcd}
M' \arrow[d,hookrightarrow] \arrow[dr, "\varphi'"] & \\
W \arrow[r] & BG \\
M \arrow[u,hookrightarrow] \arrow[ur, "\varphi"'] &
\end{tikzcd}
\end{center}
such that the inclusions $M\hookrightarrow W$ and $M'\hookrightarrow W$ induce isomorphisms on $H_*(-;\mathbb{Z}[G])$.
\end{definition}

\begin{definition} \label{homology-defhconc}
Two $m$-component links $L\subset M$ and $L'\subset M'$ are \emph{$\mathbb{Z}[G]$-homology concordant} if there exists a proper, locally flat embedding
\[ C: \bigsqcup_{i=1}^m (S^1\times[0,1])\hookrightarrow W \] into some $\mathbb{Z}[G]$-homology cobordism $W$ between $(M,\varphi)$ and $(M',\varphi')$ which restricts on the ends to the links $L$ and $L'$.
\end{definition}

Note that we return to the setting of Section~\ref{results-homotopy} if we take $M=M'$, $W=M\times[0,1]$, $G=\pi_1(M)$, and $\varphi=\varphi'=\text{id}_{\pi_1(M)}$, as a concordance is a $\mathbb{Z}[\pi_1(M)]$-homology concordance. 
Taking $G=\{1\}$ reduces Definition~\ref{homology-defhconc} to the more familiar notion of integral homology concordance.

The fact that $L$ and $L'$ live in different 3-manifolds does not prevent us from comparing the towers $\{X^G_n(L')\}$ and $\{X^G_n(L)\}$.
Given an $n$-basing $(\phi,\phi_\partial)$ over $G$ for $L'$ relative to $L$, we may define the homotopy class $h^G_n(L',\phi)$; however, comparing the homotopy classes $h^G_n(L',\phi)\in[M',X^G_n(L)]_0$ and $h^G_n(L,\text{id})\in[M,X^G_n(L)]_0$ is not straightforward. 
We would like to determine whether these two classes extend over some $\mathbb{Z}[G]$-homology cobordism between $(M,\varphi)$ and $(M',\varphi')$. 
This may be difficult to measure, so we instead ask if the images of the fundamental classes $[M]$ and $[M']$ agree in $H_3(X^G_n(L))$. 
This is the case if the homotopy classes $h^G_n(L',\phi)$ and $h^G_n(L,\text{id})$ extend over some $\mathbb{Z}[G]$-homology cobordism. 
We are immediately led to a reduction of the lower central homotopy invariants which we call the \emph{lower central homology invariants} $\theta^G_n$.

\begin{definition}\label{results-homology-defthetapairh}
Fix an $m$-component link $L\subset M$, and let $L'\subset M'$ be another $m$-component link. 
Suppose $L'$ admits an $n$-basing $(\phi,\phi_\partial)$ over $G$ relative to $L$. 
Then the \emph{$n^{\text{th}}$ $G$-lower central homology invariant} of the pair $(L',\phi)$ relative to $L$ is the homology class \[\theta^G_n(L',\phi)=h^G_n(L',\phi)_*[M']\in H_3(X^G_n(L)).\]
\end{definition}

We will often suppress the group $G$ from our notation when it is understood from context. 
To be concise, we will often refer to $\theta_n$ as the $n^{\text{th}}$ $\theta$-invariant. 
The $n^{\text{th}}$ $\theta$-invariant is well-defined because the homotopy class $h^G_n(L',\phi)$ is well-defined. 
As with the $h$-invariant, the $\theta$-invariant may depend on the choice of $n$-basing.
We account for this by taking the image of $\theta^G_n(L',\phi)$ in the orbit space $H_3(X^G_n(L))/\text{Aut}(\pi/\Gamma_n,\partial)$, where $\text{Aut}(\pi/\Gamma_n,\partial)$ now denotes the group of \emph{self-$n$-basings of L over G}.

\begin{definition}\label{results-homology-deftheta}
Let $L$ and $L'$ be as in the previous definition. 
The \emph{$n^{\text{th}}$ $G$-lower central homology invariant} of the link $L'$ relative to $L$ is the image $\theta^G_n(L')$ of the homology class $\theta^G_n(L',\phi)$ in the orbit space $H_3(X^G_n(L))/\text{Aut}(\pi/\Gamma_n,\partial)$.
\end{definition}

As before, we will say the $\theta$-invariant \emph{vanishes} if $\theta^G_n(L',\phi)=\theta^G_n(L,\text{id})\in H_3(X^G_n(L))$ or if $\theta^G_n(L')=\theta^G_n(L)\in H_3(X^G_n(L))/\text{Aut}(\pi/\Gamma_n,\partial)$. 
We summarize invariance properties of the $\theta$-invariants in Theorem~\ref{THMhinvariance} which we prove in Section~\ref{inv}.

\begin{customthm}{A$'$}\label{THMhinvariance}
The $n^{\text{th}}$ $G$-lower central homology invariant $\theta^G_n$ is an invariant of $\mathbb{Z}[G]$-homology concordance.
More precisely, fix an $m$-component link $L\subset (M,\varphi)$. 
Suppose the $m$-component links $L'\subset (M',\varphi')$ and $L''\subset (M'',\varphi'')$ are $\mathbb{Z}[G]$-homology concordant. 
Then the following statements hold for all $n$:
\begin{enumerate}[label=(\arabic*)]
\item There is an $n$-basing $(\phi',\phi'_\partial)$ over $G$ for $L''$ relative to $L'$ induced by the homology concordance.
\item An $n$-basing over $G$ for $L'$ relative to $L$ exists if and only if an $n$-basing over $G$ for $L''$ relative to $L$ exists, and the homology class $\theta^G_n(L')$ is defined if and only if $\theta^G_n(L'')$ is defined.
\item If $(\phi,\phi_\partial)$ is an $n$-basing over $G$ for $L'$ relative to $L$, then $\theta^G_n(L',\phi)=\theta^G_n(L'',\phi\circ\phi')$, where $\phi'$ is any $n$-basing from (1).
\item If $\theta^G_n(L')$ and $\theta^G_n(L'')$ are defined using any $n$-basings over $G$ relative to $L$, then \[\theta^G_n(L')=\theta^G_n(L'')\in H_3(X^G_n(L))/\textnormal{Aut}(\pi/\Gamma_n,\partial).\]
\end{enumerate}
\end{customthm}


\subsection{Determination of \texorpdfstring{$G$}{G}-lower central quotients} \label{results-determination}

We now consider the set of homology classes realized by the invariants $\theta^G_n$. 
Not all classes are necessarily realizable. 
In fact, the same is true for the invariants from \cite{ChaOrr} which the $\theta^G_n$ generalize. 
Define
\[\mathcal{R}^G_n(L)=\left\lbrace \theta\in H_3(X^G_n(L)) \;\middle|\;
\begin{tabular}{@{}l@{}}
$\theta=\theta^G_n(L',\phi)$ for some link $L'$ in some 3-manifold\\
$(M',\varphi')$ with $\varphi':\pi_1(M')\twoheadrightarrow G$ which admits\\
some $n$-basing $\phi$ over $G$ relative to $L$
\end{tabular}
\right\rbrace. \]
We will often suppress $G$ from the notation and write $X_n(L)$, $\theta_n(L)$, and $\mathcal{R}_n(L)$.
We have a well-defined function $\mathcal{R}_{n+1}(L)\to\mathcal{R}_n(L)$ induced by the canonical map $\psi_{n+1,n}$. 
This is a restriction of the homomorphism $(\psi_{n+1,n})_*:H_3(X_{n+1}(L))\to H_3(X_n(L))$.

As seen in \cite{ChaOrr}, the set $\mathcal{R}_n(L)$ is not necessarily a subgroup of $H_3(X_n(L))$, so the notion of the cokernel of $\mathcal{R}_{n+1}(L)\to\mathcal{R}_n(L)$ is not well-defined. 
Nevertheless, we make the following definition:
\begin{definition}\label{results-determination-defcoker}
A class $\theta\in\mathcal{R}_n(L)$ \emph{vanishes in the cokernel} of $\mathcal{R}_{n+1}(L)\to\mathcal{R}_n(L)$ if $\theta$ lies in the image of the map $\mathcal{R}_{n+1}(L)\to\mathcal{R}_n(L)$.
\end{definition}
\noindent In other words, a realizable class $\theta\in H_3(X_n(L))$ vanishes in the cokernel if there exists a link $L'\subset (M',\varphi')$ which admits an $(n+1)$-basing $\phi$ over $G$ relative to $L$ such that $(\psi_{n+1,n})_*(\theta_{n+1}(L',\phi))=\theta$. 

Likewise, we will say a class $\theta\in\mathcal{R}_n(L)/\text{Aut}(\pi/\Gamma_n,\partial)$ \emph{vanishes in the cokernel} if $\theta$ lies in the image of the composite \[\mathcal{R}_{n+1}(L)\to\mathcal{R}_n(L)\to\mathcal{R}_n(L)/\text{Aut}(\pi/\Gamma_n,\partial).\] 
The orbit space $\mathcal{R}_n(L)/\text{Aut}(\pi/\Gamma_n,\partial)$ is well-defined because the action of any self-basing on a realizable class yields another realizable class. 
The following theorem, which we prove in Section~\ref{homchar}, characterizes the notion of vanishing in the cokernel.

\begin{customthm}{D$_0$}\label{THMcoker}
Fix an $m$-component link $L\subset M$. 
Suppose the $m$-component link $L'\subset M'$ admits an $n$-basing $\phi$ over $G$ relative to $L$ for some $n\geq 2$. 
Then the following statements are equivalent:
\begin{enumerate}[label=(\arabic*)]
\item The link $L'$ admits an $(n+1)$-basing $\widetilde{\phi}$ over $G$ relative to $L$ which is a lift of $\phi$.
\item The invariant $\theta_n(L',\phi)$ vanishes in the cokernel of $\mathcal{R}_{n+1}(L)\to\mathcal{R}_n(L)$.
\end{enumerate}
As a consequence, the following basing-independent statements are equivalent:
\begin{enumerate}[label=(\arabic*\,$'$)]
\item The link $L'$ admits some $(n+1)$-basing over $G$ relative to $L$.
\item The invariant $\theta_n(L')$ vanishes in the cokernel of $\mathcal{R}_{n+1}(L)\to\mathcal{R}_n(L)/\textnormal{Aut}(\pi/\Gamma_n,\partial)$.
\end{enumerate}
\end{customthm}

The notion of vanishing in the cokernel extends to an equivalence relation $\sim$ on all of $\mathcal{R}_n(L)$. 
Rather than comparing a link $L'$ to the fixed link $L$, we can compare two links $L'$ and $L''$ \emph{over the fixed link $L$}. 
Given $\theta\in\mathcal{R}_n(L)$, there exists a 3-manifold $(M',\varphi')$ and a link $L'\subset M'$ which admits an $n$-basing $\phi$ relative to $L$ such that $\theta_n(L',\phi)=\theta$. 
Observe that $\phi$ induces a bijection $\mathcal{R}_n(L')\xrightarrow{\cong}\mathcal{R}_n(L)$ via $\theta_n(L'',\phi')\mapsto\theta_n(L'',\phi\circ\phi')$. 
Define $I_\theta$ to be the image of the composition $\mathcal{R}_{n+1}(L')\to\mathcal{R}_n(L')\xrightarrow{\cong}\mathcal{R}_n(L)$. 
We prove in Lemma~\ref{homchar-thmD-lemmaequiv1} that the set $\{\,I_\theta\,|\, \theta\in\mathcal{R}_n(L)\,\}$ partitions $\mathcal{R}_n(L)$. 
Let $\sim$ be the associated equivalence relation.

\begin{definition}\label{results-homology-defequiv1}
Let $\theta,\theta'\in\mathcal{R}_n(L)$. We say that $\theta\sim\theta'$ if and only if $\theta'\in I_\theta$.
\end{definition}
The invariant $\overline{\mu}^G_n$ is the equivalence class of the $\theta$-invariant under this equivalence relation.

\begin{definition}\label{results-homology-defmupair}
Fix an $m$-component link $L\subset M$, and let $L'\subset M'$ be another $m$-component link.
Suppose $L'$ admits an $n$-basing $(\phi,\phi_\partial)$ over $G$ relative to $L$. 
The \emph{$n^{\text{th}}$ $G$-Milnor invariant} of the pair $(L',\phi)$ relative to $L$ is $\overline{\mu}^G_n(L',\phi)=[\theta^G_n(L',\phi)]\in \mathcal{R}^G_n(L)/\sim$, the class of the $n^{\text{th}}$ $\theta$-invariant under the equivalence relation $\sim$.
\end{definition}

Again, we sometimes suppress $G$ when it is understood from context. 
As with the $h$- and $\theta$-invariants, we can remove dependency on the choice of $n$-basing $\phi$. 
We define a coarser equivalence relation $\approx$ on $\mathcal{R}_n(L)$ as follows:

\begin{definition}\label{results-homology-defequiv2}
Let $\theta,\theta'\in\mathcal{R}_n(L)$. 
We say that $\theta\approx\theta'$ if and only if there exists a self-basing $\psi=(\psi,\psi_\partial)\in\text{Aut}(\pi/\Gamma_n,\partial)$ of $L$ such that $\psi\cdot\theta'\sim\theta$. 
In other words, $\theta\approx\theta'$ if, upon choosing $L'\subset M'$ which admits an $n$-basing $\phi$ over $G$ relative to $L$ with $\theta_n(L',\phi)=\theta$, the class $[\theta']$ is in the image of the composition \[\mathcal{R}_{n+1}(L')\to\mathcal{R}_n(L')\xrightarrow[\phi_*]{\cong}\mathcal{R}_n(L)\to\mathcal{R}_n(L)/\text{Aut}(\pi/\Gamma_n,\partial).\]
\end{definition}

\begin{definition}\label{results-homology-defmu}
Let $L$ and $L'$ be as in Definition~\ref{results-homology-defmupair}. 
The \emph{$n^{\text{th}}$ $G$-Milnor invariant} of the link $L'$ relative to $L$ is $\overline{\mu}^G_n(L')=[\theta^G_n(L',\phi)]\in \mathcal{R}^G_n(L)/\approx$, the class of the $n^{\text{th}}$ $\theta$-invariant under the equivalence relation $\approx$.
\end{definition}

\noindent As with the $h$- and $\theta$-invariants, we will say the $\overline{\mu}$-invariant \emph{vanishes} if we have $\overline{\mu}^G_n(L',\phi)=\overline{\mu}^G_n(L,\text{id})$ or $\overline{\mu}^G_n(L')=\overline{\mu^G}_n(L)$. 
Observe that $\overline{\mu}^G_n(L',\phi)$ vanishes if and only if $\theta^G_n(L',\phi)$ vanishes in the cokernel, and likewise for $\overline{\mu}^G_n(L')$ and $\theta^G_n(L')$.

We prove the following theorem, which strengthens Theorem~\ref{THMcoker}, in Section~\ref{homchar}. 
This theorem reduces to \hyperref[THMmilnor]{Milnor's Theorem} when we take $M=M'=S^3$ and our fixed link $L$ to be the unlink. 
It is both a justification for calling these new invariants $\overline{\mu}_n$ ``Milnor's invariants" and a way to show that \emph{the vanishing of these invariants implies the vanishing of all previous versions of Milnor's invariants in dimension 3}. 
Note that we have already seen a version of this theorem in Section~\ref{intro-new}.

\begin{customthm}{D} \label{THMhcharacterization}
Fix an $m$-component link $L\subset (M,\varphi)$. 
Suppose the $m$-component links $L'\subset (M',\varphi')$ and $L''\subset (M'',\varphi'')$ admit $n$-basings $\phi'$ and $\phi''$ over $G$ relative to $L$ for some $n\geq 2$. Then the following statements are equivalent:
\begin{enumerate}[label=(\arabic*)]
\item The link $L'$ admits an $(n+1)$-basing $\widetilde{\phi'}$ over $G$ relative to $L$ which is a lift of $\phi'$.
\item The invariant $\overline{\mu}_n(L',\phi')$ vanishes.
\item The invariant $\overline{\mu}_{n+1}(L',\widetilde{\phi'})$ is well-defined.
\end{enumerate}
As a consequence, the following basing-independent statements are equivalent:
\begin{enumerate}[label=(\arabic*\,$'$)]
\item The link $L'$ admits some $(n+1)$-basing over $G$ relative to $L$.
\item The invariant $\overline{\mu}_n(L')$ vanishes.
\item The invariant $\overline{\mu}_{n+1}(L')$ is well-defined.
\end{enumerate}
Even if the $\overline{\mu}$-invariant of $L'$ does not vanish, we have the following equivalent statements concerning the links $L'$ and $L''$:
\begin{enumerate}[label=(\arabic*)]
\setcounter{enumi}{3}
\item The $n$-basing $(\phi')^{-1}\circ\phi''$ of $L''$ over $G$ relative to $L'$ lifts to an $(n+1)$-basing.
\item We have $\overline{\mu}_n(L',\phi')=\overline{\mu}_n(L'',\phi'')$.
\end{enumerate}
As a consequence, the following basing-independent statements are equivalent:
\begin{enumerate}[label=(\arabic*\,$'$)]
\setcounter{enumi}{3}
\item The link $L''$ admits some $(n+1)$-basing over $G$ relative to $L'$.
\item We have $\overline{\mu}_n(L')=\overline{\mu}_n(L'')$.
\end{enumerate}
\end{customthm}

\noindent Note that $(1)$-$(3)$ and $(1')$-$(3')$ in the above theorem are just the statement of Theorem~\ref{THMcoker}.

\begin{customcor}{D$_1$}\label{CORinductive}
For $n\geq 2$, the lower central homotopy invariants, $G$-lower central homology invariants, and $G$-Milnor's invariants are defined inductively. 
If any of these invariants vanishes at level $n$, the invariant at level $n+1$ is well-defined.
\end{customcor}

\begin{proof}
All that is required to define any of the invariants for $L'$ relative to $L$ at level $n+1$ is an $(n+1)$-basing for $L'$ relative to $L$. 
The $\overline{\mu}$-invariants are determined by both the $h$- and $\theta$- invariants. 
If any $n^{\text{th}}$ invariant is defined and vanishes, then the $n^{\text{th}}$ $\overline{\mu}$-invariant is defined and vanishes. 
Theorem~\ref{THMhcharacterization} then implies an $(n+1)$-basing exists.
\end{proof}

We have just seen in Theorem~\ref{THMhcharacterization} that the invariants $\overline{\mu}_n$ determine the lower central quotients one step at a time. Corollary~\ref{CORlift} below, which follows from Theorems~\ref{THMrealization} and~\ref{THMhcharacterization}, provides an alternative perspective: The invariant $\overline{\mu}_n(L')$ is the obstruction to lifting the homotopy invariant $h_n(L')$ to the next level of the tower $\{X_n(L)\}$. Note that it concerns links $L$ and $L'$ in the same 3-manifold $M$.

\begin{customcor}{D$_2$}\label{CORlift}
Suppose $L'\subset M$ admits an $n$-basing $\phi$ over $\pi_1(M)$ relative to $L\subset M$ for some $n\geq 2$. 
Then $h_n(L',\phi)$ is defined. 
The following statements are equivalent:
\begin{enumerate}[label=(\arabic*)]
\item The invariant $h_n(L',\phi)$ lifts to a class $f\in[M,X_{n+1}(L)]_0$ such that the image of the composition \[H^1(X_{n+1}(L))\xrightarrow{\cap f_*[M]}H_2(X_{n+1}(L))\xrightarrow{\textnormal{proj}}H_2(X_{n+1}(L))/\textnormal{im}(K_j(\pi/\Gamma_{n+1}))\] contains the image of $H_2(\pi/\Gamma_{n+1})/K_j(\pi/\Gamma_{n+1})$ for all $j<n+1$, where all coefficients are in $\mathbb{Z}[\pi_1(M)]$.
\item The invariant $\overline{\mu}_n(L',\phi)$ vanishes.
\end{enumerate}
As a consequence, the following basing-independent statements are equivalent:
\begin{enumerate}[label=(\arabic*$\,'$)]
\item For some $n$-basing $\phi'$, the invariant $h_n(L',\phi')$ lifts to a class $f\in[M,X_{n+1}(L)]_0$ such that the image of the composition \[H^1(X_{n+1}(L))\xrightarrow{\cap f_*[M]}H_2(X_{n+1}(L))\xrightarrow{\textnormal{proj}}H_2(X_{n+1}(L))/\textnormal{im}(K_j(\pi/\Gamma_{n+1}))\] contains the image of $H_2(\pi/\Gamma_{n+1})/K_j(\pi/\Gamma_{n+1})$ for all $j<n+1$, where all coefficients are in $\mathbb{Z}[\pi_1(M)]$.
\item The invariant $\overline{\mu}_n(L')$ vanishes.
\end{enumerate}
\end{customcor}

\begin{proof}
(1)$\implies$(2). 
Suppose $h_n(L',\phi)$ lifts to such a map $f\in[M,X_{n+1}(L)]_0$. 
The map $f$ automatically satisfies conditions (1) and (2) of Theorem~\ref{THMrealization} since $h_n(L',\phi)$ is a realizable class and therefore satisfies those conditions. 
The additional assumption on the composition $\text{proj}\circ(\cap f_*[M])$ allows us to apply Theorem~\ref{THMrealization} and conclude $f=h_{n+1}(L'',\widetilde{\phi'})$ for some $L''\subset M$ and some $(n+1)$-basing $\widetilde{\phi'}$.
This implies the homology invariant $\theta_n(L',\phi)=(\psi_{n+1,n})_*\theta_{n+1}(L'',\widetilde{\phi'})=\theta_n(L'',\phi')$ vanishes in the cokernel of $\mathcal{R}_{n+1}(L)\to\mathcal{R}_n(L)$, where $\phi'$ is the $n$-basing induced by $\widetilde{\phi'}$. 
Equivalently, $\overline{\mu}_n(L',\phi)$ vanishes. \vspace{1em}

\noindent (2)$\implies$(1). 
Now suppose $\overline{\mu}_n(L',\phi)$ vanishes. 
By Theorem~\ref{THMhcharacterization}, $\phi$ lifts to an $(n+1)$-basing $\widetilde{\phi}$, so $h_{n+1}(L',\widetilde{\phi})$ is defined. 
We have $\psi_{n+1,n}\circ h_{n+1}(L',\widetilde{\phi})=h_n(L',\phi)$, so $h_n(L',\phi)$ lifts to a realizable class for which the desired condition on the composition $\text{proj}\circ(\cap f_*[M])$ is satisfied by Theorem~\ref{THMrealization}. \vspace{1em}

\noindent The equivalence of (1$'$) and (2$'$) follows similarly.
\end{proof}

The assumption on the composition $\text{proj}\circ (\cap f_*[M])$ in Corollary~\ref{CORlift} is the statement of condition (3) from Theorem~\ref{THMrealization} for the class $f$, where $n$ is replaced with $n+1$. Recall from our discussion in Section~\ref{results-htpyreal} that condition (3) of Theorem~\ref{THMrealization} follows from conditions (1) and (2) of Theorem~\ref{THMrealization} in many cases; see also Corollary~\ref{htpychar-thmC-corpi2}. 
Thus, in many cases, Corollary~\ref{CORlift} states that the lower central homotopy invariant $h_n$ lifts if and only if the Milnor invariant $\overline{\mu}_n$ vanishes.


\subsection{Realization of lower central homology invariants} \label{results-homreal}

The following realization theorem for the invariants $\theta_n$ in the case where the group $G$ is the trivial group is analogous to Theorem~\ref{THMrealization} from Section~\ref{results-htpyreal}. We prove Theorem~\ref{THMhrealization} in Section~\ref{homchar}.

\begin{customthm}{E} \label{THMhrealization}
Let the fixed group $G$ be the trivial group. Fix an $m$-component link $L\subset M$, and let $\theta\in H_3(X^G_n(L))$. Then $\theta\in\mathcal{R}^G_n(L)$ if and only if the following three conditions hold:
\begin{enumerate}[label=(\arabic*)]
\item The connecting homomorphism $H_3(X^G_n(L))\xrightarrow{\Delta}H_2(\partial E_L)$ from the Mayer-Vietoris sequence corresponding to the decomposition $X^G_n(L)=M_{p_n}^\times\cup_{\partial E_L}\nu L$ sends $\theta$ to $\sum_i[T_i]$, the sum of the fundamental classes of the tori in $\partial E_L=\sqcup_i T_i$.
\item The cap product \[ \cap\,\theta: tH^2(X^G_n(L))\to tH_1(X^G_n(L)) \] is an isomorphism.
\item The image of the composition \[ H^1(X^G_n(L))\xrightarrow{\cap\,\theta}H_2(X^G_n(L))\xrightarrow{\textnormal{proj}} H_2(X^G_n(L))/\textnormal{im}(K_j(\pi/\pi_n)) \] contains the image of $H_2(\pi/\pi_n)/K_j(\pi/\pi_n)$ for all $j<n$.
\end{enumerate}
\end{customthm}

We briefly compare this theorem to Theorem~\ref{THMrealization}. Conditions (1) and (3) of Theorem~\ref{THMhrealization} are analogous to conditions (1) and (3) of Theorem~\ref{THMrealization}. Condition (3) holds for all $j<n$ if and only if it holds for $j=n-1$. Analogous to Theorem~\ref{THMrealization}, condition (3) of Theorem~\ref{THMhrealization} follows from conditions (1) and (2) in the case where the composition $H_2(E_L;\mathbb{Z})\to H_2(M;\mathbb{Z})\to H_2(X^G_n(L);\mathbb{Z})$ is zero. 

An analogue of condition (2) of Theorem~\ref{THMhrealization} is missing from Theorem~\ref{THMrealization}. In the proof of Theorem~\ref{THMhrealization}, this condition allows us to leverage a 3-dimensional homology surgery result of V. Turaev \cite{Turaev84} to produce a 3-manifold with the desired homological properties along with a link realizing the class $\theta$. Condition (2) in Theorem~\ref{THMhrealization} is necessary to apply Turaev's theorem. This is not needed in Theorem~\ref{THMrealization}, where we begin with a homotopy class of maps from $M$. Proving Theorem~\ref{THMhrealization} in the case of other groups $G$ would rely on extending Turaev's result to the setting of $\mathbb{Z}[G]$-coefficients; this is a topic of future consideration.


\section{\texorpdfstring{$n$}{n}-basings}\label{nbasing}

We require an $n$-basing over $G$ for a link $L'\subset M'$ relative to the fixed link $L\subset M$ in order to define the $n^{\text{th}}$ $G$-lower central homotopy invariant, the $n^{\text{th}}$ $G$-lower central homology invariant, and the $n^{\text{th}}$ $G$-Milnor invariant of $L'$ relative to $L$. Recall from Sections~\ref{results-homotopy} and~\ref{results-overG} that an $n$-basing over $G$ is a pair $(\phi,\phi_\partial)$, where $\phi:\pi'/\Gamma'_n\xrightarrow{\cong}\pi/\Gamma_n$ is an isomorphism over $G$ on $n^{\text{th}}$ $G$-lower central quotients and $\phi_\partial:H_1(\partial E_{L'})\xrightarrow{\cong}H_1(\partial E_L)$ is an \emph{admissible} isomorphism which is \emph{compatible} with $\phi$. We now motivate this definition by showing that a $\mathbb{Z}[G]$-homology cobordism induces an $n$-basing over $G$ for all $n$.

\begin{proposition}\label{nbasing-propconc}
For all $n$, a $\mathbb{Z}[G]$-homology concordance between the links $L\subset M$ and $L'\subset M'$ induces a based homotopy equivalence of pairs $(M_{p'_n}^\times,\partial E_{L'})\xrightarrow{\simeq}(M_{p_n}^\times,\partial E_L)$, restricting to an admissible homeomorphism $\partial E_{L'}\xrightarrow{\cong}\partial E_L$ and extending canonically to a based homotopy equivalence of pairs $(X^G_n(L'),\nu L')\xrightarrow{\simeq}(X^G_n(L),\nu L)$. In the case of a concordance in $M\times[0,1]$, where $M'=M$ and $G=\pi_1(M)$, the based homotopy equivalence $(M_{p'_n}^\times,\partial E_{L'})\xrightarrow{\simeq}(M_{p_n}^\times,\partial E_L)$ is canonical.
\end{proposition}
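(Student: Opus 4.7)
The plan is to build the homotopy equivalence by first extracting an $n$-basing over $G$ from the homology concordance and then using that data, essentially as in Lemma \ref{results-homotopy-lemmanbasing}, to produce a map of reduced mapping cylinders which is then shown to be a homotopy equivalence by a standard Eilenberg-MacLane space argument. Let $C \subset W$ denote the homology concordance and choose a regular neighborhood $\nu C$ restricting to $\nu L \sqcup \nu L'$ on the ends of $W$, with exterior $E_C = W \setminus \nu C$. The boundary of $E_C$ decomposes as $E_L \cup \Sigma \cup E_{L'}$, where $\Sigma$ is a disjoint union of annuli $T^2 \times [0,1]$ furnishing a canonical admissible homeomorphism $h_\partial : \partial E_{L'} \xrightarrow{\cong} \partial E_L$ and hence an admissible isomorphism $\phi_\partial$. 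Since $E_C$ is a $\mathbb{Z}[G]$-homology cobordism rel boundary between $E_L$ and $E_{L'}$, adapting the argument of Corollary \ref{prelims-corlcs} to the group $G$ (taking covers corresponding to $\ker(\pi \twoheadrightarrow G)$) and applying the Stallings-Dwyer Theorem produces compatible isomorphisms $\pi/\Gamma_n \xleftarrow{\cong} \pi^C/\Gamma^C_n \xrightarrow{\cong} \pi'/\Gamma'_n$ over $G$; composing yields $\phi$. Compatibility of $\phi_\partial$ with $\phi$ is automatic since both arise from the same inclusion-induced data, so $(\phi, \phi_\partial)$ is an $n$-basing over $G$ for $L'$ relative to $L$.

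Next I would promote this to the required homotopy equivalence of mapping cylinders. Let $p^C_n : E_C \to K(\pi^C/\Gamma^C_n, 1)$ be the canonical projection; its restrictions to the boundary copies of $E_L$ and $E_{L'}$ agree with $p_n$ and $p'_n$ modulo the Stallings-Dwyer isomorphisms. The inclusions $E_L \hookrightarrow E_C \hookleftarrow E_{L'}$ therefore induce maps of reduced mapping cylinders $M_{p_n}^\times \to M_{p^C_n}^\times \leftarrow M_{p'_n}^\times$, each a map of $K(\pi,1)$ spaces inducing an isomorphism on $\pi_1$ and hence a based homotopy equivalence by Whitehead's theorem. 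Inverting the first map up to homotopy and composing gives the desired based homotopy equivalence of pairs $(M_{p'_n}^\times, \partial E_{L'}) \xrightarrow{\simeq} (M_{p_n}^\times, \partial E_L)$ restricting to $h_\partial$. Finally, to extend to $X^G_n$, I would cone $h_\partial$ along the fibers of the normal disk bundles to obtain a homeomorphism $\nu L' \xrightarrow{\cong} \nu L$ and glue along $\partial E_{L'}$; the gluing lemma then produces the based homotopy equivalence of pairs $(X^G_n(L'), \nu L') \xrightarrow{\simeq} (X^G_n(L), \nu L)$.

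The main obstacle is establishing well-definedness of the map as a based homotopy class of pairs, and then the stronger canonicity assertion in the concordance case. The construction above involves several choices (a path in $E_C$ used to base the Stallings-Dwyer isomorphism, choices of homotopy inverses, and extensions across fibers); I would handle this exactly as in the uniqueness half of the proof of Lemma \ref{results-homotopy-lemmanbasing}, using that each $M_{p_n}^\times$ is an Eilenberg-MacLane space and solving the relevant extension problem over $G$ to show that any two valid choices yield based-homotopic maps of pairs. In the concordance case, with $M = M'$, $G = \pi_1(M)$, $\varphi = \varphi' = \textnormal{id}$, and $W = M \times [0,1]$, the basepoint of $M$ embeds canonically into $W$, eliminating the path ambiguity; consequently the $n$-basing itself is canonical, and Lemma \ref{results-homotopy-lemmanbasing} then produces the canonical based homotopy class of the resulting map of pairs $(M_{p'_n}^\times, \partial E_{L'}) \xrightarrow{\simeq} (M_{p_n}^\times, \partial E_L)$.
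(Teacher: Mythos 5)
Your proposal follows essentially the same route as the paper: extract isomorphisms $\pi'/\Gamma'_n \cong \pi^C/\Gamma^C_n \cong \pi/\Gamma_n$ from the homology concordance exterior via a $G$-adaptation of Corollary~\ref{prelims-corlcs} and the Stallings--Dwyer Theorem, promote these to a homotopy equivalence of reduced mapping cylinders through the intermediate $M_{p^C_n}^\times$ (all $K(\pi,1)$ spaces), and then glue in the normal bundles. Two remarks on the details, one of which is a genuine gap.

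First, the gap: in the general $\mathbb{Z}[G]$-homology concordance case, $M$ and $M'$ carry distinct basepoints and distinct structure maps to $BG$, so the basing path $\gamma$ from $\ast_M$ to $\ast_{M'}$ inside $E_C$ must be chosen so that its image in $BG$ under the based map $W \to BG$ is nullhomotopic. Without this constraint, the change-of-basepoint isomorphism conjugates by a possibly nontrivial element of $G$, and the resulting isomorphism $\pi'/\Gamma'_n \to \pi/\Gamma_n$ will fail to be over $G$; your diagram of mapping cylinders would then not commute over $BG$, which is precisely what ``over $G$'' requires. You mention a path choice only in passing, under well-definedness, but you never impose this condition, and it is what makes the construction produce an $n$-basing over $G$ at all.

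Second, a mild overreach: your final paragraph attempts to show that any two valid path choices produce based-homotopic maps of pairs in the \emph{general} homology concordance case, citing the uniqueness argument of Lemma~\ref{results-homotopy-lemmanbasing}. But that lemma shows uniqueness given a \emph{fixed} $n$-basing; different paths can give different $n$-basings, hence genuinely different homotopy classes. The proposition does not claim canonicity in the general case (only the canonical extendability to $X^G_n$ given the mapping cylinder map, plus full canonicity in the $M \times [0,1]$ case where $\gamma = \ast \times [0,1]$ eliminates the ambiguity). Your argument for the $M\times[0,1]$ case is correct and matches the paper; you should simply drop the claim of path-independence in the general case.
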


\begin{proof}
Suppose $C$ is a concordance between $L\subset M$ and $L'\subset M'$ in the $\mathbb{Z}[G]$-homology cobordism $W$. We take the basepoint of the homology cobordism $W$ to be the basepoint of $M$. Choose some path $\gamma$ in $W$ missing $C$ from the basepoint of $M$ to the basepoint of $M'$ whose image in $BG$ under the based map $W\to BG$ is nullhomotopic. An analogue of Corollary~\ref{prelims-corlcs}, with an identical proof, implies that the inclusions $E_{L'}\hookrightarrow E_C\hookleftarrow E_L$ induce isomorphisms $\pi'/\Gamma'_n\xrightarrow{\cong}\pi^C/\Gamma^C_n\xleftarrow{\cong}\pi/\Gamma_n$ for all $n$, where $\pi^C=\pi_1(E_C)$ and $\Gamma^C=\ker(\pi^C\twoheadrightarrow\pi_1(W)\twoheadrightarrow G)$.

We obtain the diagram below, where $X^G_n(C)$ is a standard homotopy pushout analogous to $X^G_n(L)$, and where $\partial\nu C$ denotes the intersection $\partial E_C\cap\nu C$, not the manifold boundary of $\partial\nu C$ which includes $\nu L$ and $\nu L'$. Since $\gamma$ is nullhomotopic in $BG$, this diagram is a diagram over $G$. The homotopy equivalences of pairs on the second row are induced by the group isomorphisms, and we may extend to the homotopy equivalences on the bottom row by gluing in $\nu C$. 

\begin{center}
\begin{tikzcd}[column sep=0em]
(E_{L'}\cup\gamma,\partial E_{L'}) \arrow[rr, hookrightarrow] \arrow[dr, hookrightarrow] \arrow[dd, hookrightarrow] & & (E_C,\partial\nu C) \arrow[dr,hookrightarrow] \arrow[dd, hookrightarrow] \arrow[rr, hookleftarrow] & & (E_L,\partial E_L) \arrow[dd, hookrightarrow] \arrow[dr, hookrightarrow] & \\ 
& (M_{p'_n}^\times\cup\gamma,\partial E_{L'}) \arrow[rr, crossing over, "\simeq" near start] & & (M_{p^C_n}^\times, \partial\nu C) \arrow[from=rr, crossing over, "\simeq"' near start] & & (M_{p_n}^\times, \partial E_L) \arrow[dd,hookrightarrow] \\
(M'\cup\gamma,\nu L') \arrow[dr, hookrightarrow, "\iota_n(L')"'] \arrow[rr, hookrightarrow] & & (W,\nu C) \arrow[dr, hookrightarrow] \arrow[rr,hookleftarrow] & & (M,\nu L) \arrow[dr, hookrightarrow, "\iota_n(L)"] & \\
& (X^G_n(L')\cup\gamma,\nu L') \arrow[from=uu, crossing over, hookrightarrow] \arrow[rr, "\simeq"] & & (X^G_n(C),\nu C) \arrow[from=uu, crossing over, hookrightarrow] \arrow[from=rr, "\simeq"'] & & (X^G_n(L),\nu L)
\end{tikzcd}
\end{center}
In the case where $C$ is a concordance in $M\times[0,1]$ (where $M'=M$ and $G=\pi_1(M)$), we may take $\gamma$ to be $\ast\times[0,1]$, and all homotopy equivalences in the diagram are then canonical.
\end{proof}

We now prove that such homotopy equivalences $(M_{p'_n}^\times,\partial E_{L'})\xrightarrow{\simeq}(M_{p_n}^\times,\partial E_L)$ as in Proposition~\ref{nbasing-propconc} are in 1-1 correspondence with $n$-basings over $G$ for $L'$ relative to $L$. The following Proposition is a refinement of Lemma~\ref{results-homotopy-lemmanbasing} from Section~\ref{results-homotopy}.

\begin{proposition}\label{nbasing-propnbasing}
The link $L'$ admits an $n$-basing $\phi$ over $G$ relative to $L$ if and only if there exists a based homotopy equivalence of pairs $h_\phi:(M_{p_n'}^\times,\partial E_{L'})\xrightarrow{\simeq}(M_{p_n}^\times,\partial E_L)$ over $G$ restricting to an admissible homeomorphism $\partial E_{L'}\xrightarrow{\cong}\partial E_L$. Additionally, any two homotopy equivalences realizing the same $n$-basing are based homotopic as maps of pairs. Furthermore,  such a homotopy equivalence $h$ extends canonically (up to based homotopy) to a based homotopy equivalence of pairs $\overline{h}_\phi:(X^G_n(L'),\nu L')\xrightarrow{\simeq} (X^G_n(L),\nu L)$.
\end{proposition}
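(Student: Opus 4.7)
The plan is to leverage the construction from Lemma~\ref{results-homotopy-lemmanbasing} (in its $G$-version) to produce the desired homotopy equivalence, then use asphericity to handle uniqueness and the converse, and finally extend over the tubular neighborhood exactly as in the proof of that lemma. I proceed in four steps.

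First, for the forward direction, I start with an $n$-basing $(\phi,\phi_\partial)$ over $G$ and invoke (the $G$-analogue of) Lemma~\ref{results-homotopy-lemmanbasing} to produce a based map of pairs $h:(E_{L'},\partial E_{L'})\to(M_{p_n}^\times,\partial E_L)$ over $G$ restricting to the admissible homeomorphism $\partial E_{L'}\xrightarrow{\cong}\partial E_L$ determined by $\phi_\partial$ and inducing $\phi$ on $\pi_1$. To promote this to a map out of $M_{p_n'}^\times$, I note that $M_{p_n'}^\times$ is obtained from $E_{L'}$ by attaching cells of dimension $\geq 2$ (precisely the cells killing $\Gamma'_n$ and building $K(\pi'/\Gamma'_n,1)$). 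Since $h$ is designed to induce $\phi$, which descends from an isomorphism modulo $\Gamma'_n$, the attaching maps of the $2$-cells become nullhomotopic after postcomposition with $h$; higher cells extend inductively because $M_{p_n}^\times$ is a $K(\pi/\Gamma_n,1)$, hence aspherical. This extends $h$ to $h_\phi:M_{p_n'}^\times\to M_{p_n}^\times$, still a map of pairs over $G$ restricting to the chosen admissible homeomorphism on boundary. Because $h_\phi$ induces the isomorphism $\phi$ between the fundamental groups of two $K(\cdot,1)$'s (and both spaces are CW), Whitehead's theorem shows $h_\phi$ is a homotopy equivalence.

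Second, for the converse, suppose $h_\phi:(M_{p_n'}^\times,\partial E_{L'})\xrightarrow{\simeq}(M_{p_n}^\times,\partial E_L)$ is a based homotopy equivalence of pairs over $G$ restricting to an admissible homeomorphism. Set $\phi_\partial$ to be the induced isomorphism on $H_1(\partial E_{L'})$ and $\phi:=(h_\phi)_*:\pi_1(M_{p_n'}^\times)\to\pi_1(M_{p_n}^\times)$, which by construction is an isomorphism $\pi'/\Gamma'_n\xrightarrow{\cong}\pi/\Gamma_n$ over $G$. The compatibility condition (2) of Definition~\ref{results-homotopy-defnbasing} follows by considering the diagram obtained by restricting $h_\phi$ to $\partial E_{L'}\cup\tau'$, where $\tau'$ is any basing of $L'$, and taking $\tau$ to be the image $h_\phi(\tau')$; commutativity is automatic since everything is induced by the single map $h_\phi$.

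Third, for the uniqueness assertion, suppose $h_\phi$ and $h'_\phi$ realize the same $n$-basing. On the boundary both restrict to admissible homeomorphisms inducing $\phi_\partial$, so they are isotopic, giving a homotopy $H_\partial$ on $\partial E_{L'}\times[0,1]$. The extension over $M_{p_n'}^\times\times[0,1]$ proceeds exactly as in the second half of the proof of Lemma~\ref{results-homotopy-lemmanbasing}: match up basings via a trace-of-homotopy argument, then extend cell by cell, with no obstruction because $M_{p_n}^\times$ is aspherical and both maps induce the same $\phi$ on $\pi_1$ over $G$.

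Finally, the canonical extension $\overline{h}_\phi:(X^G_n(L'),\nu L')\xrightarrow{\simeq}(X^G_n(L),\nu L)$ follows verbatim from the final paragraph of the proof of Lemma~\ref{results-homotopy-lemmanbasing}: the restriction of $h_\phi$ to $\partial E_{L'}$ is a homeomorphism of sphere bundles, which cones canonically to a homeomorphism of disk bundles $\nu L'\xrightarrow{\cong}\nu L$; gluing this to $h_\phi$ along the decomposition $X^G_n(L)=M_{p_n}^\times\cup_{\partial E_L}\nu L$ yields $\overline{h}_\phi$. It remains a homotopy equivalence because both spaces are assembled by attaching the same disk bundle along corresponding boundary components via homotopy equivalences. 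The main obstacle is purely bookkeeping: keeping track of basepoints and the $G$-structure while extending across the cells of the mapping cylinder; this is where the hypothesis that $\phi$ is an isomorphism \emph{over} $G$ is essential, ensuring the extensions stay over $G$ at every stage.
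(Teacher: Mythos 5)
Your forward direction, uniqueness argument, and extension over $\nu L$ are all correct and essentially match the paper's approach, modulo a cosmetic difference: you apply Lemma~\ref{results-homotopy-lemmanbasing} to produce a map on $E_{L'}$ and then extend over the cells of $M_{p_n'}^\times$, whereas the paper simply reruns the proof of that lemma with $(M_{p_n'}^\times,\partial E_{L'})$ substituted for $(E_{L'},\partial E_{L'})$; both give the same $h_\phi$, and your explicit invocation of Whitehead's theorem is a reasonable way to see it is a homotopy equivalence.

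However, your converse direction has a genuine gap. You write ``taking $\tau$ to be the image $h_\phi(\tau')$; commutativity is automatic since everything is induced by the single map $h_\phi$,'' but $h_\phi(\tau')$ lives in $M_{p_n}^\times$, \emph{not} in $E_L$. The compatibility condition in Definition~\ref{results-homotopy-defnbasing} demands a basing $\tau$ for $L$ inside $E_L$, because the right-hand column of the diagram passes through $\pi=\pi_1(E_L)$ before being projected to $\pi/\Gamma_n$; the map $h_\phi$ does not factor through $E_L$, so it cannot directly supply such a $\tau$ nor the required commutativity. What is needed is the argument the paper gives: start with an arbitrary basing $\tau^0$ for $L$ in $E_L$, observe that $\tau^0$ and $h_\phi(\tau')$ differ, up to homotopy in $M_{p_n}^\times$, by a wedge of circles representing elements of $\pi/\Gamma_n$, choose lifts of these elements to $\pi$ (possible because $\pi\twoheadrightarrow\pi/\Gamma_n$), and use them to adjust $\tau^0$ to a basing $\tau\subset E_L$ for which the diagram commutes. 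Without this lifting step, you have not actually produced the basing $\tau$ the definition requires, and ``commutativity is automatic'' is not justified.
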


\begin{proof}
We begin with the first statement. One direction requires little work: Suppose such a homotopy equivalence of pairs $h$ exists. Then we may take $\phi$ and $\phi_\partial$ to be the isomorphisms $h_*:\pi_1(M_{p_n'}^\times)\xrightarrow{\cong}\pi_1(M_{p_n}^\times)$ and $(h|_{\partial E_{L'}})_*:H_1(\partial E_{L'})\xrightarrow{\cong} H_1(\partial E_L)$, respectively. By assumption, the homeomorphism $\partial E_{L'}\xrightarrow{\cong}\partial E_L$ is admissible, so $\phi_\partial$ is admissible. All that remains is to see that $\phi_\partial$ is compatible with $\phi$ in the sense of Definition~\ref{results-homotopy-defnbasing}. Take any basing $\tau'$ for $L'$. Its image $h(\tau')$ is a basing for $L$ in $M_{p_n}^\times$. We find a corresponding basing in $E_L$ as follows: Take any basing $\tau^0$ for $L$ in $E_L$. The basings $h(\tau')$ and $\tau^0$ differ up to homotopy in $M_{p_n}^\times$ by some wedge of circles which represent elements of $\pi/\Gamma_n$. Adjust $\tau^0$ by lifts of these elements to $\pi$ to obtain a new basing $\tau$ in $E_L$. By construction, the desired diagram commutes.

The other direction is entirely similar to the proof of Lemma~\ref{results-homotopy-lemmanbasing}. We replace the pair $(E_{L'},\partial E_{L'})$ in that proof with the pair $(M_{p'_n}^\times,\partial E_{L'})$. The resulting map \[h_\phi:(M_{p_n'}^\times,\partial E_{L'})\xrightarrow{\simeq}(M_{p_n}^\times,\partial E_L)\] is a homotopy equivalence of pairs because we construct it using the isomorphism $\phi$.

We also follow the proof of Lemma~\ref{results-homotopy-lemmanbasing} to prove the remainder of the proposition. Using similar arguments, the homotopy equivalence $h_\phi$ is unique up to based homotopy of maps of pairs and extends canonically (up to based homotopy) to a map of pairs \[\overline{h}_\phi:(X^G_n(L'),\nu L')\xrightarrow{\simeq} (X^G_n(L),\nu L)\] over $G$. Because $h_\phi$ is a homotopy equivalence of pairs, $\overline{h}_\phi$ is, too.
\end{proof}

By the proof of the above proposition, the map of pairs $(E_{L'},\partial E_{L'})\to (M_{p_n}^\times,\partial E_L)$ and its extension constructed in the proof of Lemma~\ref{results-homotopy-lemmanbasing} factor through the homotopy equivalences $h_\phi$ and $\overline{h}_\phi$, respectively. Therefore, the composition $\overline{h}_\phi\circ\iota_n(L')$ is precisely the $n^{\text{th}}$ $h$-invariant $h_n(L',\phi)$.

\begin{corollary}\label{nbasing-cornbasing}
A $\mathbb{Z}[G]$-homology concordance between the links $L\subset M$ and $L'\subset M'$ induces an $n$-basing over $G$ for $L'$ relative to $L$ for all $n$. In the case of a concordance in $M\times[0,1]$, where $M'=M$ and $G=\pi_1(M)$, the induced $n$-basing is canonical.
\end{corollary}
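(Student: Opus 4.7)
The plan is to deduce the corollary directly from Proposition~\ref{nbasing-propconc} and Proposition~\ref{nbasing-propnbasing}, which together already encode the content needed. The strategy is to extract the topological data from the homology concordance via Proposition~\ref{nbasing-propconc} and then convert it into algebraic basing data via the reverse direction of Proposition~\ref{nbasing-propnbasing}.

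First, given a $\mathbb{Z}[G]$-homology concordance $C$ in some $\mathbb{Z}[G]$-homology cobordism $W$ between $(M,\varphi)$ and $(M',\varphi')$, apply Proposition~\ref{nbasing-propconc} to obtain, for every $n$, a based homotopy equivalence of pairs
\[ h:(M_{p_n'}^\times,\partial E_{L'})\xrightarrow{\simeq}(M_{p_n}^\times,\partial E_L) \]
over $G$, whose restriction to $\partial E_{L'}$ is an admissible homeomorphism onto $\partial E_L$. Next, apply the (easy) forward direction of Proposition~\ref{nbasing-propnbasing}: the homotopy equivalence $h$ produces an $n$-basing $(\phi,\phi_\partial)$ over $G$ for $L'$ relative to $L$ by taking $\phi=h_*$ on fundamental groups of the reduced mapping cylinders (which compute $\pi'/\Gamma'_n$ and $\pi/\Gamma_n$) and $\phi_\partial=(h|_{\partial E_{L'}})_*$ on $H_1$. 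Admissibility of $\phi_\partial$ is inherited from the boundary homeomorphism, and compatibility of $\phi_\partial$ with $\phi$ is verified precisely as in the proof of Proposition~\ref{nbasing-propnbasing} by adjusting basings in $E_L$ by lifts of elements of $\pi/\Gamma_n$.

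For the canonical statement, observe that when $C$ is an actual concordance in $M\times[0,1]$ with $M'=M$, $G=\pi_1(M)$, and $\varphi'=\varphi=\mathrm{id}_{\pi_1(M)}$, Proposition~\ref{nbasing-propconc} asserts that the homotopy equivalence $h$ is itself canonical (no choice of path $\gamma$ is required, since one can take $\gamma=\ast\times[0,1]$). Because Proposition~\ref{nbasing-propnbasing} identifies homotopy equivalences of pairs (up to based homotopy of pairs) with $n$-basings, the induced $n$-basing is then canonical as well. No step here is a serious obstacle; the real content has been packaged into the two preceding propositions, so the only care needed is to verify that the compatibility condition in Definition~\ref{results-homotopy-defnbasing} is indeed recovered from the boundary data of $h$, which is the last step in the forward direction of Proposition~\ref{nbasing-propnbasing}.
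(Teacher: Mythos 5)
Your proposal is correct and takes exactly the route the paper intends: the paper states Corollary~\ref{nbasing-cornbasing} without proof because it is meant to follow immediately by feeding the homotopy equivalence of pairs from Proposition~\ref{nbasing-propconc} into the forward (easy) direction of Proposition~\ref{nbasing-propnbasing}, with the canonicity claim in the $M\times[0,1]$, $G=\pi_1(M)$ case inherited from the canonicity clause in Proposition~\ref{nbasing-propconc}. Nothing to change.
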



\subsection{On the choice of \texorpdfstring{$n$}{n}-basing}\label{nbasing-selfbasing}

The invariants $h_n$, $\theta^G_n$, and $\overline{\mu}^G_n$ defined in Sections~\ref{results-homotopy},~\ref{results-homology}, and~\ref{results-determination} at first depend on the choice of $n$-basing for $L'$. We therefore study how different choices of $n$-basing are related. Suppose $(\phi,\phi_\partial)$ and $(\phi',\phi'_\partial)$ are two $n$-basings for $L'$ relative to $L$. We will show \[ (\psi,\psi_\partial)=(\phi'\circ\phi^{-1},\phi'_\partial\circ\phi_\partial^{-1})\in\text{Aut}(\pi/\Gamma_n)\times\text{Aut}(H_1(\partial E_L)) \] constitutes an $n$-basing over $G$ for $L$ relative to itself. We will call such an $n$-basing a \emph{self-$n$-basing} or \emph{self-basing}. By Proposition~\ref{nbasing-propnbasing}, to each self-basing we may associate a canonical homotopy class of homotopy self-equivalences of the pair $(X^G_n(L),\nu L)$ over $G$. In Proposition~\ref{nbasing-selfbasing-propselfbasing}, we show that the collection of all self-$n$-basings $(\psi,\psi_\partial)$, denoted $\text{Aut}(\pi/\Gamma_n,\partial)$, forms a subgroup of  $\text{Aut}(\pi/\Gamma_n)\times\text{Aut}(H_1(\partial E_L))$ isomorphic to the group of homotopy classes of homotopy self-equivalences of the pair $(M_{p_n}^\times,\partial E_L)$ over $G$ which restrict to admissible homeomorphisms $\partial E_L\to\partial E_L$. In fact, because every such $\psi$ is an isomorphism over $G$ and every such $\psi_\partial$ is admissible, we will have that \[\text{Aut}(\pi/\Gamma_n,\partial)\leq\text{Aut}_G(\pi/\Gamma_n)\times\mathbb{Z}^m\leq\text{Aut}(\pi/\Gamma_n)\times\text{Aut}(H_1(\partial E_L)),\] where $\text{Aut}_G$ denotes automorphisms over $G$, and where $\mathbb{Z}^m\leq\text{Aut}(H_1(\partial E_L))$ is the subgroup of automorphisms of $H_1(\partial E_L)$ which fix the meridians of $L$.

\begin{lemma}\label{nbasing-selfbasing-lemmainverse}
Suppose $(\phi,\phi_\partial)$ is an $n$-basing over $G$ for $L'$ relative to $L$. Then $(\phi^{-1},\phi_\partial^{-1})$ is an $n$-basing over $G$ for $L$ relative to $L'$.
\end{lemma}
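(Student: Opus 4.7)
The plan is to verify directly that $(\phi^{-1},\phi_\partial^{-1})$ satisfies the two conditions of Definition~\ref{results-homotopy-defnbasing} for an $n$-basing over $G$ for $L$ relative to $L'$. The three items to check --- admissibility of $\phi_\partial^{-1}$, the fact that $\phi^{-1}$ is a group isomorphism over $G$, and compatibility of $\phi_\partial^{-1}$ with $\phi^{-1}$ --- each amount to inverting what is already assumed for $(\phi,\phi_\partial)$.

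First, since $\phi_\partial$ sends the homology class of the $i$-th meridian of $L'$ to that of the $i$-th meridian of $L$ with matching orientation, its inverse sends the $i$-th meridian of $L$ back to the $i$-th meridian of $L'$ with matching orientation and ordering; hence $\phi_\partial^{-1}$ is admissible. Second, $\phi^{-1}\colon \pi/\Gamma_n \to \pi'/\Gamma'_n$ is a group isomorphism because $\phi$ is, and it is over $G$ because $\phi$ is over $G$ (being over $G$ is preserved by taking inverses in the category of groups over $G$).

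For the compatibility of $\phi_\partial^{-1}$ with $\phi^{-1}$, I would invoke the remark immediately following Definition~\ref{results-homotopy-defnbasing}, which states that condition (2) holds for \emph{any} basing if and only if it holds for a \emph{single} basing. Choose any basing $\tau'$ for $L'$. By compatibility of $\phi_\partial$ with $\phi$, there exists a basing $\tau$ for $L$ making the corresponding compatibility square commute. Inverting the two horizontal isomorphisms $\phi_{\partial *}$ and $\phi$ in that square produces a commuting compatibility square for $(\phi^{-1},\phi_\partial^{-1})$, where $\tau$ now serves as the basing of $L$ and $\tau'$ as the corresponding basing of $L'$. Since this exhibits compatibility for a single basing, the cited remark upgrades it to compatibility in the sense of Definition~\ref{results-homotopy-defnbasing}. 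I do not anticipate any substantive obstacle: the argument is essentially formal, and the only mildly non-trivial ingredient is the remark allowing us to check compatibility for just one basing rather than exhibiting, for each basing of $L$, a matching basing of $L'$ by hand.
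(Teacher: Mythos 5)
Your proof is correct. The only substantive difference from the paper's argument is in how compatibility of $\phi_\partial^{-1}$ with $\phi^{-1}$ is handled. The paper takes an arbitrary basing $\tau$ of $L$, compares it to the basing $\tau_0$ produced from the original compatibility, and then explicitly constructs a matching basing $\tau'$ of $L'$ by transporting the ``difference wedge'' across $\phi^{-1}$ and lifting to $\pi'$ --- in effect re-deriving, in this specific case, the remark you cite. You instead produce a single commuting square (by inverting the horizontal isomorphisms of the known one, which is the standard observation that $v\circ f = g\circ u$ with $f,g$ invertible gives $u\circ f^{-1} = g^{-1}\circ v$) and then invoke the paper's stated remark that compatibility for a single basing implies compatibility for all basings. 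Both routes are valid; yours is shorter and more formal, at the cost of leaning on that remark, which the paper states without proof and whose justification is essentially the adjustment argument the paper performs explicitly here. One could reasonably view the paper's version as the more self-contained one.
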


\begin{proof}
Since $\phi_\partial$ is admissible, so is $\phi_\partial^{-1}$. We next see that $\phi_\partial^{-1}$ is compatible with $\phi^{-1}$. Let $\tau$ be a basing for $L$. Choose any basing $\tau'_0$ for $L'$. Since $\phi_\partial$ is compatible with $\phi$, there exists a basing $\tau_0$ for $L$ such that the following square commutes:
\begin{center}
\begin{tikzcd}
\pi_1(\partial E_{L'}\cup\tau'_0) \arrow[r, "\phi_{\partial*}"', "\cong"] \arrow[dd] & \pi_1(\partial E_L\cup\tau_0) \arrow[dd] \\
& \\
\pi'/\Gamma'_n \arrow[r, "\phi"', "\cong"] & \pi/\Gamma_n.
\end{tikzcd}
\end{center}
Since $\tau$ and $\tau_0$ are both basings for $L$, they differ up to homotopy by some wedge of circles in $E_L$. Consider the image of this wedge of circles in $\pi/\Gamma_n$. Mapping this to $\pi'/\Gamma_n'$ via $\phi^{-1}$ and choosing any lift to the group $\pi'$ yields a wedge of circles in $E_{L'}$. Adjust $\tau'_0$ by this wedge of circles to obtain a new basing $\tau'$ for $L'$. Using $\phi_\partial^{-1}$, define an isomorphism $\pi_1(\partial E_L\cup\tau)\xrightarrow{\cong}\pi_1(\partial E_{L'}\cup\tau')$ which fits into the diagram
\begin{center}
\begin{tikzcd}
\pi_1(\partial E_L\cup\tau) \arrow[r, "\phi_{\partial*}^{-1}"', "\cong"] \arrow[dd] & \pi_1(\partial E_{L'}\cup\tau') \arrow[dd] \\
& \\
\pi/\Gamma_n \arrow[r, "\phi^{-1}"', "\cong"] & \pi'/\Gamma'_n.
\end{tikzcd}
\end{center}
Thus, $\phi_\partial^{-1}$ is compatible with $\phi^{-1}$, so $(\phi^{-1},\phi_\partial^{-1})$ is an $n$-basing over $G$ for $L$ relative to $L'$.
\end{proof}

\begin{lemma}\label{nbasing-selfbasing-lemmacompose}
Suppose $(\phi',\phi'_\partial)$ is an $n$-basing over $G$ for $L''$ relative to $L'$ and $(\phi,\phi_\partial)$ is an $n$-basing over $G$ for $L'$ relative to $L$. Then the composition $(\phi\circ\phi',\phi_\partial\circ\phi'_\partial)$ is an $n$-basing over $G$ for $L''$ relative to $L$.
\end{lemma}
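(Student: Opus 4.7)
The plan is to verify the three defining properties of an $n$-basing, as given in Definition~\ref{results-homotopy-defnbasing} (extended to the setting over $G$), for the composition $(\phi\circ\phi',\phi_\partial\circ\phi'_\partial)$: namely (i) that $\phi_\partial\circ\phi'_\partial$ is an admissible isomorphism $H_1(\partial E_{L''})\xrightarrow{\cong}H_1(\partial E_L)$, (ii) that $\phi\circ\phi'$ is an isomorphism over $G$ of $n^{\text{th}}$ $G$-lower central quotients, and (iii) that $\phi_\partial\circ\phi'_\partial$ is compatible with $\phi\circ\phi'$. Properties (i) and (ii) are immediate: admissibility is preserved under composition since meridians, orientations, and orderings of components are preserved by each factor, and the composition of isomorphisms over $G$ is again an isomorphism over $G$.

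The only content is property (iii). The natural approach is to stack commutative diagrams. Given any basing $\tau''$ for $L''$, I would first apply compatibility of $\phi'_\partial$ with $\phi'$ to obtain a basing $\tau'$ for $L'$ so that the square with top row $\phi'_{\partial *}$ and bottom row $\phi'$ commutes. I would then apply compatibility of $\phi_\partial$ with $\phi$ to this particular basing $\tau'$ to produce a basing $\tau$ for $L$ making the analogous square for $(\phi,\phi_\partial)$ commute. Juxtaposing these two squares vertically and composing along rows yields the required commutative diagram
\begin{center}
\begin{tikzcd}
\pi_1(\partial E_{L''}\cup\tau'') \arrow[r, "(\phi_\partial\circ\phi'_\partial)_*"', "\cong"] \arrow[d] & \pi_1(\partial E_L\cup\tau) \arrow[d] \\
\pi''/\Gamma''_n \arrow[r, "\cong", "\phi\circ\phi'"'] & \pi/\Gamma_n,
\end{tikzcd}
\end{center}
so $\phi_\partial\circ\phi'_\partial$ is compatible with $\phi\circ\phi'$.

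There is essentially no obstacle: the result is a formal consequence of unwinding the definitions, and the only mild subtlety is checking that the intermediate basing $\tau'$ chosen to match $\tau''$ via $\phi'$ can then be fed into the compatibility of $\phi$ without further adjustment. As an alternative route of comparable length, one could invoke Proposition~\ref{nbasing-propnbasing} to realize the two $n$-basings as based homotopy equivalences of pairs $h_{\phi'}$ and $h_\phi$ and simply observe that $h_\phi\circ h_{\phi'}$ is a based homotopy equivalence of pairs restricting to an admissible homeomorphism $\partial E_{L''}\xrightarrow{\cong}\partial E_L$ and inducing $(\phi\circ\phi',\phi_\partial\circ\phi'_\partial)$; by the other direction of Proposition~\ref{nbasing-propnbasing}, this composition therefore corresponds to an $n$-basing, which by construction is precisely the composition of the original two.
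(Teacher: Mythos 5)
Your proof is correct and follows the same approach as the paper: verify admissibility and the isomorphism-over-$G$ property are closed under composition, then establish compatibility by stacking the two commutative squares obtained from the compatibility of each factor, starting from an arbitrary basing $\tau''$ for $L''$. The "mild subtlety" you flag is already dispatched by the remark following Definition~\ref{results-homotopy-defnbasing} that compatibility holds for one basing if and only if it holds for all, so the intermediate $\tau'$ can indeed be fed directly into the compatibility of $\phi$.
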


\begin{proof}
Since $\phi_\partial$ and $\phi'_\partial$ are both admissible, so is the composition $\phi_\partial\circ\phi'_\partial$. Let $\tau''$ be a basing for $L''$. Since $\phi'_\partial$ is compatible with $\phi'$, we obtain the desired basing $\tau'$ for $L'$. This in turn yields the desired basing $\tau$ for $L$ since $\phi_\partial$ is compatible with $\phi$. We have the following diagram:
\begin{center}
\begin{tikzcd}
\pi_1(\partial E_{L''}\cup\tau'') \arrow[r, "\phi'_{\partial*}"', "\cong"] \arrow[dd] & \pi_1(\partial E_{L'}\cup\tau') \arrow[r, "\phi_{\partial*}"', "\cong"] \arrow[dd] & \pi_1(\partial E_L\cup\tau) \arrow[dd] \\
& & \\
\pi''/\Gamma''_n \arrow[r, "\phi'"', "\cong"] & \pi'/\Gamma'_n \arrow[r, "\phi"', "\cong"] & \pi/\Gamma_n.
\end{tikzcd}
\end{center}
Thus, $\phi_\partial\circ\phi'_\partial$ is compatible with $\phi\circ\phi'$, so $(\phi\circ\phi',\phi_\partial\circ\phi'_\partial)$ is an $n$-basing over $G$ for $L''$ relative to $L$.
\end{proof}

The preceding Lemmas~\ref{nbasing-selfbasing-lemmainverse} and~\ref{nbasing-selfbasing-lemmacompose} reveal that the set of self-$n$-basings is a group. The following proposition reveals some of the structure of this group.

\begin{proposition}\label{nbasing-selfbasing-propselfbasing}
The collection $\textnormal{Aut}(\pi/\Gamma_n,\partial)$ of all self-$n$-basings over $G$ of $L$ forms a subgroup of $\textnormal{Aut}(\pi/\Gamma_n)\times\textnormal{Aut}(H_1(\partial E_L))$ isomorphic to the group of homotopy classes of homotopy self-equivalences of the pair $(M_{p_n}^\times,\partial E_L)$ over $G$ which restrict to admissible homeomorphisms $\partial E_L\xrightarrow{\cong}\partial E_L$. Furthermore, every self-basing gives rise to a canonical homotopy class of homotopy self-equivalences of the pair $(X_n(L),\nu L)$ over $G$.
\end{proposition}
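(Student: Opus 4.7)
The plan is to leverage Proposition~\ref{nbasing-propnbasing} and Lemmas~\ref{nbasing-selfbasing-lemmainverse}--\ref{nbasing-selfbasing-lemmacompose} (specialized to $L''=L'=L$) to identify $\textnormal{Aut}(\pi/\Gamma_n,\partial)$ with a group of homotopy self-equivalences, then invoke the canonical extension from Proposition~\ref{nbasing-propnbasing} to produce the $(X_n(L),\nu L)$-level statement.

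First I would check that $\textnormal{Aut}(\pi/\Gamma_n,\partial)$ is indeed a subgroup of $\textnormal{Aut}(\pi/\Gamma_n)\times\textnormal{Aut}(H_1(\partial E_L))$. The identity $(\textnormal{id}_{\pi/\Gamma_n},\textnormal{id}_{H_1(\partial E_L)})$ is a self-basing since the compatibility diagram in Definition~\ref{results-homotopy-defnbasing} commutes using the same basing $\tau$ on both sides. Closure under composition is the content of Lemma~\ref{nbasing-selfbasing-lemmacompose} with $L=L'=L''$, and closure under inverses is Lemma~\ref{nbasing-selfbasing-lemmainverse} with $L'=L$.

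Next I would define the candidate isomorphism $\Phi$ from $\textnormal{Aut}(\pi/\Gamma_n,\partial)$ to the group $\mathcal{E}$ of based homotopy classes of homotopy self-equivalences of $(M_{p_n}^\times,\partial E_L)$ over $G$ restricting to admissible self-homeomorphisms of $\partial E_L$. To a self-basing $(\psi,\psi_\partial)$, associate the class $h_{(\psi,\psi_\partial)}\in\mathcal{E}$ produced by Proposition~\ref{nbasing-propnbasing} (applied with $L'=L$); this is well-defined up to based homotopy by that same proposition. Conversely, given $[f]\in\mathcal{E}$, the pair $(f_*,(f|_{\partial E_L})_*)$ is a self-basing by the ``easy'' direction of Proposition~\ref{nbasing-propnbasing}, and these two constructions invert each other on the nose (both sides are determined by the induced maps on $\pi_1$ and $H_1(\partial E_L)$).

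The main point, and really the only nontrivial step, is that $\Phi$ is a homomorphism: given self-basings $(\psi,\psi_\partial)$ and $(\psi',\psi'_\partial)$, the composition $h_{(\psi,\psi_\partial)}\circ h_{(\psi',\psi'_\partial)}$ is a homotopy self-equivalence of $(M_{p_n}^\times,\partial E_L)$ over $G$ restricting to an admissible self-homeomorphism of $\partial E_L$, and on fundamental groups and on $H_1(\partial E_L)$ it induces precisely $\psi\circ\psi'$ and $\psi_\partial\circ\psi'_\partial$, respectively. By the uniqueness-up-to-based-homotopy clause in Proposition~\ref{nbasing-propnbasing}, this composition is based homotopic to $h_{(\psi\circ\psi',\psi_\partial\circ\psi'_\partial)}$, which is exactly $\Phi((\psi,\psi_\partial)\circ(\psi',\psi'_\partial))$. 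Thus $\Phi$ is a group isomorphism.

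Finally, the statement about $(X_n(L),\nu L)$ is immediate: Proposition~\ref{nbasing-propnbasing} asserts that each $h_\phi$ extends canonically (up to based homotopy) to a homotopy equivalence of pairs $\overline{h}_\phi:(X_n(L),\nu L)\xrightarrow{\simeq}(X_n(L),\nu L)$ over $G$, so post-composing $\Phi$ with this canonical extension operation gives the desired assignment. No further argument is needed because, by construction, the extension is functorial up to homotopy (the extension is obtained by coning the boundary data on $\nu L$, an operation compatible with composition of self-equivalences restricting to homeomorphisms on $\partial E_L$).
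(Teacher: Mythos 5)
Your proof is correct and follows essentially the same route as the paper's: establish the subgroup structure via Lemmas~\ref{nbasing-selfbasing-lemmainverse} and~\ref{nbasing-selfbasing-lemmacompose} with $L''=L'=L$, then translate to homotopy self-equivalences of $(M_{p_n}^\times,\partial E_L)$ via Proposition~\ref{nbasing-propnbasing} and extend to $(X_n(L),\nu L)$ by the same proposition. You add the welcome explicit verification that the bijection with $\mathcal{E}$ is a group homomorphism (using the uniqueness-up-to-based-homotopy clause), a step the paper leaves implicit.
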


\begin{proof}
Any self-$n$-basing $(\psi,\psi_\partial)$ is an element of $\textnormal{Aut}(\pi/\Gamma_n)\times\textnormal{Aut}(H_1(\partial E_L))$. Taking $L''=L'=L$ in Lemmas~\ref{nbasing-selfbasing-lemmainverse} and~\ref{nbasing-selfbasing-lemmacompose}, we see that $\textnormal{Aut}(\pi/\Gamma_n,\partial)$ is closed under inverses and the group operation. Thus, $\textnormal{Aut}(\pi/\Gamma_n,\partial)\leq\textnormal{Aut}(\pi/\Gamma_n)\times\textnormal{Aut}(H_1(\partial E_L))$. By Proposition~\ref{nbasing-propnbasing}, a self-basing $(\psi,\psi_\partial)$ over $G$ is equivalent to a homotopy class of homotopy self-equivalences of the pair $(M_{p_n}^\times,\partial E_L)$ over $G$ which restricts to an admissible homeomorphism $\partial E_L\xrightarrow{\cong}\partial E_L$. Such a homotopy self-equivalence extends over $G$ to a canonical homotopy class of homotopy self-equivalences of the pair $(X_n(L),\nu L)$, again by Proposition~\ref{nbasing-propnbasing}.
\end{proof}

By Proposition~\ref{nbasing-selfbasing-propselfbasing}, $\text{Aut}(\pi/\Gamma_n,\partial)$ acts (on the left) on both $[M,X_n(L)]_0$ and $H_3(X_n(L))$ by post-composition with homotopy self-equivalences of $X_n(L)$, so that the orbit space of each of these sets modulo the group action is well-defined.


\section{Concordance invariance}\label{inv}

In this section, we prove Theorems~\ref{THMinvariance} and~\ref{THMhinvariance}. See Sections~\ref{results-homotopy} and~\ref{results-homology}, respectively, for the statements of these theorems. Theorem~\ref{THMinvariance} asserts the invariance of $h_n$ (hence $\theta_n$ and $\overline{\mu}_n$) under concordance. Theorem~\ref{THMhinvariance}, whose proof is similar to that of Theorem~\ref{THMinvariance}, asserts the invariance of $\theta_n$ (hence $\overline{\mu}_n$) under $\mathbb{Z}[G]$-homology concordance.


\subsection{Proof of Theorem~\ref{THMinvariance}} \label{proofs-inv-A}

\begin{proof}
Fix $L\subset M$, and assume $L',L''\subset M$ are concordant.

Statement (1), which asserts the existence of a canonical $n$-basing ($\phi'$,$\phi'_\partial)$ for $L''$ relative to $L'$ for each $n$, follows directly from Corollary~\ref{nbasing-cornbasing}.

Next, we prove (2), which states that for each $n$ an $n$-basing for $L'$ relative to $L$ exists if and only if an $n$-basing for $L''$ relative to $L$ exists, and that the invariant $h_n(L')$ is defined if and only if $h_n(L'')$ is defined. Suppose $(\phi,\phi_\partial)$ is an $n$-basing for $L'$ relative to $L$. By Lemma~\ref{nbasing-selfbasing-lemmacompose}, the composition $(\phi\circ\phi',\phi_\partial\circ\phi'_\partial)$ constitutes an $n$-basing for $L''$ relative to $L$. Conversely, suppose $(\phi,\phi_\partial)$ is an $n$-basing for $L''$ relative to $L$. By Lemma~\ref{nbasing-selfbasing-lemmainverse}, $((\phi')^{-1},(\phi'_\partial)^{-1})$ is an $n$-basing for $L'$ relative to $L''$, so that $(\phi\circ(\phi')^{-1},\phi_\partial\circ(\phi'_\partial)^{-1})$ is an $n$-basing for $L'$ relative to $L$, again by Lemma~\ref{nbasing-selfbasing-lemmacompose}. This proves (2), since the existence of an $n$-basing relative to $L$ is all that is needed to define the invariant $h_n$.

We now prove (3), which gives the precise relationship between the $h$-invariants of $L'$ and those of $L''$. Again assume $(\phi,\phi_\partial)$ is an $n$-basing for $L'$ relative to $L$. By Proposition~\ref{nbasing-propnbasing}, we obtain a homotopy equivalence of pairs $h_\phi:(M_{p'_n}^\times,\partial E_{L'})\xrightarrow{\simeq}(M_{p_n}^\times,\partial E_L)$ restricting to an admissible homeomorphism $\partial E_{L'}\xrightarrow{\cong}\partial E_L$ and extending to a homotopy equivalence of pairs $\overline{h}_\phi:(X_n(L'),\nu L')\xrightarrow{\simeq}(X_n(L),\nu L)$, all over $\pi_1(M)$. Since $L'$ and $L''$ are concordant, from Proposition~\ref{nbasing-propconc} we obtain a canonical homotopy equivalence of pairs $h':(M_{p''_n}^\times,\partial E_{L''})\xrightarrow{\simeq}(M_{p'_n}^\times,\partial E_{L'})$ restricting to an admissible homeomorphism $\partial E_{L''}\xrightarrow{\cong}\partial E_{L'}$ and extending canonically to a homotopy equivalence of pairs $\overline{h'}:(X_n(L''),\nu L'')\xrightarrow{\simeq}(X_n(L'),\nu L')$. In fact, using the language of Proposition~\ref{nbasing-propnbasing}, $h'=h_{\phi'}$ and $\overline{h'}=\overline{h}_{\phi'}$, where $\phi'$ is the canonical $n$-basing from statement (1). We have the following commutative diagram over $\pi_1(M)$ from the proof of Proposition~\ref{nbasing-propconc} (assume the concordance $C$ misses $\ast\times[0,1]$, and recall that $\partial\nu C$ denotes the intersection $\partial E_C\cap\nu C$):
\begin{center}
\begin{tikzcd}[column sep=0em]
(E_{L''},\partial E_{L''}) \arrow[rr, hookrightarrow] \arrow[dr, hookrightarrow] \arrow[dd, hookrightarrow] & & (E_C,\partial\nu C) \arrow[dr,hookrightarrow] \arrow[dd, hookrightarrow] \arrow[rr, hookleftarrow] & & (E_{L'},\partial E_{L'}) \arrow[dd, hookrightarrow] \arrow[dr, hookrightarrow] & \\ 
& (M_{p''_n}^\times,\partial E_{L''}) \arrow[rr, crossing over, "\simeq" near start] & & (M_{p^C_n}^\times, \partial\nu C) \arrow[from=rr, crossing over, "\simeq"' near start] & & (M_{p'_n}^\times,\partial E_{L'}) \arrow[dd,hookrightarrow] \\
(M,\nu L'') \arrow[dr, hookrightarrow, "\iota_n(L'')"'] \arrow[rr, hookrightarrow] & & (M\times[0,1],\nu C) \arrow[dr, hookrightarrow] \arrow[rr,hookleftarrow] & & (M,\nu L') \arrow[dr, hookrightarrow, "\iota_n(L')"] & \\
& (X_n(L''),\nu L'') \arrow[from=uu, crossing over, hookrightarrow] \arrow[rr, "\simeq"] & & (X_n(C),\nu C) \arrow[from=uu, crossing over, hookrightarrow] \arrow[from=rr, "\simeq"'] & & (X_n(L'),\nu L')
\end{tikzcd}
\end{center}

Thus, the composite homotopy equivalence $h_\phi\circ h_{\phi'}:(M_{p_n''}^\times,\partial E_{L''})\xrightarrow{\simeq}(M_{p_n}^\times,\partial E_L)$ restricts to an admissible homeomorphism $\partial E_{L''}\xrightarrow{\cong}\partial E_L$ and induces the $n$-basing $(\phi\circ\phi',\phi_\partial\circ\phi_\partial')$. The map $h_\phi\circ h_{\phi'}$ extends to the homotopy equivalence of pairs $\overline{h}_\phi\circ\overline{h}_{\phi'}$. Thus, \[h_n(L'',\phi\circ\phi')=\overline{h}_\phi\circ\overline{h}_{\phi'}\circ\iota_n(L'')=\overline{h}_\phi\circ\iota_n(L')=h_n(L',\phi),\] where the first and last equalities follow by definition, and the middle equality follows from the commutativity of the above diagram.

Finally, we prove (4), which states that $h_n(L')=h_n(L'')$ when defined. Suppose $(\phi,\phi_\partial)$ and $(\phi'',\phi''_\partial)$ are $n$-basings for $L'$ and $L''$, respectively, relative to $L$. Then $(\phi\circ\phi',\phi_\partial\circ\phi'_\partial)$ is also an $n$-basing for $L''$ relative to $L$, and by (3) we have $h_n(L',\phi)=h_n(L'',\phi\circ\phi')$. Observe that $(\psi,\psi_\partial):=(\phi''\circ(\phi\circ\phi')^{-1},\phi''_\partial\circ(\phi_\partial\circ\phi'_\partial)^{-1})$
is a self-$n$-basing for $L$ by Lemmas~\ref{nbasing-selfbasing-lemmainverse} and~\ref{nbasing-selfbasing-lemmacompose}. Then $h_n(L'',\phi'')=(\psi,\psi_\partial)\cdot h_n(L'',\phi\circ\phi')=(\psi,\psi_\partial)\cdot h_n(L',\phi)$, so $h_n(L')=h_n(L'')\in[M,X_n(L)]_0/\text{Aut}(\pi/\Gamma_n,\partial)$.
\end{proof}

Theorem~\ref{THMinvariance} implies that the invariants $\theta_n$ and $\overline{\mu}_n$ defined in this context are also invariants of concordance.


\subsection{Proof of Theorem \texorpdfstring{\ref{THMhinvariance}}{A'}} \label{proofs-inv-A'}

\begin{proof}
We proceed in a manner similar to the proof of Theorem~\ref{THMinvariance}. Fix $L\subset (M,\varphi)$, and suppose $L'\subset(M',\varphi')$ and $L''\subset(M'',\varphi'')$ are $\mathbb{Z}[G]$-homology concordant. Statement (1), which asserts the existence of an $n$-basing over $G$ for $L''$ relative to $L'$ for all $n$, is the statement of Corollary~\ref{nbasing-cornbasing}. The proofs of statements (2) and (4) are entirely similar to the proofs of the analogous statements (2) and (4) of Theorem~\ref{THMinvariance}. Together, these state that $\theta_n(L')$ is defined if and only if $\theta_n(L'')$ is defined, in which case $\theta_n(L')=\theta_n(L'')$.

The proof of statement (3), which gives the precise relationship between the $\theta$-invariants for $L'$ and the $\theta$-invariants for $L''$, is nearly the same as the proof of (3) of Theorem~\ref{THMinvariance}. Using the notation from the proof of Theorem~\ref{THMinvariance}, we again have $h_n(L'',\phi\circ\phi')=\overline{h}_\phi\circ\overline{h'}\circ\iota_n(L'')$ and $h_n(L',\phi)=\overline{h}_\phi\circ\iota_n(L')$. By the commutativity of the diagram from the proof of Proposition~\ref{nbasing-propconc} (which now involves a $\mathbb{Z}[G]$-homology cobordism $W$ instead of $M\times[0,1]$ as seen in the proof of Theorem~\ref{THMinvariance}), we have
\begin{align*}
\theta_n(L'',\phi\circ\phi')&=h_n(L'',\phi\circ\phi')_*[M'']=(\overline{h}_\phi)_*\overline{h'}_*\iota_n(L'')_*[M'']\\
&=(\overline{h}_\phi)_*\iota_n(L')_*[M']=h_n(L',\phi)_*[M']=\theta_n(L',\phi),
\end{align*}
which is the claimed equality in statement (3). 
\end{proof}

Just as with Theorem~\ref{THMinvariance}, Theorem~\ref{THMhinvariance} immediately implies that the $\overline{\mu}_n$ are also invariants of $\mathbb{Z}[G]$-homology concordance.


\section{Characterization and realization for \texorpdfstring{$h_n$}{hn}}\label{htpychar}

In this section, we prove characterization and realization properties of the lower central homotopy invariants $h_n$. We begin in Section~\ref{htpychar-ncob} with a more detailed discussion of $n$-cobordism of links, including the proof of Proposition~\ref{results-ncob-propncob} which states that $n$-cobordisms induce $n$-basings. In Section~\ref{htpychar-thmB}, we prove Theorem~\ref{THMcharacterization}, which asserts that the invariants $h_n$ are complete invariants of $n$-cobordism. We then prove Theorem~\ref{THMrealization}, which determines exactly which elements of $[M,X_n(L)]_0$ are realized as the $h$-invariants of links in $M$, in Section~\ref{htpychar-thmC}. For all of Section~\ref{htpychar}, all of our links live in $M$ and the fixed group $G$ is $\pi_1(M)$. Throughout this section, given a properly embedded surface $\Sigma\subset M\times[0,1]$, we will use the notation $\partial\nu\Sigma$ to denote the intersection $\partial E_\Sigma\cap\nu\Sigma$, not the manifold boundary of $\nu\Sigma$ which also includes $(M\times\{0,1\})\cap\nu\Sigma$. Some of the proofs in this section are similar to proofs seen in \cite{Heck}.


\subsection{\texorpdfstring{$n$}{n}-cobordism of links} \label{htpychar-ncob}

Recall Definition~\ref{results-ncob-defncob}: The links $L$ and $L'$ are \emph{$n$-cobordant} if there exists a properly embedded orientable $m$-component surface $\Sigma=\sqcup_i\Sigma_i\subset M\times [0,1]$ such that for each $i$:
\begin{enumerate}[label=(\arabic*)]
\item $\Sigma_i\cap (M\times\{0\})=L_i$ and $\Sigma_i\cap (M\times\{1\})=L'_i$.
\item For any basing of $\Sigma_i$ determined by a basing of $L_i$, the images of $\pi_1(L_i)$ and $\pi_1(\Sigma_i)$ in $\pi_1(E_\Sigma)$ agree modulo the $n^\text{th}$ $\pi_1(M)$-lower central subgroup, that is,  \[\text{im}\Big(\pi_1(L_i)\to\pi\to\pi^\Sigma\twoheadrightarrow\pi^\Sigma/\Gamma^\Sigma_n\Big)=\text{im}\Big(\pi_1(\Sigma_i)\to\pi^\Sigma\twoheadrightarrow\pi^\Sigma/\Gamma^\Sigma_n\Big),\] where $\pi^\Sigma=\pi_1(E_\Sigma)$ and $\Gamma^\Sigma=\ker\big(\pi^\Sigma\twoheadrightarrow\pi_1(M)\big)$, and where the inclusions $\Sigma_i\hookrightarrow E_\Sigma$ and $L_i\hookrightarrow E_L$ are induced by a trivialization of $\nu\Sigma$.
\end{enumerate}
We prove some properties of $n$-cobordisms culminating with the fact that an $n$-cobordism induces an $n$-basing for $L'$ relative to $L$.

\begin{lemma}\label{htpychar-ncob-lemmancob}
Suppose $\Sigma$ is an $n$-cobordism between $L$ and $L'$. Then the image of the composition \[H_2(\partial E_L;\mathbb{Z}[\pi_1(M)])\to H_2(\partial\nu\Sigma;\mathbb{Z}[\pi_1(M)])\to H_2(\pi^\Sigma/\Gamma^\Sigma_n;\mathbb{Z}[\pi_1(M)])\] equals the image of $H_2(\partial\nu\Sigma;\mathbb{Z}[\pi_1(M)])\to H_2(\pi^\Sigma/\Gamma^\Sigma_n;\mathbb{Z}[\pi_1(M)])$. The same holds for $L'$.
\end{lemma}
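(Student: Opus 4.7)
The plan is to work one component of $\Sigma$ at a time. Since $\Sigma=\sqcup_i\Sigma_i$ induces decompositions of $\partial\nu\Sigma$ and $\partial E_L$, and since the image of the composition is automatically contained in the image of the direct map, it suffices to show for each $i$ that every element in the image of $H_2(\partial\nu\Sigma_i;\mathbb{Z}[\pi_1(M)])\to H_2(\pi^\Sigma/\Gamma^\Sigma_n;\mathbb{Z}[\pi_1(M)])$ already lies in the image of $H_2(\partial E_{L_i};\mathbb{Z}[\pi_1(M)])$. The argument for $L'$ is the same once we use that the longitudes of $L_i$ and $L_i'$ (suitably based) agree in $\pi^\Sigma/\Gamma^\Sigma_n$, so condition~(2) transfers.

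The first step is to use the trivialization of $\nu\Sigma$ to identify $\partial\nu\Sigma_i\cong S^1\times\Sigma_i$ (meridian $\times$ core), with $\partial E_{L_i}=S^1\times L_i$ sitting inside as the product inclusion. The key structural observation is that the map $\pi_1(\partial\nu\Sigma_i)=\mathbb{Z}\times\pi_1(\Sigma_i)\to\pi^\Sigma/\Gamma^\Sigma_n$ has image inside the abelian subgroup $P:=\langle\mu,\lambda\rangle\leq\pi^\Sigma/\Gamma^\Sigma_n$, where $\mu$ and $\lambda$ are the images of the meridian and a chosen longitude of $L_i$. Indeed, the $\mathbb{Z}$-factor (the meridian of $\Sigma_i$) maps to $\mu$, and condition~(2) of Definition~\ref{results-ncob-defncob} forces the image of $\pi_1(\Sigma_i)$ to equal the image of $\pi_1(L_i)$, which is the cyclic subgroup $\langle\lambda\rangle$. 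Since the image of $\pi_1(\partial E_{L_i})=\mathbb{Z}^2$ is also $P$, both classifying maps factor as $\partial E_{L_i}, \partial\nu\Sigma_i\to K(P,1)\to K(\pi^\Sigma/\Gamma^\Sigma_n,1)$, and it suffices to compare images in $H_2(P;\mathbb{Z}[\pi_1(M)])$.

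For this comparison I would pass to $\pi_1(M)$-covers. The meridian $\mu$ bounds a meridional disk in $M\times[0,1]$ and hence maps trivially to $\pi_1(M)$, so the $\pi_1(M)$-cover of $S^1\times\Sigma_i$ is $S^1\times\widetilde\Sigma_i$ (and likewise $S^1\times\widetilde{L_i}$ for $\partial E_{L_i}$). A Künneth decomposition then reduces the comparison to showing that $H_1(\widetilde{L_i};\mathbb{Z})$ and $H_1(\widetilde\Sigma_i;\mathbb{Z})$ have the same image in the relevant $H_1$ of the cover of $K(\langle\lambda\rangle,1)$. This in turn follows from a lifted version of condition~(2): because $\Gamma^\Sigma_n\subseteq\Gamma^\Sigma$, any $x\in\pi_1(\Sigma_i)\cap\Gamma^\Sigma$ equals some $y\in\pi_1(L_i)\cap\Gamma^\Sigma$ modulo $\Gamma^\Sigma_n$ (from $x=yz$ with $z\in\Gamma^\Sigma_n$ one gets $y=xz^{-1}\in\Gamma^\Sigma$), so $\pi_1(\widetilde\Sigma_i)$ and $\pi_1(\widetilde{L_i})$ have the same image in $\Gamma^\Sigma/\Gamma^\Sigma_n$.

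The main obstacle is the twisted-coefficient bookkeeping: the cyclic image $\langle\lambda\rangle\leq\pi^\Sigma/\Gamma^\Sigma_n$ may have torsion, and $[L_i]\in\pi_1(M)$ may have finite or infinite order, so the covers $\widetilde{L_i}$, $\widetilde\Sigma_i$, and the $\pi_1(M)$-cover of $K(P,1)$ can look quite different across cases (e.g.\ $\widetilde{L_i}$ is $\mathbb{R}$ or $S^1$, and the cover of $K(P,1)$ may be disconnected). Factoring everything through the abelian group $P$ is the structural move that unifies these cases, and after that the argument is a Künneth computation together with the lifted form of condition~(2).
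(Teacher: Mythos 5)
Your approach is correct, and the key lemma you rely on—the lifted form of condition (2), showing $\pi_1(\widetilde\Sigma_i)\cap\Gamma^\Sigma$ and $\pi_1(\widetilde{L_i})\cap\Gamma^\Sigma$ have the same image in $\Gamma^\Sigma/\Gamma^\Sigma_n$—is precisely the observation at the heart of the paper's argument as well. But the route is genuinely different. The paper works directly in the target: it picks torus generators of $H_2(\partial\nu\Sigma;\mathbb{Z}[\pi_1(M)])$ in the $\pi_1(M)$-cover, uses an equivariant basing to make the meridian and longitude of each torus based loops, and then builds an explicit lift $T^2\to\overline{\partial E_L}$ of the 1-skeleton (meridian goes to a meridian of $\overline{L}$; the longitude goes to the element $x\in\pi_1(\overline{L_i}\cup\overline\tau_i)$ supplied by condition (2)); since the target is aspherical, the 2-cell extends, and agreement on $\pi_1$ gives the homotopy. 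You instead factor both peripheral classifying maps through $K(P,1)$ for the abelian image $P=\langle\mu,\lambda\rangle$ and then run a K\"unneth/wedge computation in $H_2$ of the cover of $K(P,1)$. This buys you a cleaner structural statement—you prove images already agree one stage earlier, in $H_2(P;\mathbb{Z}[\pi_1(M)])$, which is \emph{a priori} stronger than the lemma—but at the cost of more coefficient bookkeeping (your $\widetilde\Sigma_i$, $\widetilde{L_i}$ should really be $\langle[L_i]\rangle$-covers, with the full $\pi_1(M)$-coefficient statement recovered by induction/Shapiro, and you need $H_2(Q)=\Lambda^2Q$ for $Q$ abelian together with the fact that the cross-product image is exactly $\mu\wedge(\mathrm{im}\,H_1)$). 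The paper sidesteps all of that by working torus-by-torus and never naming $P$.

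One point worth making explicit if you flesh this out: your reduction ``it suffices to compare images in $H_2(P;\mathbb{Z}[\pi_1(M)])$'' is only a \emph{sufficient} reduction, and you then commit to proving that stronger statement. That is fine—the argument does go through—but it means the K\"unneth step is load-bearing, so the items you flagged as ``bookkeeping'' (disconnectedness when $[L_i]$ has infinite order, torsion in $\langle\lambda\rangle$ and in $\langle\mu\rangle$, verifying the classifying map of $S^1\times\widetilde\Sigma_i$ factors as a genuine product map) are exactly where the remaining work is, rather than being incidental. As a sanity check for the degenerate cases: when $[L_i]$ has infinite order, $\widetilde{L_i}\cong\mathbb{R}$ and $H_2(\partial E_{L_i};\mathbb{Z}[\pi_1(M)])=0$, so your argument must show the image of $H_2(\partial\nu\Sigma_i;\mathbb{Z}[\pi_1(M)])$ in $H_2(P;\mathbb{Z}[\pi_1(M)])$ is also zero—it is, because $H_1(\widetilde\Sigma_i)$ then lands in $Q\cap\langle\lambda\rangle=1$, but this is the sort of case you should verify rather than assert.
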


\begin{proof}
In this proof, we denote the $\pi_1(M)$-cover of a space $X$ by $\overline{X}$. Observe that $\partial\nu \Sigma\cong\Sigma\times S^1$ and $\overline{\partial\nu\Sigma}\cong\overline{\Sigma}\times S^1$, where $\Sigma$ and $\overline{\Sigma}$ denote parallels of the cores of $\nu\Sigma$ and $\overline{\nu\Sigma}$, respectively. Since each component of $\Sigma$ is a surface with boundary, it follows that a collection of generators of $H_2(\partial\nu\Sigma;\mathbb{Z}[\pi_1(M)])$ may be represented by embedded tori in $\overline{\partial\nu\Sigma}$. Each such torus has a symplectic basis consisting of a meridian to a component of $\overline{\Sigma}$, which we refer to as a meridian for the torus, and a curve lying on that component, which we call a longitude for the torus. Therefore, the image of any such generator of $H_2(\partial\nu\Sigma;\mathbb{Z}[\pi_1(M)])$ in $H_2(\pi^\Sigma/\Gamma^\Sigma_n;\mathbb{Z}[\pi_1(M)])$ is represented by a map $T^2\to \overline{\partial\nu\Sigma}\to K(\Gamma^\Sigma/\Gamma^\Sigma_n,1)$. It suffices to find a solution, up to homotopy, to the lifting problem
\begin{center}
\begin{tikzcd}
& \overline{\partial E_L} \arrow[d] \\
T^2 \arrow[r] \arrow[ur,dashed,"?\exists"] & K(\Gamma^\Sigma/\Gamma^\Sigma_n,1).
\end{tikzcd}
\end{center} 

Base $L$ (a parallel of $L$ in $\partial E_L$) using some basing $\tau$. This induces an equivariant basing $\overline{\tau}$ for $\overline{L}$ (a parallel of the core of $\nu\overline{L}$ in $\overline{\partial E_L}$), hence for $\overline{\Sigma}$. Extend this basing in the cover along the appropriate component of $\overline{\Sigma}$ to give a basing of the torus. The (based) meridian of $T^2$ is homotopic in $\overline{\partial\nu\Sigma}$ to a meridian for $\overline{L}$, so first define the image of the meridian of $T^2$ to be this meridian of $\overline{L}$. Next, note that the (based) longitude of $T^2$ represents an element of $\pi_1(\overline{\Sigma}\cup\overline{\tau})\leq\pi_1(\Sigma\cup\tau)$. Since $\Sigma$ is an $n$-cobordism, the image of this longitude in $\pi^\Sigma/\Gamma^\Sigma_n$ agrees with the image of some element $x\in\pi_1(L_i\cup\tau_i)$ for some $i$. But the image of the longitude is also an element of $\Gamma^\Sigma$. Since $E_L\hookrightarrow E_\Sigma$ is a map over $\pi_1(M)$, $x\in\Gamma\cap\pi_1(L_i\cup\tau_i)=\pi_1(\overline{L_i}\cup\overline{\tau_i})$. Map the longitude of $T^2$ to a curve representing $x$. This gives a lifting of the 1-skeleton of $T^2$ up to homotopy. 

Finally, we may choose any lift of the 2-cell of $T^2$. The resulting composite $T^2\to\overline{\partial E_L}\to K(\Gamma^\Sigma/\Gamma^\Sigma_n,1)$ is homotopic to the original map $T^2\to \overline{\partial\nu\Sigma}\to K(\Gamma^\Sigma/\Gamma^\Sigma_n,1)$ since the maps agree on $\pi_1$. Thus, we have produced a map $T^2\to\overline{\partial E_L}$ that represents an element of $H_2(\partial E_L;\mathbb{Z}[\pi_1(M)])$ which maps onto the image of the chosen generator of $H_2(\partial\nu\Sigma;\mathbb{Z}[\pi_1(M)])$ in $H_2(\pi^\Sigma/\Gamma^\Sigma_n;\mathbb{Z}[\pi_1(M)])$.

The same proof holds for $L'$. Recall from Section~\ref{results-ncob} that the condition \[\text{im}\Big(\pi_1(L_i)\to\pi\to\pi^\Sigma\twoheadrightarrow\pi^\Sigma/\Gamma^\Sigma_n\Big)=\text{im}\Big(\pi_1(\Sigma_i)\to\pi^\Sigma\twoheadrightarrow\pi^\Sigma/\Gamma^\Sigma_n\Big)\] implies the analogous condition for $L'$.
\end{proof}

We are now prepared to prove that $n$-cobordisms induce $n$-basings.

\begin{customprop}{\ref{results-ncob-propncob}}
Suppose $\Sigma\subset M\times[0,1]$ is an $n$-cobordism between $L$ and $L'$. Then $\Sigma$ induces a canonical $n$-basing $(\phi,\phi_\partial)$ for $L'$ relative to $L$ and a canonical isomorphism $\pi'/\Gamma'_{n+1}\xrightarrow{\cong}\pi/\Gamma_{n+1}$ over $\pi_1(M)$.
\end{customprop}

\begin{proof} We first obtain the desired isomorphisms on $\pi_1(M)$-lower central quotients. \vspace{1em}

\noindent Claim. The inclusion $E_L\hookrightarrow E_\Sigma$ induces isomorphisms $\pi/\Gamma_k\xrightarrow{\cong}\pi^\Sigma/\Gamma^\Sigma_k$ for all $k\leq n+1$. \vspace{1em}

The claim is true for $k=1$ because $E_L\hookrightarrow E_\Sigma$ is a map over $\pi_1(M)$. We use an induction argument along with the Five Lemma applied to the following diagram with exact rows:
\begin{center}
\begin{tikzcd}
1 \arrow[r] & \Gamma_{k-1}/\Gamma_{k} \arrow[r,hookrightarrow] \arrow[d] & \pi/\Gamma_{k} \arrow[r,twoheadrightarrow] \arrow[d] & \pi/\Gamma_{k-1} \arrow[r] \arrow[d] & 1 \\
1 \arrow[r] & \Gamma^\Sigma_{k-1}/\Gamma^\Sigma_{k} \arrow[r,hookrightarrow] & \pi^\Sigma/\Gamma^\Sigma_{k} \arrow[r,twoheadrightarrow] & \pi^\Sigma/\Gamma^\Sigma_{k-1} \arrow[r] & 1.
\end{tikzcd}
\end{center}

\noindent For the $k=2$ base case, it suffices to prove $H_1(E_L;\mathbb{Z}[\pi_1(M)])\to H_1(E_\Sigma;\mathbb{Z}[\pi_1(M)])$ induced by inclusion is an isomorphism since $H_1(E_L;\mathbb{Z}[\pi_1(M)])=\Gamma/\Gamma_2$, $H_1(E_\Sigma;\mathbb{Z}[\pi_1(M)])=\Gamma^\Sigma/\Gamma^\Sigma_2$, and $\pi/\Gamma=\pi^\Sigma/\Gamma^\Sigma=\pi_1(M)$.

Consider the following diagram of exact sequences of pairs. All coefficients are in $\mathbb{Z}[\pi_1(M)]$, which we suppress.
\begin{center}
\begin{tikzcd}
H_2(M) \arrow[r] \arrow[d,equal] & H_2(M,E_L) \arrow[r, twoheadrightarrow] \arrow[d] & H_1(E_L) \arrow[d] \arrow[r] & 0 \\
H_2(M\times[0,1]) \arrow[r] & H_2(M\times[0,1],E_\Sigma) \arrow[r, twoheadrightarrow] & H_1(E_\Sigma) \arrow[r] & 0
\end{tikzcd}
\end{center}
It suffices to prove that $H_2(M,E_L;\mathbb{Z}[\pi_1(M)])\to H_2(M\times[0,1],E_\Sigma;\mathbb{Z}[\pi_1(M)])$ is an isomorphism. By excision, \[H_2(M,E_L;\mathbb{Z}[\pi_1(M)])\cong H_2(\nu L,\partial E_L;\mathbb{Z}[\pi_1(M)])\] and \[H_2(M\times[0,1],E_\Sigma;\mathbb{Z}[\pi_1(M)])\cong H_2(\nu \Sigma,\partial\nu\Sigma;\mathbb{Z}[\pi_1(M)]).\]

Now consider the diagram of exact sequences of pairs below. Note that $H_2(\nu \Sigma;\mathbb{Z}[\pi_1(M)])=0$ since the components of $\Sigma$ are surfaces with boundary. Again, all coefficients are in $\mathbb{Z}[\pi_1(M)]$.
\begin{center}
\begin{tikzcd}
0 \arrow[r] & H_2(\nu L,\partial E_L) \arrow[r] \arrow[d] & H_1(\partial E_L) \arrow[r,twoheadrightarrow] \arrow[d] & H_1(\nu L) \arrow[d] \arrow[r] & 0 \\
0 \arrow[r] & H_2(\nu \Sigma, \partial\nu\Sigma) \arrow[r] & H_1(\partial\nu\Sigma) \arrow[r,twoheadrightarrow] & H_1(\nu \Sigma) \arrow[r] & 0
\end{tikzcd}
\end{center}
Even with coefficients in $\mathbb{Z}[\pi_1(M)]$, we have splittings \[H_1(\partial E_L;\mathbb{Z}[\pi_1(M)])\cong H_1(L;\mathbb{Z}[\pi_1(M)])\oplus H_1(\sqcup m_i;\mathbb{Z}[\pi_1(M)])\] and \[H_1(\partial\nu\Sigma;\mathbb{Z}[\pi_1(M)])\cong H_1(\Sigma;\mathbb{Z}[\pi_1(M)])\oplus H_1(\sqcup m_i;\mathbb{Z}[\pi_1(M)]),\] where the $m_i$ denote both meridians of $L$ and meridians of $\Sigma$. The homomorphism \[H_1(\partial E_L;\mathbb{Z}[\pi_1(M)])\to H_1(\partial\nu\Sigma;\mathbb{Z}[\pi_1(M)])\] may be written as a direct sum with respect to these splittings. Furthermore, the kernels of the rightmost maps in the diagram are precisely $H_1(\sqcup m_i;\mathbb{Z}[\pi_1(M)])$. Thus, we have (with coefficients in $\mathbb{Z}[\pi_1(M)]$)
\begin{align*}
H_2(\nu L,\partial E_L)&\cong\ker(H_1(\partial E_L)\to H_1(\nu L))\cong H_1(\sqcup m_i) \\
&\cong\ker(H_1(\partial\nu\Sigma)\to H_1(\nu \Sigma))\cong H_2(\nu \Sigma,\partial\nu\Sigma).
\end{align*}
Therefore, $\Gamma/\Gamma_2\to\Gamma^\Sigma/\Gamma^\Sigma_2$ induced by inclusion is an isomorphism. By the Five Lemma, $\pi/\Gamma_2\to\pi^\Sigma/\Gamma^\Sigma_2$ is an isomorphism. This proves the base case $k=2$.

Now suppose $\pi/\Gamma_k\xrightarrow{\cong}\pi^\Sigma/\Gamma^\Sigma_k$ is an isomorphism for some $2\leq k\leq n$. We will obtain an isomorphism $\pi/\Gamma_{k+1}\xrightarrow{\cong}\pi^\Sigma/\Gamma^\Sigma_{k+1}$. We invoke the 5-term natural exact sequence of Stallings \cite{Stallings}; see the proof of the \hyperref[THMsd]{Stallings-Dwyer Theorem} in \cite{ChaOrr}. 
We have the following diagram, where all homology groups have coefficients in $\mathbb{Z}[\pi_1(M)]$:
\begin{center}
\begin{tikzcd}
H_2(E_L) \arrow[r, twoheadrightarrow] \arrow[d] & H_2(\pi) \arrow[r] \arrow[d] & H_2(\pi/\Gamma_k) \arrow[r,twoheadrightarrow] \arrow[d,"\cong"] & \Gamma_k/\Gamma_{k+1} \arrow[d] \arrow[r] & 0 \\
H_2(E_\Sigma) \arrow[r,twoheadrightarrow] & H_2(\pi^\Sigma) \arrow[r] & H_2(\pi^\Sigma/\Gamma^\Sigma_k) \arrow[r,twoheadrightarrow] & \Gamma^\Sigma_k/\Gamma^\Sigma_{k+1} \arrow[r] & 0.
\end{tikzcd}
\end{center}
Note that $H_2(\pi;\mathbb{Z}[\pi_1(M)])$ is the second homology of the $\pi_1(M)$-cover of a $K(\pi,1)$, which is a $K(\Gamma,1)$. Therefore, $H_2(\pi;\mathbb{Z}[\pi_1(M)])=H_2(\Gamma;\mathbb{Z})$, and similarly for the other group homology terms appearing above. The surjectivity of the first map in each row follows from Hopf's Theorem, which gives an exact sequence \[\pi_2(X)\to H_2(X)\twoheadrightarrow H_2(\pi_1(X))\to 0\] for any CW complex $X$; see Section II.5 of \cite{Brown}. The rest of each row is exact by Stallings \cite{Stallings}. (Each Stallings 5-term exact sequence we use has an isomorphism between the final two terms.) The third vertical arrow is an isomorphism by the induction hypothesis. By the diagram above, we obtain an isomorphism $\Gamma_k/\Gamma_{k+1}\xrightarrow{\cong}\Gamma^\Sigma_k/\Gamma^\Sigma_{k+1}$ if we prove \[\text{im}\Big(H_2(E_L;\mathbb{Z}[\pi_1(M)])\to H_2(\pi/\Gamma_k;\mathbb{Z}[\pi_1(M)])\Big)\] is mapped isomorphically to \[\text{im}\Big(H_2(E_\Sigma;\mathbb{Z}[\pi_1(M)])\to H_2(\pi^\Sigma/\Gamma^\Sigma_k;\mathbb{Z}[\pi_1(M)])\Big).\] We then apply the Five Lemma to obtain the desired isomorphism $\pi/\Gamma_{k+1}\xrightarrow{\cong}\pi^\Sigma/\Gamma^\Sigma_{k+1}$.

We build a tower of spaces $X_j(\Sigma)$, analogous to $X_j(L)$, similarly to the proof of Proposition~\ref{nbasing-propconc}. The space $X_j(\Sigma)$ has the decomposition $X_j(\Sigma)=M_{p^\Sigma_j}^\times\cup_{\partial\nu\Sigma}\nu\Sigma$. For each $j$, there is a canonical morphism of diagrams $\mathcal{D}_j(L)\to\mathcal{D}_j(\Sigma)$ which induces a map $X_j(L)\to X_j(\Sigma)$ restricting to inclusions $\nu L\hookrightarrow\nu\Sigma$ and $E_L\hookrightarrow E_\Sigma$. Thus, we have the commutative square of Mayer-Vietoris sequences below, where all coefficients are in $\mathbb{Z}[\pi_1(M)]$. We use this diagram to study the images of $H_2(E_L)\to H_2(\pi/\Gamma_k)$ and $H_2(E_\Sigma)\to H_2(\pi^\Sigma/\Gamma^\Sigma_k)$. Observe that $H_2(\nu L)=H_2(\nu\Sigma)=0$ and $H_1(\partial E_L)\to H_1(\partial\nu\Sigma)$ is injective.

\begin{center}
\begin{tikzcd}[column sep=-0.5em]
H_2(\partial E_L) \arrow[rr] \arrow[dr,equal] \arrow[dd] & & H_2(E_L) \arrow[dr] \arrow[dd] \arrow[rr] & & H_2(M) \arrow[rr] \arrow[dd,equal] \arrow[dr] & & H_1(\partial E_L) \arrow[dd,hookrightarrow] \arrow[dr,equal] & \\ 
& H_2(\partial E_L) \arrow[rr, crossing over] & & H_2(\pi/\Gamma_k) \arrow[rr,crossing over] & & H_2(X_k(L)) \arrow[rr, crossing over] & & H_1(\partial E_L) \arrow[dd,hookrightarrow] \\
H_2(\partial\nu\Sigma) \arrow[dr,equal] \arrow[rr] & & H_2(E_\Sigma) \arrow[dr] \arrow[rr] & & H_2(M\times[0,1]) \arrow[dr] \arrow[rr] & & H_1(\partial\nu\Sigma) \arrow[dr,equal] & \\
& H_2(\partial\nu\Sigma) \arrow[from=uu, crossing over] \arrow[rr] & & H_2(\pi^\Sigma/\Gamma^\Sigma_k) \arrow[from=uu, crossing over, "\cong" near start] \arrow[rr] & & H_2(X_k(\Sigma)) \arrow[from=uu, crossing over] \arrow[rr] & & H_1(\partial\nu\Sigma)
\end{tikzcd}
\end{center}

First, we observe that \[\text{im}\Big(H_2(E_L)\to H_2(\pi/\Gamma_k)\to H_2(\pi^\Sigma/\Gamma^\Sigma_k)\Big)\subseteq\text{im}\Big(H_2(E_\Sigma)\to H_2(\pi^\Sigma/\Gamma^\Sigma_k)\Big).\] The reverse inclusion is a diagram chase which we briefly describe. Let $x\in\text{im}\big(H_2(E_\Sigma)\to H_2(\pi^\Sigma/\Gamma^\Sigma_k)\big)$. Denote the image of $x$ in $H_2(X_k(\Sigma))$ by $y$. Then $y\in\text{im}\big(H_2(M\times[0,1])\to H_2(X_k(\Sigma))\big)$. We may then consider $y$ as the image of some $z\in H_2(M)$. The image $\Delta(z)$ of $z$ under the connecting homomorphism is 0 because $\Delta(y)=0$ and $H_1(\partial E_L)\hookrightarrow H_1(\partial\nu\Sigma)$ is injective. Thus, $z$ is the image of some $\tilde{z}\in H_2(E_L)$. Since the images of $\tilde{z}$ and $x$ agree in $H_2(X_k(\Sigma))$, the image of $\tilde{z}$ in $H_2(\pi^\Sigma/\Gamma^\Sigma_k)$ differs from $x$ by the image of some element in $H_2(\partial\nu\Sigma)$. By Lemma~\ref{htpychar-ncob-lemmancob}, since $\Sigma$ is an $n$-cobordism, this difference is represented by an element in the image of $H_2(\partial E_L)$. Adjusting $\tilde{z}$ by this element, we obtain an element $\tilde{x}\in H_2(E_L)$ which maps to $x\in H_2(\pi^\Sigma/\Gamma^\Sigma_k)$. This completes the proof of the claim. The induction carries through the step from $k=n$ to $k=n+1$, after which Lemma~\ref{htpychar-ncob-lemmancob} no longer applies since an $n$-cobordism only guarantees the desired property through level $n$.

The same argument shows that the inclusion $E_{L'}\hookrightarrow E_\Sigma$ induces isomorphisms $\pi'/\Gamma'_k\xrightarrow{\cong}\pi^\Sigma/\Gamma^\Sigma_k$ for all $k\leq n+1$. We will take $\phi$, part of the data for the desired $n$-basing, to be the composite $\phi:\pi'/\Gamma'_n\xrightarrow{\cong}\pi^\Sigma/\Gamma^\Sigma_n\xrightarrow{\cong}\pi/\Gamma_n$. To complete the proof of the Proposition, we must find a canonical admissible isomorphism $\phi_\partial: H_1(\partial E_{L'})\xrightarrow{\cong} H_1(\partial E_L)$ compatible with $\phi$. Define $\phi_\partial$ as follows: Send each meridian $m_i'$ of $L'$ to the corresponding meridian $m_i$ of $L$. A longitude $\lambda_i'$ for the component $L'_i$ of $L'$ determines a corresponding copy of $\Sigma_i$ in $\partial\nu\Sigma$, which we also denote $\Sigma_i$. Send $\lambda_i'$ to the longitude $\lambda_i$ for the component $L_i$ of $L$ which is homologous to $\lambda_i'$ in this pushoff. The isomorphism $\phi_\partial$ is admissible.

We now check that $\phi_\partial$ is compatible with $\phi$. We may assume $\Sigma\subset M\times[0,1]$ misses $*\times[0,1]$. To simplify notation, we work inside the reduced cylinder $(M\times[0,1])^\times$. Choose any basing $\tau'$ for $L'$. Also choose any basing $\tau^0$ for $L$. We will eventually modify $\tau^0$. We consider these basings in $E_{L'}$ and $E_L$, respectively, and assume for each $i$ that the basings of the components $L_i'$ and $L_i$ meet $\partial\nu\Sigma_i$ in $\Sigma_i$. 

For each $i$, choose a path $\gamma_i$ on $\Sigma_i$ connecting the basings of $L_i'$ and $L_i$ and forming a based loop $\alpha_i$ together with the two basings.
The $\alpha_i$ represent elements in $\pi^\Sigma$. Let $\beta_i$ denote a lift of the image of $\alpha_i$ under the composite $\pi^\Sigma\twoheadrightarrow\pi^\Sigma/\Gamma^\Sigma_n\xrightarrow{\cong}\pi/\Gamma_n$ to an element of $\pi$. Modify the $i^{\text{th}}$ factor of the basing $\tau^0$ of $L$ by $\beta_i$. Call this new basing $\tau=\beta\tau^0$. We claim that $\tau$ is the desired basing for $L$, that is, the following diagram commutes:
\begin{center}
\begin{tikzcd}
\pi_1(\partial E_{L'}\cup\tau') \arrow[rr, "\phi_{\partial*}"', "\cong"] \arrow[d] & & \pi_1(\partial E_L\cup\tau) \arrow[d] \\
\pi'/\Gamma'_n \arrow[r, "\cong"] & \pi^\Sigma/\Gamma^\Sigma_n \arrow[from=r,"\cong"'] & \pi/\Gamma_n.
\end{tikzcd}
\end{center}

It is enough to check that this commutes for (based) meridians and longitudes of $L'$. We have two basings for $L$ in $E_\Sigma$: the basing $\tau=\beta\tau^0$ (which is a basing in $E_L$), and the basing $\tau'\gamma$ given by following the basing for $L'$ with the paths on $\Sigma$ connecting $L'$ to $L$. The based meridians and longitudes of $L$ given by these two different basings agree in $\pi^\Sigma/\Gamma^\Sigma_n$: In $\pi^\Sigma/\Gamma^\Sigma_n$, the images of $\beta_i$ and $\alpha_i$ agree. But $\alpha_i\tau^0_i$ is homotopic rel endpoints to $\tau'_i\gamma_i$. It is therefore enough to show that the following diagram commutes: 
\begin{center}
\begin{tikzcd}
\pi_1(\partial E_{L'}\cup\tau') \arrow[r, "\phi_{\partial*}"', "\cong"] \arrow[d] & \pi_1(\partial E_L\cup\tau'\gamma) \arrow[d] \\
\pi'/\Gamma'_n \arrow[r, "\cong"] & \pi^\Sigma/\Gamma^\Sigma_n.
\end{tikzcd}
\end{center}
This commutes on meridians, as the based meridians $\tau'\gamma$ induces for $L$ are just the based meridians $\tau'$ induces for $L'$ dragged down $\Sigma$ along $\gamma$. To see that this diagram also commutes on longitudes, we consider $\tau'$ as a basing for $\Sigma$ and leverage the definition of $n$-cobordism. Consider the based longitudes $l'_i=\tau'_i\lambda'_i$ for $L'$ and $l_i=\tau'_i\gamma_i\lambda_i$ for $L$ as elements of $\pi_1(\Sigma_i)$ (with the basing $\tau'_i$). Since the two longitudes cobound $\Sigma_i$, the product $l_i'l_i^{-1}$ is equal to $\prod_j[a^i_j,b^i_j]$, where $\{a^i_j,b^i_j\}_j$ is a symplectic basis for $\Sigma_i$ with appropriate basings. Since $\Sigma$ is an $n$-cobordism, the image of $\pi_1(\Sigma_i)$ in $\pi^\Sigma/\Gamma^\Sigma_n$ is cyclic, so $l_i'l_i^{-1}=\prod_j[a^i_j,b^i_j]\mapsto 1\in \pi^\Sigma/\Gamma^\Sigma_n$. Thus, the above diagram commutes, and $(\phi,\phi_\partial)$ is a canonical $n$-basing for $L'$ relative to $L$ induced by $\Sigma$.
\end{proof}


\subsection{Proof of Theorem~\ref{THMcharacterization}}\label{htpychar-thmB}

We now move toward the proof of Theorem~\ref{THMcharacterization}, which asserts the equivalence of $n$-cobordism and equality of $h$-invariants; see Section~\ref{results-ncob} for the statement of the theorem. We first prove Theorem~\ref{THMcob}, which is just Theorem~\ref{THMcharacterization} in the case where one of the two links we compare is the fixed link $L$.

\begin{customthm}{B$_0$}\label{THMcob}
Fix an $m$-component link $L\subset M$. Let $L'\subset M$ be another $m$-component link, and suppose there exists an $n$-basing $(\phi,\phi_\partial)$ for $L'$ relative to $L$. Then the following statements are equivalent:
\begin{enumerate}[label=(\arabic*)]
\item The invariant $h_n(L',\phi)$ vanishes.
\item The links $L$ and $L'$ are $n$-cobordant via an $n$-cobordism inducing the $n$-basing $(\phi,\phi_\partial)$.
\end{enumerate}
As a consequence, the following basing-independent statements are equivalent:
\begin{enumerate}[label=(\arabic*\,$'$)]
\item The invariant $h_n(L')$ vanishes.
\item The links $L$ and $L'$ are $n$-cobordant.
\end{enumerate}
\end{customthm}
\begin{proof}
\vspace{-0.5em}
The equivalence of (1$'$) and (2$'$) will be a consequence of the equivalence of (1) and (2). \vspace{1em}

\noindent (1)$\implies$(2). Suppose $h_n(L',\phi)$ vanishes. Then $h_n(L',\phi)=h_n(L,\text{id})$, so there is a based homotopy $H:M\times[0,1]\to X_n(L)$ restricting to $h_n(L,\text{id})$ on $M\times\{0\}$ and $h_n(L',\phi)$ on $M\times\{1\}$. Take the transverse preimage of $L\subset X_n(L)$ under $H$ to obtain a properly-embedded surface $\Sigma\subset M\times[0,1]$. Since $L$ and $L'$ are the respective preimages of $L$ under $h_n(L,\text{id})$ and $h_n(L',\phi)$, the surface $\Sigma$ restricts to $L$ and $L'$ on $M\times\{0\}$ and $M\times\{1\}$, respectively. 

Consider the surface exterior $E_\Sigma$, and note that $H$ restricts to a map $E_\Sigma\to M_{p_n}^\times$. This induces a homomorphism $\pi^\Sigma\to\pi/\Gamma_n$ over $\pi_1(M)$ which induces $\pi^\Sigma/\Gamma^\Sigma_n\to\pi/\Gamma_n$, where $\pi^\Sigma=\pi_1(E_\Sigma)$ and $\Gamma^\Sigma=\ker\big(\pi^\Sigma\twoheadrightarrow\pi_1(M)\big)$. We also have the homomorphism $\pi\to\pi^\Sigma$ induced by inclusion which induces a homomorphism $\pi/\Gamma_n\to\pi^\Sigma/\Gamma^\Sigma_n$. Since the homotopy $H$ restricts to $h_n(L,\text{id})$ on $M\times\{0\}$, the composition $\pi/\Gamma_n\to\pi^\Sigma/\Gamma^\Sigma_n\to\pi/\Gamma_n$ is $\text{id}_{\pi/\Gamma_n}$. In particular, $\pi/\Gamma_n\to\pi^\Sigma/\Gamma^\Sigma_n$ is injective and $\pi^\Sigma/\Gamma^\Sigma_n\to\pi/\Gamma_n$ is surjective.

The surface $\Sigma$ would be our candidate for an $n$-cobordism between the two links, except it may have too many components. In Claim 1, we perform surgeries on $\Sigma$ to ensure the preimage of each component of $L$ has only one component. More precisely, we surger $M\times[0,1]$ by attaching cancelling pairs of 5-dimensional 1- and 2-handles to the interior of $(M\times[0,1])\times\{1\}\subset(M\times[0,1])\times[0,1]$. The resulting surgery cobordism contains a cobordism from $\Sigma$ to a new surface in $M\times[0,1]$ with one fewer component. The surjectivity of $\pi^\Sigma/\Gamma^\Sigma_n\twoheadrightarrow\pi/\Gamma_n$ is key for guaranteeing that $H:M\times[0,1]\to X_n(L)$ extends over the surgery cobordism. In Claim 2, we argue that $\pi/\Gamma_n\hookrightarrow\pi^\Sigma/\Gamma^\Sigma_n$ is an isomorphism. We use an induction argument, proving $\Gamma_k/\Gamma_{k+1}\to\Gamma^\Sigma_k/\Gamma^\Sigma_{k+1}$ is an isomorphism for all $k<n$ and applying the Five Lemma. \vspace{1em}

\noindent Claim 1. We may modify the surface $\Sigma$ and the homotopy $H$ rel boundary so that the preimage of each component of $L$ under $H$ is a connected surface with boundary. \vspace{1em}

Suppose the preimage of some component $L_i$ of $L$ under $H$ has more than one component. Choose some component $\Sigma'_i$ of $H^{-1}(L_i)$, and let $\Sigma_i$ be the union of the remaining components. Find a path $\alpha$ in $E_\Sigma$ from the interior of $\partial\nu\Sigma_i$ to the interior of $\partial\nu\Sigma'_i$ whose boundary maps under $H$ to a single point of $\partial\nu L_i$ and whose image in $M_{p_n}^\times$ is nullhomotopic. Such a path exists since $\pi^\Sigma/\Gamma^\Sigma_n\twoheadrightarrow\pi/\Gamma_n$ is surjective. 

We now perform surgeries to connect $\Sigma'_i$ to $\Sigma_i$, modifying the map $H$ rel boundary. Connect $\alpha$ to $\Sigma_i$ and $\Sigma'_i$ by small intervals in $D^2$-fibers of the regular neighborhoods of the surfaces whose images in $X_n(L)$ agree. We also call this extended path $\alpha$. Its endpoints map to some point $p\in L_i\subset X_n(L)$. Parametrize the closure of a neighborhood of $p$ as $D^1\times D^2$, where $D^1\subset L_i$ is the closure of a neighborhood of $p$ in $L_i$. The endpoints of $\alpha$ have neighborhoods whose closures may be parametrized as $D^1\times D^1\times D^2$ which map under $H$ to $D^1\times D^2$ by projection onto the last two coordinates. 

Attach a 5-dimensional 1-handle $D^1\times D^4=D^1\times(D^1\times D^1\times D^2)$ to $(M\times[0,1])\times\{1\}\subset (M\times[0,1])\times[0,1]$ along the two 4-balls $D^1\times D^1\times D^2$ to obtain a surgery cobordism $Z$ from $M\times[0,1]$ to a 4-manifold $W$ which is itself a cobordism from $M$ to $M$, the result of surgery on $M\times[0,1]$ corresponding to the attachment of the 1-handle. Extend the map $H\times[0,1]:(M\times[0,1])\times[0,1]\to X_n(L)$ to $Z$ by projection of $D^1\times D^4$ to $D^4$ followed by $H$, which agrees on $\{0\}\times D^4$ and $\{1\}\times D^4$. The map $Z\to X_n(L)$ restricts to a map $H':W\to X_n(L)$ which further restricts to $h_n(L,\text{id})$ and $h_n(L',\phi)$ on $\partial W=M\sqcup M$. Define a surface $\Sigma_i''\subset W$ by removing from $\Sigma_i$ and $\Sigma'_i$ the $D^2$-neighborhoods of the endpoints of $\alpha$ and gluing in $D^1\times S^1\subset D^1\times D^4$. The surface $\Sigma''_i$ has one fewer component than $\Sigma_i\cup\Sigma'_i$.

We now attach a 5-dimensional 2-handle to $Z$ to cancel the 1-handle described above. The attaching circle for this 2-handle is the path $\alpha$ along with the core of the 1-handle pushed to the boundary of $Z$. The map $Z\to X_n(L)$ extends over the 2-handle with the image of the 2-handle missing $\nu L\subset X_n(L)$ since the image of the attaching circle in $M_{p_n}^\times$ is nullhomotopic. Because the handles cancel, the resulting space is homeomorphic to $(M\times[0,1])\times[0,1]$. The resulting map $(M\times[0,1])\times[0,1]\to X_n(L)$ restricts to a map $H'':(M\times[0,1])\times\{1\}\to X_n(L)$ which is another homotopy from $h_n(L,\text{id})$ to $h_n(L',\phi)$. 

By construction, $(H'')^{-1}(L)$ has one fewer component than $H^{-1}(L)$. We may continue inductively, reducing the number of components of $H^{-1}(L)$ until we reach an $m$-component surface in $M\times[0,1]$ cobounded by $L$ and $L'$ which is the preimage of $L$ under a homotopy $M\times[0,1]\to X_n(L)$ from $h_n(L,\text{id})$ to $h_n(L',\phi)$. We continue to call this surface $\Sigma$ and this homotopy $H$. This proves Claim 1. \vspace{1em}

\noindent Claim 2. The new surface $\Sigma\subset M\times[0,1]$ induces an isomorphism $\pi/\Gamma_n\xrightarrow{\cong}\pi^\Sigma/\Gamma^\Sigma_n$. \vspace{1em}

The proof of Claim 2 is analogous to part of the proof of Proposition~\ref{results-ncob-propncob}, so we will omit some details. Since we modified the homotopy $H$ rel boundary, the new homotopy obtained through the proof of Claim 1, which we still call $H$, restricts to $h_n(L,\text{id})$ on $M\times\{0\}$. Thus, the composition $\pi/\Gamma_n\to\pi^\Sigma/\Gamma^\Sigma_n\to\pi/\Gamma_n$ remains $\text{id}_{\pi/\Gamma_n}$. In fact, the composition $\pi/\Gamma_k\to\pi^\Sigma/\Gamma^\Sigma_k\to\pi/\Gamma_k$ is $\text{id}_{\pi/\Gamma_k}$ for all $k\leq n$. So $\pi/\Gamma_k\hookrightarrow\pi^\Sigma/\Gamma^\Sigma_k$ is injective for all $k\leq n$.

The proof of Claim 2 for $n=2$ is entirely analogous to part of the proof of Proposition~\ref{results-ncob-propncob}. We now use induction. Suppose $\pi/\Gamma_k\to\pi^\Sigma/\Gamma^\Sigma_k$ is an isomorphism for some $2\leq k< n$. We have already seen that $\pi/\Gamma_{k+1}\hookrightarrow\pi^\Sigma/\Gamma^\Sigma_{k+1}$ is injective. Consider the diagram of short exact sequences below.
\begin{center}
\begin{tikzcd}
1 \arrow[r] & \Gamma_k/\Gamma_{k+1} \arrow[r,hookrightarrow] \arrow[d, hookrightarrow] & \pi/\Gamma_{k+1} \arrow[r,twoheadrightarrow] \arrow[d, hookrightarrow] & \pi/\Gamma_k \arrow[r] \arrow[d,"\cong"] & 1 \\
1 \arrow[r] & \Gamma^\Sigma_k/\Gamma^\Sigma_{k+1} \arrow[r,hookrightarrow] & \pi^\Sigma/\Gamma^\Sigma_{k+1} \arrow[r,twoheadrightarrow] & \pi^\Sigma/\Gamma^\Sigma_k \arrow[r] & 1
\end{tikzcd}
\end{center}
To prove $\pi/\Gamma_{k+1}\hookrightarrow\pi^\Sigma/\Gamma^\Sigma_{k+1}$ is also surjective, it suffices to show $\Gamma_k/\Gamma_{k+1}\hookrightarrow\Gamma^\Sigma_k/\Gamma^\Sigma_{k+1}$ is surjective. We again invoke the Stallings 5-term natural exact sequence \cite{Stallings} in the following diagram:

\begin{center}
\begin{tikzcd}
H_2(\Gamma) \arrow[r] \arrow[d] & H_2(\Gamma/\Gamma_k) \arrow[r,twoheadrightarrow] \arrow[d,"\cong"] & \Gamma_k/\Gamma_{k+1} \arrow[d] \arrow[r] & 0 \\
H_2(\Gamma^\Sigma) \arrow[r] & H_2(\Gamma^\Sigma/\Gamma^\Sigma_k) \arrow[r,twoheadrightarrow] & \Gamma^\Sigma_k/\Gamma^\Sigma_{k+1} \arrow[r] & 0.
\end{tikzcd}
\end{center}
Since the composite $H_2(\Gamma/\Gamma_k)\xrightarrow{\cong} H_2(\Gamma^\Sigma/\Gamma^\Sigma_k)\twoheadrightarrow\Gamma^\Sigma_k/\Gamma^\Sigma_{k+1}$ is surjective, so is $\Gamma_k/\Gamma_{k+1}\to\Gamma^\Sigma_k/\Gamma^\Sigma_{k+1}$. This proves Claim 2.

We now have the following diagram of groups induced by the surface $\Sigma\subset M\times[0,1]$ and the homotopy $H$:
\begin{center}
\begin{tikzcd}
\pi'/\Gamma'_n \arrow[d] \arrow[dr,"\phi", "\cong"'] & \\
\pi^\Sigma/\Gamma^\Sigma_n \arrow[r] & \pi/\Gamma_n. \\
\pi/\Gamma_n \arrow[u,"\cong"] \arrow[ur,equal] &
\end{tikzcd}
\end{center}
Thus, we also have isomorphisms $\pi'/\Gamma'_n\xrightarrow{\cong}\pi^\Sigma/\Gamma^\Sigma_n$ and $\pi^\Sigma/\Gamma^\Sigma_n\xrightarrow{\cong}\pi/\Gamma_n$. In particular, since the map $M\times[0,1]\to X_n(L)$ sends $\Sigma$ to $L$, for each component $\Sigma_i$ of $\Sigma$ the image of $\pi_1(\Sigma_i)$ is sent to the image of $\pi_1(L_i)$ under the isomorphism $\pi^\Sigma/\Gamma^\Sigma_n\xrightarrow{\cong}\pi/\Gamma_n$. Because the inverse of this isomorphism is induced by the inclusion $E_L\hookrightarrow E_\Sigma$, the surface $\Sigma$ is an $n$-cobordism by definition. Furthermore, $\Sigma$ induces the isomorphisms $\phi$ and $\phi_\partial$ by construction. This proves (1) implies (2). \vspace{1em}

\noindent (2)$\implies$(1). Now suppose $\Sigma\subset M\times[0,1]$ is an $n$-cobordism between $L$ and $L'$. By Proposition~\ref{results-ncob-propncob}, $\Sigma$ induces some $n$-basing, say, $(\phi,\phi_\partial)$. This yields inclusions of the pairs $(E_L,\partial E_L)$ and $(E_{L'},\partial E_{L'})$ into $(E_\Sigma,\partial\nu\Sigma)$ which induce the isomorphism $\phi_\partial$ on the boundary. We have the following diagram of isomorphisms:
\begin{center}
\begin{tikzcd}
\pi'/\Gamma'_n \arrow[d,"\cong"'] \arrow[dr,"\phi", "\cong"'] & \\
\pi^\Sigma/\Gamma^\Sigma_n \arrow[r,"\cong"] & \pi/\Gamma_n. \\
\pi/\Gamma_n \arrow[u,"\cong"] \arrow[ur,equal] &
\end{tikzcd}
\end{center}
We would like to obtain the diagram of pairs of spaces below. We wish to construct the map $(E_\Sigma,\partial\nu\Sigma)\to(M_{p_n}^\times,\partial E_L)$.
\begin{center}
\begin{tikzcd}
(E_{L'},\partial E_{L'}) \arrow[d,hookrightarrow] \arrow[dr] & \\
(E_\Sigma,\partial\nu\Sigma) \arrow[r,dashed] & (M_{p_n}^\times,\partial E_L) \\
(E_L,\partial E_L) \arrow[u,hookrightarrow] \arrow[ur] &
\end{tikzcd}
\end{center}

We first extend the maps on $E_L$ and $E_{L'}$ to $\partial\nu\Sigma$. Choose a basing for $L$ (which induces a basing on $\Sigma$), and choose a trivialization of $\nu L$ (hence $\nu\Sigma$ and $\nu L'$). We have the diagram below for each $i$, where $L_i$ and $\Sigma_i$ are parallels of the cores of $\nu L$ and $\nu\Sigma$, respectively, induced by the trivializations.
\begin{center}
\begin{tikzcd}
\pi_1(L_i) \arrow[d] \arrow[r] & \pi/\Gamma_n \arrow[d,"\cong"] \\
\pi_1(\Sigma_i) \arrow[r] & \pi^\Sigma/\Gamma^\Sigma_n
\end{tikzcd}
\end{center}

Since $\Sigma$ is an $n$-cobordism, the image of $\pi_1(L_i)$ in $\pi^\Sigma/\Gamma^\Sigma_n$ is onto the image of $\pi_1(\Sigma_i)$. Because $\pi/\Gamma_n\xrightarrow{\cong}\pi^\Sigma/\Gamma^\Sigma_n$ is an isomorphism, the same is true in $\pi/\Gamma_n$. We extend the map to the 1-skeleton of $\Sigma$ by sending each based loop in $\Sigma$ to a based loop in $L$ whose image agrees in $\pi/\Gamma_n$. There is no obstruction to defining a map from the 2-skeleton of $\Sigma$ (i.e. all of $\Sigma$) to $L$. To complete the extension to $\partial\nu\Sigma$, we use the trivializations to map $\partial\nu\Sigma\cong S^1\times\Sigma\to S^1\times L\cong\partial E_L$. We now have a map of pairs $(E_L\cup\partial\nu\Sigma\cup E_{L'},\partial\nu\Sigma)\to(M_{p_n}^\times,\partial E_L)$.

As $M_{p_n}^\times$ is an Eilenberg-MacLane space, there is no obstruction to extending to all of $E_\Sigma$ given the diagram of group isomorphisms above. Thus, we obtain a map of pairs $(E_\Sigma,\partial\nu\Sigma)\to(M_{p_n}^\times,\partial E_L)$ which fits into the diagram above. This map restricts to a bundle map $\partial\nu\Sigma\to\partial E_L$ from the sphere bundle of $\Sigma$ to the sphere bundle of $L\subset X_n(L)$. Extend to the corresponding disk bundles $\nu\Sigma$ and $\nu L$ by coning on fibers. This yields a map $H:M\times[0,1]\to X_n(L)$ which is a homotopy between $h_n(L,\text{id})$ and $h_n(L',\phi)$. Thus, the invariant $h_n(L',\phi)$ vanishes. \vspace{1em}

We have now proven the equivalence of (1) and (2). To complete the proof of the theorem, we show the corresponding basing-independent statements are equivalent. \vspace{1em}

\noindent (1$'$) $\Longleftrightarrow$ (2$'$). The invariant $h_n(L')$ vanishes if and only if $h_n(L',\phi)$ vanishes for some $n$-basing $\phi$. By the equivalence of (1) and (2), this is true if and only if $L$ and $L'$ are $n$-cobordant via some $n$-cobordism inducing the $n$-basing $\phi$, i.e. if and only if $L$ and $L'$ are $n$-cobordant.
\end{proof}

Theorem~\ref{THMcharacterization}, which compares links $L'$ and $L''$ relative to the fixed link $L$, follows readily from Theorem~\ref{THMcob}.

\begin{proof}[Proof of Theorem~\ref{THMcharacterization}]
Fix a link $L\subset M$, and let $L', L''\subset M$ be links which admit $n$-basings $(\phi',\phi'_\partial)$ and $(\phi'',\phi''_\partial)$, respectively, relative to $L$. Lemmas~\ref{nbasing-selfbasing-lemmainverse} and~\ref{nbasing-selfbasing-lemmacompose} imply $(\psi,\psi_\partial)=((\phi')^{-1}\circ\phi'',(\phi'_\partial)^{-1}\circ\phi''_\partial)$ is an $n$-basing of $L''$ relative to $L'$. Let $\overline{h}_{\phi'}$ be the homotopy equivalence of pairs $\overline{h}_{\phi'}:(X_n(L'),\nu L')\xrightarrow{\simeq}(X_n(L),\nu L)$ induced by the $n$-basing $(\phi',\phi'_\partial)$ (provided by Proposition~\ref{nbasing-propnbasing}). Observe that $\overline{h}_{\phi'}\circ h_n(L',\text{id})=h_n(L',\phi')$ and $\overline{h}_{\phi'}\circ h_n(L'',\psi)=h_n(L'',\phi'')$. By Theorem~\ref{THMcob}, $L'$ and $L''$ are $n$-cobordant via an $n$-cobordism inducing the $n$-basing $\psi$ if and only if $h_n(L'',\psi)=h_n(L',\text{id})$. Since $\overline{h}_{\phi'}$ is a homotopy equivalence, this is the case if and only if $\overline{h}_{\phi'}\circ h_n(L'',\psi)=\overline{h}_{\phi'}\circ h_n(L',\text{id})$, that is, $h_n(L'',\phi'')=h_n(L',\phi')\in[M,X_n(L)]_0$.

The equivalence of the analogous basing-independent statements follows immediately from the equivalence of (1) and (2) in a similar manner to the proof of Theorem~\ref{THMcob}.
\end{proof}


\subsection{Proof of Theorem~\ref{THMrealization}}\label{htpychar-thmC}

Recall that Theorem~\ref{THMrealization} states precise conditions under which a homotopy class $f\in[M,X_n(L)]_0$ is realized as $h_n(L',\phi)$ for some link $L'\subset M$ and some $n$-basing $\phi$; see Section~\ref{results-htpyreal} for the full statement. Some elements of the proof of Theorem~\ref{THMrealization} mirror arguments from the proof of Theorem~\ref{THMcob}.

\begin{proof}
($\implies$). Suppose $f\in[M,X_n(L)]_0$ is realizable. Then $f=h_n(L',\phi)$ for some $m$-component link $L'\subset M$ which admits the $n$-basing $(\phi,\phi_\partial)$ relative to $L$. By Proposition~\ref{nbasing-propnbasing}, the $n$-basing $\phi$ induces a canonical homotopy equivalence of pairs \[\overline{h}_\phi:(X_n(L'),\nu L')\xrightarrow{\simeq}(X_n(L),\nu L).\] Since $f=h_n(L',\phi)$, we have by definition that $f\simeq \overline{h}_\phi\circ\iota_n(L')$. Recall that we obtain the following diagram:
\begin{center}
\begin{tikzcd}
M \arrow[r,hookrightarrow,"\iota_n(L')"] \arrow[drr] & X_n(L') \arrow[dr] & \\
& & B\pi_1(M). \\
M \arrow[r,hookrightarrow,"\iota_n(L)"'] \arrow[urr] & X_n(L) \arrow[ur] \arrow[from=uu, crossing over, "\overline{h}_\phi","\simeq"'] &
\end{tikzcd}
\end{center}
We can see that $f_*=(\overline{h}_\phi)_*\circ\iota_n(L')_*=\iota_n(L)_*$, that is, condition (2) holds, from the induced diagram on fundamental groups:
\begin{center}
\begin{tikzcd}
\pi_1(M) \arrow[r,"\cong"] \arrow[drr,equal] & \pi_1(X_n(L')) \arrow[dr,"\cong"] & \\
& & \pi_1(M). \\
\pi_1(M) \arrow[r,"\cong"'] \arrow[urr,equal] & \pi_1(X_n(L)) \arrow[ur,"\cong"'] \arrow[from=uu, crossing over, "\cong"'] &
\end{tikzcd}
\end{center}

Next, since $\overline{h}_\phi$ is a map of pairs, the composition $\Delta\circ f_*=\Delta\circ(\overline{h}_\phi)_*\circ\iota_n(L')_*$ sends $[M]$ to $\Sigma[T_i]$, so condition (1) holds.

Finally, we show condition (3) holds, which concerns the homomorphism \[\text{proj}\circ(\cap f_*[M]):H^1(X_n(L);\mathbb{Z}[\pi_1(M)])\to H_2(X_n(L);\mathbb{Z}[\pi_1(M)])/\text{im}(K_j(\pi/\Gamma_n)).\] Consider diagram (\ref{diagram-CO}) below, where all coefficients are in $\mathbb{Z}[\pi_1(M)]$ and $j<n$. Recall that $K_j(\pi/\Gamma_n)$ is the kernel of $H_2(\pi/\Gamma_n)\to H_2(\pi/\Gamma_j)$.
\begin{equation*}
\begin{tikzcd}
& H^1(M) \arrow[d,"\cap{[M]}"',"\cong"] & H^1(X_n(L)) \arrow[l,"f^*"',"\cong"] \arrow[d,"\cap f_*{[M]}"] \arrow[dr,"\text{proj}\circ(\cap f_*{[M]})"] & \\
H_2(E_{L'}) \arrow[r,"j'_*"] \arrow[d,twoheadrightarrow] & H_2(M) \arrow[r,"f_*"] & H_2(X_n(L)) \arrow[r,twoheadrightarrow,"\text{proj}"] & H_2(X_n(L))/\text{im}(K_j(\pi/\Gamma_n)) \\
H_2(\pi') \arrow[r] & H_2(\pi'/\Gamma'_n) \arrow[r,"\phi_*","\cong"'] & H_2(\pi/\Gamma_n) \arrow[r,twoheadrightarrow,"\text{proj}"] \arrow[u] & H_2(\pi/\Gamma_n)/K_j(\pi/\Gamma_n) \arrow[u]
\end{tikzcd} \tag{$\ast$} \label{diagram-CO}
\end{equation*}
The map $\cap[M]$ is an isomorphism by Poincar\'{e} duality. The map $\phi_*$ is an isomorphism by the assumption that $\phi$ is an $n$-basing.

To prove that the map $f^*$ on the top row is an isomorphism, we invoke the (cohomology) Universal Coefficient Spectral Sequence, a cohomology spectral sequence with \[E_2^{p,q}=\text{Ext}_{\mathbb{Z}[\pi_1(M)]}^p\big(H_q(-;\mathbb{Z}[\pi_1(M)),\mathbb{Z}[\pi_1(M)]\big)\] which converges to $H^*(-;\mathbb{Z}[\pi_1(M)])$. See \cite{LevineJ77} Theorem 2.3 or \cite{Rotman} Theorem 11.34 (note that the pages of this spectral sequence are the transposes of those seen in \cite{LevineJ77}, with the differentials modified accordingly). From this spectral sequence, we obtain the following natural short exact sequence akin to the Universal Coefficient Theorem for cohomology. For convenience, we suppress the coefficient ring $\mathbb{Z}[\pi_1(M)]$ in some places.
\begin{align*}
0\to\text{Ext}^1\big(H_0(-),\mathbb{Z}[\pi_1(M)]\big)&\to H^1(-;\mathbb{Z}[\pi_1(M)])\to \\
&\to\ker\Big(\text{Ext}^0\big(H_1(-),\mathbb{Z}[\pi_1(M)]\big)\to\text{Ext}^2\big(H_0(-),\mathbb{Z}[\pi_1(M)]\big)\Big)\to 0.
\end{align*}
The map $f:M\to X_n(L)$ induces a map of spectral sequences, and therefore a map of short exact sequences of the type seen above. By condition (2), $f$ induces isomorphisms on $H_i(-;\mathbb{Z}[\pi_1(M)])$, $i=0,1$, which implies $f^*:H^1(X_n(L);\mathbb{Z}[\pi_1(M)])\to H^1(M;\mathbb{Z}[\pi_1(M)])$ is an isomorphism by the Five Lemma.

We now apply the \hyperref[THMsd]{Stallings-Dwyer Theorem} with $A=\Gamma'$ and $B=\Gamma/\Gamma_n$ (see Section~\ref{prelims-lcs}). By assumption, $\Gamma'\to\Gamma/\Gamma_n$ induces an isomorphism on $n^{\text{th}}$ lower central quotients. Therefore, with $\mathbb{Z}[\pi_1(M)]$ coefficients, the map $H_2(\pi')\to H_2(\pi/\Gamma_n)/K_j(\pi/\Gamma_n)$, which is the composition of the three maps on the bottom row of diagram (\ref{diagram-CO}), is surjective: Note that $H_2(\pi';\mathbb{Z}[\pi_1(M)])$ is the second homology of the $\pi_1(M)$-cover of a $K(\pi',1)$, which is a $K(\Gamma',1)$, so we have $H_2(\pi';\mathbb{Z}[\pi_1(M)])=H_2(\Gamma';\mathbb{Z})$. Similarly, $H_2(\pi/\Gamma_n;\mathbb{Z}[\pi_1(M)])=H_2(\Gamma/\Gamma_n;\mathbb{Z})$. Combining with the surjectivity of $H_2(E_{L'})\twoheadrightarrow H_2(\pi')$ and following the diagram counter-clockwise beginning with $H_2(E_{L'})$, we see that $H_2(E_{L'})$ surjects onto the image of $H_2(\pi/\Gamma_n)/K_j(\pi/\Gamma_n)$ in $H_2(X_n(L))/\text{im}(K_j(\pi/\Gamma_n))$. Thus, the composition of maps following the diagram clockwise from $H_2(E_{L'})$ to $H_2(X_n(L))/\text{im}(K_j(\pi/\Gamma_n))$ also surjects onto the image of $H_2(\pi/\Gamma_n)/K_j(\pi/\Gamma_n)$ in $H_2(X_n(L))/\text{im}(K_j(\pi/\Gamma_n))$. This implies the image of $\text{proj}\circ(\cap f_*[M])$ contains the image of $H_2(\pi/\Gamma_n)/K_j(\pi/\Gamma_n)$. Thus, condition (3) holds for all $j<n$. \vspace{1em}

\noindent ($\impliedby$). To prove the converse, suppose $f\in[M,X_n(L)]_0$ satisfies conditions (1)-(3). We may assume the image of $f$ contains $L\subset X_n(L)$. Define $L'=f^{-1}(L)$, the transverse preimage of $L$ under $f$. The restriction of $f$ to $E_{L'}$ yields a map $E_{L'}\to M_{p_n}^\times$, hence a homomorphism $\pi'\to\pi/\Gamma_n$. Since $f_*=\iota_n(L)_*$ on fundamental groups by condition (2), this is a homomorphism over $\pi_1(M)$. In Claim 1 below, we will show that the induced homomorphism $\pi'/\Gamma'_n\to\pi/\Gamma_n$ is surjective. Claim 3 will upgrade this to an isomorphism. In between, we will need Claim 2 to arrange that $L'$ has the same number of components as $L$. \vspace{1em}

\noindent Claim 1. The restriction of $f$ to $E_{L'}$ induces a surjective homomorphism $\pi'/\Gamma'_n\twoheadrightarrow\pi/\Gamma_n$. \vspace{1em}

First, we prove $\Gamma'/\Gamma'_2\to\Gamma/\Gamma_2$ is surjective. Consider the diagram of Mayer-Vietoris sequences below corresponding to the decompositions $M=E_{L'}\cup_{\partial E_{L'}}\nu L'$ and $X_n(L)=M_{p_n}^\times\cup_{\partial E_L}\nu L$. Recall that $M_{p_n}^\times$ is a $K(\pi/\Gamma_n,1)$. All coefficients are in $\mathbb{Z}[\pi_1(M)]$.
\begin{center}
\begin{tikzcd}
H_1(\partial E_{L'}) \arrow[r,twoheadrightarrow] \arrow[d,twoheadrightarrow] & H_1(E_{L'})\oplus H_1(\nu L') \arrow[r] \arrow[d] & 0 \\
H_1(\partial E_L) \arrow[r,twoheadrightarrow] & H_1(\pi/\Gamma_n)\oplus H_1(\nu L) \arrow[r] & 0
\end{tikzcd}
\end{center}

The vertical map on the left is surjective by construction. The vertical map on the right splits as a direct sum of homomorphisms. Thus, by the diagram, \[ \Gamma'/\Gamma'_2=H_1(E_{L'};\mathbb{Z}[\pi_1(M)])\to H_1(\pi/\Gamma_n;\mathbb{Z}[\pi_1(M)])=\Gamma/\Gamma_2\] is surjective. By an argument given in the proof of the \hyperref[THMsd]{Stallings-Dwyer Theorem} (see \cite{ChaOrr}), the homomorphism $\Gamma'/\Gamma'_n\to\Gamma/\Gamma_n$ is surjective. Because $\pi'/\Gamma'_n\to\pi/\Gamma_n$ is a homomorphism over $\pi_1(M)$, the Five Lemma implies that $\pi'/\Gamma'_n\to\pi/\Gamma_n$ is surjective. \vspace{1em}

\noindent Claim 2. We may reduce the number of components of $L'$ without altering the homotopy class of the map $f$ such that the resulting link, which we continue to call $L'$, has $m$ components and maps homeomorphically to $L$. \vspace{1em}

The proof of Claim 2 is similar to the proof of Claim 1 of (1)$\implies$(2) from the proof of Theorem~\ref{THMcob} with one notable exception. When we finish reducing the number of components of $L'$, we want each component of the resulting link to map homeomorphically onto a component of $L$. Suppose some component $L_i$ of $L$ has a transverse preimage which is not connected, that is, $f^{-1}(L_i)$ is more than one component of the link $L'$. Similarly to the proof of Theorem~\ref{THMcob}, we attach 4-dimensional 1-handles to $M\times\{1\}\subset M\times[0,1]$ in which we connect the components of $f^{-1}(L_i)$, along with cancelling 4-dimensional 2-handles. We are left with a homotopy $M\times[0,1]\to X_n(L)$ which is $f$ on $M\times\{0\}$ and a map $g$ on $M\times\{1\}$ such that the preimage of $L_i$ under $g$ is a single component $L'_i$. Just as in the proof of Theorem~\ref{THMcob}, we leverage the surjection $\pi'/\Gamma'_n\twoheadrightarrow\pi/\Gamma_n$. By condition (1), $\Delta g_*[M]=\Delta f_*[M]=\sum_i[T_i]$. Thus, by additional homotopy of $g$ (which we may take to be the identity outside a neighborhood of $\nu L'_i$), we may arrange that $g$ maps $L'_i$ homeomorphically to $L_i$. We do the same for all components of $L$ and obtain a map, which we continue to call $g$, such that $g\simeq f$ and $g^{-1}(L)$ is an $m$-component link. This proves Claim 2. \vspace{1em}

\noindent Claim 3. We have an isomorphism $\pi'/\Gamma'_n\xrightarrow{\cong}\pi/\Gamma_n$. \vspace{1em}

Observe that the surgeries performed in the proof of Claim 2 did not alter the surjectivity of the homomorphism $\pi'/\Gamma'_n\twoheadrightarrow\pi/\Gamma_n$.

To begin, we show $\Gamma'/\Gamma'_2=H_1(E_{L'};\mathbb{Z}[\pi_1(M)])\to H_1(E_L;\mathbb{Z}[\pi_1(M)])=\Gamma/\Gamma_2$ is now an isomorphism. Consider the diagram of Mayer-Vietoris sequences below corresponding to the decompositions $M=E_{L'}\cup_{\partial E_{L'}}\nu L'$, $X_n(L)=M_{p_n}^\times\cup_{\partial E_L}\nu L$, and $M=E_L\cup_{\partial E_L}\nu L$. All coefficients are in $\mathbb{Z}[\pi_1(M)]$.

\begin{center}
\begin{tikzcd}
H_2(M) \arrow[r] \arrow[d,"g_*"] & H_1(\partial E_{L'}) \arrow[d,"\cong"] \arrow[r] & H_1(E_{L'})\oplus H_1(\nu L') \arrow[r] \arrow[d] & 0 \\
H_2(X_n(L)) \arrow[r] & H_1(\partial E_L) \arrow[r] & H_1(\pi/\Gamma_n)\oplus H_1(\nu L) \arrow[r] & 0 \\
H_2(M) \arrow[r] \arrow[u,"\iota_n(L)_*"'] & H_1(\partial E_L) \arrow[r] \arrow[u,equal] & H_1(E_L)\oplus H_1(\nu L) \arrow[u,"\cong"'] \arrow[r] & 0
\end{tikzcd}
\end{center}

The rightmost vertical maps split as direct sums of homomorphisms, and $H_1(\nu L')$ maps isomorphically onto $H_1(\nu L)$. It suffices to show the
composition of the second column of vertical maps, which are induced by restrictions of $g$ and $\iota_n(L)$, sends $\text{im}\big(H_2(M)\to H_1(\partial E_{L'})\big)$ isomorphically to $\text{im}\big(H_2(M)\to H_1(\partial E_L)\big)$. To show this, we prove the diagram below commutes. All coefficients are still in $\mathbb{Z}[\pi_1(M)]$.

\begin{center}
\begin{tikzcd}
H^1(M) \arrow[r,"\cap{[M]}","\cong"'] & H_2(M) \arrow[r,"\Delta"] & H_1(\partial E_{L'}) \arrow[d,"\cong"] \\
H^1(X_n(L)) \arrow[u,"g^*","\cong"'] \arrow[d,"\iota_n(L)^*"',"\cong"] & & H_1(\partial E_L) \arrow[d,equal] \\
H^1(M) \arrow[r,"\cap{[M]}","\cong"'] & H_2(M) \arrow[r,"\Delta"] & H_1(\partial E_L)
\end{tikzcd}
\end{center}

The leftmost vertical arrows are isomorphisms by the same Universal Coefficient Spectral Sequence argument used in the forward direction of this proof, since $g_*$ and $\iota_n(L)_*$ are isomorphisms on $H_i(-;\mathbb{Z}[\pi_1(M)])$, $i=0,1$. Consider the two commutative squares below, again with coefficients in $\mathbb{Z}[\pi_1(M)]$. We use these squares to see that the diagram above commutes. The commutativity of the squares below follows from the relationship between the connecting homomorphism in a Mayer-Vietoris sequence and the cap product; see \cite{Spanier}.

\begin{center}
\begin{tabular}{cc}
\begin{tikzcd}
H^1(X_n(L)) \arrow[r,"\text{inc}^*"] \arrow[d,"\cap{(\iota_n(L)_*[M])}"'] & H^1(\partial E_L) \arrow[d,"\cap(\Delta{\iota_n(L)_*[M]})"] \\
H_2(X_n(L)) \arrow[r,"\Delta"] & H_1(\partial E_L)
\end{tikzcd}
&
\begin{tikzcd}
H^1(X_n(L)) \arrow[r,"\text{inc}^*"] \arrow[d,"\cap{(g_*[M])}"'] & H^1(\partial E_L) \arrow[d,"\cap(\Delta{g_*[M]})"] \\
H_2(X_n(L)) \arrow[r,"\Delta"] & H_1(\partial E_L)
\end{tikzcd}
\end{tabular}
\end{center}

We check that the large square commutes. Let $\alpha\in H^1(X_n(L);\mathbb{Z}[\pi_1(M)])$. Then \begin{align*}
g_*\Delta(g^*\alpha\cap[M])&=\Delta g_*(g^*\alpha\cap [M])=\Delta(\alpha\cap g_*[M])=\text{inc}^*(\alpha)\cap(\Delta g_*[M]) \\
&=\text{inc}^*(\alpha)\cap(\Delta \iota_n(L)_*[M])=\Delta(\alpha\cap \iota_n(L)_*[M]) \\
&=\Delta \iota_n(L)_*(\iota_n(L)^*\alpha\cap [M])=\iota_n(L)_*\Delta(\iota_n(L)^*\alpha\cap[M]).
\end{align*}
The third and fifth equalities follow from the commutative squares above. The fourth equality follows from condition (1) or by construction. Thus, the diagram commutes, and we obtain an isomorphism $\Gamma'/\Gamma'_2\xrightarrow{\cong}\Gamma/\Gamma_2$.

We consider the diagram (\ref{diagram-CO}) again, but with $g$ in place of $f$. Recall all coefficients are in $\mathbb{Z}[\pi_1(M)]$. The diagram commutes for all $j<n$. We require $j<n$ simply to ensure that $K_j(\pi/\Gamma_n)$ is defined. By assumption (3), the image of $\text{proj}\circ(\cap g_*[M])=\text{proj}\circ(\cap f_*[M])$ contains the image of $H_2(\pi/\Gamma_n)/K_j(\pi/\Gamma_n)$ in $H_2(X_n(L))/\text{im}(K_j(\pi/\Gamma_n))$. We claim the composition $H_2(\pi')\to H_2(\pi/\Gamma_n)/K_j(\pi/\Gamma_n)$ of the three arrows on the bottom row is onto. Claim 3 then follows from the \hyperref[THMsd]{Stallings-Dwyer Theorem}.

To prove this surjectivity, we consider the diagram of Mayer-Vietoris sequences seen previously. We copy a different part of this diagram below. All coefficients are in $\mathbb{Z}[\pi_1(M)]$. We will use that the leftmost and rightmost vertical maps are isomorphisms.

\begin{center}
\begin{tikzcd}
H_2(\partial E_{L'}) \arrow[r] \arrow[d,"\cong"] & H_2(E_{L'}) \arrow[r] \arrow[d] & H_2(M) \arrow[r] \arrow[d,"g_*"] & H_1(\partial E_{L'}) \arrow[d,"\cong"] \\
H_2(\partial E_L) \arrow[r] & H_2(\pi/\Gamma_n) \arrow[r] & H_2(X_n(L)) \arrow[r] & H_1(\partial E_L) \arrow[d,equal] \\
H_2(\partial E_L) \arrow[r] \arrow[u,equal] & H_2(E_L) \arrow[r] \arrow[u] & H_2(M) \arrow[u,"\iota_n(L)_*"'] \arrow[r] & H_1(\partial E_L)
\end{tikzcd}
\end{center}

A diagram chase using diagram (\ref{diagram-CO}) and the diagram above completes the proof of Claim 3. Let $x\in H_2(\pi/\Gamma_n)$. By condition (3), there exists $\alpha\in H^1(X_n(L))$ such that the images of $x$ and $\alpha$ agree in $H_2(X_n(L))/\text{im}(K_j(\pi/\Gamma_n))$. Let $\overline{x}$ and $a$ denote the respective images of these elements in $H_2(X_n(L))$. Then $\overline{x}-a\in\text{im}(K_j(\pi/\Gamma_n))$, so there exists some $y\in K_j(\pi/\Gamma_n)\leq H_2(\pi/\Gamma_n)$ whose image in $H_2(X_n(L))$ is $\overline{x}-a$. The element $x-y\in H_2(\pi/\Gamma_n)$ maps to $a\in H_2(X_n(L))$, so $a$ is in the image of $H_2(\pi/\Gamma_n)$. In particular, this implies that $\Delta(a)=0$, where $\Delta$ is the connecting homomorphism in the appropriate Mayer-Vietoris sequence. We apply the two isomorphisms at the top of diagram (\ref{diagram-CO}) to $\alpha$ to produce an element $b\in H_2(M)$ such that $g_*(b)=a$. By the rightmost vertical isomorphisms in the Mayer-Vietoris diagram, $\Delta(b)=0$, too. Thus, there exists $c\in H_2(E_{L'})$ which maps to $b$ in $H_2(M)$. Following diagram (\ref{diagram-CO}) counterclockwise from $H_2(E_{L'})$ to $H_2(\pi/\Gamma_n)$, consider the image $z$ of the element $c$. Since $x-y$ and $z$ both map to $a\in H_2(X_n(L))$, the Mayer-Vietoris sequence implies that $z-(x-y)$ is in the image of $H_2(\partial E_L)$. Using the isomorphism at the upper-left of the Mayer-Vietoris diagram, subtract from $c\in H_2(E_{L'})$ the image of this element. This yields $c'\in H_2(E_{L'})$ whose image in $H_2(\pi/\Gamma_n)$ is $z-(z-(x-y))=x-y$. Since $y\in K_j(\pi/\Gamma_n)$, $x-y$ and $x$ have the same image in $H_2(\pi/\Gamma_n)/K_j(\pi/\Gamma_n)$. 

Thus, the map $H_2(E_{L'})\to H_2(\pi/\Gamma_n)/K_j(\pi/\Gamma_n)$ is surjective, and so the map $H_2(\pi')\to H_2(\pi/\Gamma_n)/K_j(\pi/\Gamma_n)$ is, too. By the \hyperref[THMsd]{Stallings-Dwyer Theorem} with $A=\Gamma'$ and $B=\Gamma/\Gamma_n$, $g$ induces an isomorphism $\Gamma'/\Gamma'_n\xrightarrow{\cong}\Gamma/\Gamma_n$, hence an isomorphism $\pi'/\Gamma'_n\xrightarrow{\cong}\pi/\Gamma_n$ by the Five Lemma. This proves Claim 3. \vspace{1em}

To complete the proof of the theorem, we note that the isomorphism $\phi:\pi'/\Gamma'_n\xrightarrow{\cong}\pi/\Gamma_n$ we obtain from Claim 3 along with the isomorphism $\phi_\partial:H_1(\partial E_{L'})\xrightarrow{\cong} H_1(\partial E_L)$ constitutes an $n$-basing for $L'$ relative to $L$. Thus, $h_n(L',\phi)$ is defined, and \[h_n(L',\phi)=g=f\in[M,X_n(L)]_0.\]
\end{proof}

By the above proof, we observe

\begin{corollary}\label{htpychar-thmC-corpi2}
Let $L\subset M$ be an $m$-component link, and let $f\in[M,X_n(L)]_0$. Suppose that $\pi_2(M)=0$ or, more generally, that the following composition is zero: \[H_2(E_L;\mathbb{Z}[\pi_1(M)])\to H_2(M;\mathbb{Z}[\pi_1(M)])\to H_2(X_n(L);\mathbb{Z}[\pi_1(M)]).\]
If $f$ satisfies conditions (1) and (2) of Theorem~\ref{THMrealization}, then $f$ also satisfies condition (3).
\end{corollary}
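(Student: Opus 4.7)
The plan is to show that under the stated hypothesis, the image of $H_2(\pi/\Gamma_n)/K_j(\pi/\Gamma_n)$ in $H_2(X_n(L))/\textnormal{im}(K_j(\pi/\Gamma_n))$ is already the zero subgroup for every $j<n$. Once this is established, condition (3) of Theorem~\ref{THMrealization} holds vacuously for any $f$ satisfying (1) and (2), since the image of $\text{proj}\circ(\cap f_*[M])$ trivially contains zero. As a preliminary reduction, when $\pi_2(M)=0$, the Hurewicz theorem applied to the universal cover $\widetilde{M}$ gives $H_2(M;\mathbb{Z}[\pi_1(M)])=H_2(\widetilde{M})=\pi_2(\widetilde{M})=\pi_2(M)=0$, so the composition $\iota_n(L)_*\circ j_*$ appearing in the hypothesis is automatically zero. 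It therefore suffices to argue under the more general vanishing hypothesis, and I would do so.

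The key ingredient I would extract from the forward direction of the proof of Theorem~\ref{THMrealization}, applied in the degenerate case $L'=L$ with $\phi=\textnormal{id}$, is the Stallings--Dwyer surjection
\[
p_{n*}\colon H_2(E_L;\mathbb{Z}[\pi_1(M)])\twoheadrightarrow H_2(\pi/\Gamma_n;\mathbb{Z}[\pi_1(M)])/K_j(\pi/\Gamma_n)
\]
for all $j<n$. This follows by combining the Hopf surjection $H_2(E_L;\mathbb{Z}[\pi_1(M)])\twoheadrightarrow H_2(\pi;\mathbb{Z}[\pi_1(M)])$ with the Stallings--Dwyer Theorem applied to the tautological identification $\Gamma/\Gamma_n\xrightarrow{\cong}\Gamma/\Gamma_n$, exactly as in the proof of Theorem~\ref{THMrealization}. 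The second ingredient is the elementary observation, immediate from the construction of $X_n(L)$ as the pushout of $M_{p_n}^\times\leftarrow E_L\rightarrow M$, that the two compositions
\[
H_2(E_L)\xrightarrow{p_{n*}}H_2(\pi/\Gamma_n)\xrightarrow{(\textnormal{inc})_*}H_2(X_n(L))\quad\text{and}\quad H_2(E_L)\xrightarrow{j_*}H_2(M)\xrightarrow{\iota_n(L)_*}H_2(X_n(L))
\]
agree (up to a sign that is irrelevant for images), since they represent the same map $E_L\hookrightarrow X_n(L)$ going around the two sides of the pushout square.

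Combining these two ingredients completes the argument. Under the hypothesis the second composition is zero, hence so is the first; that is, $(\textnormal{inc})_*(p_{n*}(H_2(E_L)))=0$ in $H_2(X_n(L))$. By the Stallings--Dwyer surjection, the image $p_{n*}(H_2(E_L))$ together with $K_j(\pi/\Gamma_n)$ spans all of $H_2(\pi/\Gamma_n)$, so
\[
(\textnormal{inc})_*(H_2(\pi/\Gamma_n))\subseteq (\textnormal{inc})_*\bigl(p_{n*}(H_2(E_L))\bigr)+(\textnormal{inc})_*(K_j(\pi/\Gamma_n))=\textnormal{im}(K_j(\pi/\Gamma_n)).
\]
Passing to the quotient, the map $H_2(\pi/\Gamma_n)/K_j(\pi/\Gamma_n)\to H_2(X_n(L))/\textnormal{im}(K_j(\pi/\Gamma_n))$ is identically zero, so condition (3) is trivially satisfied.

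I do not anticipate a serious obstacle: the Stallings--Dwyer surjection is precisely what was set up inside the proof of Theorem~\ref{THMrealization}, the pushout commutativity is tautological, and the Hurewicz reduction is standard. The only care needed is to verify that the Stallings--Dwyer argument really gives a surjection onto the $\pi_1(M)$-lower-central version $H_2(\pi/\Gamma_n)/K_j(\pi/\Gamma_n)$ used in the statement, exactly as invoked in the forward direction of the proof of Theorem~\ref{THMrealization}.
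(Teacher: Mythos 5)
Your proof is correct, and it takes a genuinely more direct route than the paper's. The paper argues via the link $L'=f^{-1}(L)$: it re-runs the inductive argument from the backward direction of Theorem~\ref{THMrealization} to establish $\pi'/\Gamma'_k\cong\pi/\Gamma_k$ for $k\le n$ \emph{without} invoking condition (3), then applies the Stallings--Dwyer surjection to $H_2(\pi')$ and concludes via diagram~(\ref{diagram-CO}) that the image of $H_2(\pi/\Gamma_n)/K_j(\pi/\Gamma_n)$ in $H_2(X_n(L))/\textnormal{im}(K_j(\pi/\Gamma_n))$ is killed because the relevant map from $H_2(E_{L'})$ factors through $H_2(M;\mathbb{Z}[\pi_1(M)])$ (which vanishes when $\pi_2(M)=0$). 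That route genuinely uses conditions~(1) and~(2) of Theorem~\ref{THMrealization}, since those are what make the induction on $\pi'/\Gamma'_k\cong\pi/\Gamma_k$ go through, and the argument only mentions $L$ obliquely in the ``more generally'' case. You instead apply the Stallings--Dwyer surjection directly to the fixed link $L$ itself (via the canonical projection $\Gamma\twoheadrightarrow\Gamma/\Gamma_n$ together with the Hopf surjection), combine it with the tautological commutativity of the homotopy pushout square defining $X_n(L)$, and deduce that the image of $H_2(\pi/\Gamma_n)/K_j(\pi/\Gamma_n)$ is already zero in the relevant quotient. This bypasses $L'$ and conditions~(1)--(2) entirely, proving the slightly stronger statement that under the hypothesis \emph{every} $f\in[M,X_n(L)]_0$ vacuously satisfies condition~(3). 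Your approach is shorter, handles the $\pi_2(M)=0$ case and the more general vanishing hypothesis uniformly in a single stroke, and makes clearer exactly why the ``general'' hypothesis in the corollary is phrased in terms of $L$ rather than $L'$. (One pedantic remark: the two pushout compositions in your ingredient (b) agree on the nose, not merely up to sign; both are induced by the single map $E_L\hookrightarrow X_n(L)$, so the parenthetical sign caveat is unnecessary.)
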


This fact is particularly useful for applications of Theorem~\ref{THMrealization} to open questions like the \hyperref[CONJ]{Almost-Concordance Conjecture}, as it shows there are many instances where we do not need to check the somewhat technical condition (3).

\begin{proof}
Unless otherwise stated, all homology groups in this proof have coefficients in $\mathbb{Z}[\pi_1(M)]$. Suppose that $\pi_2(M)=0$. Then $H_2(M)=\pi_2(M)=0$. Beginning with the isomorphism $\Gamma'/\Gamma'_2\xrightarrow{\cong}\Gamma/\Gamma_2$ from the proof of Theorem~\ref{THMrealization} which did not depend on condition (3), we inductively show $\pi'/\Gamma'_k\xrightarrow{\cong}\pi/\Gamma_k$ for all $k\leq n$. 

Suppose we have an isomorphism for some $2\leq k<n$. By the final Mayer-Vietoris diagram appearing in the proof of Theorem~\ref{THMrealization}, the homomorphisms $H_2(\partial E_{L'})\to H_2(E_{L'})$ and $H_2(\partial E_L)\to H_2(E_L)$ are surjective. This implies that the images of $H_2(\pi')$ and $H_2(\pi)$ in $H_2(\pi'/\Gamma'_k)$ and $H_2(\pi/\Gamma_k)$, respectively, are equal to the images of $H_2(\partial E_{L'})$ and $H_2(\partial E_L)$ and are therefore isomorphic. By the Stallings exact sequence, we obtain an isomorphism $\Gamma'_k/\Gamma'_{k+1}\xrightarrow{\cong}\Gamma_k/\Gamma_{k+1}$, hence an isomorphism $\pi'/\Gamma'_{k+1}\xrightarrow{\cong}\pi/\Gamma_{k+1}$. This completes the induction. 

By the proof of the \hyperref[THMsd]{Stallings-Dwyer Theorem}, $H_2(\pi')\to H_2(\pi/\Gamma_n)/K_j(\pi/\Gamma_n)$ is surjective for all $j<n$. From diagram (\ref{diagram-CO}) in the proof of Theorem~\ref{THMrealization}, $H_2(E_{L'})$ surjects onto the image of $H_2(\pi/\Gamma_n)/K_j(\pi/\Gamma_n)$ in $H_2(X_n(L))/\text{im}(K_j(\pi/\Gamma_n))$. This image is 0 because the homomorphism from $H_2(E_{L'})$ factors through $H_2(M)=0$. Thus, condition (3) of Theorem~\ref{THMrealization} holds, regardless of the image of $\text{proj}\,\circ\,(\cap f_*[M])$. (In fact, the image of the map $\text{proj}\,\circ\,(\cap f_*[M])$ is also 0 because $H^1(X_n(L))\cong H^1(M)\cong H_2(M)=0$.)

The proof in the general case where the composition $H_2(E_L)\to H_2(M)\to H_2(X_n(L))$ is zero is nearly identical and leverages the same Mayer-Vietoris diagram.
\end{proof}


\section{Characterization and realization for \texorpdfstring{$\theta_n$}{thetan} and \texorpdfstring{$\overline{\mu}_n$}{mun}}\label{homchar}

In this section, we prove Theorems~\ref{THMcoker},~\ref{THMhcharacterization}, and~\ref{THMhrealization} concerning the homology concordance invariants $\theta_n$ and $\overline{\mu}_n$. Recall that Theorem~\ref{THMhcharacterization} states that the $\mathbb{Z}[G]$-homology concordance invariants $\overline{\mu}_n$ determine whether an $(n+1)$-basing exists, assuming the existence of an $n$-basing. In other words, the $G$-Milnor's invariants $\overline{\mu}_n$ determine the $G$-lower central quotients one step at a time. Theorem~\ref{THMhcharacterization} will follow quickly from Theorem~\ref{THMcoker}, which characterizes when the invariants $\theta_n$ vanish in the cokernel of $\mathcal{R}_{n+1}(L)\to\mathcal{R}_n(L)$. When $G$ is the trivial group, Theorem~\ref{THMhrealization} gives precise conditions under which elements of $H_3(X_n(L))$ are realized as (integral) homology concordance invariants of links in 3-manifolds. In this section, links are not assumed to live in the same 3-manifold. Throughout this section, given a properly embedded surface $\Sigma$ in some 4-dimensional cobordism $W$ of 3-manifolds $M$ and $M'$, we will use the notation $\partial\nu\Sigma$ to denote the intersection $\partial E_\Sigma\cap\nu\Sigma$, not the manifold boundary of $\nu\Sigma$ which also includes $(M\sqcup M')\cap\nu\Sigma$. Some of the proofs in this section are similar to arguments seen in \cite{ChaOrr}.


\subsection{Proof of Theorem~\ref{THMhcharacterization}}\label{homchar-thmD}

We prove the following theorem toward eventually proving Theorem~\ref{THMcoker} and its generalization, Theorem~\ref{THMhcharacterization}. Recall
that Theorem~\ref{THMhcharacterization} and its siblings, Theorem~\ref{THMcoker} and Corollaries~\ref{CORinductive} and~\ref{CORlift}, say that an $n$-basing lifts to an $(n+1)$-basing if and only if the corresponding $\overline{\mu}$-invariant (or $\theta$-invariant, depending on context) vanishes in an appropriate sense. Furthermore, these invariants are defined inductively and characterize when the $h$-invariants satisfy a lifting property.

\begin{theorem}\label{homchar-thmD-thmlift}
Fix an $m$-component link $L\subset M$, and let $n\geq 2$. Suppose $L'\subset M'$ and $L''\subset M''$ are $m$-component links equipped with $n$-basings $(\phi',\phi'_\partial)$ and $(\phi'',\phi''_\partial)$ over $G$ relative to $L$, so that $(\psi,\psi_\partial)=((\phi')^{-1}\circ\phi'',(\phi'_\partial)^{-1}\circ\phi''_\partial)$ is an $n$-basing over $G$ for $L''$ relative to $L'$. If $\theta^G_n(L',\phi')=\theta^G_n(L'',\phi'')\in H_3(X^G_n(L))$, then the $n$-basing $(\psi,\psi_\partial)$ lifts to an $(n+1)$-basing over $G$ for $L''$ relative to $L'$.
\end{theorem}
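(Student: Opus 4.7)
The plan is to promote the equality of $\theta$-invariants to a 4-dimensional cobordism between $M'$ and $M''$ carrying a map into $X_n^G(L)$, take the preimage of $L$ to obtain a surface cobordism between $L'$ and $L''$ that plays the role of an ``$n$-cobordism in a non-product 4-manifold'', and then mimic the proof of Proposition~\ref{results-ncob-propncob} to extract the desired $(n+1)$-basing. This parallels the proof of Theorem~\ref{THMcob}, except that the ambient 4-manifold is no longer a product.

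First I would apply surjectivity of the Thom map $\Omega_3^{SO}(X_n^G(L)) \to H_3(X_n^G(L))$ (a consequence of $\Omega_k^{SO} = 0$ for $1 \le k \le 3$ via the Atiyah--Hirzebruch spectral sequence) to the vanishing class $h_n^G(L',\phi')_*[M'] - h_n^G(L'',\phi'')_*[M''] = 0$. This produces a compact oriented 4-manifold $W$ with $\partial W = M' \sqcup (-M'')$ and a map $F \colon W \to X_n^G(L)$ extending $h_n^G(L',\phi')$ on $M'$ and $h_n^G(L'',\phi'')$ on $-M''$. Next, homotope $F$ rel boundary to be transverse to $L \subset X_n^G(L)$ and set $\Sigma = F^{-1}(L)$, a properly embedded orientable surface in $W$ with $\Sigma \cap M' = L'$ and $\Sigma \cap M'' = L''$. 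Writing $\pi^\Sigma = \pi_1(E_\Sigma)$ and $\Gamma^\Sigma = \ker(\pi^\Sigma \twoheadrightarrow G)$, the restriction $F|_{E_\Sigma}$ factors through $M_{p_n}^\times$ and induces a surjection $\pi^\Sigma/\Gamma^\Sigma_n \twoheadrightarrow \pi/\Gamma_n$. Using the 5-dimensional 1- and 2-handle surgery argument from the proof of Theorem~\ref{THMcob}, I would reduce to the case where $\Sigma$ has exactly $m$ components, each mapping homeomorphically onto the corresponding component of $L$ under $F$; these surgeries extend compatibly over $F$ thanks to the surjectivity just noted.

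Then, paralleling the Stallings--Dwyer induction in the proof of Proposition~\ref{results-ncob-propncob}, I would show that the inclusions $E_{L'} \hookrightarrow E_\Sigma$ and $E_{L''} \hookrightarrow E_\Sigma$ induce isomorphisms $\pi'/\Gamma'_k \xrightarrow{\cong} \pi^\Sigma/\Gamma^\Sigma_k$ and $\pi''/\Gamma''_k \xrightarrow{\cong} \pi^\Sigma/\Gamma^\Sigma_k$ over $G$ for all $k \le n+1$, working with the Mayer--Vietoris sequences associated to the decompositions $M' = E_{L'} \cup_{\partial E_{L'}} \nu L'$, $W = E_\Sigma \cup_{\partial \nu \Sigma} \nu \Sigma$, $M'' = E_{L''} \cup_{\partial E_{L''}} \nu L''$, and $X_n^G(L) = M_{p_n}^\times \cup_{\partial E_L} \nu L$ with $\mathbb{Z}[G]$ coefficients. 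The composite $\pi''/\Gamma''_{n+1} \xrightarrow{\cong} \pi^\Sigma/\Gamma^\Sigma_{n+1} \xrightarrow{\cong} \pi'/\Gamma'_{n+1}$, paired with $\psi_\partial$, is then the desired $(n+1)$-basing lifting $(\psi, \psi_\partial)$; compatibility with $\psi_\partial$ is inherited from the fact that $F$ already realizes the $n$-basings $\phi'$ and $\phi''$ on the boundary.

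The main obstacle is the Stallings--Dwyer step. Unlike in Proposition~\ref{results-ncob-propncob}, the 4-manifold $W$ is not $M \times [0,1]$, so the inclusions $M' \hookrightarrow W$ and $M'' \hookrightarrow W$ do not a priori induce $\mathbb{Z}[G]$-homology isomorphisms, and this is exactly the hypothesis that drives the lower-central-quotient inductions in the product setting. The key compensating input is that $F$ sends $\Sigma$ homeomorphically onto $L$, together with an analog of Lemma~\ref{htpychar-ncob-lemmancob} controlling the image of $H_2(\partial E_L; \mathbb{Z}[G])$ in $H_2(\pi^\Sigma/\Gamma^\Sigma_k; \mathbb{Z}[G])$. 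In addition, I expect one must perform further compatible 5-dimensional handle surgeries on $W \times [0,1]$ (extending $F$ by nullhomotopies in $X_n^G(L)$) to kill excess $\mathbb{Z}[G]$-homology of $W$ relative to $M'$ and $M''$, reducing the homological setup to the one governing Proposition~\ref{results-ncob-propncob}. Verifying that such simplifying surgeries can always be carried out without disturbing $\Sigma$ or the map $F$ is the most delicate aspect of the argument.
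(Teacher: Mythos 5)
Your opening moves — using $\Omega_3^{SO}(X_n^G(L)) \to H_3(X_n^G(L))$ to produce a cobordism $W$ over $X_n^G(L)$, taking the transverse preimage $\Sigma = F^{-1}(L)$, and using 5-dimensional handle surgeries to reduce $\Sigma$ to $m$ components — coincide with the paper's. The divergence comes at the point you flag as ``the most delicate aspect,'' and that delicate point is a genuine gap, not just a technical loose end.

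Your plan is to show the inclusions $E_{L'}\hookrightarrow E_\Sigma$ and $E_{L''}\hookrightarrow E_\Sigma$ induce isomorphisms on $G$-lower central quotients up to level $n+1$, mimicking Proposition~\ref{results-ncob-propncob}. But that proof is powered by the equality $H_2(M;\mathbb{Z}[\pi_1(M)]) = H_2(M\times[0,1];\mathbb{Z}[\pi_1(M)])$ and the resulting identifications in the Mayer--Vietoris diagrams; when $W$ is an arbitrary oriented bordism, none of those identifications hold, and the Stallings five-term comparison breaks down. Your proposed fix — further compatible surgeries to kill the ``excess $\mathbb{Z}[G]$-homology of $W$ rel boundary'' — is not generically available. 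Converting an arbitrary cobordism of $3$-manifolds (or its punctured version $E_\Sigma$) into a $\mathbb{Z}[G]$-homology cobordism rel boundary is a homology surgery problem in the sense of Cappell--Shaneson, carrying obstructions valued in $\Gamma$-groups; it cannot be carried out for free, and 5-dimensional 1-/2-handle pairs in $W\times[0,1]$ only change $W$ within its bordism class over $X_n^G(L)$, which is exactly the information you already started with. Indeed, if such simplifications were always possible one would effectively upgrade equality of $\theta_n$ to $\mathbb{Z}[G]$-homology concordance (modulo the genus of $\Sigma$), which is far stronger than the theorem allows. Also note that even to parallel Lemma~\ref{htpychar-ncob-lemmancob} you would need $\Sigma$ to satisfy the $n$-cobordism condition on fundamental group images in $\pi^\Sigma/\Gamma^\Sigma_n$, but that condition refers to the lower central quotients of $\pi^\Sigma$ itself, which are precisely what you do not yet control — the argument is circular.

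The paper's proof avoids the problem entirely. After building $W$ and $\Sigma$, it never tries to identify $\pi^\Sigma/\Gamma^\Sigma_k$ with $\pi'/\Gamma'_k$. Instead it postcomposes the map $(E_\Sigma,\partial\nu\Sigma)\to(M_{p_n}^\times,\partial E_L)$ with $h_{\phi'}^{-1}$ to land in $(M_{p'_n}^\times,\partial E_{L'})$, and poses the relative lifting problem of lifting $h_\psi\circ p''_n : (E_{L''},\partial E_{L''}) \to (M_{p'_n}^\times,\partial E_{L'})$ through $p'_{n+1,n}$ with the boundary prescribed by $\psi_\partial$. The single obstruction lives in $H^2(E_{L''},\partial E_{L''};A)$ with $A = \Gamma'_n/\Gamma'_{n+1}$ as a $\mathbb{Z}[G]$-module, and by naturality both $o_{L'}$ and $o_{L''}$ are restrictions of $o_\Sigma\in H^2(E_\Sigma,\partial\nu\Sigma;A)$. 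Since $o_{L'}=0$ tautologically, it suffices to show the two restriction maps out of $H^2(E_\Sigma,\partial\nu\Sigma;A)$ have the same kernel. That equality is proved by Poincar\'e--Lefschetz duality for the cobordism pair $(E_\Sigma; E_{L'}, E_{L''})$ plus the injectivity of $i'_*$ and $i''_*$ on $H_1(-;A)$, and the injectivity is established by the Universal Coefficient Spectral Sequence using only that $E_{L'}\to M_{p_n}^\times$ and $E_{L''}\to M_{p_n}^\times$ are $1$-equivalences on $\mathbb{Z}[G]$-homology (this is where $n\geq 2$ enters). No property of the homology of $W$ or of $E_\Sigma$ beyond what duality provides is ever used; the argument is robust against $W$ being a ``bad'' cobordism. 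After obtaining a lift $\widetilde\psi$, its invertibility follows from the reverse lifting problem plus the centrality of $\Gamma'_n/\Gamma'_{n+1}$ in $\Gamma'/\Gamma'_{n+1}$. You should replace the lower-central-quotient induction through $E_\Sigma$ with this obstruction-theoretic argument.
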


\begin{proof}
We suppress the group $G$ from our notation. Suppose $\theta_n(L',\phi')=\theta_n(L'',\phi'')$. Recall that $H_3(X_n(L))$ is isomorphic to the oriented bordism group $\Omega_3^{SO}(X_n(L))$. By definition, $\theta_n(L',\phi')=h_n(L',\phi')_*[M']$ and $\theta_n(L'',\phi'')=h_n(L'',\phi'')_*[M'']$. Since $\theta_n(L',\phi')=\theta_n(L'',\phi'')$, there is an oriented cobordism $W$ from $M'$ to $M''$ over the space $X_n(L)$ as in the diagram below.
\begin{center}
\begin{tikzcd}
M'' \arrow[dr, "h_n(L''{,}\phi'')"] \arrow[d, hookrightarrow] & \\
W \arrow[r, "f"] & X_n(L) \\
M' \arrow[ur, "h_n(L'{,}\phi')"'] \arrow[u, hookrightarrow] &
\end{tikzcd}
\end{center}

Taking the transverse preimage of $L\subset X_n(L)$ under the map $f$, we obtain a surface $\Sigma\subset W$ cobounded by the links $L'$ and $L''$. This surface may have more than $m$ components. We use an argument similar to the one seen in Claim 1 of $(1)\implies(2)$ in the proof of Theorem~\ref{THMcob} to reduce the number of components of $\Sigma$.

Suppose $f^{-1}(L_i)$ has more than one component for some $i$. Using the same method as the proof of Theorem~\ref{THMcob}, we attach 5-dimensional 1-handles to $W\times\{1\}\subset W\times[0,1]$ in which we connect the components of $f^{-1}(L_i)$. It is not necessary in this case, as it was in the proof of Theorem~\ref{THMcob}, to attach cancelling 2-handles. Nevertheless, we can attach cancelling 5-dimensional 2-handles and extend the map to $X_n(L)$ over these 2-handles. This produces a homotopy rel boundary $W\times[0,1]\to X_n(L)$. The preimage of $L$ under the map $W\times\{1\}\to X_n(L)$ is an $m$-component surface, which we continue to call $\Sigma$, cobounded by $L$ and $L'$. We also refer to this new map $W\to X_n(L)$ as $f$.

The pair $(E_\Sigma,\partial\nu\Sigma)$ is a cobordism of pairs between $(E_{L'},\partial E_{L'})$ and $(E_{L''},\partial E_{L''})$. We have the following diagram of pairs, obtained by removing $\nu L$ from $X_n(L)$ and its corresponding preimages under the maps $h_n(L',\phi')$, $h_n(L'',\phi'')$, and $f$:

\begin{center}
\begin{tikzcd}
(E_{L''}{,}\partial E_{L''}) \arrow[dr] \arrow[d, hookrightarrow] & \\
(E_\Sigma{,}\partial\nu\Sigma) \arrow[r] & (M_{p_n}^\times{,}\partial E_L). \\
(E_{L'}{,}\partial E_{L'}) \arrow[ur] \arrow[u, hookrightarrow] &
\end{tikzcd}
\end{center}

\noindent We postcompose with the homotopy inverse of the homotopy equivalence $h_{\phi'}:(M_{p_n'}^\times,\partial E_{L'})\xrightarrow{\simeq}(M_{p_n}^\times,\partial E_L)$ provided by Proposition~\ref{nbasing-propnbasing} induced by the $n$-basing $\phi'$ to obtain the diagram

\begin{center}
\begin{tikzcd}
(E_{L''}{,}\partial E_{L''}) \arrow[dr] \arrow[drr,"h_\psi\circ p_n''"] \arrow[d, hookrightarrow] & & \\
(E_\Sigma{,}\partial\nu \Sigma) \arrow[r] & (M_{p_n}^\times{,}\partial E_L) \arrow[r, "\simeq"] & (M_{p_n'}^\times,\partial E_{L'}). \\
(E_{L'}{,}\partial E_{L'}) \arrow[ur] \arrow[urr, "p_n'"'] \arrow[u, hookrightarrow] & &
\end{tikzcd}
\end{center}
The $n$-basing $(\psi,\psi_\partial)$ of $L''$ relative to $L'$ is now encoded in this diagram via the corresponding homotopy equivalence $h_\psi:(M_{p''_n}^\times,\partial E_{L''})\xrightarrow{\simeq}(M_{p'_n}^\times,\partial E_{L'})$. In order to lift $\psi$ to an $(n+1)$-basing, we will obtain a lift
\begin{center}
\begin{tikzcd}
& (M_{p'_{n+1}}^\times,\partial E_{L'}) \arrow[d, "p'_{n+1{,}n}"] \\
(E_{L''},\partial E_{L''}) \arrow[r,"h_\psi\circ p_n''"'] \arrow[ur, dashed] & (M_{p'_n}^\times,\partial E_{L'}).
\end{tikzcd}
\end{center}
The lift of $\partial E_{L''}$ is already determined by the isomorphism $\psi_\partial$. We therefore have a relative lifting problem
\begin{center}
\begin{tikzcd}
\partial E_{L''} \arrow[d, hookrightarrow] \arrow[r, "\psi_\partial"] & M_{p'_{n+1}}^\times \arrow[d, "p'_{n+1{,}n}"] \\
E_{L''} \arrow[r, "h_\psi\circ p_n''"'] \arrow[ur, dashed, "?\exists"] & M_{p'_n}^\times.
\end{tikzcd}
\end{center}

Since $p'_{n+1,n}$ is not necessarily a fibration, we replace $M_{p'_{n+1}}^\times$ with the mapping path space $P$ of $p'_{n+1,n}$. It suffices to find a relative lift of $h_\psi\circ p''_n$ to $P$. We may lift the 1-skeleton with no obstruction. Since $P$ is a $K(\pi'/\Gamma'_{n+1},1)$, if we can lift the 2-skeleton of $E_{L''}$, then we can also lift the 3-skeleton (i.e. all of $E_{L''}$). The map $P \to M_{p'_n}^\times$ is a fibration from a $K(\pi'/\Gamma'_{n+1},1)$ to a $K(\pi'/\Gamma'_n,1)$. Its fiber is therefore a $K(\Gamma'_n/\Gamma'_{n+1},1)$. This fibration defines a system of local coefficients given by $A=\Gamma'_n/\Gamma'_{n+1}$ as a module over $\mathbb{Z}[\pi'/\Gamma'_n]$. This system corresponds to a homomorphism $\pi'/\Gamma'_n\to\text{Aut}(A)$. Since $A$ is central in $\Gamma'/\Gamma'_{n+1}$, the restriction of this homomorphism to $\Gamma'/\Gamma'_n$ is trivial, inducing a homomorphism $G\to\text{Aut}(A)$ and giving $A$ the structure of a $\mathbb{Z}[G]$ module:
\begin{center}
\begin{tikzcd}
\Gamma'/\Gamma'_n \arrow[d, hookrightarrow] \arrow[dr,"0"] \\
\pi'/\Gamma'_n \arrow[d,twoheadrightarrow] \arrow[r] & \text{Aut}(A). \\
G \arrow[ur,dashed,"\exists"']
\end{tikzcd}
\end{center}

The obstruction to lifting the 2-skeleton of $E_{L''}$ is an element $o_{L''}\in H^2(E_{L''},\partial E_{L''};A)$, where we understand coefficients in $A$ to mean the system of local coefficients from the fibration pulled back over the appropriate spaces. This obstruction vanishes if and only if we have a relative lift of $E_{L''}$ to $P$ and, in turn, to $M_{p'_{n+1}}^\times$. Analogously, we have obstruction classes $o_{L'}$ and $o_\Sigma$ corresponding to lifting $E_{L'}$ and $E_\Sigma$. The naturality of obstruction classes gives us
\begin{center}
\begin{tikzcd}[row sep=0.5em]
H^2(E_{L'},\partial E_{L'};A) & H^2(E_\Sigma,\partial\nu \Sigma;A) \arrow[l] \arrow[r] & H^2(E_{L''},\partial E_{L''};A) \\
o_{L'} & o_\Sigma \arrow[l, mapsto] \arrow[r, mapsto] & o_{L''}.
\end{tikzcd}
\end{center}
The relative lift of $E_{L'}$ exists, as we have the inclusion map $E_{L'}\hookrightarrow M_{p'_{n+1}}^\times$. Thus, $o_{L'}=0$. It therefore suffices to prove \[ \ker\Big(H^2(E_\Sigma,\partial\nu \Sigma;A)\to H^2(E_{L'},\partial E_{L'};A)\Big)=\ker\Big(H^2(E_\Sigma,\partial\nu \Sigma;A)\to H^2(E_{L''},\partial E_{L''};A)\Big). \]

\noindent We prove this equality through a series of reductions.

First, consider the following diagram, where the horizontal arrows are Poincar\'{e} duality and each lowest vertical arrow is projection onto the first factor:

\begin{center}
\begin{tikzcd}
H^2(E_\Sigma,\partial\nu \Sigma;A) \arrow[d,"\text{inc}^*"'] \arrow[r, "\cong"]  & H_2(E_\Sigma,E_{L'}\sqcup E_{L''};A) \arrow[d,"\Delta"] \\
H^2(E_{L'}\sqcup E_{L''},\partial E_{L'}\sqcup \partial E_{L''};A) \arrow[d,equal] \arrow[r, "\cong"] & H_1(E_{L'}\sqcup E_{L''};A) \arrow[d,equal]  \\
H^2(E_{L'},\partial E_{L'};A)\oplus H^2(E_{L''},\partial E_{L''};A) \arrow[d,twoheadrightarrow] \arrow[r,"\cong"] & H_1(E_{L'};A)\oplus H_1(E_{L''};A) \arrow[d,twoheadrightarrow,"p"]  \\
H^2(E_{L'},\partial E_{L'};A) \arrow[r,"\cong"] & H_1(E_{L'};A).
\end{tikzcd}
\end{center}

\noindent As in the proof of Claim 3 of Theorem~\ref{THMrealization}, the top square commutes by the relationship between cap product and the connecting homomorphism in a Mayer-Vietoris sequence; see \cite{Spanier}. By the diagram, $\ker\big(H^2(E_\Sigma,\partial\nu \Sigma;A)\to H^2(E_{L'},\partial E_{L'};A)\big)=PD^{-1}(\ker(p\circ\Delta))$, where $PD$ is the Poincar\'{e} duality map. If $p|_{\text{im}\Delta}$ injects, then this equals $PD^{-1}(\ker\Delta)$. On the other hand, \[\ker\big(H^2(E_\Sigma,\partial\nu \Sigma;A)\to H^2(E_{L''},\partial E_{L''};A)\big)=PD^{-1}(\ker(q\circ\Delta))\supseteq PD^{-1}(\ker\Delta),\] where $q:H_1(E_{L'};A)\oplus H_1(E_{L''};A)\twoheadrightarrow H_1(E_{L''};A)$ is projection onto the second factor. Thus, the equality we wish to establish follows if both $p|_{\text{im}\Delta}$ and $q|_{\text{im}\Delta}$ inject. But
\begin{align*}
\text{im}\Delta & =\ker\Big(H_1(E_{L'}\sqcup E_{L''};A)\to H_1(E_\Sigma;A)\Big) \\
& =\Big\{\,(x,y)\in H_1(E_{L'};A)\oplus H_1(E_{L''};A)\,\Big|\,i'_*(x)+i''_*(y)=0\,\Big\},
\end{align*}
where $i'_*:H_1(E_{L'};A)\to H_1(E_\Sigma;A)$ and $i''_*:H_1(E_{L''};A)\to H_1(E_\Sigma;A)$ are induced by inclusion. Suppose $(x,y)\in\text{im}\Delta$ with $p(x,y)=x=0$. Then $i''_*(y)=0$. Similarly, if $q(x,y)=y=0$, then $i'_*(x)=0$. It therefore suffices to prove that $i'_*$ and $i''_*$ inject.

Similar to the proof of Theorem~\ref{THMrealization}, we invoke the (homology) Universal Coefficient Spectral Sequence, a homology spectral sequence with $E^2_{p,q}=\text{Tor}^{\mathbb{Z}[G]}_p(H_q(-;\mathbb{Z}[G]),A)$ which converges to $H_*(-;A)$; see \cite{LevineJ77} Theorem 2.3 or \cite{Rotman} Theorem 11.34. From this spectral sequence, we obtain the following natural short exact sequence akin to the Universal Coefficient Theorem for homology:
\[0\to \frac{H_1(-;\mathbb{Z}[G])\otimes_{\mathbb{Z}[G]} A}{\text{im}(\text{Tor}_2(H_0(-;\mathbb{Z}[G]),A))}\to H_1(-;A)\to \text{Tor}_1(H_0(-;\mathbb{Z}[G]),A)\to 0.\]

The composition $E_{L'}\to M_{p'_n}^\times\xrightarrow{\simeq} M_{p_n}^\times$ induces maps of spectral sequences, and therefore a diagram of short exact sequences of the type seen above. The isomorphisms \[H_0(E_{L'};\mathbb{Z}[G])\xrightarrow{\cong}H_0(\pi'/\Gamma'_n;\mathbb{Z}[G])\xrightarrow{\cong}H_0(\pi/\Gamma_n;\mathbb{Z}[G])\] and \[H_1(E_{L'};\mathbb{Z}[G])\xrightarrow{\cong}H_1(\pi'/\Gamma'_n;\mathbb{Z}[G])\xrightarrow{\cong}H_1(\pi/\Gamma_n;\mathbb{Z}[G])\] imply that $H_1(E_{L'};A)\xrightarrow{\cong}H_1(\pi/\Gamma_n;A)$ is an isomorphism. Note that we use $n\geq 2$ to obtain the isomorphism $H_1(E_{L'};\mathbb{Z}[G])\xrightarrow{\cong}H_1(\pi/\Gamma_n;\mathbb{Z}[G])$. The isomorphism $H_1(E_{L'};A)\xrightarrow{\cong}H_1(\pi/\Gamma_n;A)$ factors as
\begin{center}
\begin{tikzcd}
H_1(E_{L'};A) \arrow[r,"\cong"] \arrow[d,"i'_*"] & H_1(\pi/\Gamma_n;A). \\
H_1(E_\Sigma;A) \arrow[ur]
\end{tikzcd}
\end{center}
Thus, $i'_*$ injects. To show $i''_*$ injects, we repeat the argument and apply the spectral sequence to $E_{L''}$. 

The injectivity of $i'_*$ and $i''_*$ implies \[ \ker\Big(H^2(E_\Sigma,\partial\nu \Sigma;A)\to H^2(E_{L'},\partial E_{L'};A)\Big)=\ker\Big(H^2(E_\Sigma,\partial\nu \Sigma;A)\to H^2(E_{L''},\partial E_{L''};A)\Big).\] Since the obstruction class $o_{L'}$ vanishes, the obstruction class $o_{L''}$ vanishes. We obtain a lift of $h_\psi\circ p''_n:E_{L''}\to M_{p'_n}^\times$ to $M_{p'_{n+1}}^\times$ rel $\psi_\partial$, that is, a map of pairs $(E_{L''},\partial E_{L''})\to(M_{p'_{n+1}},\partial E_{L'})$. 

This map factors as 
\begin{center}
\begin{tikzcd}
(E_{L''},\partial E_{L''}) \arrow[r] \arrow[d] & (M_{p'_{n+1}}^\times,\partial E_{L'}), \\
(M_{p''_{n+1}}^\times,\partial E_{L''}) \arrow[ur, "\widetilde{h}"']
\end{tikzcd}
\end{center}
\noindent as the obstruction to such a factorization vanishes: The homomorphism $\pi''\twoheadrightarrow\pi'/\Gamma'_{n+1}$ factors through $\pi''/\Gamma''_{n+1}$ and yields a homomorphism $\widetilde{\psi}:\pi''/\Gamma''_{n+1}\to\pi'/\Gamma'_{n+1}$ lifting $\psi$. We wish to show $(\widetilde{\psi},\psi_\partial)$ is an $(n+1)$-basing for $L''$ relative to $L'$ which is a lift of $(\psi,\psi_\partial)$. By Proposition~\ref{nbasing-propnbasing}, it suffices to show that $\widetilde{\psi}:\pi''/\Gamma''_{n+1}\to\pi'/\Gamma'_{n+1}$ is an isomorphism, so that $\widetilde{h}$ is a homotopy equivalence of pairs.

To do so, we reverse the roles of $L'$ and $L''$ and instead consider the diagram
\begin{center}
\begin{tikzcd}
(E_{L''}{,}\partial E_{L''}) \arrow[dr] \arrow[drr,"p_n''"] \arrow[d, hookrightarrow] & & \\
(E_\Sigma{,}\partial\nu \Sigma) \arrow[r] & (M_{p_n}^\times{,}\partial E_L) \arrow[r, "\simeq"] & (M_{p_n''},\partial E_{L''}). \\
(E_{L'}{,}\partial E_{L'}) \arrow[ur] \arrow[urr, "h_{\psi^{-1}}\circ p_n'"'] \arrow[u, hookrightarrow] & &
\end{tikzcd}
\end{center}
We solve the relative lifting problem
\begin{center}
\begin{tikzcd}
\partial E_{L'} \arrow[d, hookrightarrow] \arrow[r, "\psi_\partial^{-1}"] & M_{p''_{n+1}}^\times \arrow[d, "p''_{n+1{,}n}"] \\
E_{L'} \arrow[r, "h_{\psi^{-1}}\circ p_n'"'] \arrow[ur, dashed, "?\exists"] & M_{p''_n}^\times
\end{tikzcd}
\end{center}
and obtain a homomorphism $\widetilde{\psi}':\pi'/\Gamma'_{n+1}\to\pi''/\Gamma''_{n+1}$ lifting $\psi^{-1}$. 

We now show that $\widetilde{\psi}$ and $\widetilde{\psi}'$ are mutually inverse. Consider the following diagram of short exact sequences. Note that $\widetilde{\psi}\circ\widetilde{\psi}'$ is a lift of $\psi\circ\psi^{-1}=\text{id}_{\pi'/\Gamma'_n}$.
\begin{center}
\begin{tikzcd}
1 \arrow[r] & \Gamma'_n/\Gamma'_{n+1} \arrow[r, hookrightarrow] \arrow[d] & \pi'/\Gamma'_{n+1} \arrow[r, twoheadrightarrow] \arrow[d,"\widetilde{\psi}\circ\widetilde{\psi}'"] & \pi'/\Gamma'_n \arrow[d, equal] \arrow[r] & 1 \\
1 \arrow[r] & \Gamma'_n/\Gamma'_{n+1} \arrow[r, hookrightarrow] & \pi'/\Gamma'_{n+1} \arrow[r,twoheadrightarrow] & \pi'/\Gamma'_n \arrow[r] & 1
\end{tikzcd}
\end{center}
To show $\widetilde{\psi}\circ\widetilde{\psi}'$ is the identity, it suffices to show $\widetilde{\psi}\circ\widetilde{\psi}'$ is the identity on $\Gamma'_n/\Gamma'_{n+1}$. Since $\widetilde{\psi}\circ\widetilde{\psi}'$ is a lift of $\text{id}_{\pi'/\Gamma'_n}$, it is a homomorphism over $G$ and therefore sends $\Gamma'/\Gamma'_{n+1}$ to itself. We show $\widetilde{\psi}\circ\widetilde{\psi}'$ is, in fact, the identity on $\Gamma'_2/\Gamma'_{n+1}$.

Let $g\in\Gamma'_2/\Gamma'_{n+1}$. Then $g$ may be written as $g=\prod_i[a_i,b_i]$, where $a_i,b_i\in\Gamma'/\Gamma'_{n+1}$. Since $\widetilde{\psi}\circ\widetilde{\psi}'$ is a lift of $\text{id}_{\pi'/\Gamma'_n}$, we have $\widetilde{\psi}\circ\widetilde{\psi}'(a_i)=a_iu_i$ and $\widetilde{\psi}\circ\widetilde{\psi}'(b_i)=b_iv_i$ for some $u_i,v_i\in\Gamma'_n/\Gamma'_{n+1}$. But $\Gamma'_n/\Gamma'_{n+1}$ is central in $\Gamma'/\Gamma'_{n+1}$, so $[a_iu_i,b_iv_i]=[a_i,b_i]$ for all $i$. Thus, \[ (\widetilde{\psi}\circ\widetilde{\psi}')(g)=\prod_i\big[(\widetilde{\psi}\circ\widetilde{\psi}')(a_i),(\widetilde{\psi}\circ\widetilde{\psi}')(b_i)\big]=\prod_i[a_iu_i,b_iv_i]=\prod_i[a_i,b_i]=g.\] It follows that $\widetilde{\psi}\circ\widetilde{\psi}'=\text{id}_{\pi'/\Gamma'_{n+1}}$. 

To complete the proof, reverse the roles of $\widetilde{\psi}$ and $\widetilde{\psi}'$ to see that $\widetilde{\psi}'=\widetilde{\psi}^{-1}$. Thus, $(\widetilde{\psi},\psi_\partial)$ constitutes an $(n+1)$-basing for $L''$ relative to $L'$ over $G$ which is a lift of the $n$-basing $(\psi,\psi_\partial)$.
\end{proof}

We now apply Theorem~\ref{homchar-thmD-thmlift} to characterize the notion of ``vanishing in the cokernel". Recall that Theorem~\ref{THMcoker} asserts that the invariant $\theta_n(L')$ relative to $L$ vanishes in the cokernel of $\mathcal{R}_{n+1}(L)\to\mathcal{R}_n(L)/\text{Aut}(\pi/\Gamma_n,\partial)$ if and only if $L'$ admits some $(n+1)$-basing relative to $L$.

\begin{proof}[Proof of Theorem~\ref{THMcoker}]
We again suppress the fixed group $G$. Fix $L\subset M$, and suppose $L'\subset M'$ admits an $n$-basing $(\phi,\phi_\partial)$ relative to $L$. The proof of this theorem follows quickly from Theorem~\ref{homchar-thmD-thmlift}. 

Suppose $L'$ admits an $(n+1)$-basing $\widetilde{\phi}$ over $G$ relative to $L$ which is a lift of $\phi$. The invariant $\theta_{n+1}(L',\widetilde{\phi})$ is sent to $\theta_n(L',\phi)$ under the map $\mathcal{R}_{n+1}(L)\to\mathcal{R}_n(L)$. Thus, by definition, $\theta_n(L',\phi)$ vanishes in the cokernel of $\mathcal{R}_{n+1}(L)\to\mathcal{R}_n(L)$.

Conversely, suppose $\theta_n(L',\phi)$ vanishes in the cokernel of $\mathcal{R}_{n+1}(L)\to\mathcal{R}_n(L)$. Then there exists a 3-manifold $(M'',\varphi'')$ with $\varphi'':\pi_1(M'')\twoheadrightarrow G$ and a link $L''\subset M''$ equipped with an $(n+1)$-basing $\widetilde{\phi'}$ over $G$ relative to $L$ such that $\theta_{n+1}(L'',\widetilde{\phi'})$ is sent to $\theta_n(L',\phi)$ under the map $\mathcal{R}_{n+1}(L)\to\mathcal{R}_n(L)$. Let $\phi'$ be the $n$-basing for $L''$ induced by the $(n+1)$-basing $\widetilde{\phi'}$. Then $\theta_n(L'',\phi')=\theta_n(L',\phi)$. By Theorem~\ref{homchar-thmD-thmlift}, the $n$-basing $\psi=(\phi')^{-1}\circ\phi$ of $L'$ relative to $L''$ lifts to an $(n+1)$-basing $\widetilde{\psi}$. We obtain an $(n+1)$-basing $\widetilde{\phi}$ for $L'$ relative to $L$ which is a lift of $\phi$ by composing with the $(n+1)$-basing $\widetilde{\phi'}$. Define $\widetilde{\phi}$ to be the $(n+1)$-basing $\widetilde{\phi'}\circ\widetilde{\psi}$. This proves the equivalence of (1) and (2).

We now prove the equivalence of the analogous basing-independent statements (1$'$) and (2$'$). Suppose $L'$ admits an $(n+1)$-basing $\widetilde{\phi'}$ over $G$ relative to $L$. Let $\phi'$ be the $n$-basing induced by $\widetilde{\phi'}$. Note that $\psi:=\phi\circ(\phi')^{-1}$ is a self-$n$-basing of $L$. Then $\theta_{n+1}(L',\widetilde{\phi'})$ maps to $\theta_n(L',\phi')$ under $\mathcal{R}_{n+1}(L)\to\mathcal{R}_n(L)$, and $\theta_n(L',\phi)=\psi\cdot\theta_n(L',\phi')$, that is, $\theta_n(L')$ vanishes in the cokernel of $\mathcal{R}_{n+1}(L)\to\mathcal{R}_n(L)/\textnormal{Aut}(\pi/\Gamma_n,\partial)$.

Conversely, suppose $\theta_n(L')$ vanishes in the cokernel of $\mathcal{R}_{n+1}(L)\to\mathcal{R}_n(L)/\textnormal{Aut}(\pi/\Gamma_n,\partial)$. Then there exists a self-$n$-basing $\psi$ for $L$ such that $\psi\cdot\theta_n(L',\phi)=\theta_n(L',\psi\circ\phi)$ is in the image of $\mathcal{R}_{n+1}(L)$. By statement (2) of this theorem, $L'$ admits an $(n+1)$-basing $\widetilde{\phi'}$ relative to $L$ which is a lift of $\psi\circ\phi$.
\end{proof}

Before proceeding to the proof of Theorem~\ref{THMhcharacterization}, we recall the definition of the equivalence relation $\sim$ defined on $\mathcal{R}_n(L)$. For $\theta\in\mathcal{R}_n(L)$, there exists a link $L'$ in some 3-manifold $M'$ with $\pi_1(M')\twoheadrightarrow G$ which admits an $n$-basing $\phi$ over $G$ relative to $L$ such that $\theta_n(L',\phi)=\theta\in H_3(X_n(L))$. The set $I_\theta$ is the image of the composition $\mathcal{R}_{n+1}(L')\to\mathcal{R}_n(L')\xrightarrow[\phi_*]{\cong}\mathcal{R}_n(L)$. We say $\theta\sim\theta'$ if $\theta'\in I_\theta$.

\begin{lemma} \label{homchar-thmD-lemmaequiv1}
The set $I_\theta$ is well-defined and $I_\theta=I_{\theta'}$ if and only if $\theta'\in I_\theta$, that is, the sets $I_\theta$ form a partition of $\mathcal{R}_n(L)$ and $\sim$ is a well-defined equivalence relation.
\end{lemma}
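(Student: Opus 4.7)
The strategy is to leverage Theorem~\ref{homchar-thmD-thmlift} to establish well-definedness of $I_\theta$, and then to deduce the partition property via parallel compositions and inverses of the lifts it provides. Throughout I will use the identity $\phi_*(\theta_n(L'',\chi))=\theta_n(L'',\phi\circ\chi)$, which is immediate from Proposition~\ref{nbasing-propnbasing} together with the functorial behavior $\overline{h}_\phi\circ\overline{h}_\chi=\overline{h}_{\phi\circ\chi}$.

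For well-definedness, I would take two representatives $(L'_1,\phi_1)$ and $(L'_2,\phi_2)$ of $\theta$. Since $\theta_n(L'_1,\phi_1)=\theta_n(L'_2,\phi_2)$, Theorem~\ref{homchar-thmD-thmlift} yields an $(n+1)$-basing $\widetilde{\psi}$ of $L'_2$ relative to $L'_1$ lifting $\psi:=\phi_1^{-1}\circ\phi_2$. Any element arising from $(L'_1,\phi_1)$ has the form $\theta_n(L'',\phi_1\circ\chi)$ for some $n$-basing $\chi$ of $L''$ relative to $L'_1$ that lifts to an $(n+1)$-basing $\widetilde{\chi}$. Rewriting $\phi_1\circ\chi=\phi_2\circ(\psi^{-1}\circ\chi)$ and noting that $\widetilde{\psi}^{-1}\circ\widetilde{\chi}$ is an $(n+1)$-basing lifting $\psi^{-1}\circ\chi$, the same element is realized from $(L'_2,\phi_2)$. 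By symmetry, $I_\theta$ is independent of the chosen representative. Here I use Lemmas~\ref{nbasing-selfbasing-lemmainverse} and~\ref{nbasing-selfbasing-lemmacompose} to ensure every composite remains a genuine basing over $G$, and the invertibility of $\widetilde{\psi}$ as an $(n+1)$-basing.

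To obtain the partition, I first observe that $\theta\in I_\theta$ always: the identity $(n+1)$-basing of $L'$ relative to itself contributes $\theta_{n+1}(L',\mathrm{id})=(\iota^G_{n+1}(L'))_*[M']\in\mathcal{R}_{n+1}(L')$, and by the tower relation $\psi^{L'}_{n+1,n}\circ\iota^G_{n+1}(L')\simeq\iota^G_n(L')$ this maps to $\theta_n(L',\mathrm{id})$ in $\mathcal{R}_n(L')$, hence to $\theta$ under $\phi_*$. In particular, $I_\theta=I_{\theta'}$ immediately forces $\theta'\in I_\theta$.

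For the converse, suppose $\theta'\in I_\theta$, so $\theta'=\theta_n(L'',\phi\circ\chi)$ with $\chi$ lifting to some $\widetilde{\chi}$. Then $(L'',\phi\circ\chi)$ realizes $\theta'$, and by the well-definedness just proved I may compute $I_{\theta'}$ using this representative. Any $\omega\in I_{\theta'}$ has the form $\theta_n(L''',\phi\circ\chi\circ\chi')$ with $\chi'$ lifting to $\widetilde{\chi'}$; the composite $\widetilde{\chi}\circ\widetilde{\chi'}$ lifts $\chi\circ\chi'$, placing $\omega$ in $I_\theta$. Conversely, any $\omega=\theta_n(L''',\phi\circ\chi'')\in I_\theta$ with $\chi''$ lifting to $\widetilde{\chi''}$ can be rewritten as $(\phi\circ\chi)_*\theta_n(L''',\chi^{-1}\circ\chi'')$, and since $\widetilde{\chi}$ is an $(n+1)$-basing hence invertible, $\widetilde{\chi}^{-1}\circ\widetilde{\chi''}$ lifts $\chi^{-1}\circ\chi''$, showing $\omega\in I_{\theta'}$. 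I expect the only real obstacle will be careful notational bookkeeping of domains of compositions; the substantive algebraic input is entirely contained in Theorem~\ref{homchar-thmD-thmlift}.
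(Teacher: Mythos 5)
Your proof is correct and follows essentially the same line of reasoning as the paper's, hinging on Theorem~\ref{homchar-thmD-thmlift} (equivalently Theorem~\ref{THMcoker}) together with the invertibility and composability of $(n+1)$-basings from Lemmas~\ref{nbasing-selfbasing-lemmainverse} and~\ref{nbasing-selfbasing-lemmacompose}. You make the reflexivity $\theta\in I_\theta$ explicit via the identity $(n+1)$-basing, a step which the paper's own proof uses implicitly when it deduces well-definedness by ``taking $\theta'=\theta$.''
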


\begin{proof}
Let $\theta\in \mathcal{R}_n(L)$ with $\theta=\theta_n(L',\phi)$ for some link $L'$ in some 3-manifold $M'$ which admits the $n$-basing $\phi$ relative to $L$. Suppose $\theta'\in I_\theta$. In particular this means $\theta'\in\mathcal{R}_n(L)$, so that $\theta'=\theta_n(L'',\phi')$ for some link $L''\subset M''$. Then $\phi^{-1}\circ\phi'$ is an $n$-basing for $L''$ relative to $L'$. Since $\theta'\in I_\theta$, we have that $\theta_n(L'',\phi^{-1}\circ\phi')\in\text{im}\big(\mathcal{R}_{n+1}(L')\to\mathcal{R}_n(L')\big)$. By Theorem~\ref{THMcoker}, the $n$-basing $\phi^{-1}\circ\phi'$ lifts to an $(n+1)$-basing $\psi$ for $L''$ relative to $L'$. 

Consider the following diagram, where the horizontal arrows are bijections of sets:
\begin{center}
\begin{tikzcd}
\mathcal{R}_{n+1}(L'') \arrow[rr,"\psi_*"', "\cong"] \arrow[d] & & \mathcal{R}_{n+1}(L') \arrow[d] \\
\mathcal{R}_n(L'') \arrow[r, "\phi'_*"', "\cong"] & \mathcal{R}_n(L) & \mathcal{R}_n(L'). \arrow[l,"\phi_*", "\cong"']
\end{tikzcd}
\end{center}
To prove $I_{\theta'}=I_\theta$, we need to see that this diagram commutes. Suppose $\theta''\in\mathcal{R}_{n+1}(L'')$ with $\theta''=\theta_{n+1}(L''',\widetilde{\phi''})$, where $\widetilde{\phi''}$ is an $(n+1)$-basing for $L'''$ relative to $L''$ inducing the $n$-basing $\phi''$. Considering the images of $\theta_{n+1}(L''',\widetilde{\phi''})$ under the maps in the diagram above, we have

\begin{center}
\begin{tikzcd}
\theta_{n+1}(L''',\widetilde{\phi''}) \arrow[rr, mapsto, "\psi_*"] \arrow[d, mapsto] & & \theta_{n+1}(L''',\psi\circ\widetilde{\phi''}) \arrow[d, mapsto] \\
\theta_n(L''',\phi'') \arrow[r, mapsto, "\phi'_*"] & \theta_n(L''',\phi'\circ\phi'') & \theta_n(L''',\phi^{-1}\circ\phi'\circ\phi''). \arrow[l, mapsto, "\phi_*"']
\end{tikzcd}
\end{center}

\noindent Thus, $I_{\theta'}=I_\theta$. Taking $\theta'=\theta$ shows that the set $I_\theta$ is well-defined, independent of our choice of $(L',\phi)$.
\end{proof}

Lemma~\ref{homchar-thmD-lemmaequiv1} allows us to prove a generalization of Theorem~\ref{THMcoker} involving the equivalence relation $\sim$. Instead of comparing one link $L'$ to a fixed link $L$, we can again compare links $L'$ and $L''$ \emph{over the fixed link $L$}.

\begin{corollary}\label{homchar-thmD-corequiv}
Fix an $m$-component link $L\subset M$, and let $n\geq 2$. Suppose $L'\subset M'$ and $L''\subset M''$ are $m$-component links equipped with $n$-basings $(\phi',\phi'_\partial)$ and $(\phi'',\phi''_\partial)$ over $G$ relative to $L$, so that $(\psi,\psi_\partial)=((\phi')^{-1}\circ\phi'',(\phi'_\partial)^{-1}\circ\phi''_\partial)$ is an $n$-basing over $G$ for $L''$ relative to $L'$. Then the $n$-basing $(\psi,\psi_\partial)$ lifts to an $(n+1)$-basing for $L''$ over $G$ relative to $L'$ if and only if $\theta_n(L',\phi')\sim\theta_n(L'',\phi'')$ in $\mathcal{R}_n(L)$.
\end{corollary}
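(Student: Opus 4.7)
The plan is to deduce the corollary formally from Theorem~\ref{homchar-thmD-thmlift}, Lemma~\ref{homchar-thmD-lemmaequiv1}, and the fact (implicit in Lemma~\ref{nbasing-selfbasing-lemmacompose} applied at level $n+1$) that a composition of lifted basings is a lift of the composition. In both directions the key is to use $(L',\phi')$ itself as the chosen realization of $\theta_n(L',\phi')$ in the definition of $I_{\theta_n(L',\phi')}$, which is legitimate thanks to Lemma~\ref{homchar-thmD-lemmaequiv1}.

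For the forward direction, I would assume that $(\psi,\psi_\partial)$ lifts to an $(n+1)$-basing $\widetilde{\psi}$ of $L''$ over $G$ relative to $L'$, so that $\theta_{n+1}(L'',\widetilde{\psi})\in\mathcal{R}_{n+1}(L')$. Pushing this class through the composition
\[\mathcal{R}_{n+1}(L')\to\mathcal{R}_n(L')\xrightarrow[(\phi')_*]{\cong}\mathcal{R}_n(L)\]
and tracking basings: the first arrow sends it to $\theta_n(L'',\psi)$ (where $\psi$ is the $n$-basing induced by $\widetilde{\psi}$, which agrees with the $\psi$ in the statement), and the second arrow sends it to $\theta_n(L'',\phi'\circ\psi)=\theta_n(L'',\phi'')$. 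Hence $\theta_n(L'',\phi'')\in I_{\theta_n(L',\phi')}$, which is exactly $\theta_n(L',\phi')\sim\theta_n(L'',\phi'')$.

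For the reverse direction, I would assume $\theta_n(L',\phi')\sim\theta_n(L'',\phi'')$, and use $(L',\phi')$ to compute $I_{\theta_n(L',\phi')}$. By definition there exists a link $L'''$ in some $(M''',\varphi''')$ and an $(n+1)$-basing $\widetilde{\rho}$ over $G$ relative to $L'$ such that, writing $\rho$ for the induced $n$-basing, the class $\theta_{n+1}(L''',\widetilde{\rho})$ maps to $\theta_n(L'',\phi'')$ under the above composition. Unwinding, this says $\theta_n(L''',\phi'\circ\rho)=\theta_n(L'',\phi'')$ in $H_3(X_n^G(L))$. Now both $\phi'\circ\rho$ and $\phi''$ are $n$-basings of $L'''$ and $L''$ (respectively) over $G$ relative to $L$, so Theorem~\ref{homchar-thmD-thmlift} applies: the induced $n$-basing $\sigma=(\phi'\circ\rho)^{-1}\circ\phi''=\rho^{-1}\circ\psi$ of $L''$ over $G$ relative to $L'''$ lifts to an $(n+1)$-basing $\widetilde{\sigma}$. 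Finally, writing $\psi=\rho\circ\sigma$ and composing the two $(n+1)$-basings via Lemma~\ref{nbasing-selfbasing-lemmacompose}, the $(n+1)$-basing $\widetilde{\rho}\circ\widetilde{\sigma}$ of $L''$ over $G$ relative to $L'$ is a lift of $\psi$, as required.

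The content of the argument is essentially bookkeeping; the genuine work has already been done in Theorem~\ref{homchar-thmD-thmlift}. The only real care needed is the reverse direction, where one must ensure that the intermediate realization $L'''$ can be inserted between $L'$ and $L''$ so that Theorem~\ref{homchar-thmD-thmlift} can be invoked with the correct basings, and that the resulting lift of $\sigma$ genuinely composes with $\widetilde{\rho}$ to lift $\psi$ (rather than some other composition on the nose). This is precisely where the compatibility clause in the definition of $n$-basing and the compositional structure from Lemmas~\ref{nbasing-selfbasing-lemmainverse} and~\ref{nbasing-selfbasing-lemmacompose} enter, and will be the main (though still routine) point to verify carefully.
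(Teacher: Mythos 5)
Your proof is correct and rests on the same essential ingredients as the paper's, but the paper's own proof is considerably shorter: it simply observes that, using $(L',\phi')$ as the chosen realization in the definition of $I_{\theta_n(L',\phi')}$ (legitimate by Lemma~\ref{homchar-thmD-lemmaequiv1}) and applying the bijection $\phi'_*:\mathcal{R}_n(L')\xrightarrow{\cong}\mathcal{R}_n(L)$, one has $\theta_n(L'',\phi'')\in I_{\theta_n(L',\phi')}$ if and only if $\theta_n(L'',\psi)$ vanishes in the cokernel of $\mathcal{R}_{n+1}(L')\to\mathcal{R}_n(L')$, and then invokes Theorem~\ref{THMcoker} (with $L'$ in the role of the fixed link) once, which gives both directions of the equivalence at a stroke. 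Your reverse direction --- introducing $L'''$, applying Theorem~\ref{homchar-thmD-thmlift}, and composing lifted basings via Lemma~\ref{nbasing-selfbasing-lemmacompose} --- is exactly the proof of the harder implication in Theorem~\ref{THMcoker}, so you have in effect re-derived that theorem inline rather than citing it. There is no gap, only a missed opportunity to shorten: Theorem~\ref{THMcoker} packages precisely the equivalence between ``vanishing in the cokernel of $\mathcal{R}_{n+1}(L')\to\mathcal{R}_n(L')$'' and ``$\psi$ lifts to an $(n+1)$-basing of $L''$ relative to $L'$,'' which is the entire content of the corollary once the definitions are unwound.
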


\noindent This is a generalization of Theorem~\ref{THMcoker} because $\theta\sim\theta_n(L,\text{id})$ if and only if $\theta$ vanishes in the cokernel of $\mathcal{R}_{n+1}(L)\to\mathcal{R}_n(L)$.

\begin{proof}
By definition, $\theta_n(L'',\phi'')\in I_{\theta_n(L',\phi')}$ if and only if $\theta_n(L'',\psi)=\theta_n(L'',(\phi')^{-1}\circ\phi'')$ vanishes in the cokernel of $\mathcal{R}_{n+1}(L')\to\mathcal{R}_n(L')$. By Theorem~\ref{THMcoker}, this happens if and only if $\psi$ lifts to an $(n+1)$-basing of $L''$ relative to $L'$.
\end{proof}

Recall that the Milnor invariant of length $n$ of the pair $(L',\phi)$, where $\phi$ is an $n$-basing over $G$ relative to $L$, is the equivalence class $\overline{\mu}_n(L',\phi)=[\theta_n(L',\phi)]$ of the $\theta$-invariant under the equivalence relation $\sim$. The Milnor invariant of length $n$ of the link $L'$ is the equivalence class $\overline{\mu}_n(L')=[\theta_n(L',\phi)]$ under the equivalence relation $\approx$, where we further quotient by the action of $\text{Aut}(\pi/\Gamma_n,\partial)$.

We now prove Theorem~\ref{THMhcharacterization} which generalizes Theorem~\ref{THMcoker}. We leverage this theorem in Section~\ref{previous} to show that the vanishing of the lower central homotopy invariants, taken in appropriate contexts, implies the vanishing of all previous versions of Milnor's invariants in dimension 3.

\begin{proof}[Proof of Theorem~\ref{THMhcharacterization}]
The equivalence of statements (1)-(3) and the equivalence of statements (1$'$)-(3$'$) are the conclusions of Theorem~\ref{THMcoker}. The equivalence of statements (4) and (5), which compare links $L'\subset M'$ and $L''\subset M''$ relative to the fixed link $L\subset M$, is a rewriting of the conclusion of Corollary~\ref{homchar-thmD-corequiv}. Finally, we prove that the analogous basing-independent statements (4$'$) and (5$'$) are equivalent. 

Let $L\subset M$, $L'\subset M'$, $L''\subset M''$, $\phi'$, and $\phi''$ be as in Corollary~\ref{homchar-thmD-corequiv}. Suppose there exists an $(n+1)$-basing $\widetilde{\psi}$ of $L''$ relative to $L'$. Let $\psi$ be the induced $n$-basing, so that $\psi=(\phi')^{-1}\circ(\phi'\circ\psi)$ lifts. By Corollary~\ref{homchar-thmD-corequiv}, $\theta_n(L',\phi')\sim\theta_n(L'',\phi'\circ\psi)$. Equivalently, $\theta_n(L',\phi')\sim((\phi'\circ\psi)\circ(\phi'')^{-1})\cdot\theta_n(L'',\phi'')$, that is, $\overline{\mu}_n(L')=\overline{\mu}(L'')$. Conversely, suppose $\overline{\mu}_n(L')=\overline{\mu}(L'')$. Then there exists a self-$n$-basing $\psi$ of $L$ such that $\psi\cdot\theta_n(L'',\phi'')\sim\theta_n(L',\phi')$, that is, $\theta_n(L'',\psi\circ\phi'')\sim\theta_n(L',\phi')$. We obtain the desired $(n+1)$-basing for $L''$ relative to $L'$ from Corollary~\ref{homchar-thmD-corequiv}.
\end{proof}


\subsection{Proof of Theorem~\ref{THMhrealization}}\label{homchar-thmE}

For the remainder of this section, let $G$ be the trivial group. As $G$ will indicate the trivial group throughout, we will write $\mathcal{R}_n(L)$, $X_n(L)$, $h_n(L',\phi)$, and $\theta_n(L',\phi)$ when we mean $\mathcal{R}^G_n(L)$, $X^G_n(L)$, $h^G_n(L',\phi)$, and $\theta^G_n(L',\phi)$. 

Recall that Theorem~\ref{THMhrealization} characterizes which elements of $H_3(X_n(L))$ can be realized as $\theta_n(L',\phi)$ for some link $L'$ in some 3-manifold $M'$ admitting an $n$-basing relative to $L$ over $G$ (i.e. in the case of integral homology concordance). See Section~\ref{results-homreal} for the full statement. Theorem~\ref{THMhrealization} relies heavily on a 3-dimensional homology surgery theorem of V. Turaev \cite{Turaev84}. In the case where $G\neq\{1\}$, a realization theorem for $\theta$-invariants should follow from similar arguments, but one would first need to prove a $\mathbb{Z}[G]$-coefficient version of Turaev's homology surgery result. This is a topic of future consideration. We copy Turaev's result below as Lemma~\ref{homchar-thmE-lemmaturaev}.

\begin{lemma}[Turaev \cite{Turaev84}, Lemma 2.2]\label{homchar-thmE-lemmaturaev}
Suppose $g:N\to X$ is a map of a closed oriented 3-manifold $N$ to a CW-complex $X$ with finitely generated $\pi_1(X)$ such that the cap product \[\cap\, g_*[N]:tH^2(X)\to tH_1(X)\] is an isomorphism. Then $(N,g)$ is oriented bordant over $X$ to a pair $(M,f)$ of a closed oriented 3-manifold $M$ and a map $f:M\to X$ which induces an isomorphism $f_*:H_1(M)\xrightarrow{\cong} H_1(X)$.
\end{lemma}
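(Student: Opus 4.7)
The plan is to construct the desired bordism over $X$ by performing surgery on $N$ along a carefully chosen system of embedded circles, so as to modify $H_1(N)$ to match $H_1(X)$ via $g_*$. The argument splits into two stages: first arrange that $g_*\colon H_1(N) \to H_1(X)$ is surjective, then kill its kernel to achieve injectivity, all while extending $g$ over the resulting $4$-dimensional bordism.

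For the surjectivity stage, finite generation of $\pi_1(X)$ yields a finite generating set for $H_1(X)$. Attach finitely many $4$-dimensional $1$-handles to $N \times [0,1]$ along pairs of points in $N \times \{1\}$, producing a bordism from $N$ to the connected sum of $N$ with copies of $S^1 \times S^2$. Extend $g$ over each $1$-handle (no obstruction, as $1$-handles are contractible) so that the new $S^1$-factors map to loops representing the chosen generators of $H_1(X)$. After this step one may assume $g_*\colon H_1(N) \twoheadrightarrow H_1(X)$ is surjective.

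For the injectivity stage, let $K := \ker g_*$, a finitely generated subgroup of the finitely generated abelian group $H_1(N)$. For each chosen generator $[\alpha] \in K$, represent it by an embedded circle $\alpha \subset N$. Since $g_*[\alpha] = 0$, the loop $g(\alpha)$ lies in $[\pi_1(X), \pi_1(X)]$; writing this element as a product of commutators, one tubes $\alpha$ with auxiliary embedded loops (which may be chosen disjoint from $\alpha$ because we are in a $3$-manifold) whose $g$-images cancel the commutator expression, producing an embedded circle $\alpha'$ homologous to $\alpha$ with $g(\alpha')$ null-homotopic in $X$. Attaching a $4$-dimensional $2$-handle to $N \times [0,1]$ along $\alpha' \subset N \times \{1\}$ and extending $g$ over the $2$-handle via a null-homotopy of $g(\alpha')$ yields a bordism over $X$ which kills $[\alpha]$ in $H_1$.

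The main obstacle, and the place where the cap-product hypothesis is decisive, is controlling the effect of surgery on torsion: surgering a representative of a torsion class in $K$ can introduce a new meridional $H_1$-class that again lies in the kernel of the modified map, potentially preventing the process from terminating. To handle this, factor the hypothesized isomorphism $\cap\, g_*[N]\colon tH^2(X) \to tH_1(X)$ as the composite
\[
tH^2(X) \xrightarrow{g^*} tH^2(N) \xrightarrow[\cong]{\cap\, [N]} tH_1(N) \xrightarrow{g_*} tH_1(X),
\]
using Poincar\'{e} duality on the closed oriented $3$-manifold $N$ and naturality of cap product. This decomposition rigidly couples the torsion of $H_1(N)$ with that of $H_1(X)$ via $g_*$, and a Mayer--Vietoris analysis of each individual surgery applied to the decomposition $N = (N \smallsetminus \nu\alpha') \cup (S^1 \times D^2)$ shows that the coupling is preserved step by step. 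Consequently each surgery can be arranged to strictly reduce $K$ without introducing new torsion obstructions, and since all groups in sight are finitely generated, the process terminates after finitely many steps, producing the desired closed oriented $3$-manifold $M$ with a map $f\colon M \to X$ satisfying $f_*\colon H_1(M) \xrightarrow{\cong} H_1(X)$ and bordant over $X$ to $(N, g)$.
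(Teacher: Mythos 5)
First, a framing remark: the paper does not prove this statement. It is quoted verbatim from Turaev (Lemma 2.2 of \cite{Turaev84}) and used as a black box in the proof of Theorem~\ref{THMhrealization}, so there is no in-paper argument to compare yours against. Judged on its own terms, your outline follows the standard surgery strategy that Turaev's proof also uses --- make $g_*$ surjective by attaching $1$-handles, then kill $\ker(g_*)$ on $H_1$ by attaching $2$-handles along embedded circles with nullhomotopic image --- and you correctly isolate the torsion of the kernel as the crux and correctly factor $\cap\, g_*[N]$ through Poincar\'e duality on $N$ via the projection formula.

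However, the decisive step is asserted rather than proven. The sentences ``a Mayer--Vietoris analysis \dots shows that the coupling is preserved step by step'' and ``each surgery can be arranged to strictly reduce $K$ without introducing new torsion obstructions'' \emph{are} the content of the lemma, and nothing in your argument establishes them. Concretely: (i) you never choose framings, yet the effect on $H_1$ of surgery along a circle depends entirely on the framing; (ii) surgering a torsion class $[\alpha]$ of order $d$ with self-linking $c/d$ replaces it by a meridional class of order dividing $|c+dk|$ (for framing $k$), and since every meridian bounds a disk in $N$ its image in $X$ is nullhomotopic, so this new class automatically lies in the kernel of the modified map --- your argument gives no reason it is ``smaller'' than the class you killed and supplies no induction measure guaranteeing termination (for instance, a class of order $d$ with self-linking $0$ produces a meridian of order $|dk|$ or infinite order for every framing, so a single na\"ive surgery need not make progress at all); (iii) the splitting $tH_1(N)=\mathrm{im}(PD\circ g^*)\oplus(\ker g_*\cap tH_1(N))$ you extract from the hypothesis is the right structural fact, but the tool that actually makes the surgeries work is the torsion linking form: the hypothesis forces the two summands to be orthogonal, hence the linking form restricted to the torsion part of the kernel to be nondegenerate, and one then needs a genuine framing-dependent, order-reducing argument that a subgroup carrying a nondegenerate linking form can be killed by a terminating sequence of surgeries. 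None of this appears. A smaller, fixable point: to make $g(\alpha')$ nullhomotopic by tubing you need $g_*$ surjective on $\pi_1$, not merely on $H_1$; your first stage only arranges the latter, though the finite generation of $\pi_1(X)$ lets you arrange the former by the same handle attachments.
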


Before proving Theorem~\ref{THMhrealization}, we briefly remark that conditions (2) and (3) in the statement of the theorem can be viewed as Poincar\'{e} duality restrictions imposed on the homology class $\theta\in H_3(X_n(L))$. The utility of condition (2) in the following proof is to apply Turaev's theorem, Lemma~\ref{homchar-thmE-lemmaturaev}. A homology class $\theta\in H_3(X_n(L))$ will yield a bordism class of 3-manifolds over $X_n(L)$ which we modify to ensure the desired homology. See Section~\ref{results-homreal} for a comparison of the statement of Theorem~\ref{THMhrealization} with the statement of Theorem~\ref{THMrealization}, the realization theorem for $h$-invariants.

\begin{proof}[Proof of Theorem~\ref{THMhrealization}]
($\implies$). Suppose $\theta\in\mathcal{R}_n(L)$. Then there exists a 3-manifold $M'$ and an $m$-component link $L'\subset M'$, where $L'$ admits an $n$-basing $(\phi,\phi_\partial)$ relative to $L$ such that $\theta_n(L',\phi)=\theta$. In other words, $\theta=h_n(L',\phi)_*[M']$. For simplicity of notation, let $f$ denote the map $h_n(L',\phi)$. First, since $f$ is a map of pairs $(M',\nu L')\to(X_n(L),\nu L)$ restricting to an admissible homeomorphism $\partial E_{L'}\xrightarrow{\cong}\partial E_L$, $\Delta(\theta)=\Delta f_*[M']=\sum_i[T_i]$, where $\Delta$ is the connecting homomorphism in the Mayer-Vietoris sequence corresponding to the decomposition $X_n(L)=M_{p_n}^\times\cup_{\partial E_L} \nu L$, and where $\partial E_{L}=\sqcup_i T_i$. Thus, condition (1) holds.

Next, consider the commutative square below.
\begin{center}
\begin{tikzcd}
tH^2(M') \arrow[d,"\cap\, {[M']}"'] & tH^2(X_n(L)) \arrow[l,"f^*"'] \arrow[d,"\cap\, \theta"] \\
tH_1(M') \arrow[r,"f_*"'] & tH_1(X_n(L))
\end{tikzcd}
\end{center}
The cap product $\cap\,[M']$ is an isomorphism by Poincar\'{e} duality, and \[\pi_1(X_n(L))\cong\pi_1(M)/\pi_1(M)_n\cong\pi_1(M')/\pi_1(M')_n\] (see Proposition~\ref{appendix-proppi1G} in Section~\ref{appendix-alg} of the Appendix). Thus, $f_*:H_1(M')\to H_1(X_n(L))$ and its restriction $tH_1(M')\to tH_1(X_n(L))$ are isomorphisms. But $tH^2(-)\cong\text{Ext}(H_1(-),\mathbb{Z})$, so $f^*:tH^2(X_n(L))\to tH^2(M')$ is also an isomorphism. By the diagram, $\cap\,\theta$ is an isomorphism, too. This proves condition (2) holds.

To prove condition (3) holds, we consider diagram (\ref{diagram-CO2}) below, where all coefficients are in $\mathbb{Z}$. Diagram (\ref{diagram-CO2}) is similar to diagram (\ref{diagram-CO}) seen in the proof of Theorem~\ref{THMrealization}.

\begin{equation*}
\begin{tikzcd}
& H^1(M') \arrow[d,"\cap{[M']}"',"\cong"] & H^1(X_n(L)) \arrow[l,"f^*"',"\cong"] \arrow[d,"\cap\, \theta"] \arrow[dr,"\text{proj}\circ(\cap\,\theta)"] & \\
H_2(E_{L'}) \arrow[r,"j'_*"] \arrow[d,twoheadrightarrow] & H_2(M') \arrow[r,"f_*"] & H_2(X_n(L)) \arrow[r,twoheadrightarrow,"\text{proj}"] & H_2(X_n(L))/\text{im}(K_j(\pi/\pi_n)) \\
H_2(\pi') \arrow[r] & H_2(\pi'/\pi'_n) \arrow[r,"\phi_*","\cong"'] & H_2(\pi/\pi_n) \arrow[r,twoheadrightarrow,"\text{proj}"] \arrow[u] & H_2(\pi/\pi_n)/K_j(\pi/\pi_n) \arrow[u]
\end{tikzcd} \tag{$\ast\ast$} \label{diagram-CO2}
\end{equation*}

\noindent The map $f^*:H^1(X_n(L))\to H^1(M')$ is an isomorphism since $H^1(-)\cong\text{Hom}(H_1(-),\mathbb{Z})$ and $f_*:H_1(M')\xrightarrow{\cong} H_1(X_n(L))$ is an isomorphism. By the \hyperref[THMsd]{Stallings-Dwyer Theorem}, $H_2(\pi')\to H_2(\pi/\pi_n)/K_j(\pi/\pi_n)$ is surjective since $\phi:\pi'/\pi'_n\xrightarrow{\cong}\pi/\pi_n$ is an isomorphism. Just as in Theorem~\ref{THMrealization}, this implies the image of $\text{proj}\circ(\cap\,\theta)$ contains the image of $H_2(\pi/\pi_n)/K_j(\pi/\pi_n)$ for all $j<n$. This proves condition (3) holds. \vspace{1em}

\noindent ($\impliedby$). The converse combines Turaev's result, Lemma~\ref{homchar-thmE-lemmaturaev}, with ingredients from the proof of Theorem~\ref{THMrealization}. When possible, instead of repeating arguments from the proof of Theorem~\ref{THMrealization}, we will reference that proof and sketch analogues of the arguments in this setting.

Suppose conditions (1)-(3) hold for some class $\theta\in H_3(X_n(L))$. Since $H_3(-)\cong\Omega_3^{\text{SO}}(-)$, there exists a 3-manifold $M''$ and a map $g:M''\to X_n(L)$ such that $g_*[M'']=\theta$. We first invoke the homology surgery result of Turaev to alter $(M'',g)$. We then use the transverality techniques and arguments seen in the proof of Theorem~\ref{THMrealization} to produce the desired link realizing the class $\theta$.

By Turaev's result and condition (2), $(M'',g)$ is oriented bordant over $X_n(L)$ to some $(M',f)$, where $M'$ is a closed orientable 3-manifold and $f:M'\to X_n(L)$ induces an isomorphism on $H_1(-)$. Since $(M',f)$ is oriented bordant to $(M'',g)$, we have $f_*[M']=g_*[M'']=\theta$. As in the proof of Theorem~\ref{THMrealization}, we may assume that the image of the map $f$ contains $L$. Define $L'\subset M'$ to be the transverse preimage $f^{-1}(L)$ of $L$. \vspace{1em}

\noindent Claim 1. The restriction of $f$ to $E_{L'}$ induces a surjective homomorphism $\pi'/\pi'_n\to \pi/\pi_n$. \vspace{1em}

We consider a Mayer-Vietoris diagram similar to one seen in the proof of Theorem~\ref{THMrealization}.
\begin{center}
\begin{tikzcd}
H_1(\partial E_{L'}) \arrow[r] \arrow[d,twoheadrightarrow] & H_1(E_{L'})\oplus H_1(\nu L') \arrow[r,twoheadrightarrow] \arrow[d] & H_1(M') \arrow[r] \arrow[d,"\cong"] & 0 \\
H_1(\partial E_L) \arrow[r] & H_1(\pi/\pi_n)\oplus H_1(\nu L) \arrow[r,twoheadrightarrow] & H_1(X_n(L)) \arrow[r] & 0
\end{tikzcd}
\end{center}
The middle vertical arrow splits as a direct sum of homomorphisms. A diagram chase proves $\pi'/\pi'_2=H_1(E_{L'})\to H_1(\pi/\pi_n)=\pi/\pi_2$ is surjective. As in the proof of Theorem~\ref{THMrealization}, Claim 1 now follows from a technique used in the proof of the \hyperref[THMsd]{Stallings-Dwyer Theorem}; see \cite{ChaOrr}. \vspace{1em}

\noindent Claim 2. We may reduce the number of components of $L'$ without altering the homotopy class of the map $f$ such that the resulting link, which we continue to call $L'$, has $m$ components and maps homeomorphically to $L$. \vspace{1em}

The proof of Claim 2 is entirely similar that of Claim 2 in the proof of Theorem~\ref{THMrealization} and requires condition (1); see also Claim 1 of $(1)\implies (2)$ from the proof of Theorem~\ref{THMcob}. \vspace{1em}

\noindent Claim 3. The map $f$ induces an isomorphism $\pi'/\pi'_n\xrightarrow{\cong}\pi/\pi_n$. \vspace{1em}

The proof of Claim 3 proceeds in the same way as the proof of Claim 3 of Theorem~\ref{THMrealization}. We briefly summarize the argument. 

We first show $\pi'/\pi'_2\to\pi/\pi_2$ is an isomorphism. Consider the diagram of Mayer-Vietoris sequences below.
\begin{center}
\begin{tikzcd}
H_2(M') \arrow[r] \arrow[d,"f_*"] & H_1(\partial E_{L'}) \arrow[d,"\cong"] \arrow[r] & H_1(E_{L'})\oplus H_1(\nu L') \arrow[r] \arrow[d] & H_1(M') \arrow[d,"\cong"] \arrow[r] & 0 \\
H_2(X_n(L)) \arrow[r] & H_1(\partial E_L) \arrow[r] & H_1(\pi/\pi_n)\oplus H_1(\nu L) \arrow[r] & H_1(X_n(L)) \arrow[r] & 0 \\
H_2(M) \arrow[r] \arrow[u,"\iota_n(L)_*"'] & H_1(\partial E_L) \arrow[r] \arrow[u,equal] & H_1(E_L)\oplus H_1(\nu L) \arrow[u,"\cong"'] \arrow[r] & H_1(M) \arrow[u,"\cong"'] \arrow[r] & 0
\end{tikzcd}
\end{center}
Using this diagram, it suffices to prove that the composition $H_1(\partial E_{L'})\xrightarrow{\cong} H_1(\partial E_L)$ of the isomorphisms in the second column from the left, induced by restrictions of $f$ and $\iota_n(L)$, sends $\text{im}\big(H_2(M')\to H_1(\partial E_{L'})\big)$ isomorphically to $\text{im}\big(H_2(M)\to H_1(\partial E_L)\big)$. We accomplish this by appealing to the commutative diagram below. The desired isomorphism $\pi'/\pi'_2\to\pi/\pi_2$ then follows from a diagram chase.

\begin{center}
\begin{tikzcd}
H^1(M') \arrow[r,"\cap{[M']}","\cong"'] & H_2(M') \arrow[r,"\Delta"] & H_1(\partial E_{L'}) \arrow[d,"\cong"] \\
H^1(X_n(L)) \arrow[u,"f^*","\cong"'] \arrow[d,"\iota_n(L)^*"',"\cong"] & & H_1(\partial E_L) \arrow[d,equal] \\
H^1(M) \arrow[r,"\cap{[M]}","\cong"'] & H_2(M) \arrow[r,"\Delta"] & H_1(\partial E_L)
\end{tikzcd}
\end{center}

To complete the proof of Claim 3, we again consider diagram (\ref{diagram-CO2}) above. By assumption (3), the image of $\text{proj}\circ(\cap\,\theta)=\text{proj}\circ(\cap f_*[M'])$ contains the image of $H_2(\pi/\pi_n)/K_j(\pi/\pi_n)$ in $H_2(X_n(L))/\text{im}(K_j(\pi/\pi_n))$. The surjectivity of $H_2(\pi')\to H_2(\pi/\pi_n)/K_j(\pi/\pi_n)$ then follows from diagram (\ref{diagram-CO2}) and a different part of the above diagram of Mayer-Vietoris sequences, copied below. After a diagram chase similar to the proof of Theorem~\ref{THMrealization}, Claim 3 follows from the \hyperref[THMsd]{Stallings-Dwyer Theorem}.

\begin{center}
\begin{tikzcd}
H_2(\partial E_{L'}) \arrow[r] \arrow[d,"\cong"] & H_2(E_{L'}) \arrow[r] \arrow[d] & H_2(M') \arrow[r] \arrow[d,"f_*"] & H_1(\partial E_{L'}) \arrow[d,"\cong"] \\
H_2(\partial E_L) \arrow[r] & H_2(\pi/\pi_n) \arrow[r] & H_2(X_n(L)) \arrow[r] & H_1(\partial E_L) \arrow[d,equal] \\
H_2(\partial E_L) \arrow[r] \arrow[u,equal] & H_2(E_L) \arrow[r] \arrow[u] & H_2(M) \arrow[u,"\iota_n(L)_*"'] \arrow[r] & H_1(\partial E_L)
\end{tikzcd}
\end{center}

To complete the proof of the theorem, we note that the isomorphism $\pi'/\pi'_n\xrightarrow{\cong}\pi/\pi_n$ we obtain from Claim 3, along with the isomorphism $H_1(\partial E_{L'})\xrightarrow{\cong} H_1(\partial E_L)$, constitutes an $n$-basing over the trivial group for $L'$ relative to $L$. Thus, the homotopy class $h_n(L',\phi)$ is defined, and $h_n(L',\phi)_*[M']=f_*[M']=\theta\in H_3(X_n(L))$.
\end{proof}


\section{Relationships to previous work}\label{previous}

In this section, we investigate relationships between the lower central homotopy invariants and previous versions of Milnor's invariants. We apply Theorem~\ref{THMhcharacterization} in Section~\ref{previous-milnor} to show that for links in $S^3$ and $n\geq 2$ the invariant $\overline{\mu}_n$ relative to the unlink is equivalent to Milnor's $\overline{\mu}$-invariant of length $n+1$ \cite{Milnor}. The connection between the stronger invariants $h_n$ and $\theta_n$ and Milnor's $\overline{\mu}$-invariants passes through invariants of Orr \cite{Orr89}. In Section~\ref{previous-mfld}, we show that the invariants $\theta_n$ and $\overline{\mu}_n$ for empty links are precisely the Cha-Orr ``Milnor's invariants of 3-manifolds" \cite{ChaOrr}. Finally, in Section~\ref{previous-other}, we again leverage Theorem~\ref{THMhcharacterization} to prove that the vanishing of the lower central homotopy invariants, taken in specific contexts, implies the vanishing of other extensions of Milnor's invariants. We consider D. Miller's $\overline{\tau}$-invariants for knots in Seifert-fibered 3-manifolds \cite{MillerD} and M. Kuzbary's Dwyer number \cite{Kuzbary}.


\subsection{Invariants of links in \texorpdfstring{$S^3$}{S3}} \label{previous-milnor}

We begin by understanding the relationship between the new invariants defined in this paper and Milnor's original $\overline{\mu}$-invariants. K. Orr's homotopy-theoretic version of Milnor's $\overline{\mu}$-invariants is essential for understanding this connection. His approach significantly improved the understanding of Milnor's invariants and inspired much of the work in this paper. In \cite{Orr89}, Orr proves a realization theorem for his invariants, shows his invariants are equivalent to Milnor's, computes the number of linearly independent invariants for any fixed length, and establishes a connection to $n$-cobordism. We first apply Theorem~\ref{THMhcharacterization} to establish the equivalence for $n\geq 2$ of the invariant $\overline{\mu}_n$ relative to the unlink and Milnor's $\overline{\mu}$-invariant of length $n+1$ \cite{Milnor}. Then, we show that the lower central homotopy invariants for links in $S^3$ relative to the $m$-component unlink are equivalent to Orr's invariants \cite{Orr89}. Finally, we leverage work of Igusa-Orr \cite{IgusaOrr} to establish a relationship between the $h_n$ and Milnor's $\overline{\mu}$-invariants. For all of Section~\ref{previous-milnor}, we assume all links live in $S^3$, we always choose the unlink as our fixed link, and we take the word ``longitude" to mean ``preferred longitude".

We briefly summarize Orr's construction. Suppose $L\subset S^3$ is an $m$-component link with vanishing Milnor's $\overline{\mu}$-invariants of lengths $\leq n$ for some $n\geq 2$. Equivalently, the longitudes of $L$ are in $\pi_n$, so their images under the map $E_L\to K(\pi/\pi_n,1)$ are nullhomotopic. Choosing meridians for $L$ (i.e. a basing for $L$) defines an isomorphism $\tau:F/F_n\xrightarrow{\cong}\pi/\pi_n$, where $F=F(m)$ is the $m$-generator free group. We may view its inverse $\phi:\pi/\pi_n\xrightarrow{\cong}F/F_n$ as an isomorphism to the $n^\text{th}$ lower central quotient of the link group of the $m$-component unlink $U$. This yields a map $E_L\to K(F/F_n,1)$ which we may assume sends the components of $\partial E_L$ by projection onto the images of the corresponding meridians. The map $E_L\to K(F/F_n,1)$ therefore extends to a map $S^3\to K_n$, where $K_n=K(F/F_n,1)\cup_{(\sqcup m_i)}(\sqcup D^2)$ is obtained by gluing $m$ disks $D^2$ to the Eilenberg-MacLane space along the images of the meridians. The map $S^3\to K_n$ sends the regular neighborhood of each component of $L$ by projection onto the corresponding $D^2$.

Orr denotes the resulting homotopy class of maps $S^3\to K_n$ by $\theta_n(L,\tau)\in\pi_3(K_n)$. To avoid confusion with our notation and to emphasize the relationship with the invariants from Section~\ref{results-homotopy}, we will instead denote this class by $h^O_n(L,\phi)\in\pi_3(K_n)$. Similarly to the invariant $h_n$, one could remove the dependency on $\phi$ and define $h^O_n(L)$. Note that $h^O_n(U,\text{id})$ is nullhomotopic because the map $E_U\to K(F/F_n,1)$ factors through the wedge of circles to which the disks are attached. Thus, $h^O_n(L,\phi)=h^O_n(U,\text{id})$ if and only if $h^O_n(L,\phi)$ is nullhomotopic. In other words, $h^O_n(L,\phi)$ vanishes if and only if it is nullhomotopic. We will construct a map $r_n:X_n(U)\to K_n$ relating the lower central homotopy invariant $h_n$ to Orr's invariant $h^O_n$, so that $r_n\circ h_n(L,\phi)=h^O_n(L,\phi)$.

\begin{lemma}\label{previous-lemmamubasing}
An $n$-basing for $L$ relative to $U$ exists if and only if Milnor's $\overline{\mu}$-invariants for $L$ of lengths $\leq n$ vanish.
\end{lemma}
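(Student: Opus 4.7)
The plan is to translate the defining conditions of an $n$-basing into two concrete properties of $L$: (i) the meridian map $F \to \pi$ induces an isomorphism $F/F_n \xrightarrow{\cong} \pi/\pi_n$, and (ii) the preferred longitudes of $L$ lie in $\pi_n$. By iteratively applying \hyperref[THMmilnor]{Milnor's Theorem}, these together are equivalent to the vanishing of all $\overline{\mu}$-invariants of lengths $\leq n$. Throughout, note that $G = \pi_1(S^3)$ is trivial, so $\Gamma = \pi$ and the $G$-lower central series is the classical one; the link group of the unlink $U$ is $F = F(m)$, and each preferred longitude $\ell_i^U$ is nullhomotopic in $E_U$, hence trivial in $F$.

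For the forward direction, I would start with an $n$-basing $(\phi, \phi_\partial)$ and unpack the compatibility square on meridians and longitudes. Admissibility of $\phi_\partial$ gives $\phi_{\partial *}(m_i^L) = m_i^U$, and compatibility then forces $\phi$ to send the class of $m_i^L$ to the class of $m_i^U$ in $F/F_n$; hence the meridian map $F \to \pi$ induces the isomorphism $\phi^{-1}$ on $n^{\text{th}}$ lower central quotients, yielding condition (i). By iterating Milnor's Theorem from the trivial base case $\pi/\pi_2 \cong F/F_2 \cong \mathbb{Z}^m$, all $\overline{\mu}$-invariants of lengths $\leq n-1$ vanish. To push the vanishing to length $n$, I would chase each $\ell_i^L$ around the compatibility square: $\phi_{\partial *}(\ell_i^L) = a_i m_i^U + b_i \ell_i^U$ in $H_1(\partial E_U)$ for some integers $a_i, b_i$, which projects to $(m_i^U)^{a_i}$ in $F/F_n$ since $\ell_i^U = 1 \in F$; matching with $\phi(\overline{\ell_i^L})$ and abelianizing, the fact that preferred longitudes are nullhomologous forces $a_i = 0$. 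Hence $\overline{\ell_i^L} = 0$ in $\pi/\pi_n$, giving condition (ii), and one more application of Milnor's Theorem upgrades the vanishing to length $n$.

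For the converse, given vanishing of all $\overline{\mu}$-invariants of lengths $\leq n$, iterated Milnor's Theorem produces an isomorphism induced by the meridian map $\tau: F/F_{n+1} \xrightarrow{\cong} \pi/\pi_{n+1}$, which restricts to $\tau: F/F_n \xrightarrow{\cong} \pi/\pi_n$. I would set $\phi := \tau^{-1}$ and define $\phi_\partial$ by sending $m_i^L \mapsto m_i^U$ and $\ell_i^L \mapsto \ell_i^U$. Admissibility is immediate, and compatibility is then straightforward: on meridians it holds by construction, and on longitudes both routes land at zero in $F/F_n$, since $\ell_i^U = 1 \in F$ and since the extra length-$n$ vanishing implies $\ell_i^L \in \pi_n$. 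The main delicacy will be the longitude chase in the forward direction, specifically using abelianization and the nullhomologous nature of preferred longitudes to kill the $(m_i^U)^{a_i}$ term; beyond that, the proof is bookkeeping with Milnor's Theorem.
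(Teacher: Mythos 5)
Your proof is correct and follows essentially the same approach as the paper: both directions translate the two halves of an $n$-basing (the group isomorphism $\phi$ and the admissible, compatible boundary map $\phi_\partial$) into the standard characterization of vanishing Milnor invariants via longitudes lying in $\pi_n$. The one place you go beyond the paper is the abelianization argument killing the $a_i$; the paper simply asserts that the $n$-basing's isomorphism sends based longitudes of $L$ to based longitudes of $U$, whereas you supply the justification (admissibility forces $\phi_\partial(\ell_i^L) = a_i m_i^U + \ell_i^U$, and matching abelianized images of $\phi$ on both sides of the compatibility square, using that preferred longitudes are nullhomologous, forces $a_i = 0$) — a genuine detail worth making explicit.
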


\begin{proof}
($\Longrightarrow$). An $n$-basing $(\phi,\phi_\partial)$ for $L$ relative to $U$ includes an isomorphism $\phi:\pi/\pi_n\xrightarrow{\cong}F/F_n$ which sends (based) longitudes of $L$ to (based) longitudes of $U$. This implies the longitudes of $L$ are trivial in $\pi/\pi_n$ or, equivalently, Milnor's $\overline{\mu}$-invariants of lengths $\leq n$ vanish. \\
($\Longleftarrow$). Now suppose Milnor's $\overline{\mu}$-invariants of lengths $\leq n$ vanish. The longitudes of $L$ are therefore in $\pi_n$, so we may define an isomorphism $\phi:\pi/\pi_n\xrightarrow{\cong}F/F_n$ which preserves meridians and longitudes. In other words, there is an admissible isomorphism $\phi_\partial:H_1(\partial E_L)\xrightarrow{\cong} H_1(\partial E_U)$ compatible with an isomorphism $\phi:\pi/\pi_n\xrightarrow{\cong} F/F_n$. Thus, we have produced an $n$-basing $(\phi,\phi_\partial)$ for $L$ relative to $U$.
\end{proof}

\begin{corollary}\label{previous-cormu}
For $n\geq 2$, the invariant $\overline{\mu}_n(L)$ vanishes if and only if Milnor's $\overline{\mu}$-invariants for $L$ of lengths $\leq n+1$ vanish.
\end{corollary}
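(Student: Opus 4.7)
The plan is to chain together Lemma~\ref{previous-lemmamubasing} and the basing-independent equivalences $(1')\Leftrightarrow(2')$ in Theorem~\ref{THMhcharacterization}, applied to the case $L = L' = $ the link under consideration and with fixed link taken to be the $m$-component unlink $U\subset S^3$. Both results are already established in the paper, so the argument should be purely a matter of unpacking definitions and shifting the index from $n$ to $n+1$ in the right place.

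Concretely, I would argue as follows. First, in the forward direction, assume $\overline{\mu}_n(L)$ vanishes. By definition this requires $\overline{\mu}_n(L)$ to be defined, which in turn requires an $n$-basing for $L$ relative to $U$ to exist; by Lemma~\ref{previous-lemmamubasing} this says Milnor's $\overline{\mu}$-invariants for $L$ of lengths $\leq n$ vanish. Now applying the equivalence $(2')\Rightarrow(1')$ of Theorem~\ref{THMhcharacterization} (with the fixed link $U$ and with $L' = L$), the vanishing of $\overline{\mu}_n(L)$ yields the existence of some $(n+1)$-basing for $L$ relative to $U$. Feeding this back into Lemma~\ref{previous-lemmamubasing}, but now with $n$ replaced by $n+1$, we conclude that Milnor's $\overline{\mu}$-invariants for $L$ of lengths $\leq n+1$ vanish. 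For the reverse direction, assume Milnor's $\overline{\mu}$-invariants of lengths $\leq n+1$ vanish. A fortiori the invariants of lengths $\leq n$ vanish, so by Lemma~\ref{previous-lemmamubasing} an $n$-basing for $L$ relative to $U$ exists and $\overline{\mu}_n(L)$ is defined. Also by Lemma~\ref{previous-lemmamubasing} an $(n+1)$-basing exists, so the equivalence $(1')\Rightarrow(2')$ of Theorem~\ref{THMhcharacterization} gives that $\overline{\mu}_n(L)$ vanishes.

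There is essentially no obstacle here; the only subtlety is ensuring the ``defined'' hypothesis of Theorem~\ref{THMhcharacterization} is met on both sides, which is handled by invoking Lemma~\ref{previous-lemmamubasing} at the appropriate level. The restriction $n\geq 2$ is inherited from the hypothesis of Theorem~\ref{THMhcharacterization}. Note that this corollary is the precise analogue, for links in $S^3$ compared to the unlink, of the classical \hyperref[THMmilnor]{Milnor's Theorem}: vanishing of the length-$n$ invariant $\overline{\mu}_n$ defined here corresponds to the vanishing of Milnor's original invariants of length $n+1$, with the shift explained by the convention that $\overline{\mu}_n$ compares $n^{\text{th}}$ lower central quotients whereas the classical $\overline{\mu}$-invariants of length $n+1$ detect the passage from the $n^{\text{th}}$ to the $(n+1)^{\text{st}}$ lower central quotient.
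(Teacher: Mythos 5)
Your proof is correct and takes essentially the same approach as the paper: chain Lemma~\ref{previous-lemmamubasing} with the basing-independent equivalence $(1')\Leftrightarrow(2')$ of Theorem~\ref{THMhcharacterization}. The only difference is that you unpack the two directions and track the ``defined'' hypothesis more explicitly than the paper's two-sentence version, which is a harmless elaboration.
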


\begin{proof}
Let $n\geq 2$. By Theorem~\ref{THMhcharacterization}, $\overline{\mu}_n(L)$ vanishes if and only if there exists an $(n+1)$-basing for $L$ relative to $U$. By Lemma~\ref{previous-lemmamubasing}, this is true if and only if Milnor's $\overline{\mu}$-invariants for $L$ of lengths $\leq n+1$ vanish.
\end{proof}

\begin{corollary}\label{previous-cordefined}
For $n\geq 2$, the invariant $h_n(L)$ is defined if and only if Orr's invariant $h^O_n(L)$ is defined.
\end{corollary}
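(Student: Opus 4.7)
The plan is to reduce both conditions, the existence of $h_n(L)$ and the existence of $h^O_n(L)$, to a single classical criterion, namely the vanishing of Milnor's $\overline{\mu}$-invariants of $L$ of lengths $\leq n$. Once both invariants are characterized in terms of this common condition, the corollary follows instantly.

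First, I would handle the $h_n$-side. By Definition~\ref{results-homotopy-defh}, the lower central homotopy invariant $h_n(L)$ is defined precisely when an $n$-basing for $L$ relative to $U$ exists. Lemma~\ref{previous-lemmamubasing}, which I have just proven, identifies this with the vanishing of Milnor's $\overline{\mu}$-invariants of $L$ of lengths $\leq n$. So this half requires no further work.

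Next, I would handle the $h^O_n$-side by recalling Orr's construction. The invariant $h^O_n(L,\phi)\in\pi_3(K_n)$ is built by extending a map $E_L\to K(F/F_n,1)$ across the attached disks of $K_n$, where the meridians of $L$ are sent to the generators of $F/F_n$ determined by the chosen basing. For this extension to exist, the images of the longitudes of $L$ must be nullhomotopic in $K(F/F_n,1)$; equivalently, under the meridian-induced isomorphism $F/F_n\xrightarrow{\cong}\pi/\pi_n$, the longitudes must lie in $\pi_n$. This is precisely the classical statement that Milnor's $\overline{\mu}$-invariants of $L$ of lengths $\leq n$ vanish. Therefore $h^O_n(L)$ is defined if and only if these $\overline{\mu}$-invariants vanish.

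Combining the two reductions, both $h_n(L)$ and $h^O_n(L)$ are defined exactly when Milnor's $\overline{\mu}$-invariants of $L$ of lengths $\leq n$ vanish, proving the corollary. There is no real technical obstacle here; the proof is a direct bookkeeping consequence of Lemma~\ref{previous-lemmamubasing} together with the well-known characterization of when Orr's construction applies, and the only thing to be careful about is the $n\geq 2$ hypothesis (needed so that meridians provide a canonical identification on $H_1$ and the isomorphism $\phi:\pi/\pi_n\xrightarrow{\cong}F/F_n$ is determined up to self-basing by the vanishing of the lower $\overline{\mu}$-invariants).
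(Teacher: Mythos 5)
Your argument is correct and is essentially the one the paper intends: both definedness conditions reduce, via Lemma~\ref{previous-lemmamubasing} and the standard characterization (recalled in the paper's summary of Orr's construction) that $h^O_n(L)$ is defined precisely when the longitudes lie in $\pi_n$, to the single condition that Milnor's $\overline{\mu}$-invariants of lengths $\leq n$ vanish. The paper gives no separate proof for this corollary, but it is an immediate bookkeeping consequence of the surrounding material in exactly the way you describe.
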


Relative versions of Lemma~\ref{previous-lemmamubasing} and Corollary~\ref{previous-cormu} hold:

\begin{lemma}\label{previous-lemmamubasingrel}
Suppose $L$ and $L'$ are links with vanishing Milnor's $\overline{\mu}$-invariants of lengths $\leq n$. An $(n+1)$-basing for $L'$ relative to $L$ exists if and only if Milnor's $\overline{\mu}$-invariants for $L$ and $L'$ of length $n+1$ agree.
\end{lemma}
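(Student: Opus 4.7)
The plan is to apply Theorem~\ref{THMhcharacterization} in the relative setting, using the unlink $U\subset S^3$ as the fixed link and then translating the conclusion into the classical language of Milnor's invariants.

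First, under the hypothesis that Milnor's $\overline{\mu}$-invariants of lengths $\leq n$ vanish for both $L$ and $L'$, Lemma~\ref{previous-lemmamubasing} supplies $n$-basings $\phi_L$ and $\phi_{L'}$ for $L$ and $L'$, respectively, relative to $U$. In particular, the $G$-Milnor invariants $\overline{\mu}_n(L)$ and $\overline{\mu}_n(L')$ (with $G=\pi_1(S^3)=1$, relative to $U$) are both defined. I would then apply Theorem~\ref{THMhcharacterization}, parts $(4')\Leftrightarrow(5')$, to the triple $(U,L,L')$: an $(n+1)$-basing for $L'$ relative to $L$ exists if and only if $\overline{\mu}_n(L)=\overline{\mu}_n(L')$ as invariants of this paper relative to $U$.

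The remaining content is to identify this equality with the classical statement that the length-$(n+1)$ $\overline{\mu}$-invariants of $L$ and $L'$ coincide. For this I would use the direct algebraic description of these objects. An $n$-basing $\phi_L$ relative to $U$ amounts to a choice of meridian system for $L$ together with the resulting isomorphism $\pi/\pi_n\xrightarrow{\cong} F/F_n$; the classical length-$(n+1)$ Milnor invariants of $L$ are then encoded by the images of the (based) longitudes in $F_n/F_{n+1}$ (equivalently, by their Magnus expansions), modulo exactly the indeterminacy captured by the self-$n$-basings of the unlink. The claim is that under this identification, the equality $\overline{\mu}_n(L)=\overline{\mu}_n(L')$ (relative to $U$) coincides with equality of Milnor's length-$(n+1)$ invariants of $L$ and $L'$. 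Concretely, the forward direction is immediate: an $(n+1)$-basing sends based meridians and longitudes of $L'$ to those of $L$ modulo $\pi_{n+1}$, so the longitude words agree modulo $F_{n+1}$. For the reverse direction, agreement of the longitude words modulo $F_{n+1}$ lets one define an isomorphism of the quotients $F/(F_{n+1}\cdot\langle\!\langle\lambda_i\rangle\!\rangle)\cong \pi/\pi_{n+1}$ and $\pi'/\pi'_{n+1}$ preserving meridians and longitudes, producing the desired $(n+1)$-basing.

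The main obstacle will be handling the indeterminacies carefully: the equivalence relation $\approx$ on $\mathcal{R}_n(U)$ (induced by the action of self-$n$-basings of $U$ together with the cokernel equivalence $\sim$) must correspond precisely to the classical indeterminacy in Milnor's length-$(n+1)$ invariants, which arises from the choice of meridians and from longitudes being well-defined only modulo the ideal generated by the shorter Milnor words. Matching these two indeterminacies is the content of the equivalence between $\overline{\mu}_n$ (relative to $U$) and Milnor's total length-$(n+1)$ $\overline{\mu}$-invariant claimed in Section~\ref{previous-milnor}; once that identification is in place (via the map $r_n:X_n(U)\to K_n$ and Orr's identification of $h^O_n$ with Milnor's invariants from \cite{Orr89}), the lemma follows formally from steps (i) and (ii).
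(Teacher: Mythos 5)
Your proposal has a circularity problem in its overall structure. You set up a two-step plan: (i) apply Theorem~\ref{THMhcharacterization} $(4')\Leftrightarrow(5')$ to translate ``$(n+1)$-basing exists'' into ``$\overline{\mu}_n(L)=\overline{\mu}_n(L')$ relative to $U$,'' and (ii) identify $\overline{\mu}_n$ relative to $U$ with Milnor's classical length-$(n+1)$ invariant. But that identification --- the sentence ``$\overline{\mu}_n$ is equivalent to Milnor's (total) $\overline{\mu}$-invariant of length $n+1$'' in Section~\ref{previous-milnor} --- is precisely the content of Corollary~\ref{previous-cormurel}, which is deduced from the lemma you are trying to prove (together with Theorem~\ref{THMhcharacterization}). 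Invoking that equivalence as an input, or appealing to the $r_n$ map and Orr's Theorem~12 to supply it, does not escape the circle: the paper's logical order is Lemma~\ref{previous-lemmamubasingrel} $\Rightarrow$ Corollary~\ref{previous-cormurel} $\Rightarrow$ the equivalence statement, not the reverse.

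The direct forward/reverse arguments you sketch in the middle of your proposal are actually the right route, and they are essentially what the paper does --- but the crucial missing ingredient is the \emph{additivity} of first non-vanishing Milnor invariants. Your phrase ``the longitude words agree modulo $F_{n+1}$'' papers over the real difficulty: Milnor's invariants of $L$ and $L'$ are computed from \emph{separate} choices of meridian system, and $\pi/\pi_{n+1}$ and $\pi'/\pi'_{n+1}$ are, in general, proper (and \emph{a priori} non-isomorphic) quotients of $F/F_{n+1}$, so there is no ambient free group in which to compare the longitude words directly. The paper resolves this by forming the connected sum $L\#(-L')$: Milnor's invariants of length $n+1$ agree if and only if $L\#(-L')$ has vanishing length-$(n+1)$ invariants (citing the additivity results of Cochran and Orr), if and only if the differences of corresponding longitudes lie in the $(n+1)^{\text{st}}$ lower central subgroup, if and only if one can build an isomorphism $\pi'/\pi'_{n+1}\xrightarrow{\cong}\pi/\pi_{n+1}$ preserving meridians and longitudes --- i.e., an $(n+1)$-basing. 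Without additivity (or an equivalent tool), your reverse direction --- going from agreement of Milnor invariants to an actual isomorphism of $(n+1)^{\text{st}}$ quotients --- is not justified. So: drop the Theorem~\ref{THMhcharacterization} framing, keep the direct argument, and cite additivity to make it watertight.
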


\begin{corollary}\label{previous-cormurel}
Suppose $L$ and $L'$ are links with vanishing Milnor's $\overline{\mu}$-invariants of lengths $\leq n$ for some $n\geq 2$. Then $\overline{\mu}_n(L)=\overline{\mu}_n(L')$ if and only if Milnor's $\overline{\mu}$-invariants for $L$ and $L'$ of length $n+1$ agree.
\end{corollary}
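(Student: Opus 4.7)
The plan is to derive Corollary~\ref{previous-cormurel} as a direct consequence of Theorem~\ref{THMhcharacterization} combined with Lemma~\ref{previous-lemmamubasingrel}, following the same template that produced Corollary~\ref{previous-cormu} from Lemma~\ref{previous-lemmamubasing}. Throughout, fix the $m$-component unlink $U\subset S^3$ as the reference link and take $G$ to be the trivial group, so that the $G$-lower central quotients coincide with the classical lower central quotients of the link groups.

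First, since $L$ and $L'$ have vanishing Milnor $\overline{\mu}$-invariants of lengths $\leq n$, Lemma~\ref{previous-lemmamubasing} supplies $n$-basings $\phi$ and $\phi'$ for $L$ and $L'$, respectively, relative to $U$. In particular, for $n\geq 2$ the invariants $\overline{\mu}_n(L)$ and $\overline{\mu}_n(L')$ are both defined, so that the statement $\overline{\mu}_n(L)=\overline{\mu}_n(L')$ makes sense.

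Next, I would apply the basing-independent equivalence (4$'$)$\Leftrightarrow$(5$'$) of Theorem~\ref{THMhcharacterization}, with the role of the fixed link played by $U$ and the roles of $L'$ and $L''$ played by $L$ and $L'$. This yields the equivalence $\overline{\mu}_n(L)=\overline{\mu}_n(L')$ if and only if $L'$ admits some $(n+1)$-basing (over the trivial group) relative to $L$. Lemma~\ref{previous-lemmamubasingrel}, whose hypothesis that $L$ and $L'$ have vanishing $\overline{\mu}$-invariants of lengths $\leq n$ is exactly the standing assumption, then translates the existence of such an $(n+1)$-basing into the agreement of Milnor's $\overline{\mu}$-invariants of length $n+1$ for $L$ and $L'$. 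Concatenating these two equivalences gives the corollary.

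There is no real obstacle here; the bookkeeping is limited to checking that Theorem~\ref{THMhcharacterization} is applicable, which follows from Lemma~\ref{previous-lemmamubasing}, and that Lemma~\ref{previous-lemmamubasingrel} is applicable, which follows from the hypothesis of the corollary. No additional transversality, handle-attaching, or spectral sequence argument is required beyond what is already packaged into these two inputs.
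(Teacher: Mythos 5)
Your proof is correct and follows essentially the same route as the paper. The paper's sketch is shared between Lemma~\ref{previous-lemmamubasingrel} and Corollary~\ref{previous-cormurel}, devoting most of its text to proving the lemma (via the connected-sum/longitude-difference argument) and then simply asserting that the corollary follows from Theorem~\ref{THMhcharacterization}; you have correctly filled in the detail that the relevant equivalence is (4$'$)$\Leftrightarrow$(5$'$) with $U$ playing the role of the fixed link, and correctly noted that Lemma~\ref{previous-lemmamubasing} is needed beforehand to guarantee $n$-basings exist so that the theorem's hypotheses are met.
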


\noindent Thus, for $n\geq 2$, the invariant $\overline{\mu}_n$ is equivalent to Milnor's (total) $\overline{\mu}$-invariant of length $n+1$. We provide a brief sketch of the proof. Details involving the additivity of the first nonvanishing Milnor invariants appear in \cite{Cochran90} and \cite{Orr89}. 

\begin{proof}[Sketch of the proofs of Lemma~\ref{previous-lemmamubasingrel} and Corollary~\ref{previous-cormurel}]
For links $L$ and $L'$ with vanishing Milnor's $\overline{\mu}$-invariants of lengths $\leq n$, Milnor's $\overline{\mu}$-invariants of length $n+1$ for $L$ and $L'$ agree if and only if $L\# (-L')$ (with any chosen bands for the connected sum) has vanishing Milnor's invariants of length $n+1$ or, equivalently, the differences of the longitudes of $L$ and $L'$ lie in the $(n+1)^{\text{st}}$ lower central subgroup. Thus, Milnor's $\overline{\mu}$-invariants of length $n+1$ for $L$ and $L'$ agree if and only if we can define an isomorphism $\pi'/\pi'_{n+1}\xrightarrow{\cong}\pi/\pi_{n+1}$ preserving (based) meridians and longitudes, that is, an $(n+1)$-basing for $L'$ relative to $L$. Corollary~\ref{previous-cormurel} then follows from Theorem~\ref{THMhcharacterization}.
\end{proof}

We now explore the relationship between the space $X_n(U)$ and Orr's space $K_n$. Recall that $X_n(U)$ is formed by gluing $\nu U$ to $M_{p^U_n}^\times$, the reduced mapping cylinder of $p^U_n:E_U\to K(F/F_n,1)$. We view this gluing in two stages: First, attach a disk $D^2$ to each meridian of $U$. Then, fill in the remainder of each component of $\partial E_U$ with a 3-ball. Because $M_{p^U_n}^\times$ is also a $K(F/F_n,1)$, the inclusion map $E_U\hookrightarrow M_{p^U_n}^\times$ is homotopic to a map which projects each component of $\partial E_U$ to the corresponding meridian in $M_{p^U_n}^\times$. This factors through a homotopy self-equivalence $M_{p^U_n}^\times\xrightarrow{\simeq}M_{p^U_n}^\times$ which behaves this way on $\partial E_U$.

After the first stage of gluing, we see that $M_{p^U_n}^\times\cup_{(\sqcup m_i)}(\sqcup D^2)$ is a model for the space $K_n$. We may extend the homotopy self-equivalence $M_{p^U_n}^\times\xrightarrow{\simeq}M_{p^U_n}^\times$ by the identity on each $D^2$, giving a homotopy equivalence $M_{p^U_n}^\times\cup_{(\sqcup m_i)}(\sqcup D^2)\xrightarrow{\simeq}K_n$. This map extends over each 3-ball we attach to form $X_n(U)$ by viewing $D^3=D^1\times D^2$ and projecting to $D^2\subset K_n$. We thus obtain a map $r_n:X_n(U)\to K_n$. By construction, $r_n\circ h_n(U,\text{id})=r_n\circ \iota_n(U)=h^O_n(U,\text{id})$. We have the diagram

\begin{center}
\begin{tikzcd}
E_U \arrow[dr,hookrightarrow] \arrow[rr] \arrow[dd,hookrightarrow] & & M_{p^U_n}^\times \arrow[dd,hookrightarrow] \\
& M_{p^U_n}^\times \arrow[ur,"\simeq"] & \\
S^3 \arrow[dr,hookrightarrow,"\iota_n(U)"'] \arrow[rr,"h^O_n(U{,}\text{id})" near end] & & K_n. \\
& X_n(U) \arrow[ur,"r_n"'] \arrow[from=uu,hookrightarrow,crossing over] &
\end{tikzcd}
\end{center}
The image in $K_n$ of the attaching map of each 3-ball we glue to $M_{p^U_n}^\times\cup_{(\sqcup m_i)}(\sqcup D^2)$ to form $X_n(U)$ is nullhomotopic. Thus, $X_n(U)\simeq K_n\vee(\vee^m S^3)$, and the map $r_n$ may be viewed as collapsing each of these $S^3$ factors to the basepoint.

Now suppose $L$ admits an $n$-basing $(\phi,\phi_\partial)$ relative to $U$. By Lemma~\ref{previous-cormu}, this is the case if and only if Milnor's $\overline{\mu}$-invariants of lengths $\leq n$ vanish for $L$. Equivalently, the longitudes of $L$ are in $\pi_n$. Define a homotopy self-equivalence $M_{p_n}^\times\xrightarrow{\simeq}M_{p_n}^\times$, similar to the self-equivalence of $M_{p^U_n}^\times$ above, which projects each component of $\partial E_L$ to the corresponding meridian. The rightmost square in the diagram above fits into the diagram below.
\begin{center}
\begin{tikzcd}
E_L \arrow[dr,hookrightarrow] \arrow[rr] \arrow[dd,hookrightarrow] & & M_{p_n}^\times \arrow[rr,"\simeq"] & & M_{p^U_n}^\times \arrow[dd,hookrightarrow] \\
& M_{p_n}^\times \arrow[ur,"\simeq"] \arrow[rr,"\simeq","h_\phi"'] & & M_{p^U_n}^\times \arrow[ur,"\simeq"] & \\
S^3 \arrow[dr,hookrightarrow,"\iota_n(L)"'] \arrow[rrrr,"h^O_n(L{,}\phi)"] \arrow[drrr,"h_n(L{,}\phi)"] & & & & K_n \\
& X_n(L) \arrow[from=uu,hookrightarrow,crossing over] \arrow[rr,"\simeq","\overline{h}_\phi"'] & & X_n(U) \arrow[ur,"r_n"'] \arrow[from=uu,hookrightarrow,crossing over] &
\end{tikzcd}
\end{center}

The maps $M_{p_n}^\times\xrightarrow{\simeq} M_{p^U_n}^\times$ on the top square of the diagram are both homotopy equivalences of pairs; in particular, the map on the front face sends $\partial E_L$ to $\partial E_U$ by an admissible homeomorphism, and the map on the back face sends the meridians of $L$ to the meridians of $U$. The extensions of the maps on the top of the diagram to the maps on the bottom therefore commute as claimed. Commutativity of the diagram proves the following proposition.

\begin{proposition}\label{previous-propr}
The map $r_n:X_n(U)\to K_n$ satisfies $r_n\circ h_n(L,\phi)=h^O_n(L,\phi)$.
\end{proposition}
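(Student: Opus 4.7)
The plan is to verify commutativity of the cubical diagram displayed immediately before the proposition statement, working outward from the link exterior $E_L$. Recall that $h_n(L,\phi)$ is constructed in Lemma~\ref{results-homotopy-lemmanbasing} by first extending $\phi$ to a map of pairs $(E_L,\partial E_L)\to(M_{p_n}^\times,\partial E_U)$ realizing the admissible homeomorphism $h_\phi$ on the boundary, and then canonically extending over $\nu L$ by coning on the sphere-bundle fibers to produce a map $(S^3,\nu L)\to(X_n(U),\nu U)$. Orr's class $h^O_n(L,\phi)$ is built analogously: one first produces a map $E_L\to K(F/F_n,1)$ inducing $\phi$ which sends each component of $\partial E_L$ by projection onto its corresponding meridian, and then extends over $\nu L$ by projecting each $D^3=D^1\times D^2$ onto the attached meridian disk $D^2\subset K_n$.

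The key preliminary observation I would invoke is Lemma~\ref{previous-lemmamubasing}: the existence of an $n$-basing for $L$ relative to $U$ forces all longitudes of $L$ to lie in $\pi_n$, so each component of $\partial E_L$ maps nullhomotopically onto the corresponding meridian circle in $M_{p_n}^\times$. This legitimizes the self-homotopy-equivalence $M_{p_n}^\times\xrightarrow{\simeq}M_{p_n}^\times$ that projects each boundary torus to its meridian, and similarly for $M_{p^U_n}^\times$. Because $h_\phi$ is a map of pairs restricting to an admissible homeomorphism $\partial E_L\xrightarrow{\cong}\partial E_U$, it carries meridians to meridians, so the top square of the cube (the one entirely between reduced mapping cylinders) commutes up to based homotopy.

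Next I would extend this commutativity downward. After composing with the inclusion $M_{p^U_n}^\times\hookrightarrow M_{p^U_n}^\times\cup_{(\sqcup m_i)}(\sqcup D^2)=K_n$, the image of each boundary torus factors through the corresponding meridian disk; this is exactly where the canonical extension in the definition of $h_n(L,\phi)$ (coning on fibers of $\nu L$) agrees with the canonical extension used to build $r_n$ (projecting $D^1\times D^2$ onto $D^2$). Since both extensions depend only on the admissible identification on the sphere bundle, they agree up to homotopy. This gives commutativity of the bottom square, and hence the desired identity $r_n\circ h_n(L,\phi)\simeq h^O_n(L,\phi)$ in $[S^3,K_n]_0$.

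The only subtlety I anticipate is a bit of basepoint bookkeeping together with the observation that the 3-ball fillings used to build $X_n(U)$ from $K_n$ attach along nullhomotopic maps in $K_n$, so they contribute the wedged $S^3$ factors that $r_n$ collapses — this is precisely why the factorization through $X_n(U)\simeq K_n\vee(\vee^m S^3)$ recovers $h^O_n$ upon projection. Beyond that, the proof is essentially a diagram chase once the two canonical extensions are identified on the sphere bundle $\partial E_L$.
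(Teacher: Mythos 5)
Your proof follows the paper's own argument essentially step for step: you identify the key input (Lemma~\ref{previous-lemmamubasing}, forcing longitudes into $\pi_n$ so that the boundary-projection self-equivalences of $M_{p_n}^\times$ and $M_{p^U_n}^\times$ exist), verify the top square via the admissibility of $h_\phi$, and conclude commutativity of the bottom square from the agreement of the canonical extensions over $\nu L$. The paper packages this as commutativity of the cubical diagram displayed before the proposition, and your write-up is a correct unwinding of exactly that diagram chase.
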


Recall from Section~\ref{results-homotopy} that we say $h_n(L,\phi)$ \emph{vanishes} if $h_n(L,\phi)=h_n(U,\text{id})$. By Proposition~\ref{previous-propr}, the vanishing of $h_n(L,\phi)$ implies the vanishing of $h^O_n(L,\phi)$: If $h_n(L,\phi)=h_n(U,\text{id})$, then $h^O_n(L,\phi)=r_n\circ h_n(L,\phi)=r_n\circ h_n(U,\text{id})=h^O_n(U,\text{id})$. The converse also holds.

\begin{proposition}\label{previous-propOrrh}
The invariants $h_n$ and $h^O_n$ are equivalent, that is, $h_n(L)=h_n(L')$ if and only if $h^O_n(L)=h^O_n(L')$. In particular, $h_n(L)$ vanishes if and only if $h^O_n(L)$ is nullhomotopic.
\end{proposition}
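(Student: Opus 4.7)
The forward direction is essentially immediate from the preceding material. Proposition~\ref{previous-propr} states that $r_n\circ h_n(L,\phi)=h^O_n(L,\phi)$, so if $h_n(L,\phi)=h_n(L',\phi')$ then post-composing with $r_n$ gives $h^O_n(L,\phi)=h^O_n(L',\phi')$. The basing-independent statement $h_n(L)=h_n(L')\implies h^O_n(L)=h^O_n(L')$ follows by noting that the action of $\textnormal{Aut}(F/F_n,\partial)$ on $[S^3,X_n(U)]_0$ is compatible, via $r_n$, with the corresponding action on $\pi_3(K_n)$ (each self-$n$-basing determines a homotopy self-equivalence of $X_n(U)$ which, by Proposition~\ref{nbasing-propnbasing} applied inside the construction of $r_n$, descends along $r_n$ to a self-equivalence of $K_n$).

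For the backward direction, the plan is to pass through the geometric characterization of both invariants by $n$-cobordism. Theorem~\ref{THMcharacterization} asserts that, given $n$-basings $\phi$ and $\phi'$ for $L$ and $L'$ relative to $U$, the equality $h_n(L,\phi)=h_n(L',\phi')$ holds if and only if $L$ and $L'$ are $n$-cobordant via an $n$-cobordism inducing the compatible $n$-basing $\psi=\phi^{-1}\circ\phi'$ between $L'$ and $L$. Orr proved in \cite{Orr89} the analogous characterization for his invariants on links in $S^3$: the class $h^O_n(L,\phi)\in\pi_3(K_n)$ equals $h^O_n(L',\phi')$ if and only if $L$ and $L'$ are $n$-cobordant by a surface inducing the same algebraic data encoded by $\phi$ and $\phi'$. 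Combining these two characterizations yields: if $h^O_n(L,\phi)=h^O_n(L',\phi')$, then an $n$-cobordism exists with the required compatibility, and this same $n$-cobordism forces $h_n(L,\phi)=h_n(L',\phi')$ by Theorem~\ref{THMcharacterization}. The basing-independent statement $h^O_n(L)=h^O_n(L')\implies h_n(L)=h_n(L')$ is then obtained by quoting this pointwise equivalence after an appropriate choice of $n$-basings representing the orbit classes.

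The final ``in particular'' statement is then the special case $L'=U$, $\phi'=\mathrm{id}$: as noted in the paragraph preceding the proposition, $h^O_n(U,\mathrm{id})$ is nullhomotopic, so $h^O_n(L)$ vanishes if and only if $h^O_n(L)$ is nullhomotopic; combined with the equivalence just established, this gives $h_n(L)$ vanishes if and only if $h^O_n(L)$ is nullhomotopic.

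The main obstacle will be to cite Orr's $n$-cobordism characterization of $h^O_n$ in a form that matches our basing conventions, since Orr works with chosen meridians rather than explicit $n$-basing data; the translation is straightforward because a basing for $L$ in his sense is precisely a choice of isomorphism $\tau\colon F/F_n\xrightarrow{\cong}\pi/\pi_n$, which is the inverse of the isomorphism $\phi$ featured in our $n$-basing $(\phi,\phi_\partial)$ (the longitudinal data $\phi_\partial$ being automatic once $\overline{\mu}$-invariants of lengths $\le n$ vanish, cf.\ Lemma~\ref{previous-lemmamubasing}). Once this dictionary is in place, the two $n$-cobordism criteria match, and the equivalence is immediate.
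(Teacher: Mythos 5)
Your proposal is correct and takes essentially the same route as the paper: the paper's proof also passes through the $n$-cobordism characterizations, citing Theorem~\ref{THMcharacterization} and Theorem~7 of \cite{Orr89} to establish $h_n(L)=h_n(L')$ iff $L,L'$ are $n$-cobordant iff $h^O_n(L)=h^O_n(L')$. Your forward direction via $r_n$ is a valid observation (it is precisely the remark the paper makes just before the proposition), but it is logically redundant here since both $n$-cobordism characterizations are biconditionals, so the argument you give for the backward direction already yields both.
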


\begin{proof}
We could prove this directly, but a faster proof leverages the relationships between the invariants in question and $n$-cobordism. By Theorem~\ref{THMcharacterization}, the invariants $h_n(L)$ and $h_n(L')$ agree if and only if $L$ and $L'$ are $n$-cobordant. By Theorem 7 of \cite{Orr89}, this is the case if and only if $h^O_n(L)=h^O_n(L')$.
\end{proof}
\noindent To prove Proposition~\ref{previous-propOrrh} directly, one views $X_n(U)$ as $K_n\vee(\vee^m S^3)$ and observes that all invariants differ only on $K_n$.

Proposition~\ref{previous-propOrrh} implies that the sets of realizable classes in $\pi_3(X_n(U))$ and $\pi_3(K_n)$ are in bijective correspondence. By Theorem 4 of \cite{Orr89}, all classes in $\pi_3(K_n)$ are realizable. Since $\pi_2(S^3)=0$, condition (3) of the realization theorem for $h$-invariants, Theorem~\ref{THMrealization}, follows from conditions (1) and (2) by Corollary~\ref{htpychar-thmC-corpi2}. Condition (2) of Theorem~\ref{THMrealization} is vacuously satisfied when $M=S^3$, so every homotopy class in $\pi_3(X_n(U))$ satisfying condition (1) of Theorem~\ref{THMrealization} is realizable. Thus, homotopy classes in $\pi_3(X_n(U))$ satisfying condition (1) of Theorem~\ref{THMrealization} correspond bijectively with elements of $\pi_3(K_n)$.

To conclude this section, we consider other versions of Orr's invariant. Let $\theta^O_n(L,\phi)$ be the homology class $\theta^O_n(L,\phi)=h^O_n(L,\phi)_*[S^3]\in H_3(K_n)$, and let $\overline{\mu}^O_n(L,\phi)$ be the image of this class in $\text{coker}\big(H_3(K_{n+1})\to H_3(K_n)\big)$, which is isomorphic to $\text{coker}\big(\pi_3(K_{n+1})\to\pi_3(K_n)\big)$ as seen in \cites{Orr89, IgusaOrr}. Again, one could remove the dependency on $\phi$ and define $\theta^O_n(L)$ and $\overline{\mu}^O_n(L)$; in fact, Orr shows that $\overline{\mu}^O_n(L,\phi)$ is independent of the choice of $\phi$. Theorem 12 of \cite{Orr89} states that $\overline{\mu}^O_n(L)$ vanishes if and only if $L$ has vanishing Milnor's $\overline{\mu}$-invariants of lengths $\leq n+1$. Combining this discussion with Corollary~\ref{previous-cormu}, we have

\begin{corollary}
For $n\geq 2$, the invariant $\overline{\mu}_n(L)$ vanishes if and only if the invariant $\overline{\mu}^O_n(L)$ vanishes.
\end{corollary}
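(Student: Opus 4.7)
The plan is to chain together two facts that have already been established or cited just before the corollary. The first is Corollary~\ref{previous-cormu} of this paper, which asserts that for $n\geq 2$, the new invariant $\overline{\mu}_n(L)$ vanishes if and only if Milnor's $\overline{\mu}$-invariants of $L$ of lengths $\leq n+1$ vanish. The second is Theorem 12 of Orr's paper \cite{Orr89}, which (as quoted in the paragraph immediately preceding the corollary) asserts that $\overline{\mu}^O_n(L)$ vanishes if and only if $L$ has vanishing Milnor $\overline{\mu}$-invariants of lengths $\leq n+1$.

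Thus the proof is a single sentence: both $\overline{\mu}_n(L)=0$ and $\overline{\mu}^O_n(L)=0$ are equivalent to the same classical vanishing condition on Milnor's invariants, so they are equivalent to each other. In particular, I would write:

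\emph{Proof.} By Corollary~\ref{previous-cormu}, for $n\geq 2$ the invariant $\overline{\mu}_n(L)$ vanishes if and only if Milnor's $\overline{\mu}$-invariants of $L$ of lengths $\leq n+1$ vanish. By Theorem 12 of \cite{Orr89}, $\overline{\mu}^O_n(L)$ vanishes if and only if Milnor's $\overline{\mu}$-invariants of $L$ of lengths $\leq n+1$ vanish. Combining these two equivalences yields the corollary. $\square$

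There is essentially no obstacle here, since all the real work has already been done: Corollary~\ref{previous-cormu} was obtained from Theorem~\ref{THMhcharacterization} (the characterization theorem for the new $\overline{\mu}$-invariants) together with Lemma~\ref{previous-lemmamubasing}, while the statement about $\overline{\mu}^O_n$ is quoted from Orr's paper. If one wanted a more self-contained presentation, the only additional thing to verify would be that the ``vanishing" convention used for $\overline{\mu}^O_n(L)$ (i.e., that its class in $\mathrm{coker}(H_3(K_{n+1})\to H_3(K_n))$ is trivial) lines up with Orr's statement of Theorem 12, which is immediate from the construction of $\overline{\mu}^O_n$ and the fact that $h^O_n(U,\mathrm{id})$ is nullhomotopic.
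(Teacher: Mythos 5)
Your proposal is correct and matches the paper's own argument essentially verbatim: the paper states the corollary as an immediate consequence of combining Corollary~\ref{previous-cormu} with the quoted Theorem 12 of \cite{Orr89}. Nothing further is needed.
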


Work of Igusa-Orr \cite{IgusaOrr} establishes the exact relationship between Milnor's $\overline{\mu}$-invariants and the stronger invariants $\theta^O_n(L)$ and $h^O_n(L)$ for $n\geq 2$. We summarize their results in the table below.

\begin{center}
\begin{tabular}{|c|c|}
\hline
Orr's invariant & Equivalent Milnor's $\overline{\mu}$-invariant(s) \\
\hhline{|=|=|}
$\overline{\mu}^O_n$ & $\overline{\mu}$ of length $n+1$ \\
\hline
$\theta^O_n$ & $\overline{\mu}$ of lengths $n+1\leq k \leq 2n-1$ \\
\hline
$h^O_n$ & $\overline{\mu}$ of lengths $n+1\leq k \leq 2n$ \\
\hline
\end{tabular}
\end{center}

\noindent The final row of this table is known as the \emph{$k$-slice theorem} due to the relationship between the invariant $h^O_k$ and $k$-cobordism. A link which is $k$-cobordant to the unlink is called \emph{$k$-slice}. By Proposition~\ref{previous-propOrrh}, the $k$-slice theorem still holds if we replace Orr's invariant $h^O_k$ with the lower central homotopy invariant $h_k$. One may ask whether analogous relationships among the new invariants $h_n$, $\theta_n$, and $\overline{\mu}_n$ from Sections~\ref{results-homotopy} and~\ref{results-homology} hold for links in other 3-manifolds. This is a topic of future exploration; see Section~\ref{future-invariants}.


\subsection{Milnor invariants of 3-manifolds} \label{previous-mfld}

One of the most recent versions of Milnor's invariants is a collection of homology cobordism invariants of 3-manifolds due to Cha-Orr \cite{ChaOrr}. In addition to Orr's work \cite{Orr89}, the paper \cite{ChaOrr} also inspired many ideas in this paper. Cha-Orr refer to their invariants as ``Milnor invariants of 3-manifolds" for good reason; the invariants exhibit behavior similar to Milnor's link invariants. In fact, Cha-Orr show one can recover Milnor's $\overline{\mu}$-invariants of a link $L\subset S^3$ as invariants of the 3-manifold which is 0-framed surgery on every component of $L$. 

In this section, we recover the Cha-Orr invariants from the lower central homology invariants $\theta_n$ and Milnor invariants $\overline{\mu}_n$ for empty links. Comparing empty links is equivalent to comparing 3-manifolds. Theorem~\ref{THMhinvariance} proves that $\theta^G_n$ and $\overline{\mu}^G_n$ are invariants of $\mathbb{Z}[G]$-homology cobordism, as a homology concordance of empty links is the same as a homology cobordism of 3-manifolds. For this section, we use the notation from Section~\ref{results-homology}. In particular, any 3-manifold $M$ we consider will be equipped with a fixed surjective homomorphism $\varphi:\pi_1(M)\twoheadrightarrow G$ onto a fixed group $G$.

Consider such a pair $(M,\varphi)$. The empty link $\varnothing_M$ in $M$ has exterior $E_{\varnothing_M}=M$ and link group $\pi_1(M)$. The appropriate $\mathbb{Z}[G]$-lower central quotients are the $\mathbb{Z}[G]$-lower central quotients of $\pi_1(M)$. Following our standard notation, we set $\pi=\pi_1(M)$ and $\Gamma=\ker\varphi$. The homotopy pushout diagram $\mathcal{D}^G_n(L)$ from Section~\ref{results-overG} becomes
\begin{center}
\begin{tikzcd}
M \arrow[r] \arrow[d,equal] & K(\pi/\Gamma_n,1). \\
M
\end{tikzcd}
\end{center}
Thus, the space $X_n(\varnothing_M)$ is just the reduced mapping cylinder of $M\to K(\pi/\Gamma_n,1)$, which is itself a $K(\pi/\Gamma_n,1)$, and the map $\iota_n(\varnothing_M)$ is a map $M\to K(\pi/\Gamma_n,1)$ induced by the canonical projection $\pi\twoheadrightarrow\pi/\Gamma_n$.

Now fix the empty link $\varnothing_M$, and suppose we wish to compare another empty link $\varnothing_{M'}$ to $\varnothing_M$. To define the invariants from Section~\ref{results-homology}, we require an $n$-basing over $G$ for $\varnothing_{M'}$ relative to $\varnothing_M$. Because the empty link exteriors have no boundary, an $n$-basing over $G$ for $\varnothing_{M'}$ relative to $\varnothing_M$ is just an isomorphism $\phi:\pi'/\Gamma'_n\xrightarrow{\cong}\pi/\Gamma_n$ over $G$. Given such an isomorphism $\phi$, the invariant $\theta_n(\varnothing_{M'},\phi)$ is the image of $[M']$ under \[H_3(M')\to H_3(\pi'/\Gamma'_n)\xrightarrow[\phi_*]{\cong}H_3(\pi/\Gamma_n).\] This produces a $\mathbb{Z}[G]$-version of the Cha-Orr invariant from \cite{ChaOrr}. 

Taking $G$ to be the trivial group (so that we obtain invariants of $\mathbb{Z}$-homology cobordism) yields exactly the Cha-Orr invariant $\theta_n(M',\phi)$. In this setting, the set of realizable classes $\mathcal{R}_n(\varnothing_M)\subseteq H_3(\pi/\pi_n)$ is the set of realizable classes of 3-manifold invariants $\mathcal{R}_n(\pi)$ from \cite{ChaOrr}. The definitions of the equivalence relations on these sets of realizable classes coincide, so the invariant $\overline{\mu}_n(\varnothing_{M'})$ agrees with the Cha-Orr Milnor invariant $\overline{\mu}_n(M')$.

Observe that the invariants $\theta_n(L')$ and $\overline{\mu}_n(L')$ for a link $L'\subset M'$ relative to a fixed link $L\subset M$ determine the 3-manifold invariants $\theta_n(M')$ and $\overline{\mu}_n(M')$ relative to $M$. An $n$-basing of links induces an $n$-basing of corresponding sublinks, and the invariant for a link determines the invariants of its sublinks, including the empty link.
Thus, if one wishes to compute the ``number" of Milnor invariants relative to a fixed link $L$, one should restrict to counting invariants $\overline{\mu}_n(L')$ whose associated 3-manifold invariants $\overline{\mu}_n(M')$ vanish.

Because the invariants from Section~\ref{results-homology}  reduce to the Cha-Orr invariants, some of the main results from \cite{ChaOrr} are special cases of results in this paper. When restricted to the natural numbers, Theorems B, C, and E and Corollary D from \cite{ChaOrr} follow from our Theorem~\ref{THMhcharacterization}. The realization result Theorem G from \cite{ChaOrr} restricted to natural numbers follows from our Theorem~\ref{THMhrealization}. Cha-Orr extend their invariants to the transfinite setting, and their results hold in this more general context. We discuss such an extension of our work in Section~\ref{future-extensions}.


\subsection{Other extensions of Milnor's invariants} \label{previous-other}

Theorem~\ref{THMhcharacterization} states that the Milnor invariants $\overline{\mu}^G_n$ determine the $\mathbb{Z}[G]$-lower central quotients associated to a link $L\subset M$ one step at a time. The implications of Theorem~\ref{THMhcharacterization} include the equivalence for $n\geq 2$ of $\overline{\mu}_n$ for links in $S^3$ relative to the unlink with Milnor's $\overline{\mu}$-invariant of length $n+1$ as seen in Section~\ref{previous-milnor}. In this section, we again leverage Theorem~\ref{THMhcharacterization} to show that the vanishing of $\overline{\mu}_n$, defined in particular settings, implies the vanishing of extensions of Milnor's invariants due to D. Miller \cite{MillerD} and M. Kuzbary \cite{Kuzbary}.


\subsubsection{D. Miller's \texorpdfstring{$\overline{\tau}$}{tau}-invariants}

D. Miller considers knots in aspherical Seifert-fibered 3-manifolds in the free homotopy class of the Seifert fiber $K$ in \cite{MillerD}. Miller fixes the Seifert fiber $K$ and compares other knots in the homotopy class of the Seifert fiber relative to $K$. We remind the reader that Seifert-fibered 3-manifolds are precisely those $M$ for which $\pi_1(M)$ contains a normal subgroup isomorphic to $\mathbb{Z}$. This normal subgroup is generated by the class of the Seifert fiber, and $\pi_1(M)$ may be described as the extension \[1\to\mathbb{Z}\hookrightarrow\pi_1(M)\twoheadrightarrow S\to 1,\] where $S$ is the orbifold fundamental group of the base surface of the fibration.

In our notation, Miller takes $G=S$, so that $\Gamma=\ker(\pi\twoheadrightarrow\pi_1(M)\twoheadrightarrow S)$, where $\pi=\pi_1(E_K)$, and $\Gamma/\Gamma_n\cong\mathbb{Z}\times F/F_n$, where $F$ is a free group. These quotients are analogous to the lower central quotients of the unlink $U\subset S^3$. Given a knot $K'$ freely homotopic to $K$, Miller finds a presentation for $\Gamma'/\Gamma'_n$ analogous to Milnor's for $L\subset S^3$. He then proceeds in a manner analogous to Milnor \cite{Milnor}, using the Magnus expansion to define integer-valued invariants. These integers, which Miller calls the \emph{$\overline{\tau}$-invariants} of $K'$, are the coefficients of the images of distinguished longitudes of the $S$-cover of $K'$ in a power series ring. Note that the image in $S$ of the homotopy class of the Seifert fiber is trivial, so the preimage of any knot in that class under the $S$-cover of $M$ is a link with infinitely many components, indexed by elements of $S$.

Miller proves that all $\overline{\tau}$-invariants vanish for the fixed knot $K$. If $\Gamma'/\Gamma'_n\cong\Gamma/\Gamma_n$ preserving meridians and distinguished longitudes, then the $\overline{\tau}$-invariants for $K'$ of lengths $\leq n$ vanish. Miller's work implies a theorem analogous to \hyperref[THMmilnor]{Milnor's Theorem}, although it is not directly proven in \cite{MillerD}: We have an isomorphism $\Gamma'/\Gamma'_{n+1}\cong\Gamma/\Gamma_{n+1}$ or, equivalently, an isomorphism $\Gamma'/\Gamma'_n\cong\Gamma/\Gamma_n$ preserving meridians and distinguished longitudes \emph{if and only if} the $\overline{\tau}$-invariants for $K'$ of lengths $\leq n$ vanish. Miller concludes \cite{MillerD} by proving a constructive realization theorem for the $\overline{\tau}$-invariants.

\begin{proposition}
Let $M$ be an aspherical Seifert-fibered 3-manifold, $K$ the Seifert fiber of $M$, and $K'$ some knot freely homotopic to $K$. Let $S$ be the quotient of $\pi_1(M)$ by the infinite cylic normal subgroup corresponding to the Seifert fiber. Suppose for some $n\geq 2$ that the $n^{\text{th}}$ $S$-Milnor invariant $\overline{\mu}^S_n(K')$ vanishes, that is, $\overline{\mu}^S_n(K')$ is defined and equals $\overline{\mu}^S_n(K)$. Then Miller's $\overline{\tau}$-invariants for $K'$ of lengths $\leq n+1$ vanish.
\end{proposition}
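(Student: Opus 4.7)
The plan is to deduce the vanishing of Miller's $\overline{\tau}$-invariants by combining Theorem~\ref{THMhcharacterization} with the (essentially implicit) analogue of \hyperref[THMmilnor]{Milnor's Theorem} that Miller establishes in \cite{MillerD}. The paragraph immediately preceding the proposition emphasizes exactly this: vanishing of the $\overline{\tau}$-invariants of lengths $\leq n+1$ is equivalent to the existence of an isomorphism $\Gamma'/\Gamma'_{n+1}\cong\Gamma/\Gamma_{n+1}$ preserving meridians and distinguished longitudes (of the components of the preimages in the $S$-cover). So the strategy is to produce such an isomorphism from an $(n+1)$-basing over $S$.

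First, I would apply Theorem~\ref{THMhcharacterization} with $G=S$. By hypothesis $n\geq 2$ and $\overline{\mu}_n^S(K')$ is defined and vanishes, so the theorem supplies an $(n+1)$-basing $(\widetilde{\phi},\widetilde{\phi}_\partial)$ over $S$ for $K'$ relative to $K$. Unpacking the definition, $\widetilde{\phi}\colon\pi'/\Gamma'_{n+1}\xrightarrow{\cong}\pi/\Gamma_{n+1}$ is an isomorphism over $S$, and $\widetilde{\phi}_\partial\colon H_1(\partial E_{K'})\xrightarrow{\cong}H_1(\partial E_K)$ is an admissible isomorphism compatible with $\widetilde{\phi}$.

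Next, I would pass to the $S$-cover. Since $\widetilde{\phi}$ lies over $\mathrm{id}_S$, the Five Lemma applied to the short exact sequences $1\to\Gamma/\Gamma_{n+1}\to\pi/\Gamma_{n+1}\to S\to 1$ (and the primed analogue) gives an induced isomorphism $\Gamma'/\Gamma'_{n+1}\xrightarrow{\cong}\Gamma/\Gamma_{n+1}$. These restricted groups are precisely the $(n+1)^{\text{st}}$ lower central quotients of the link groups of the preimages $\widetilde{K}\subset\widetilde{M}$ and $\widetilde{K'}\subset\widetilde{M'}$ that Miller works with, since $K$ (hence $K'$) is freely homotopic to the Seifert fiber and therefore has trivial image in $S$. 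Admissibility of $\widetilde{\phi}_\partial$ translates directly to preservation of meridians of corresponding components in the $S$-cover. For longitudes, the compatibility of $\widetilde{\phi}_\partial$ with $\widetilde{\phi}$ says that some based longitude for $K'$ is sent to a based longitude for $K$ in $\pi/\Gamma_{n+1}$; lifting basings equivariantly to the $S$-cover, this becomes exactly the statement that Miller's distinguished longitude of any component of $\widetilde{K'}$ is sent to the distinguished longitude of the corresponding component of $\widetilde{K}$ under the induced isomorphism on $\Gamma'/\Gamma'_{n+1}\cong\Gamma/\Gamma_{n+1}$. Finally, I would invoke Miller's analogue of \hyperref[THMmilnor]{Milnor's Theorem} to conclude that all $\overline{\tau}$-invariants of $K'$ of length $\leq n+1$ vanish.

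The main obstacle will be the third paragraph: carefully verifying that the compatibility datum of an $(n+1)$-basing, which a priori is only a statement in the (non-equivariant) link group $\pi'$, translates into the correct statement about Miller's distinguished longitudes in the $S$-cover. This amounts to choosing a basing $\tau'$ of $K'$, lifting it equivariantly to a basing of $\widetilde{K'}$, showing that the corresponding distinguished longitude is the unique pushoff of that component whose class lies in $\Gamma'_2/\Gamma'_{n+1}$ (so that it is canonical on the nose, not up to conjugation), and checking that the basing $\tau$ for $K$ produced by compatibility yields the same canonical longitude element on the $K$-side under $\widetilde{\phi}$. Once this peripheral book-keeping is done, the rest of the argument is a direct appeal to the two previously established theorems.
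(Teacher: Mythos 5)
Your proposal is correct and takes essentially the same route as the paper: invoke Theorem~\ref{THMhcharacterization} with $G=S$ to get an $(n+1)$-basing, restrict to an isomorphism $\Gamma'/\Gamma'_{n+1}\cong\Gamma/\Gamma_{n+1}$ preserving meridians and distinguished longitudes, and conclude via Miller's analogue of Milnor's Theorem. The only difference is that you flag and sketch the verification that compatibility of $\phi_\partial$ with $\phi$ translates into preservation of Miller's distinguished longitudes in the $S$-cover, a bookkeeping step the paper's proof simply asserts.
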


\begin{proof}
By Theorem~\ref{THMhcharacterization}, since $\overline{\mu}^S_n(K)=\overline{\mu}^S_n(K')$, $K'$ admits an $(n+1)$-basing over $S$ relative to $K$. This consists of an isomorphism $\phi:\pi'/\Gamma'_{n+1}\xrightarrow{\cong}\pi/\Gamma_{n+1}$ which preserves meridians and distinguished longitudes. This yields an isomorphism $\Gamma'/\Gamma'_{n+1}\xrightarrow{\cong}\Gamma/\Gamma_{n+1}$ which preserves meridians and distinguished longitudes. By Miller's work, the $\overline{\tau}$-invariants of lengths $\leq n+1$ vanish.
\end{proof}


\subsubsection{M. Kuzbary's Dwyer number}

M. Kuzbary develops the \emph{Dwyer number}, an invariant for knots in closed oriented 3-manifolds that behaves like the first non-vanishing Milnor invariant, in \cite{Kuzbary}. This invariant compares the lower central quotients $\pi'/\pi'_n$ associated to a knot $K'\subset M$ to the quotients $\pi/\pi_n$ for a fixed knot $K\subset M$. Using the geometrically-defined Dwyer subgroups $\Phi_n(-)\leq H_2(-;\mathbb{Z})$ first formulated in \cite{FreedmanTeichner} (see also \cite{CochranHarvey}), Kuzbary defines the \emph{Dwyer number of $K'$ relative to $K$} as \[D(K',K)=\max\bigg\{\,n\,\bigg|\,\frac{H_2(E_{K'})}{\Phi_n(E_{K'})}\cong \frac{H_2(E_K)}{\Phi_n(E_K)}\,\bigg\}\in\mathbb{N}\cup\{\infty\}. \]
Kuzbary shows that $H_2(X)/\Phi_n(X)\cong H_2(\pi_1(X))/K_n(\pi_1(X))$. Recall from the statements of Theorems~\ref{THMrealization} and~\ref{THMhrealization} that for a group $H$ the abelian group $K_n(H)$ is defined as $K_n(H)=\ker\big(H_2(H)\to H_2(H/H_n)\big)$. The \hyperref[THMsd]{Stallings-Dwyer Theorem} implies $D(K',K)\geq n$ if and only if we have an isomorphism $\pi'/\pi'_{n+1}\cong\pi/\pi_{n+1}$. Kuzbary leverages the Dwyer number to analyze concordance of null-homologous knots in $\#^k(S^1\times S^2)$ and establish a relationship with Massey products.

\begin{proposition}
Fix a knot $K$ in a closed orientable 3-manifold $M$, and let $K'\subset M$ be another knot freely homotopic to $K$. Let $G$ be the trivial group. Suppose for some $n\geq 2$ that the $n^{\text{th}}$ Milnor's invariant $\overline{\mu}^G_n(K')$ vanishes, that is, $\overline{\mu}^G_n(K')$ is defined and equals $\overline{\mu}^G_n(K)$. Then $D(K',K)\geq n$.
\end{proposition}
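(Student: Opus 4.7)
The plan is to reduce this to an application of Theorem~\ref{THMhcharacterization}, then translate the resulting isomorphism into the Dwyer-number condition via Kuzbary's characterization. Since the fixed group $G$ is trivial, the subgroup $\Gamma = \ker(\pi \twoheadrightarrow G)$ is all of $\pi$, so $\Gamma_n = \pi_n$ is the classical $n^{\text{th}}$ lower central subgroup; the $G$-lower central quotients are exactly the classical lower central quotients of the knot groups $\pi$ and $\pi'$. Our hypothesis is therefore that $\overline{\mu}_n(K')$ is well-defined and agrees with $\overline{\mu}_n(K)$ in this classical setting.

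First I would apply Theorem~\ref{THMhcharacterization} directly. Since $\overline{\mu}_n(K')$ is defined, $K'$ admits some $n$-basing over $G$ relative to $K$, so both $(1')$ and $(2')$ apply. The hypothesis $\overline{\mu}_n(K') = \overline{\mu}_n(K)$, which we interpret via the equivalence of $(2')$ and $(3')$ of the theorem (taking $L'' = K$ in statements $(4')$--$(5')$, or equivalently noting that the vanishing of $\overline{\mu}_n(K')$ and the condition $\overline{\mu}_n(K') = \overline{\mu}_n(K)$ are the same statement), yields the existence of some $(n+1)$-basing $(\phi, \phi_\partial)$ over $G$ for $K'$ relative to $K$. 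In particular, the isomorphism $\phi\colon \pi'/\pi'_{n+1} \xrightarrow{\cong} \pi/\pi_{n+1}$ is part of the data of this $(n+1)$-basing.

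Next I would invoke Kuzbary's characterization of the Dwyer number. As recalled just above the proposition, a consequence of the Stallings--Dwyer Theorem is that $D(K',K) \geq n$ if and only if there is an isomorphism $\pi'/\pi'_{n+1} \cong \pi/\pi_{n+1}$. The isomorphism $\phi$ produced above is exactly such an isomorphism, so $D(K',K) \geq n$, completing the proof.

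I do not expect any real obstacle: the work is entirely done by Theorem~\ref{THMhcharacterization} together with the Stallings--Dwyer-based reformulation of the Dwyer number. The only mild subtlety is bookkeeping -- checking that specializing to the trivial group $G$ really does turn $G$-lower central quotients into classical lower central quotients, so that the isomorphism furnished by the $(n+1)$-basing is precisely of the form required by Kuzbary's criterion. Since $\Gamma = \pi$ when $G = \{1\}$, this identification is immediate, and no further argument is needed.
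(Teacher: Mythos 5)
Your proof is correct and follows essentially the same route as the paper's: invoke Theorem~\ref{THMhcharacterization} to obtain an $(n+1)$-basing, extract the isomorphism $\pi'/\pi'_{n+1}\cong\pi/\pi_{n+1}$, and conclude via Kuzbary's Stallings--Dwyer characterization of the Dwyer number. The additional remarks about $\Gamma=\pi$ when $G=\{1\}$ are harmless bookkeeping that the paper leaves implicit.
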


\begin{proof}
By Theorem~\ref{THMhcharacterization}, since $\overline{\mu}^G_n(K')=\overline{\mu}^G_n(K)$, $K'$ admits an $(n+1)$-basing over $G$ relative to $K$. In particular, this includes an isomorphism $\phi:\pi'/\pi'_{n+1}\xrightarrow{\cong}\pi/\pi_{n+1}$. Thus, $D(K',K)\geq n$.
\end{proof}


\section{Future directions}\label{future}

We conclude this paper with a discussion of several current and future directions, including a number of open questions.


\subsection{Further investigation of new invariants} \label{future-invariants}

The fact that the invariants from Sections~\ref{results-homotopy} and~\ref{results-homology} reduce to Milnor's $\overline{\mu}$-invariants in the case where we compare links in $S^3$ to the unlink motivates us to ask whether known results for Milnor's $\overline{\mu}$-invariants extend to the more general setting of this work.


\subsubsection*{Algebraic information captured by $h_n$ and $\theta_n$}

Igusa-Orr answer a long-standing open question known as the $k$-slice conjecture affirmatively in \cite{IgusaOrr}: A link $L\subset S^3$ is $k$-cobordant to the unlink if and only if Milnor's $\overline{\mu}$-invariants of lengths $1\leq i\leq 2k$ vanish. Igusa-Orr prove this using Igusa Pictures \cite{Igusa} and Orr's invariants from \cite{Orr89} which characterize $k$-cobordism and are analogous to the $h$-invariants from Section~\ref{results-homotopy}. Recall from Section~\ref{previous-milnor} that the lower central homotopy invariant $h_k$ is equivalent to Orr's invariant of length $k$. As discussed in Section~\ref{previous-milnor}, Igusa-Orr show that Orr's length $k$ invariant captures Milnor's $\overline{\mu}$-invariants of lengths $k+1\leq i\leq 2k$. In the more general setting of this work, it is probable that we also lose significant information passing from $h_k$ or $\theta_k$ to the invariants $\overline{\mu}_k$. Understanding how much more information these stronger invariants capture is equivalent to understanding to what extent a relative version of the $k$-slice conjecture holds for links in closed orientable 3-manifolds.

\begin{question}\label{future-qkslice}
Does $h_k$ vanish if and only if $\overline{\mu}_i$ vanishes for $k\leq i\leq 2k-1$?
\end{question}


\subsubsection*{Whitney towers and surface towers}

Interpretations of Milnor's $\overline{\mu}$-invariants in terms of twisted Whitney towers and surface towers (also called gropes in the literature) are given in \cites{FreedmanTeichner,CST11,CST14,Cha18}. Cha-Orr give a similar interpretation for ``Milnor invariants of 3-manifolds" and extend it to the transfinite setting \cite{ChaOrr}. 

\begin{goal}\label{future-goaltower}
Provide an interpretation of the invariants from this paper in terms of Whitney towers and surface towers.
\end{goal}


\subsubsection*{Finite type invariants}

Bar-Natan \cite{BarNatan} and X. Lin \cite{Lin} showed Milnor's invariants for string links are of finite type. Habegger-Masbaum determine a formula for computing Milnor's $\overline{\mu}$-invariants for string links from the Kontsevich integral in \cite{HabeggerMasbaum}. 

\begin{goal}\label{future-goalfinitetype}
Recast the invariants from this paper as finite type invariants.
\end{goal}

\noindent M. Powell has suggested that the theory of claspers due to Gusarov \cite{Gusarov} and Habiro \cite{Habiro} may seed a constructive realization machine for these invariants.


\subsubsection*{Invariants comparing links in $S^3$ to a nontrivial link}

In the case where we compare links in $S^3$ to the unlink, the invariants $h_n$ from Section~\ref{results-homotopy} reduce to invariants of Orr \cite{Orr89} which capture Milnor's $\overline{\mu}$-invariants.

\begin{question}\label{future-qS3}
Do we gain anything by fixing a nontrivial link $L\subset S^3$ instead of the unlink?
\end{question}

\noindent Despite the additivity of Milnor's invariants, due to Cochran \cite{Cochran90} and Orr \cite{Orr89} in the case of the first nonvanishing invariant and more generally due to Krushkal \cite{Krushkal}, we expect the answer is yes.


\subsection{Extensions of this work} \label{future-extensions}

In addition to the potential new interpretations of the invariants in this paper described in Section~\ref{future-invariants}, we may consider whether certain known extensions of Milnor's $\overline{\mu}$-invariants carry over to the setting of this work.


\subsubsection*{Extending to the transfinite setting}

The existence of nontrivial transfinite Milnor invariants for links has been an open question since Milnor's work \cite{Milnor}. Milnor originally asked about invariants detecting phenomena in the transfinite lower central quotients of link groups for links in $S^3$. Using J. Levine's algebraic closure of groups \cites{LevineJ89a,LevineJ89b}, equivalent to Vogel's homology localization of groups \cite{Vogel78}, Cha-Orr extend their homology cobordism invariants to the transfinite setting and answer a similar question in \cite{ChaOrr}. They exhibit 3-manifolds with vanishing finite-length Milnor invariants and non-trivial transfinite invariants. One expects analogous extensions of the invariants $h_n$, $\theta_n$, and $\overline{\mu}_n$ should exist, in particular because the $\theta$- and $\overline{\mu}$-invariants contain the Cha-Orr invariants as special cases. These extensions would involve a tower of spaces $X_\omega(L)$ with maps from $M$, where $\omega$ is a countable ordinal. 

\begin{goal}\label{future-goaltransfinite}
Leverage the examples from \cite{ChaOrr} to find the first instances of nontrivial transfinite Milnor invariants for links.
\end{goal}


\subsubsection*{J. Levine's group closure}

One expects an extension of the invariants $h_n$, $\theta_n$, and $\overline{\mu}_n$ even beyond the previous idea to invariants that detect phenomena at the level of J. Levine's algebraic closure \cites{LevineJ89a,LevineJ89b}. These invariants would probe this algebraic closure without passing to (finite or transfinite) lower central quotients. They would be ``universal $\pi_1$ invariants" in the sense that they would detect information about the $\mathbb{Z}[G]$-homology of link exteriors coming from link groups at the deepest possible level. In other words, these invariants would detect information about the fundamental group of the $\mathbb{Z}[G]$-Vogel homology localization of the link exterior invisible to any lower central quotient; see \cite{Vogel78}. Such invariants would be analogous J. Levine's invariant for links in $S^3$ from \cite{LevineJ89b}. Whether this invariant is nontrivial is an open question. 

Just as in the transfinite setting, Cha-Orr extend their homology cobordism invariants to this universal $\pi_1$ setting and prove the existence of 3-manifolds with vanishing finite- and transfinite-length Milnor invariants but with non-trivial universal invariants \cite{ChaOrr}. Again, one expects analogous extensions of the invariants $h_n$, $\theta_n$, and $\overline{\mu}_n$ since the $\theta$- and $\overline{\mu}$-invariants contain the Cha-Orr invariants as special cases. These invariants would involve spaces $X(L,1)$ equipped with maps from $M$. The various maps from $M$ would fit into a diagram as follows:

\begin{center}
\begin{tikzcd}
& M \arrow[ddl] \arrow[ddr] \arrow[ddrrr] \arrow[ddrrrrr] & & & & & & \\
\\
X(L,1) \arrow[r] & \cdots \arrow[r] & X_\omega(L) \arrow[r] & \cdots \arrow[r] & X_n(L) \arrow[r] & \cdots \arrow[r] & X_1(L).
\end{tikzcd}
\end{center}

\begin{goal}\label{future-goaluniversal}
Leverage the work from \cite{ChaOrr} to find links with nontrivial universal Milnor invariants.
\end{goal}


\subsubsection*{Milnor's invariants for knotted surfaces}

The study of surface links (including single-component knotted surfaces) in 4-manifolds is currently a very active area in 4-dimensional topology. Concordance in this setting has garnered a lot of recent attention \cites{Sunukjian,Schwartz,AMY,KlugMiller21,MillerM,KlugMiller,SchneidermanTeichner}, but little is understood overall. Despite recent work, there is a general shortage of concordance invariants for links of surfaces, particularly when the surfaces have postitive genus.

Audoux-Meilhan-Yasuhara recently explored an analogue of Milnor's invariants for concordance of knotted surfaces in 4-space \cite{AMY}. They leverage the combinatorial method of cut-diagrams to apply tools similar to Milnor's work \cite{Milnor} and develop concordance invariants. Analogous versions of Orr's invariants \cite{Orr89} in this setting are likely amenable to computing the number of such invariants.

\begin{goal}\label{future-goalsurfaces}
Extend the invariants of \cite{AMY} to links of positive genus surfaces in closed orientable 4-manifolds. More broadly, extend the invariants from this work to concordance invariants of embeddings of $n$-manifolds in $(n+2)$-manifolds.
\end{goal}

Progress toward Goal~\ref{future-goalsurfaces} will appear in forthcoming joint work with M. Miller.


\subsubsection*{Variations on the definition of concordance}

We list a few variations of this work adapted to studying links in slightly different contexts:
\begin{itemize}
\item One could study concordance of unordered links, unoriented links, or both unordered and unoriented links by slightly changing the definitions of an $n$-basing and the group $\text{Aut}(\pi/\Gamma_n,\partial)$ of self-$n$-basings given in Section~\ref{nbasing}. Versions of the main results of this paper still follow.
\item Instead of studying $\mathbb{Z}[G]$-homology concordance of links in 3-manifolds $M$ with fixed homomorphisms $\varphi:\pi_1(M)\twoheadrightarrow G$ to a fixed group $G$, as in Section~\ref{results-homology}, one could remove such a fixed homomorphism $\varphi$ as part of the structure of $M$ and only ask that $\pi_1(M)$ admits some homomorphism onto $G$. Again, this change would slightly alter the definitions of an $n$-basing and the group of self-$n$-basings.
\end{itemize}


\subsection{Applications of invariants} \label{future-applications}

We list several possible applications of the new invariants defined in this work to existing open questions.


\subsubsection*{Computing collections of realizable classes}

One can leverage Theorems~\ref{THMrealization} and~\ref{THMhrealization} to compute the collection of Milnor invariants relative to a fixed link. One might even hope for a constructive algorithm to realize invariants by specific links in $M$. Such computations for specific knots $K\subset M$ can partially answer the \hyperref[CONJ]{Almost-Concordance Conjecture} \cites{Celoria,FNOP}. Partial progress toward this conjecture has been made in \cites{MillerD,Celoria,Yildiz,FNOP,NOPP,Kuzbary}. We conjecture that the invariants $h_n$ are sufficient for proving the remaining cases of the \hyperref[CONJ]{Almost-Concordance Conjecture}.


\subsubsection*{Applications to homology cobordism}

Recall that the invariants $\theta_n$ and $\overline{\mu}_n$ are homology concordance invariants and reduce to the homology cobordism invariants of Cha-Orr \cite{ChaOrr} when specialized to empty links. 

\begin{question}\label{future-qhcob}
Are there applications of the invariants in this work to the study of homology cobordism of 3-manifolds which do not pass through the Cha-Orr invariants?
\end{question}

The invariants $\theta_n$ and $\overline{\mu}_n$ may also be useful in answering the following question:

\begin{question}\label{future-qhomconc}
Fix a 3-manifold $M$. Does there exist a 3-manifold $M'$, homology cobordant to $M$, and a knot $K\subset M$ such that $K$ is not topologically homology concordant to any knot in $M'$?
\end{question}

A. Levine provides an example of a knot in a homology null-cobordant homology sphere which is not smoothly concordant to any knot in $S^3$ in \cite{LevineA}. The existence of such an example in the topological category is still open; C. Davis shows in \cite{Davis} that such an example cannot be detected by the powerful Cochran-Orr-Teichner filtration on the topological concordance group \cite{COT}. The invariants defined in this work may be able to provide analogous examples in the topological setting in 3-manifolds which are not homology spheres. They may also interact in interesting ways with recent work on homology concordance such as \cite{Zhou} and \cite{HLL}.


\pagebreak


\addcontentsline{toc}{section}{References}
\bibliography{mathrefs.bib}


\appendix


\section{Properties of \texorpdfstring{$X_n(L)$}{Xn(L)} and \texorpdfstring{$\iota_n(L)$}{iotan(L)}}\label{appendix}

In this brief appendix, we collect a number of properties of the spaces $X_n(L)$ and maps $\iota_n(L):M\hookrightarrow X_n(L)$ defined in section~\ref{results-homotopy}. These include a number of well-definedness and naturality properties; see \cite{Stees} for more details.


\subsection{Well-definedness and naturality properties}\label{appendix-welldefnat}

Observe the following well-definedness and naturality properties of $X_n(L)$ and $\iota_n(L)$. These follow from the naturality of the constructions of Section~\ref{results-homotopy} and algebraic-topological properties of homotopy pushouts.
\begin{itemize}
\item The space $X_n(L)$ is well-defined up to homotopy equivalence, independent of the choices of model for $K(\pi/\Gamma_n,1)$ and based map $p_n:E_L\to K(\pi/\Gamma_n,1)$ induced by the canonical projection $\pi\twoheadrightarrow\pi/\Gamma_n$.
\item The model obtained for $X_n(L)$ depends naturally on the choices of model for $K(\pi/\Gamma_n,1)$ and based map $p_n$. Two sets of choices yield a canonical homotopy class of homotopy equivalences $\psi$ between the two resulting models.
\item The homotopy class $\iota_n(L)\in[M,X_n(L)]_0$ is well-defined.
\item The inclusions $\iota_n(L):M\hookrightarrow X_n(L)$ into two models for $X_n(L)$ are related, up to homotopy, via the canonical homotopy equivalence $\psi$ above.
\end{itemize}

Recall the maps $\psi_{m,n}:X_m(L)\to X_n(L)$ for $m\geq n$ defined in Section~\ref{results-homotopy}. The $\psi_{m,n}$ satisfy the following:
\begin{itemize}
\item The homotopy classes $\psi_{m,n}\in[X_m(L),X_n(L)]_0$ are well-defined.
\item The maps $\psi_{m,n}$ between two different models of each of $X_m(L)$ and $X_n(L)$ are related, up to based homotopy, via canonical homotopy equivalences.
\item The collection $\{\psi_{i,j}\}$ satisfies $\psi_{m,p}\simeq\psi_{n,p}\psi_{m,n}$.
\item The collections $\{\psi_{i,j}\}$ and $\{\iota_k(L)\}$ satisfy $\psi_{i,j}\iota_i(L)\simeq\iota_j(L)$.
\end{itemize}

The spaces $X^G_n(L)$ and maps $\iota^G_n(L)$ and $\psi^G_{m,n}$ defined in Section~\ref{results-overG} satisfy analogous well-definedness and naturality properties.


\subsection{Algebraic properties}\label{appendix-alg}

\begin{proposition} \label{appendix-proppi1} Let the fixed group $G$ be $G=\pi_1(M)$, and let $\varphi:\pi_1(M)\twoheadrightarrow G$ be the identity homomorphism.
\begin{enumerate}[label=(\arabic*)]
\item The map $\iota_n(L):M\hookrightarrow X_n(L)$ induces an isomorphism \[\iota_n(L)_*:\pi_1(M)\xrightarrow{\cong}\pi_1(X_n(L)).\]
\item The abelian group $H_3(X_n(L))$ contains a summand canonically isomorphic to $H_3(M)\cong\mathbb{Z}$.
\end{enumerate}
\end{proposition}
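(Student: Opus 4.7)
For part (1), the plan is to apply Seifert--van Kampen to the homotopy pushout square defining $X_n(L)$, which presents $\pi_1(X_n(L))$ as the categorical pushout of groups
$\pi/\Gamma_n \xleftarrow{p_n} \pi \xrightarrow{j_*} \pi_1(M).$
Here $p_n$ is surjective with kernel $\Gamma_n$, while $\ker j_* = \Gamma \supseteq \Gamma_n$ (since $G=\pi_1(M)$ and $\varphi=\text{id}$), so $j_*$ factors as $\pi \twoheadrightarrow \pi/\Gamma_n \twoheadrightarrow \pi_1(M)$. A short universal-property check will then show the pushout collapses to $\pi_1(M)$: given any pair of compatible maps $(\alpha:\pi/\Gamma_n\to H, \beta:\pi_1(M)\to H)$, surjectivity of $j_*$ forces $\alpha=\beta\circ(\pi/\Gamma_n \twoheadrightarrow \pi_1(M))$, so the pair is determined by $\beta$ alone. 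Under this identification the canonical structure map $\pi_1(M)\to\pi_1(X_n(L))$ is $\iota_n(L)_*$, which is therefore an isomorphism.

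For part (2), assuming $L$ is nonempty, the plan is to combine two Mayer--Vietoris sequences. Applied to the decomposition $X_n(L) = M_{p_n}^\times \cup_{E_L} M$, the vanishing of $H_3(E_L)$ (since $E_L$ is a compact 3-manifold with nonempty boundary) yields an injection $H_3(\pi/\Gamma_n) \oplus H_3(M) \hookrightarrow H_3(X_n(L))$ whose restriction to the $H_3(M)$ summand is (up to sign) $\iota_n(L)_*$. Applied instead to the alternative decomposition $X_n(L) = M_{p_n}^\times \cup_{\partial E_L} \nu L$, the connecting homomorphism $\Delta: H_3(X_n(L)) \to H_2(\partial E_L) \cong \mathbb{Z}^m$ will satisfy $\Delta(\iota_n(L)_*[M]) = \sum_i [T_i]$, the sum of the torus fundamental classes, which is a primitive element of $\mathbb{Z}^m$.

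Primitivity then supplies the splitting: projecting $\Delta$ onto any single coordinate gives a homomorphism $\rho: H_3(X_n(L)) \to \mathbb{Z}$ with $\rho(\iota_n(L)_*[M]) = 1$, so $\iota_n(L)_*(H_3(M))$ is a direct summand of $H_3(X_n(L))$, canonically identified with $H_3(M)\cong\mathbb{Z}$ via $\iota_n(L)_*$ (the canonicity attaches to the summand itself; the complementary summand depends on the choice of projection). I do not expect serious obstacles: part (1) is a formal pushout computation, and the crux of part (2) is the identification of $\Delta(\iota_n(L)_*[M])$, which is essentially condition (1) of Theorem~\ref{THMrealization} applied to the canonical realization $\iota_n(L) = h_n(L, \text{id})$. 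The one place that requires care is the implicit hypothesis $L\neq\varnothing$ in (2): without it the decomposition $X_n(L) = M_{p_n}^\times \cup_{E_L} M$ degenerates and the conclusion can fail (for instance when $L=\varnothing$ in $M=S^3$, where $X_n\simeq\ast$).
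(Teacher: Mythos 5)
Your proof is correct, and for part (1) it takes a genuinely different route from the paper. The paper computes $\pi_1(X_n(L))$ directly from the cell structure: it first uses the decomposition $X_n(L) = M_{p_n}^\times \cup_{\partial E_L} \nu L$ to identify $\pi_1(X_n(L))$ with $(\pi/\Gamma_n)/(\Gamma/\Gamma_n) \cong \pi_1(M)$ (the meridians normally generate $\Gamma$), and then, in a separate step, it views $X_n(L)$ as $M$ with cells of dimension $\geq 2$ attached along elements of $\Gamma_n$ to check that the isomorphism is in fact $\iota_n(L)_*$. Your Seifert--van Kampen argument applied to the defining homotopy pushout gets both points simultaneously: the universal-property collapse exhibits $\pi_1(M)$ as the pushout with the identity as the structure map, which is $\iota_n(L)_*$ by construction. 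One small slip: you write that ``surjectivity of $j_*$'' forces $\alpha = \beta \circ q$; the correct justification is that $p_n$ is an epimorphism, which lets you cancel it from the compatibility equation $\alpha \circ p_n = \beta \circ q \circ p_n$.

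For part (2) your argument is essentially the paper's: both use the Mayer--Vietoris sequence of $X_n(L) = M_{p_n}^\times \cup_{\partial E_L} \nu L$, both identify $\Delta(\iota_n(L)_*[M]) = \sum_i [T_i]$, and both produce the splitting from a retraction $\mathbb{Z}^m \to \mathbb{Z}$ sending that primitive element to $1$. Your first Mayer--Vietoris sequence (using $X_n(L) = M_{p_n}^\times \cup_{E_L} M$ together with $H_3(E_L) = 0$) is unnecessary: once $\rho \circ \iota_n(L)_* = \mathrm{id}_{\mathbb{Z}}$ you already know $\iota_n(L)_*$ is split injective on $H_3$, so the earlier injectivity claim is redundant. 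Your observation that the statement implicitly requires $L \neq \varnothing$ is a good catch: when $m = 0$ the group $H_2(\partial E_L)$ is trivial and the retraction $\rho$ cannot exist; the example $L = \varnothing$ in $S^3$, where $X_n(\varnothing) \simeq K(1,1) \simeq *$ and $H_3 = 0$, shows the conclusion genuinely fails. The paper does treat empty links elsewhere (Section~\ref{previous-mfld}), so this is a real, if mild, omission in the hypothesis of the proposition.
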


\begin{proof}
Since $X_n(L)$ may be obtained from the reduced mapping cylinder $M_{p_n}^\times$ by gluing $\nu_L$ to $\partial E_L\times\{1\}$, the homomorphism $\pi/\Gamma_n\to\pi_1(X_n(L))$ is surjective and its kernel is normally generated by the images of the meridians of $L$ in $\pi/\Gamma_n$. This is precisely the subgroup $\Gamma/\Gamma_n$, since $\Gamma\trianglelefteq\pi$ is the subgroup of $\pi$ normally generated by the meridians of $L$. Thus, \[\pi_1(X_n(L))\cong(\pi/\Gamma_n)/(\Gamma/\Gamma_n)\cong\pi/\Gamma\cong\pi_1(M).\]

To see that this isomorphism is induced by the map $\iota_n(L)$, we instead view $X_n(L)$ as obtained from $M$ by attaching cells of dimension 2 and higher. The attaching maps of the 2-cells represent elements of $\Gamma_n$, which are already nullhomotopic in $M$, and the higher-dimensional cells do not affect the fundamental group. Therefore, $\iota_n(L)_*$ induces an isomorphism. This proves (1).

To prove statement (2), we consider the Mayer-Vietoris sequence corresponding to the decomposition $X_n(L)=M_{p_n}^\times\cup_{\partial E_L}\nu L$. The connecting homomorphism $H_3(X_n(L))\to H_2(\partial E_L)\cong\mathbb{Z}^m$ sends the inclusion-induced image of $[M]$ to $(1,1,\dots,1)$ (i.e. the image of $[M]$ is the sum of the fundamental classes of the tori which form $\partial E_L$). Projecting $H_2(\partial E_L)$ onto $\mathbb{Z}$ generated by $(1,1,\dots,1)$ gives a surjective homomorphism $H_3(X_n(L))\twoheadrightarrow\mathbb{Z}$ which sends the image of $[M]$ to 1. This yields a splitting of $H_3(X_n(L))$ as $H_3(M)\oplus A$ for some abelian group $A$. As the inclusion map $\iota_n(L):M\hookrightarrow X_n(L)$ is canonical, this splitting is canonical. This proves (2).
\end{proof}

The spaces $X^G_n(L)$ from Section~\ref{results-overG} have analogous algebraic properties:

\begin{proposition}\label{appendix-proppi1G}
Let $G$ be some group with $\varphi:\pi_1(M)\twoheadrightarrow G$.
\begin{enumerate}[label=(\arabic*)]
\item Let $\Gamma^G=\ker\varphi$. Then $\pi_1(X^G_n(L))\cong\pi_1(M)/\Gamma^G_n$, the $n^{\text{th}}$ $G$-lower central quotient of $\pi_1(M)$, and the map $\iota^G_n(L):M\hookrightarrow X^G_n(L)$ induces the canonical projection $\pi_1(M)\twoheadrightarrow\pi_1(M)/\Gamma^G_n$.
\item The abelian group $H_3(X^G_n(L))$ contains a summand canonically isomorphic to $H_3(M)\cong\mathbb{Z}$.
\end{enumerate}
\end{proposition}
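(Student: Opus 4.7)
The plan is to imitate the proof of Proposition~\ref{appendix-proppi1} almost verbatim, keeping careful track of how the group $G$ enters. Statement (1) is essentially a van Kampen calculation for a homotopy pushout, and statement (2) is a Mayer-Vietoris computation identical in form to the one appearing in the proof of Proposition~\ref{appendix-proppi1}.

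For (1), I would first apply the Seifert-van Kampen theorem to the defining homotopy pushout
\[
\begin{tikzcd}
E_L \arrow[r, "p_n"] \arrow[d, hookrightarrow, "j"'] & K(\pi/\Gamma_n,1) \\
M &
\end{tikzcd}
\]
to obtain $\pi_1(X^G_n(L))$ as the pushout of groups $\pi_1(M)\leftarrow \pi \to \pi/\Gamma_n$. Since both maps are surjections, this pushout is $\pi/\langle\langle N\cup\Gamma_n\rangle\rangle$, where $N=\ker(\pi\twoheadrightarrow\pi_1(M))$ is normally generated by meridians of $L$. Both $N$ and $\Gamma_n$ are normal in $\pi$, so the pushout simplifies to $\pi_1(M)/\mathrm{im}(\Gamma_n)$. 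The key observation is that the image of $\Gamma_n$ in $\pi_1(M)$ is precisely $\Gamma^G_n$: since $\pi\twoheadrightarrow\pi_1(M)$ carries $\Gamma$ onto $\Gamma^G$ (by the definition of $\Gamma$), and the lower central series is natural under surjective homomorphisms, the image of $\Gamma_n$ is exactly $(\Gamma^G)_n=\Gamma^G_n$. This gives $\pi_1(X^G_n(L))\cong\pi_1(M)/\Gamma^G_n$. To see that this isomorphism is the one induced by $\iota^G_n(L)$, I would use the alternative description of $X^G_n(L)$ as being obtained from $M$ by attaching cells of dimension $\geq 2$ (noted in Section~\ref{results-overG}); the attaching maps of the $2$-cells represent elements in $\Gamma_n$, whose images in $\pi_1(M)$ lie in $\Gamma^G_n$, while higher-dimensional cells do not affect $\pi_1$. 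Hence $\iota^G_n(L)_*$ is exactly the canonical projection.

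For (2), the argument is the same as in Proposition~\ref{appendix-proppi1}. I would apply the Mayer-Vietoris sequence with integer coefficients associated to the decomposition $X^G_n(L)=M_{p_n}^\times\cup_{\partial E_L}\nu L$. The connecting homomorphism $\Delta\colon H_3(X^G_n(L))\to H_2(\partial E_L)\cong\mathbb{Z}^m$ sends $\iota^G_n(L)_*[M]$ to $\sum_i[T_i]=(1,\dots,1)$, since $M$ itself decomposes as $E_L\cup_{\partial E_L}\nu L$ and the inclusion $\iota^G_n(L)$ respects this decomposition. Post-composing $\Delta$ with the projection $\mathbb{Z}^m\twoheadrightarrow\mathbb{Z}$ onto the diagonal (or equivalently, the sum of coordinates divided appropriately) yields a surjective homomorphism $H_3(X^G_n(L))\twoheadrightarrow\mathbb{Z}$ which sends $\iota^G_n(L)_*[M]$ to $1$. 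This splits the short exact sequence $0\to\ker\to H_3(X^G_n(L))\to\mathbb{Z}\to 0$ canonically via $[M]\mapsto\iota^G_n(L)_*[M]$, producing the desired canonical summand $H_3(M)\cong\mathbb{Z}$.

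No step is a serious obstacle; the only point that requires care is the identification in (1) of the image of $\Gamma_n$ under $\pi\twoheadrightarrow\pi_1(M)$ with $\Gamma^G_n$, and this follows directly from the naturality of the lower central series under surjections together with the definition $\Gamma=\ker(\pi\twoheadrightarrow\pi_1(M)\twoheadrightarrow G)$. The entire proof should be short, parallel in structure to Proposition~\ref{appendix-proppi1}.
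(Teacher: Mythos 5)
Your proof is correct and is essentially the straightforward adaptation of the paper's proof of Proposition~\ref{appendix-proppi1} that the paper implicitly intends (it gives no explicit proof of this proposition). In part~(1) you compute $\pi_1$ of the homotopy pushout directly by van Kampen, whereas the paper's A.1 proof frames the same calculation as gluing $\nu L$ to the mapping cylinder $M_{p_n}^\times$ and identifying the kernel of $\pi/\Gamma_n \to \pi_1(X_n(L))$; these are equivalent. The one genuinely new ingredient in your argument—the identification of the image of $\Gamma_n$ under $\pi\twoheadrightarrow\pi_1(M)$ with $\Gamma^G_n$—is correct, and follows exactly as you say from surjectivity together with the inductive definition of the $G$-lower central series (for $n\geq 2$ the image of $[\pi,\Gamma_{n-1}]$ is $[\pi_1(M),\Gamma^G_{n-1}]$). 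Part~(2) is verbatim the A.1 argument, and it carries over with no modification since it involves only the decomposition $X^G_n(L)=M_{p_n}^\times\cup_{\partial E_L}\nu L$ and the Mayer–Vietoris connecting map, neither of which sees the group $G$. One small correction: your parenthetical ``or equivalently, the sum of coordinates divided appropriately'' is not right (the sum sends $(1,\dots,1)$ to $m$, not $1$, and dividing is not a homomorphism $\mathbb{Z}^m\to\mathbb{Z}$); what is meant is any retraction of $\mathbb{Z}^m$ onto the summand generated by $(1,\dots,1)$, e.g.\ projection to the first coordinate, which is what the paper's phrase ``projecting onto $\mathbb{Z}$ generated by $(1,\dots,1)$'' means. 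That is a wording issue only, not a gap.
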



\end{document}